\titleformat{\subsection}[runin]
  {\normalfont\normalsize\bf}{\thesubsection}{1em}{}
	\def\MR#1{}
\newcommand{\bZ}{\mathbb{Z}}    
\newcommand{\bR}{\mathbb{R}}    
\newcommand{\bC}{\mathbb{C}}    
\newcommand{\GL}{\mathrm{GL}}           
\newcommand{\SL}{\mathrm{SL}}           
\newcommand{\SO}{\mathrm{SO}}           
\newcommand{\PSL}{\mathrm{PSL}}         
\newcommand{\uO}{\mathrm{O}}            
\newcommand{\Aut}{\operatorname{Aut}}   
\newcommand{\Hom}{\operatorname{Hom}}   
\newcommand{\bP}{\mathbb{P}}    
\newcommand{\cH}{\mathcal{H}}   
\newcommand{\cC}{\mathcal{C}}   
\newcommand{\cO}{\mathcal{O}}   
\newcommand{\cE}{\mathcal{E}}   
\newcommand{\Coh}{\mathrm{Coh}} 
\newcommand{\Pic}{\operatorname{Pic}}	
\newcommand{\NS}{\operatorname{NS}}     
\newcommand{\Amp}{\mathrm{Amp}}         
\newcommand{\ch}{\mathrm{ch}}           
\newcommand{\cl}{\mathrm{cl}}           
\newcommand{\Db}{\mathrm{D^b}}  
\newcommand{\sD}{\mathscr{D}}   
\newcommand{\sP}{\mathscr{P}}   
\newcommand{\sA}{\mathscr{A}}   
\newcommand{\sT}{\mathscr{T}}   
\newcommand{\sF}{\mathscr{F}}   
\newcommand{\Stab}{\mathrm{Stab}} 
\newcommand{\std}{\mathrm{std}} 
\newcommand{\cA}{\mathcal{A}}   
\newcommand{\mukaiH}{\widetilde{H}} 
\newcommand{\cN}{\mathcal{N}}   
\newcommand{\cP}{\mathcal{P}}   
\newcommand{\cQ}{\mathcal{Q}}   
\newcommand{\cI}{\mathcal{I}}   
\newcommand{\cat}{\mathrm{cat}}     
\newcommand{\poly}{\mathrm{poly}}   
\newcommand{\tr}{\mathrm{tr}}       
\newcommand{\git}{\mathbin{
  \mathchoice{\backslash\mkern-6mu\backslash}
    {\backslash\mkern-6mu\backslash}
    {\backslash\mkern-5mu\backslash}
    {\backslash\mkern-5mu\backslash}}}
\newcommand{\frn}{w_n}
\newcommand{\red}{\mathrm{red}}
\newtheorem*{thm*}{Theorem}
\newtheorem*{prop*}{Proposition}
\newtheorem*{cor*}{Corollary}
\newtheorem{thm}{Theorem}[section]
\newtheorem{prop}[thm]{Proposition}
\newtheorem{cor}[thm]{Corollary}
\newtheorem{lemma}[thm]{Lemma}
\numberwithin{equation}{section}
\theoremstyle{definition}
\newtheorem{defn}[thm]{Definition}
\newtheorem{eg}[thm]{Example}
\newtheorem{ques}[thm]{Question}
\newtheorem{rmk}[thm]{Remark}
\begin{document}


\title{Nielsen realization problem for derived automorphisms of generic K3 surfaces}
\author{Yu-Wei Fan
    \and Kuan-Wen Lai}
\date{}

\newcommand{\ContactInfo}{{
\bigskip\footnotesize

\bigskip
\noindent Y.-W.~Fan,
\textsc{Yau Mathematical Sciences Center\\
Tsinghua University\\
Beijing 100084, P. R. China}\par\nopagebreak
\noindent\textsc{Email:} \texttt{ywfan@mail.tsinghua.edu.cn}

\bigskip
\noindent K.-W.~Lai,
\textsc{Mathematisches Institut der Universit\"at Bonn\\
Endenicher Allee 60, 53121 Bonn, Deutschland}\par\nopagebreak
\noindent\textsc{Email:} \texttt{kwlai@math.uni-bonn.de}
}}

\maketitle
\thispagestyle{titlepage}

\begin{abstract}
We prove that all nontrivial finite subgroups of derived automorphisms of K3 surfaces of Picard number one have order two and give formulas for the numbers of their conjugacy classes. We also obtain a similar result for the subgroups which are finite up to shifts. This in turn shows that such a K3 surface admits an associated cubic fourfold if and only if it has a derived automorphism of order three up to shifts. These results are achieved by proving that such a subgroup fixes a Bridgeland stability condition up to $\mathbb{C}$-actions. We also establish similar existence results for curves, twisted abelian surfaces, generic twisted K3 surfaces, and standard autoequivalences on surfaces.
\end{abstract}

\setcounter{tocdepth}{3}
\tableofcontents

\restoregeometry

\section{Introduction}
\label{sect:intro}

For a K3 surface $X$ of Picard number one and degree $2n$, it is well-known that
$$
    \Aut(X) = \begin{cases}
    \{\text{id}\} & \text{if}\quad n\geq2 \\
    \left<\iota\right>\cong\bZ_2 & \text{if}\quad n=1
\end{cases}
$$
where $\iota$ is the covering involution of $X\to\bP^2$ \cite{Nik81}*{Corollary~10.1.3}. Now consider the bounded derived category of coherent sheaves $\Db(X)$. Then $\Aut(X)$ appears as a subgroup of the group of autoequivalences $\Aut(\Db(X))$. Does the latter contains a nontrivial element of finite order other than $\iota$? For quartic K3 surfaces, that is, when $n=2$, one can consider the autoequivalence
$$
    \Theta\coloneqq(-\otimes\cO_X(1))\circ T_{\cO_X}
$$
where $T_{\cO_X}$ is the spherical twist along $\cO_X$. It holds that $\Theta^4 = [2]$ (\cite{CK08},\cite{BFK}*{Proposition~5.8}), so the autoequivalence $\Theta^2[-1]$ gives an involution. In general, it is a nontrivial task to construct finite order autoequivalences.

As part of the main results in this paper, we give a complete classification of finite subgroups of autoequivalences as follows:

\begin{thm}
\label{mainthm:finite-subgp}
Let $X$ be a K3 surface of Picard number one and degree $2n$. Then every nontrivial finite subgroup of $\Aut(\Db(X))$ is of order~$2$ and generated by an anti-symplectic involution. Moreover, the number of conjugacy classes of these subgroups is equal to
$$\begin{cases}
    1 & \text{if}
    \quad n = 1,2 \\
    2^{a-1} & \text{if}
    \quad n = 2^{\ell}p_1^{k_1}\cdots p_a^{k_a},
    \quad \ell\in\{0,1\},
    \quad a, k_i\geq1,
    \quad p_i\equiv 1\text{ (mod }4\text{) are primes} \\
    0 & \text{if}
    \quad n \text{ is divisible by } 4 \text{ or a prime } p\equiv 3\text{ (mod }4\text{)}.
\end{cases}$$
\end{thm}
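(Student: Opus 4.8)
The plan is to convert the statement into a lattice problem on the algebraic Mukai lattice $\cN(X)$ by means of Bridgeland stability conditions, and then to resolve that problem with Nikulin's theory of discriminant forms.

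Let $G\leq\Aut(\Db(X))$ be a nontrivial finite subgroup. I would first invoke the realization result announced in the abstract: $G$ fixes some $\sigma$ in the distinguished component $\Stab^\dagger(X)$ up to the $\bC$-action. Since the right $\widetilde{\GL}^+(2,\bR)$-action commutes with autoequivalences and is free, each $g\in G$ satisfies $g\cdot\sigma=\sigma\cdot z_g$ with $z_g\in\bC$ unique, and $g\mapsto z_g$ is a homomorphism from $G$ into the torsion-free group $(\bC,+)$; hence $z_g=0$ and $G$ fixes $\sigma$ exactly. Passing to the natural map $\Psi\colon\Aut(\Db(X))\to\uO(\mukaiH(X,\bZ))$ on the Mukai lattice, whose image is the group of orientation-preserving Hodge isometries and whose kernel acts freely on $\Stab^\dagger(X)$ (Bridgeland's conjecture in Picard number one, after Bayer--Bridgeland), the subgroup $G\cap\ker\Psi$ acts freely while fixing $\sigma$, hence is trivial; so $\Psi$ embeds $G$ into the finite-order Hodge isometries of $\mukaiH(X,\bZ)$ fixing the central-charge vector $\Omega_\sigma\in\cN(X)_\bC$.

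Next I would analyse such isometries through the orthogonal splitting $\mukaiH(X,\bZ)_\bQ=\cN(X)_\bQ\oplus T(X)_\bQ$ into algebraic and transcendental parts. Since $X$ has Picard number one, $T(X)_\bQ$ is a simple $\bQ$-Hodge structure, so its Hodge endomorphism algebra is a division ring and $\Psi(g)|_{T(X)}$ has irreducible minimal polynomial $\Phi_d$, where $d$ is the order of the root of unity by which it acts on $H^{2,0}$; then $\varphi(d)\mid\mathrm{rk}\,T(X)=21$ forces $d\in\{1,2\}$, i.e.\ $\Psi(g)|_{T(X)}=\pm\mathrm{id}$. On $\cN(X)$, of signature $(2,1)$, fixing $\Omega_\sigma$ forces $\Psi(g)$ to fix the positive-definite plane $P_\sigma=\langle\real\,\Omega_\sigma,\imag\,\Omega_\sigma\rangle$ pointwise, so $\Psi(g)|_{\cN(X)}$ is $\mathrm{id}$ or the reflection $s_w$ in a primitive generator $w$ of the negative line $P_\sigma^{\perp}$; in particular $\Psi(G)$ lies in a group of order at most $4$. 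Because $\sigma\in\Stab^\dagger(X)$, its period $\Omega_\sigma$ is orthogonal to no spherical class, so $w\perp\Omega_\sigma$ gives $\langle w,w\rangle\neq-2$; using $\cN(X)\cong U\oplus\langle 2n\rangle$, a brief divisibility computation shows that primitivity of such a $w$ makes $s_w$ act nontrivially on the discriminant $A_{\cN(X)}\cong\bZ/2n$, and the gluing compatibility across $A_{\cN(X)}\cong A_{T(X)}$ in the unimodular lattice $\mukaiH(X,\bZ)$ then kills the "symplectic" case $(s_w,\mathrm{id}_{T(X)})$ for every $n$ and the case $(\mathrm{id}_{\cN(X)},-\mathrm{id}_{T(X)})$ for $n\geq2$ (for $n=1$ the latter survives as $\iota$). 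Hence $\Psi(G)\subseteq\{\mathrm{id},\,(s_w,-\mathrm{id}_{T(X)})\}$: an order-at-most-two group with anti-symplectic generator, which proves the first assertion.

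For the enumeration I would use the derived global Torelli theorem (so $\Psi$ surjects onto the orientation-preserving Hodge isometries) together with the realization result, in order to match conjugacy classes of the subgroups with $\uO(\cN(X))$-orbits of primitive negative non-spherical vectors $w$ for which $s_w$ is integral and acts by $-\mathrm{id}$ on $A_{\cN(X)}$; by Nikulin's theory such an orbit is captured by the isometry class of the positive-definite rank-two lattice $w^{\perp}\subset\cN(X)$ together with its gluing data. Unwinding the discriminant-form bookkeeping, both the existence of such a $w$ and the enumeration of the resulting invariant lattices reduce to counting, up to $w\mapsto-w$, the square roots of $-1$ in $\bZ/n$: there are none if $4\mid n$ or if some prime $p\equiv3\pmod4$ divides $n$, and otherwise $2^a$ of them when $n=2^{\ell}p_1^{k_1}\cdots p_a^{k_a}$ with $\ell\in\{0,1\}$ and $p_i\equiv1\pmod4$, giving $2^{a-1}$ classes for $a\geq1$; the cases $n=1,2$ ($a=0$) are checked by hand and yield the single class generated by $\iota$, resp.\ by $\Theta^2[-1]$. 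The main obstacle is exactly this arithmetic core — deciding which reflections of $U\oplus\langle 2n\rangle$ are integral and act by $-\mathrm{id}$ on the discriminant, arranging Nikulin's computation so the count collapses to square roots of $-1$ modulo $n$, and — the subtle point — upgrading conjugacy of Hodge isometries to honest conjugacy in $\Aut(\Db(X))$ without the kernel of $\Psi$ artificially splitting a class.
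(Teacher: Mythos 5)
Your treatment of the first assertion is essentially sound and close in spirit to the paper: after invoking the realization theorem you reduce to Hodge isometries fixing the central charge, get $\pm\mathrm{id}$ on $T(X)$ and (identity or a reflection $s_w$) on $\cN(X)$, exactly as in the paper's Lemma~\ref{lemma:only-invol}. Your route to anti-symplecticity is genuinely different and works: instead of the paper's homomorphism $\mathbf{d}=\overline{\det\cdot\,\mathrm{disc}}$ on $\Gamma_0^+(n)$ (Lemma~\ref{lemma:det-disc} and Corollary~\ref{cor:invol-anti-symp}), you observe that a reflection in a primitive vector of square $\neq -2$ never acts trivially on the discriminant group, so the gluing over $A_{\cN(X)}\cong A_{T(X)}$ kills $(s_w,\mathrm{id}_{T(X)})$, and kills $(\mathrm{id},-\mathrm{id}_{T(X)})$ unless $n=1$. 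That is a clean, self-contained alternative for the qualitative statement.

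The genuine gap is in the enumeration, i.e. half of the theorem. You assert a matching between conjugacy classes of involutions in $\Aut(\Db(X))$ and $\uO(\cN(X))$-orbits of suitable vectors $w$, and then an arithmetic collapse to square roots of $-1$ modulo $n$; both steps are exactly the content that needs proof, and you yourself flag them as the ``main obstacle'' without resolving them. Concretely: (i) the image of $\Aut(\Db(X))$ in $\uO(\cN(X))$ is a proper subgroup (isometries acting as $\pm\mathrm{id}$ on the discriminant, with orientation constraints), so full $\uO(\cN(X))$-orbits can merge classes that are not conjugate under autoequivalences, and (ii) the kernel $\cI(\Db(X))$, a huge free group, could a priori split one cohomological class into several categorical conjugacy classes; ruling this out is not formal. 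The paper avoids the issue entirely by a different mechanism: it identifies $\Aut_s(\Db(X))/\bZ[2]$ with $\pi_1^{\rm orb}(\Gamma_0^+(n)\git\cQ_0^+(X))$ (simple connectedness of $\Stab^\dag(X)$ plus free generation of the Torelli group by squares of spherical twists), computes this group as an explicit free product whose $\bZ_2$ free factors number $\tfrac{\nu_2}{2}$ (using Fricke's ramification data for $Y_0(n)\to Y_0^+(n)$, Lemmas~\ref{lemma:fricke-curve-invariants}--\ref{lemma:(-2)-points} and Proposition~\ref{prop:orbfund}), so that conjugacy classes of finite subgroups are read off from Kurosh's theorem (Lemma~\ref{lemma:finite-subgp}); Lemmas~\ref{lemma:lift-invol}--\ref{lemma:bij_invol} then transfer the count to honest involutions in $\Aut(\Db(X))$, and the numerical formula is Shimura's formula for $\nu_2(n)$. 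Your final numbers agree with $\tfrac{\nu_2}{2}$, but as written the counting argument is a plausible program, not a proof: you would need to establish the orbit--conjugacy bijection for the correct (smaller) isometry group and show no splitting by the Torelli kernel, which in the paper is precisely what the orbifold-fundamental-group computation (or, alternatively, the uniqueness of fixed reduced stability conditions in Lemma~\ref{lemma:unique-stabred}) supplies.
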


It is also interesting to classify subgroups of autoequivalences which are finite modulo shift functors. For instance, if a K3 surface $X$ admits an associated cubic fourfold $Y\subseteq\bP^5$ so that $\Db(X)$ is equivalent to the Kuznetsov component of $\Db(Y)$, then there exists an element of order~$3$ in $\Aut(\Db(X))/\bZ[2]$ by \cite{Kuz04}*{Lemma~4.2}. In \cite{Huy23}*{Remark~7.1}, Huybrechts asks whether the converse of this statement is true or not at both categorical and cohomological levels. In this paper, we provide a complete classification and counting formulas for finite subgroups of $\Aut(\Db(X))/\bZ[2]$. Based on this observation, we give an affirmative answer to Huybrechts' question for K3 surfaces of Picard number one.

\begin{thm}
\label{mainthm:finite-subgp-mod-2}
Let $X$ be a K3 surface of Picard number one and degree $2n$. Then every maximal finite subgroup of $G\subseteq\Aut(\Db(X))/\bZ[2]$ is isomorphic to $G_s\times\bZ_2[1]$, where $G_s\subseteq G$ is the symplectic subgroup of $G$ and has the form
$$
    G_s 
    \cong\begin{cases}
        \bZ_6 & \text{if}\quad n=1 \\
        \bZ_4 & \text{if}\quad n=2 \\
        0,\; \bZ_2, \;\text{or }\; \bZ_3 & \text{if}\quad n\geq 3.
    \end{cases}
$$
For $n=1,2$, there exists one and only one such subgroup up to conjugation. For $n\geq 3$, the number of subgroups isomorphic to $\bZ_2\times\bZ_2[1]$ up to conjugation is the same as the number of finite subgroups of $\Aut(\Db(X))$ up to conjugation. On the other hand, the number of conjugacy classes of subgroups isomorphic to $\bZ_3\times\bZ_2[1]$ is equal to
$$
\begin{cases}
    2^{a-1} & \text{if}
    \quad n = 3^{\ell}p_1^{k_1}\cdots p_a^{k_a},
    \quad \ell\in\{0,1\},
    \quad a, k_i\geq1,
    \quad p_i\equiv 1\text{ (mod }3\text{) are primes} \\
    0 & \text{if}
    \quad n \text{ is divisible by } 9 \text{ or a prime } p\equiv 2\text{ (mod }3\text{)}.
\end{cases}
$$
\end{thm}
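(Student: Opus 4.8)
The strategy is to reduce the classification to a problem about finite‑order isometries of the rank‑three lattice $L_n:=U\oplus\langle 2n\rangle\cong\widetilde{H}^{1,1}(X,\bZ)$ with prescribed action on the discriminant form, and then to solve that problem by the arithmetic of the rings $\bZ[\zeta_m]$ with $\varphi(m)\le 2$. Let $G\subseteq\Aut(\Db(X))/\bZ[2]$ be maximal finite and let $\widetilde{G}\subseteq\Aut(\Db(X))$ be its preimage, so that $\widetilde{G}$ is finite up to shifts. By the main technical result — that a subgroup which is finite up to shifts fixes a Bridgeland stability condition up to the $\bC$‑action — there is $\sigma\in\Stab^{\dagger}(X)$ with $g\cdot\sigma\in\bC\cdot\sigma$ for every $g\in\widetilde{G}$. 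Since the $\bC$‑action is free and commutes with the $\Aut$‑action, the assignment $g\mapsto\lambda_g$, where $g\cdot\sigma=\lambda_g\cdot\sigma$, is a homomorphism $\rho\colon\widetilde{G}\to\bC$ with $\rho([2])=2$ in the normalization where $[1]$ acts as $1\in\bC$. As $\rho|_{\bZ[2]}$ is injective, $\ker\rho$ embeds into $G$, hence is a finite subgroup of $\Aut(\Db(X))$ and by Theorem~\ref{mainthm:finite-subgp} is trivial or generated by an anti‑symplectic involution; and $\rho(\widetilde{G})$ is a finitely generated subgroup of $(\bC,+)$ containing $2\bZ$ with finite index, hence infinite cyclic and contained in $\bR$. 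Therefore $\widetilde{G}\cong\ker\rho\times\bZ$, and $\widetilde{G}$ is abelian. Using that $\Aut^{0}(\Db(X))$ acts freely on $\Stab^{\dagger}(X)$ — part of the proof of Bridgeland's conjecture for Picard rank one by Bayer and Bridgeland — one checks that the kernel of $\widetilde{G}\to\uO(\widetilde{H}(X,\bZ))$ is exactly $\bZ[2]$: an element of $\widetilde{G}$ that acts trivially on cohomology sends $\sigma$ to $\lambda\cdot\sigma$ with $\lambda\in 2\bZ$, hence equals $[\lambda]$. So $G$ embeds into the group of orientation‑preserving Hodge isometries of $\widetilde{H}(X,\bZ)$. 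Finally $[1]$ acts on $\widetilde{H}(X,\bZ)$ by $-\mathrm{id}$, is central, anti‑symplectic, and of order two modulo $\bZ[2]$; by maximality its class lies in $G$, the symplectic part $G_s$ therefore has index two, and since $[1]$ is central and not symplectic one gets $G=G_s\times\bZ_2[1]$.

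It remains to determine $G_s$ and to count. For $X$ generic the only Hodge isometries of the rank‑$21$ transcendental lattice $\widetilde{T}(X)$ are $\pm\mathrm{id}$, so a symplectic orientation‑preserving Hodge isometry is the identity on $\widetilde{T}(X)$ and restricts on $L_n$ to an element $h\in\uO^{+}(L_n)$ — here $\uO^{+}$ denotes isometries preserving the orientation of a positive‑definite plane — acting trivially on the discriminant group $A_{L_n}\cong\bZ/2n$; conversely every such $h$ arises this way. Since $L_n$ has signature $(2,1)$, $\uO^{+}(L_n)$ acts on the hyperbolic plane, a finite subgroup fixes a point, and its stabilizer there lies in $\SO(2)\times\{\pm\mathrm{id}\}$; using the action on $A_{L_n}$ one concludes that $G_s$ is cyclic. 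If $h\ne\mathrm{id}$ has order $m$, its characteristic polynomial is $(x\mp 1)\Phi_m(x)$ with the $\Phi_m$‑eigenlattice $M:=\ker\Phi_m(h)\cap L_n$ positive‑definite of rank $\varphi(m)\le 2$; hence $m\in\{2,3,4,6\}$ and $M$ is a rank‑one $\bZ[\zeta_m]$‑module. The numerical identity $\mathrm{disc}(M)\,\mathrm{disc}(M^{\perp})=\mathrm{disc}(L_n)\,[L_n:M\oplus M^{\perp}]^{2}$, together with the conditions that $h$ glue across $L_n$ and act trivially on $A_{L_n}$, determines which $m$ occur: $m=4$ forces $n=2$ and $m=6$ forces $n=1$ (realized respectively by the autoequivalence $\Theta$ and by an order‑six composition built from $\iota$ and $[1]$), while for $n\ge 3$ one is left with $G_s\in\{0,\bZ_2,\bZ_3\}$. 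In particular $\bZ_6$ does not occur for $n\ge 3$, even when $n$ admits both an order‑two and an order‑three symplectic element, because these do not lie inside a common $\bZ[\omega]$‑structure on any sublattice of $L_n$; the two cyclic groups then generate an infinite group, a phenomenon analogous to the absence of order‑six elements in $\PSL_2(\bZ)$. The order‑three case is realized exactly when $X$ has an associated cubic fourfold — via the degree‑three rotation of the Kuznetsov component, which also gives the promised answer to Huybrechts' question — and the order‑two case is realized by $\tau\circ[1]$ for an anti‑symplectic involution $\tau$ of $\Db(X)$, so it occurs precisely under the hypothesis of Theorem~\ref{mainthm:finite-subgp}.

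For the counting, the assignment $\langle\tau\rangle\mapsto\langle\tau[1],\,[1]\rangle/\bZ[2]$ is a conjugation‑equivariant bijection between finite subgroups of $\Aut(\Db(X))$ and subgroups of $\Aut(\Db(X))/\bZ[2]$ isomorphic to $\bZ_2\times\bZ_2[1]$, which yields the stated equality of counts; for $n=1,2$ a direct check shows the maximal subgroup is unique up to conjugacy. For the $\bZ_3\times\bZ_2[1]$ count one enumerates order‑three isometries of $L_n$ with the above properties up to conjugacy and up to $h\sim h^{-1}$; since $\bZ[\omega]$ has class number one, the eigenlattice $M$ corresponds to a factorization $n=n_1n_2$ into coprime parts, each a product of primes split in $\bQ(\sqrt{-3})$, i.e.\ $\equiv 1\pmod 3$ — the ramified prime $3$ contributing its factor with no choice and $9\mid n$ being obstructed — and the number of unordered such factorizations is $2^{a-1}$, giving the formula. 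The heart of the proof, and its main obstacle, is exactly this lattice‑arithmetic analysis: establishing the sharp existence criterion for the order‑three isometries (and, for $n=1,2$, the order‑six and order‑four ones) of $U\oplus\langle 2n\rangle$ with the prescribed discriminant action, and carrying out the genus‑theoretic count — this is where the arithmetic of $\bZ[\omega]$ and of binary quadratic forms does the real work.
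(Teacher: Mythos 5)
Your first paragraph is essentially sound and parallels the paper: maximality forces $[1]\in G$, Picard number one forces every autoequivalence to be symplectic or anti-symplectic, so $G\cong G_s\times\bZ_2[1]$ (this is Lemma~\ref{lemma:symp-vs-nonsymp}), and Gepner-type subgroups inject modulo $\bZ[2]$ into Hodge isometries fixing the central charge up to scale (Proposition~\ref{prop:gepner_auteq-isom}). The genuine gap is in the second and third paragraphs, where you reduce the determination and counting of $G_s$ entirely to finite-order isometries of $L_n=U\oplus\langle 2n\rangle$ acting trivially on the discriminant. That reduction is only a \emph{necessary} condition: a finite-order symplectic Hodge isometry need not be induced by any element of finite order in $\Aut(\Db(X))/\bZ[2]$, since lifts are only determined up to the Torelli group, whose nontrivial elements (squares of spherical twists) have infinite order. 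Concretely, your key arithmetic claim ``$m=6$ forces $n=1$'' is false at the lattice level: for $n=3$ the order-$6$ elliptic element $g\in\Gamma_0^+(3)\setminus\Gamma_0(3)$ lifts to the isometry $-M_g$ of $U\oplus\langle 6\rangle$, which has order $6$, preserves orientation, and acts trivially on $\bZ/6$ (by the computation in Lemma~\ref{lemma:det-disc}, elements of the coset act as $-\mathrm{id}$ on the discriminant, so the other lift acts as $+\mathrm{id}$); glued with the identity on $T(X)$ this is an order-$6$ symplectic Hodge isometry, yet $\Aut_s(\Db(X))/\bZ[2]\cong\bZ\ast\bZ$ is torsion-free for $n=3$, because the unique fixed point of this isometry on $\cQ^+(X)$ is a $(-2)$-point and is removed from $\cQ^+_0(X)$. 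So your criterion would wrongly allow $G_s\cong\bZ_6$ for $n=3$ (and a similar issue undermines your cyclicity argument, since reflections in $(-2)$-vectors also act trivially on the discriminant). The missing input is exactly what the paper uses: Bayer--Bridgeland's simple connectedness of $\Stab^\dag(X)$ and freeness of $\cI(\Db(X))/\bZ[2]$ on squares of spherical twists, yielding $\Aut_s(\Db(X))/\bZ[2]\cong\pi_1^{\rm orb}(\Gamma_0^+(n)\git\cQ^+_0(X))$, whose explicit free-product decomposition (Proposition~\ref{prop:orbfund}, with the special treatment of $\iota$ when $n=1$) together with Kurosh's theorem both rules out torsion located at $(-2)$-points and classifies maximal finite subgroups up to conjugacy (Lemma~\ref{lemma:finite-subgp}).

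Relatedly, the counting is not actually carried out. You assert a bijection between conjugacy classes of $\bZ_3\times\bZ_2[1]$ subgroups of $\Aut(\Db(X))/\bZ[2]$ and conjugacy classes of order-$3$ isometries of $L_n$ up to inversion, but conjugacy upstairs is a priori finer than conjugacy of cohomological images: both injectivity and surjectivity of this correspondence are precisely what the free-product/covering-space structure supplies in the paper, where the count becomes the number $\nu_3/2$ of order-$3$ elliptic points on $Y_0^+(n)$, evaluated by Shimura's formula for $\nu_3$ together with Fricke's description of the ramification of $Y_0(n)\to Y_0^+(n)$. You explicitly defer this ``genus-theoretic count'' (and the sharp existence criteria for the order-$4$ and order-$6$ cases) as the ``real work,'' and your appeal to the cubic-fourfold criterion for existence of order-$3$ elements inverts the paper's logic, since that criterion is itself deduced from the classification. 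As it stands, the proposal establishes the splitting $G\cong G_s\times\bZ_2[1]$ but neither the list of possible $G_s$ nor the counting formulas.
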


For K3 surfaces of Picard number one and degree at least $4$, the following proposition provides a criterion for the existence of an associated cubic fourfold. We refer the reader to Proposition~\ref{prop:cubic} for a complete list of equivalent criteria and Remark~\ref{rmk:cubic_deg-2} for a discussion on the degree~$2$ case.

\begin{prop}
\label{prop:cubic_partial}
Let $X$ be a K3 surface of Picard number one and degree at least $4$. Then $X$ admits an associated cubic fourfold if and only if $\Aut(\Db(X))/\bZ[2]$ contains an element of order~$3$.
\end{prop}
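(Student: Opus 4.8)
The plan is to treat the two directions separately. The forward implication is already recorded in the introduction: if $\Db(X)\simeq\cA_Y$ for the Kuznetsov component $\cA_Y$ of a smooth cubic fourfold $Y$, then Kuznetsov's degree shift functor $\mathsf{O}$ on $\cA_Y$ satisfies $\mathsf{O}^3\cong[2]$ by \cite{Kuz04}*{Lemma~4.2}, so its class in $\Aut(\Db(X))/\bZ[2]$ has order $3$; nothing more is needed there. All the content is in the converse, which I would split into a group‑theoretic step, extracting arithmetic constraints on $n$ from Theorem~\ref{mainthm:finite-subgp-mod-2}, and a geometric step, realizing those constraints by an honest cubic fourfold.

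For the first step, suppose $g\in\Aut(\Db(X))/\bZ[2]$ has order $3$. Then $\langle g\rangle\cong\bZ_3$ is a finite subgroup, hence contained in a maximal finite subgroup $G$, which by Theorem~\ref{mainthm:finite-subgp-mod-2} is isomorphic to $G_s\times\bZ_2[1]$; since $g$ has odd order it lies in the symplectic factor $G_s$, so $G_s$ contains an element of order $3$. The case $n=2$ is impossible because there $G_s\cong\bZ_4$; hence $n\geq 3$ (recall $n\geq 2$), $G_s\cong\bZ_3$, and $G$ is one of the subgroups isomorphic to $\bZ_3\times\bZ_2[1]$ enumerated in Theorem~\ref{mainthm:finite-subgp-mod-2}. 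As that enumeration is therefore nonempty, $n$ must have the form
$$
n = 3^{\ell}p_1^{k_1}\cdots p_a^{k_a},\qquad \ell\in\{0,1\},\quad a,k_i\geq 1,\quad p_i\equiv 1\pmod 3\ \text{primes}.
$$
Equivalently: $n$ is odd, $9\nmid n$, every odd prime dividing $n$ is $3$ or $\equiv 1\pmod 3$, and (since $a\geq 1$) $n\geq 7$.

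For the geometric step I would show that any K3 surface $X$ of Picard number one whose degree $d=2n$ has this shape is the associated K3 surface of a smooth cubic fourfold. Those conditions are exactly Hassett's numerical condition $(**)$ on $d$ ($4\nmid d$, $9\nmid d$, and no odd prime $p\equiv 2\pmod 3$ divides $d$) together with $d\equiv 0$ or $2\pmod 6$ and $d\geq 14$, so the Hassett divisor $\mathcal{C}_d$ is a nonempty family of smooth cubic fourfolds. When $(**)$ holds, the period domain of cubics in $\mathcal{C}_d$ is isomorphic to the period domain of polarized K3 surfaces of degree $d$; combining this isomorphism with surjectivity of the period map for cubic fourfolds and the global Torelli theorem for K3 surfaces, $X$ occurs as the Hodge‑theoretic associated K3 surface of some $Y\in\mathcal{C}_d$, and $Y$ is forced to be smooth because $X$ has Picard rank exactly one (singular cubics and the other special loci carry a strictly larger algebraic part of $H^4$, hence have associated K3 surfaces of Picard rank $\geq 2$). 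Finally, by the theorem of Addington–Thomas, in the form valid for all cubics in $\mathcal{C}_d$ when $(**)$ holds, this Hodge‑theoretic association is induced by an equivalence $\cA_Y\simeq\Db(X)$, which is the desired conclusion.

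I expect the geometric step to be the main obstacle: Theorem~\ref{mainthm:finite-subgp-mod-2} only delivers a lattice‑theoretic order‑$3$ symmetry of the Mukai Hodge structure of $X$, and turning this into an \emph{honest smooth} cubic fourfold whose Kuznetsov component recovers $\Db(X)$ for the \emph{given} $X$ — rather than for some deformation‑equivalent K3 of the same degree — is where one must marshal surjectivity of the cubic period map, K3 Torelli, and the Addington–Thomas equivalence, and where the Picard‑number‑one hypothesis is essential to avoid the singular locus and the remaining Hassett divisors. The number theory, by contrast, is routine, and it is the real reason the statement holds: both ``$\Aut(\Db(X))/\bZ[2]$ contains an element of order $3$'' and ``$X$ admits an associated cubic fourfold'' translate, via Theorem~\ref{mainthm:finite-subgp-mod-2} and via Hassett's $(**)$ respectively, into the same condition on $n$ displayed above. (Alternatively, the proposition can be read off from the complete list of equivalent criteria in Proposition~\ref{prop:cubic}.)
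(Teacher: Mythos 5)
Your proposal is correct and takes essentially the same route as the paper, which proves this statement as part of Proposition~\ref{prop:cubic}: both directions reduce the order-$3$ condition (you via the counting formula of Theorem~\ref{mainthm:finite-subgp-mod-2}, the paper via the free-product structure of $\Aut_s(\Db(X))/\bZ[2]$ and $\nu_3(n)\neq 0$, which encode the same arithmetic) to Hassett's numerical criterion on $d=2n$, and then produce the cubic by Hassett's lattice-theoretic result together with surjectivity of the period map for cubic fourfolds, with Picard number one ruling out the excluded loci. Your only deviations are cosmetic: you invoke Addington--Thomas and K3 Torelli explicitly, while the paper works with the Hodge-theoretic definition of ``associated'' and records the categorical equivalence as a citation.
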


Theorems~\ref{mainthm:finite-subgp} and \ref{mainthm:finite-subgp-mod-2} are achieved along our way in studying a categorical analogue of the \emph{Nielsen realization problem}, which is the starting point of this paper. In the original version of the realization problem, one considers an oriented surface $S$ and asks whether every finite subgroup of the mapping class group $\mathrm{Mod}(S)$ fixes a point on the Teichm\"uller space $\mathrm{Teich}(S)$ or not.\footnote{
Farb--Looijenga \cite{FL21} recently
studied various versions of the realization problem for K3 manifolds.
}
This problem was posted by Nielsen \cite{Nie32} and answered positively by Kerckhoff \cite{Ker83}. In light of various connections established in the last decade between Teichm\"uller theory and the theory of stability conditions on triangulated categories \cites{GMN13,DHKK14,BS15,HKK17}, one can formulate the following questions:

Let $\mathscr{D}$ be a triangulated category and $\Stab(\mathscr{D})$ be the space of stability conditions on $\mathscr{D}$ introduced by Bridgeland \cite{Bri07}.
\begin{center}
    \it Does every finite subgroup $G\subseteq\Aut(\sD)$ fix a point on $\Stab(\mathscr{D})$?
\end{center}
The space $\Stab(\sD)$ carries a free $\bC$-action such that the double shift functor $[2]$ acts trivially on the quotient manifold $\Stab(\sD)/\bC$. Because shift functors have no classical counterpart in Teichm\"uller theory, it is also natural to consider the modified question:
\begin{center}
    \it Does every finite subgroup $G\subseteq\Aut(\sD)/\bZ[2]$ fix a point on $\Stab(\sD)/\bC$?
\end{center}
In fact, an affirmative answer to the second question implies an affirmative answer to the first one; see Lemma~\ref{lemma:q2impliesq1}.

In this paper, we give affirmative answers to these questions when $\mathscr{D}$ is the bounded derived category of coherent sheaves on a curve, a twisted abelian surface, or a generic twisted K3 surface; see Propositions~\ref{prop:nielsen_curve} and \ref{prop:nielsen_twisted-ab-k3}. For surfaces in general, we prove that these questions have positive answers if one considers only standard autoequivalences; see Proposition~\ref{prop:nielsen_stdauto-surf}. For the main objects of study in this paper, namely, K3 surfaces of Picard number one, we obtain a more precise statement:

\begin{thm}
\label{mainthm:nielsen_K3}
Let $X$ be a projective K3 surface, $\Stab^\dag(X)\subseteq\Stab(X)$ be the connected component which contains geometric stability conditions, and $G\subseteq\Aut(\Db(X))/\bZ[2]$ be a subgroup. If $X$ has Picard number one, then
$$
    G\text{ is finite}
    \quad\Longleftrightarrow\quad
    G\text{ fixes a point on }\Stab^\dag(X)/\bC.
$$
In general, the implication ``$\Longleftarrow$'' holds regardless of the Picard number of $X$.
\end{thm}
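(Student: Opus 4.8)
The plan is to prove the two implications separately, with the ``$\Longleftarrow$'' direction being the elementary one. Suppose $G$ fixes a point $\sigma\in\Stab^\dag(X)/\bC$. The stabilizer of a point in $\Stab^\dag(X)/\bC$ under $\Aut(\Db(X))/\bZ[2]$ is finite: this follows because the action of $\Aut(\Db(X))$ on $\Stab^\dag(X)$ is properly discontinuous (by Bridgeland's work on stability conditions on K3 surfaces, \cite{Bri08}), so point stabilizers in $\Stab^\dag(X)$ are finite, and the $\bC$-action is free and the two actions commute, so a stabilizer in the quotient differs from one upstairs only by the lattice $\bZ[2]$ acting inside $\bC$; modding out by $\bZ[2]$ leaves a finite group. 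Hence $G$, being contained in a finite group, is finite. This argument uses nothing about the Picard number, which is why that direction holds in general.

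For the converse, assume $X$ has Picard number one and $G\subseteq\Aut(\Db(X))/\bZ[2]$ is finite; I want to produce a fixed point on $\Stab^\dag(X)/\bC$. The strategy is an averaging/barycenter construction adapted to the geometry of $\Stab^\dag(X)/\bC$. First I would recall the description of $\Stab^\dag(X)/\bC$ for Picard number one K3 surfaces: by Bridgeland, $\Stab^\dag(X)$ covers a period domain, and in the rank-one Picard case the relevant space has an explicit model (an upper-half-plane-type domain, or more precisely a domain cut out inside $\cN(X)\otimes\bC$ by the discriminant) on which $\Aut(\Db(X))/\bZ[2]$ acts; the quotient by $\bC$ is then finite-dimensional and admits a natural $\Aut$-invariant metric of nonpositive curvature (or at least a suitable convex structure). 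A finite group acting on such a space has a fixed point by the Cartan fixed point theorem (the circumcenter of any orbit is fixed). Concretely: pick any $\sigma_0\in\Stab^\dag(X)/\bC$, form the finite orbit $G\cdot\sigma_0$, and take its circumcenter; invariance of the metric under $G$ forces this circumcenter to be $G$-fixed. I would then need to check that the circumcenter lies in $\Stab^\dag(X)/\bC$ itself and not on some boundary stratum — this is where the ``$\dag$'' and the geometry of the wall-and-chamber structure enter, and one must use that the complement of $\Stab^\dag(X)$ (walls, or the locus where stability breaks) is avoided by the convex hull of a finite orbit.

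The main obstacle will be establishing the right geometric structure on $\Stab^\dag(X)/\bC$ and verifying that the fixed point stays inside it. There are two realistic routes. One is to exhibit an honest $\Aut$-invariant complete CAT(0) (or $\mathrm{CAT}(0)$-like, e.g. nonpositively curved Kähler) metric on $\Stab^\dag(X)/\bC$ for Picard rank one — plausible since the space should be a quotient of a bounded symmetric domain of type IV of low rank or a product of half-planes — and then invoke Cartan's theorem, taking care that completeness or a properness input guarantees the circumcenter does not escape to infinity or to a wall. The other, more hands-on route is to reduce to a lattice-theoretic statement: a finite subgroup of $\Aut(\Db(X))/\bZ[2]$ maps to a finite subgroup of the isometry group of the Mukai lattice $\widetilde{H}(X,\bZ)$ preserving a rank-two positive-definite subspace (the ``$\Pi$'' plane of a stability condition), and finite orthogonal groups fixing a positive-definite plane always fix a point in the corresponding Grassmannian-of-planes; one then lifts this cohomological fixed point to a categorical stability condition using the surjectivity/covering properties of the period map for $\Stab^\dag(X)$ over the Picard-rank-one period domain, which is exactly where the Picard number one hypothesis is used to rule out obstructions. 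I expect the paper will in fact follow something close to this second route, since it dovetails with the rest of the program (the passage from categorical to cohomological and the use of $\cN(X)$), and the delicate point — making sure the lifted stability condition is genuine, i.e. has no spherical-class walls passing through it — is precisely the step that will require the most care and is special to low Picard rank.
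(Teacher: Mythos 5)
There is a genuine gap in both directions. For ``$\Longleftarrow$'', your claim that the stabilizer of a point of $\Stab^\dag(X)/\bC$ ``differs from one upstairs only by the lattice $\bZ[2]$ acting inside $\bC$'' is false: an element of the stabilizer satisfies $\Phi(\sigma)=\sigma\cdot\lambda$ with $\lambda$ a priori any complex number, not an even integer (e.g.\ $\Theta=(-\otimes\cO_X(1))\circ T_{\cO_X}$ on a quartic K3 rotates its fixed $\bC$-orbit by $\lambda=1/2$). So the quotient stabilizer is an extension of the group of scalars $e^{-i\pi\lambda}\subseteq\bC^*$ by $\Aut(\Db(X),\sigma)$, and finiteness of that scalar group is exactly what has to be proved; it is not formal, since a subgroup of $U(1)$ can be infinite. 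The paper proves it (Lemmas~\ref{lemma:gepner_finite_symp}--\ref{lemma:gepner_finiteness} and Proposition~\ref{prop:gepner_auteq-isom}) by showing $\Aut(\mukaiH(X,\bZ),Z\cdot\bC^*)$ is a discrete subgroup of the compact group $\uO(P)\times\uO(P^\perp)$ (and similarly on the transcendental lattice), hence finite, and that the map $\Aut(\Db(X),\sigma\cdot\bC)\to\Aut(\mukaiH(X,\bZ),Z\cdot\bC^*)$ has kernel exactly $\bZ[2]$ via \cite{Bri08}*{Theorem~1.1}. Also, proper discontinuity of the full $\Aut(\Db(X))$-action on $\Stab^\dag(X)$ is not in Bridgeland's work; what is available (and what the paper cites) is finiteness of $\Aut(\Db(X),\sigma)$, proved by lattice arguments.

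For ``$\Longrightarrow$'', your route~2 is in the right spirit at the cohomological level, but you misidentify where the real difficulty lies. Producing a fixed positive plane is not enough: first, one must know the fixed point lies in $\cQ^+_0(X)$, i.e.\ is not a $(-2)$-point, and the paper gets this not by a wall-avoidance estimate but by identifying $\Aut_s(\Db(X))/\bZ[2]$ with $\pi_1^{\rm orb}(\Gamma_0^+(n)\git\cQ^+_0(X))$ and invoking Kurosh's theorem, so that finite subgroups are conjugate into stabilizers of genuine elliptic points (loops around the removed $(-2)$-points have infinite order in this group). Second, and more seriously, lifting the fixed point through the covering $\Stab^\dag(X)/\widetilde{\GL}^+(2,\bR)\to\cQ^+_0(X)$ only yields $\Psi\Phi(\sigma)=\sigma\cdot\lambda$ for some deck transformation $\Psi\in\cI(\Db(X))$; eliminating $\Psi$ is the crux, and the paper does it (Lemma~\ref{lemma:fixedpoint-compo}) using Bayer--Bridgeland's theorems that $\Stab^\dag(X)$ is simply connected and that $\cI(\Db(X))/\bZ[2]$ is \emph{freely} generated by squares of spherical twists, via a word-cancellation argument exhibiting $\Psi$ as a commutator. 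Your proposal never addresses this deck-transformation ambiguity, and your route~1 (an invariant CAT(0) metric on $\Stab^\dag(X)/\bC$ plus Cartan's fixed point theorem) is unsupported: that space is a $\bC$-bundle over the universal cover of $\cH$ minus a discrete set modulo nothing --- no invariant complete nonpositively curved metric is exhibited, and bounded orbits are not clear, so the circumcenter argument does not get off the ground.
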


To gain the intuition about how Theorem~\ref{mainthm:nielsen_K3} imposes restrictions on autoequivalences of finite orders and thus helps us classify finite subgroups of $\Aut(\Db(X))$, we refer the reader to Lemma~\ref{lemma:only-invol}. In the following, we briefly introduce our approach to the realization problem for K3 surfaces of Picard number one. The cases of curves, twisted abelian surfaces, and generic twisted K3 surfaces are simplifications of the same strategy:

For every K3 surface $X$, there is an $\Aut(\Db(X))$-equivariant covering map
\begin{equation}
\label{map:stab-Q}
\xymatrix{
    \mathcal{S}\colonequals
    \Stab^\dag(X)/\widetilde{\GL}^+(2,\bR)
    \ar[r]
    & \cQ^+_0(X)
}
\end{equation}
where the group $\widetilde{\GL}^+(2,\bR)$ is the universal cover of $\GL^+(2,\bR)$ \cite{Bri08}*{Theorem~1.1}. In the case that $X$ has Picard number one, $\cQ^+_0(X)$ is isomorphic to the upper half plane
$$
    \cH\colonequals\{z\in\bC\mid\mathrm{Im}(z)>0\}
$$
with a discrete subset of points removed. Based on works by Bayer--Bridgeland \cite{BB17} and Kawatani \cites{Kaw14,Kaw19}, we prove that every \emph{symplectic} finite subgroup
$$
    G_s\subseteq\Aut_s(\Db(X))/\bZ[2]
$$
has a fixed point on $\Stab^\dag(X)/\bC$ via the following two steps:
\begin{itemize}
    \item If $X$ has degree $2n$, then the actions of autoequivalences on $\cQ^+_0(X)\subseteq\cH$ form a subgroup $\Gamma_0^+(n)\subseteq\PSL(2,\bC)$ called the \emph{Fricke group} \cite{Kaw14}*{Proposition~2.9}. The space $\Stab^\dag(X)$ is simply-connected \cite{BB17}*{Theorem~1.3}, so we get an isomorphism
    \begin{equation}
    \label{eqn:sympauto-is-orbfund}
        \Aut_s(\Db(X))/\bZ[2]
        \cong\pi_1^{\rm orb}(\Gamma_0^+(n)\git\cQ^+_0(X))
    \end{equation}
    (which requires some modification when $n = 1$; see Section~\ref{subsect:solving-nielsen-K3}). This shows that $G_s$ is isomorphic to a cyclic group fixing a point $p\in\cQ^+_0(X)$. Thus it fixes a point on $\mathcal{S}$ over $p$ upon composing a deck transformation of \eqref{map:stab-Q}.
    
    \item As $\Stab^\dag(X)$ is simply connected, \eqref{map:stab-Q} is a universal cover. This property and \cite{Kaw19}*{Theorem~1.3} assert that the group of deck transformations of \eqref{map:stab-Q} is freely generated by the squares of spherical twists \cite{BB17}*{Theorem~4.1}. The freeness allows us to prove that $G_s$ fixes some $\overline{\sigma}\in\mathcal{S}$ over $p$ \emph{without} composing a deck transformation. The finiteness of $G_s$ then implies that it fixes a lift of $\overline{\sigma}$ in $\Stab^\dag(X)/\bC$.
\end{itemize}
To extend this result to any finite subgroup $G\subseteq\Aut(\Db(X))/\bZ[2]$, we first prove that $G$ is either symplectic or isomorphic to $G_s\times\bZ_2[1]$ where $G_s\subseteq G$ is the symplectic subgroup; see Lemma~\ref{lemma:symp-vs-nonsymp}. Since $G_s$ has a fixed point, $G$ has a fixed point as well. We remark that Theorem~\ref{mainthm:finite-subgp-mod-2} is proved by computing the orbifold fundamental group in \eqref{eqn:sympauto-is-orbfund}.

A stability condition representing a fixed point on $\Stab(\sD)/\bC$ under the action of a non-shift functor is called a \emph{Gepner type stability condition}. This notion was introduced by Toda in his study of Gepner points of stringy K\"ahler moduli spaces of Calabi--Yau manifolds \cite{Tod14}. In view of this and the realization problem, it is reasonable to introduce the notion of \emph{Gepner type autoequivalences} as follows:

\begin{defn}
\label{defn:gepner}
Let $\mathscr{D}$ be a triangulated category. We say an autoequivalence $\Phi\in\Aut(\mathscr{D})$ is of \emph{Gepner type} if its action on $\Stab(\mathscr{D})/\bC$ has a fixed point, or equivalently, if there exists a stability condition $\sigma$ such that
$$
    \Phi(\sigma) = \sigma\cdot\lambda
    \qquad\text{for some}\qquad
    \lambda\in\bC.
$$
More generally, we say a subgroup
$
    G\subseteq\Aut(\sD)/\bZ[m],
$
where $m\in\bZ$, is of Gepner type if its action on $\Stab(\mathscr{D})/\bC$ has a fixed point. Autoequivalences of Gepner type with respect to a stability condition $\sigma$ form a group which we denote as
$$
    \Aut(\sD,\,\sigma\cdot\bC) = \{
        \Phi\in\Aut(\sD)
        \mid
        \Phi(\sigma)\in\sigma\cdot\bC
    \}.
$$
Its subgroup of elements fixing $\sigma$ is denoted as
$
    \Aut(\sD,\sigma) = \{
        \Phi\in\Aut(\sD)
        \mid
        \Phi(\sigma) = \sigma
    \}.
$
\end{defn}

Our study fits into the more general framework of classifying autoequivalences via their actions on the space of stability conditions. What we deal with in this paper is about autoequivalences of finite order. Beyond this, one can study autoequivalences from a dynamical perspective, for example, from the properties of being \emph{reducible} or \emph{pseudo-Anosov} (cf.~\cites{DHKK14,FFHKL21}) when acting on certain compactifications of the space of stability conditions. Such results have been obtained in the case of elliptic curves by Kikuta--Koseki--Ouchi \cite{KKO22}. In this paper, we classify autoequivalences of K3 surfaces of Picard number one into four types: \emph{finite order}, \emph{$(-2)$-reducible}, \emph{$0$-reducible}, and \emph{pseudo-Anosov}, in terms of their actions on the punctured hyperbolic domain $\cQ^+_0(X)$. We also propose some open questions regarding the reducible autoequivalences.

\subsection*{Organization of the paper}
In Section~\ref{sect:curve-std}, we first recall basic notions about Bridgeland stability conditions on triangulated categories. Then we solve the realization problem for curves and standard autoequivalences on surfaces. In Section~\ref{sect:k3_gepner}, as preparation for later sections, we first review fundamental properties about autoequivalences and stability conditions on K3 surfaces with arbitrary Picard numbers, then we prove the finiteness of Gepner type autoequivalences in this case. The realization problem for twisted abelian surfaces and generic twisted K3 surfaces will be answered in this section.

In Section~\ref{sect:nielsen_k3-pic1}, we compute the images of spherical twists in $\Gamma_0^+(n)$. Then we compute the numbers of elliptic points, cusps, and the holes corresponding to spherical objects on the quotient $\Gamma_0^+(n)\git\cQ^+_0(X)$. This will be used to prove Theorem~\ref{mainthm:finite-subgp-mod-2}. The realization problem for K3 surfaces of Picard number one will also be solved in this section. In Section~\ref{sect:classification}, we prove Theorems~\ref{mainthm:finite-subgp} and describe the distribution of Gepner type points on $\Stab^\dag(X)$. We also discuss a classification of autoequivalences according to their actions on $\cQ^+_0(X)$. Section~\ref{sect:asso_cubic} is devoted to characterizations for the existence of associated cubic fourfolds.

\subsection*{Acknowledgments}
This project was initiated in the fall of 2021 from a conversation between the second author and Daniel Huybrechts, where Huybrechts proposed the categorical analogue of the Nielsen realization problem in the case of K3 surfaces. We are very grateful to him for proposing the direction and many discussions throughout the completion of the paper. We would like to thank Gebhard Martin for his help when we were trying to understand the literature \cite{Fri28}. We would also like to thank Arend Bayer, Alexander Kuznetsov, and Evgeny Shinder for inspiring discussions and valuable comments to the paper. The second author is supported by the ERC Synergy Grant HyperK (ID: 854361).

\section{Curves and standard autoequivalences on surfaces}
\label{sect:curve-std}

In this section, we recall the definition of Bridgeland stability conditions on triangulated categories and certain natural group actions on the space of stability conditions. As a warm-up for later sections, we show that the categorical Nielsen realization problems have affirmative answers for bounded derived categories of coherent sheaves on curves, as well as the group of standard autoequivalences on surfaces. We also provide a classification of Gepner type autoequivalences in the curve case.

\subsection{Preliminaries on stability conditions}
\label{subsect:prelim_stab}

Let us review the definition of Bridgeland stability conditions following \cite{Bri07}. Let $\sD$ be a triangulated category, $K_0(\sD)$ be the Grothendieck group of $\sD$, and
$
    \cl: K_0(\sD)\longrightarrow\Gamma
$
be a group homomorphism to a finite rank abelian group $\Gamma$. Then a \emph{Bridgeland stability condition} $\sigma=(Z,\sP)$ on $\sD$ consists of
\begin{itemize}
    \item a group homomorphism $Z\colon\Gamma\longrightarrow\bC$ called the \emph{central charge}, and
    \item a collection of full additive subcategories $\sP=\{\sP(\phi)\subseteq\sD\}_{\phi\in\bR}$ call the \emph{slicing}
\end{itemize}
such that
\begin{enumerate}[label=(\arabic*)]
	\item for every nonzero object $E\in\sP(\phi)$, we have $Z(E)\coloneqq Z(\cl([E]))\in\bR_{>0}\cdot e^{i\pi\phi}$;
	\item $\sP(\phi+1)=\sP(\phi)[1]$,
	\item if $\phi_1>\phi_2$ and $A_i\in \sP(\phi_i)$ for $i=1,2$, then $\Hom(A_1, A_2)=0$,
	\item for any nonzero object $E\in\sD$, there exists a (unique) collection of exact triangles
    $$\xymatrix@C=.5em{
        & 0 \ar[rrrr] &&&& E_1 \ar[rrrr] \ar[dll] &&&& E_2
        \ar[rr] \ar[dll] && \cdots \ar[rr] && E_{n-1}
        \ar[rrrr] &&&& E \ar[dll]  &  \\
        &&& A_1 \ar@{-->}[ull] &&&& A_2 \ar@{-->}[ull] &&&&&&&& A_n \ar@{-->}[ull] 
    }$$
	where $A_i\in \sP(\phi)$ and $\phi_1>\phi_2>\cdots>\phi_n$,
	\item there exists a constant $C>0$ and a norm $||\cdot||$ on $\Gamma\otimes_\bZ\bR$ such that
    $$
        ||\cl([E])|| \leq C|Z(E)|
        \qquad\text{for all}\qquad
        E\in \sP(\phi).
    $$
\end{enumerate}

Due to \cite{Bri07}*{Proposition~5.3}, giving a stability condition $\sigma = (Z,\sP)$ is equivalent to giving a pair $(Z,\sA)$, where $\sA$ is the heart of a bounded t-structure on $\sD$, such that the following additional conditions are satisfied:
\begin{enumerate}[label=(\alph*)]
    \item for any nonzero object $E\in\sA$, we have $Z(E)\in\cH\cup\bR_{<0}$ where
    $$
        \cH\colonequals\{
            z\in\bC \mid \mathrm{Im}(z)>0
        \}.
    $$
    \item for any nonzero object $E\in\sA$, there exists a (unique) filtration
    $$
        0=E_0\subseteq E_1\subseteq\cdots\subseteq E_n=E
        \qquad\text{where}\qquad
        E_i\in\sA
    $$
    such that $A_i\coloneqq E_i/E_{i-1}$ is $Z$-semistable for all $i$ and $\arg(A_1)>\cdots>\arg(A_n)$. Here $A\in\sA$ is called \emph{$Z$-semistable} if $\arg(A)\geq\arg(B)$ for any subobject $B$ of $A$ in $\sA$.
    \item there exists a constant $C>0$ and a norm $||\cdot||$ on $\Gamma\otimes_\bZ\bR$ such that
	$$
        ||\cl([E])|| \leq C|Z(E)|
    $$
    for any $Z$-semistable $E\in\sA$.
\end{enumerate}

We will be mainly considering the case where $\sD=\Db(X)$ is the bounded derived category of coherent sheaves on a smooth complex projective variety $X$. In this case $\Gamma$ is chosen to be the numerical Grothendieck group
$$
    \cN(\sD)\colonequals
    K_0(\sD)/K_0(\sD)^{\perp\chi}
$$
defined as the quotient of the Grothendieck group by the kernel of its Euler pairing
$$
    \chi(E_1,E_2)
    = \sum_i(-1)^i\dim\Hom(E_1,E_2[i]).
$$
In this case, the homomorphism $\cl\colon K_0(\sD)\longrightarrow\cN(\sD)$ is the natural quotient.

\begin{eg}
\label{eg:curvestab}
Let $\sD=\Db(X)$ be the bounded derived category of coherent sheaves on a smooth complex projective curve. The numerical Grothendieck group $\cN(\sD)$ can be identified with $\bZ\oplus\bZ$ with the quotient map $K_0(\sD)\longrightarrow\cN(\sD)$ sending $[E]$ to $(\mathrm{rank}(E),\,\mathrm{deg}(E))$. Let us exhibit an example of a stability condition on $\sD$: For any $\beta+i\omega\in\cH$, define a group homomorphism $Z_{\beta,\omega}\colon\cN(\sD)\longrightarrow\bC$ by
$$
    Z_{\beta,\omega}(E)
    = -\mathrm{deg}(E)
    + (\beta+i\omega)\,\mathrm{rank}(E).
$$
For $\phi\in(0,1]$, define $\sP(\phi)\subseteq\sD$ to be the subcategory with objects consisting of slope semistable coherent sheaves $E$ with $Z_{\beta,\omega}(E)\in\bR_{>0}\cdot e^{i\pi\phi}$. For the other $\phi\in\bR$, define $\sP(\phi)$ via the property $\sP(\phi+1)=\sP(\phi)[1]$. Then $\sigma_{\beta,\omega}=(Z_{\beta,\omega},\sP)$ gives a stability condition called a \emph{geometric stability condition}.
\end{eg}

Let $\Stab(\sD)$ be the set of Bridgeland stability conditions on $\sD$. Bridgeland introduced a topology on $\Stab(\sD)$ and proves that the forgetful map
$$
    \Stab(\sD)
    \longrightarrow
    \Hom(\Gamma,\bC)
    : \sigma=(Z,\sP)
    \longmapsto
    Z
$$
is a local homeomorphism. This equips $\Stab(\sD)$ with the structure of a finite dimensional complex manifold. The space $\Stab(\sD)$ carries a left action by the group $\Aut(\sD)$ where an autoequivalence $\Phi$ acts as
$$
    \sigma = (Z,\sP)
    \longmapsto
    \Phi(\sigma)\colonequals(
        Z\circ[\Phi]^{-1},\sP'
    )
    \quad\text{where}\quad
    \sP'(\phi) = \Phi(\sP(\phi)).
$$
There is also a right action by $\widetilde{\GL}^+(2,\bR)$, namely, the universal cover of $\GL^+(2,\bR)$. An element of this group corresponds to a pair $(T,f)$ where
\begin{itemize}
    \item $T\in\GL^+(2,\bR)$, and
    \item $f\colon\bR\longrightarrow\bR$ is an increasing function that satisfies $f(\phi+1) = f(\phi)+1$,
\end{itemize}
such that the actions of $f$ and $T$ on the quotients $\bR/2\bZ$ and $(\bR^2\setminus\{0\})/\bR_{>0}$, respectively, are compatible via the isomorphisms
$$\xymatrix@R=0pt{
    \bR/2\bZ\ar[r]^-\sim
    & S^1\ar[r]^-\sim
    & (\bR^2\setminus\{0\})/\bR_{>0} \\
    \phi\ar@{|->}[r] & e^{i\pi\phi}. &
}$$
Each $(T,f)$ acts on $\Stab(\sD)$ by
$$
    \sigma = (Z,\sP)
    \longmapsto
    \sigma\cdot g
    \colonequals
    (T^{-1}\circ Z,\sP'')
    \quad\text{where}\quad
    \sP''(\phi)\colonequals\sP(f(\phi)).
$$
The left action by $\Aut(\sD)$ and the right action by $\widetilde{\GL}^+(2,\bR)$ commute with each other according to \cite{Bri07}*{Lemma~8.2}.

If we view $\bR^2$ as the complex plane $\bC$, then multiplications by complex numbers realizes $\bC^*$ as a subgroup of $\GL^+(2,\bR)$, which lifts to a subgroup $\bC\subseteq\widetilde{\GL}^+(2,\bR)$. Each $\lambda\in\bC$ acts on the space $\Stab(\sD)$ by
$$
    \sigma = (Z,\sP)
    \longmapsto
    \sigma\cdot\lambda
    \colonequals
    (Z\cdot e^{-i\pi\lambda}, \sP'')
    \quad\text{where}\quad
    \sP''(\phi) = \sP(\phi+\mathrm{Re}(\lambda)).
$$
Notice that $\bC$ is not a normal subgroup of $\widetilde{\GL}^+(2,\bR)$. Indeed, we have the following elementary fact:

\begin{lemma}
\label{lemma:GL2conjugation}
Pick $\lambda\in\bC\backslash\bZ\subseteq\widetilde{\GL}^+(2,\bR)$. Then for every $g\in\widetilde{\GL}^+(2,\bR)$ we have
$$
    g^{-1}\lambda g\in\bC
    \;\Longrightarrow\;
    g\in\bC.
$$
\end{lemma}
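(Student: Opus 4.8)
The plan is to transport the statement down to $\GL^+(2,\bR)$, where it becomes an elementary fact about conjugating conformal (``rotation-scaling'') matrices, and then lift it back. Let $\pi\colon\widetilde{\GL}^+(2,\bR)\to\GL^+(2,\bR)$ be the covering homomorphism, put $\bar g\colonequals\pi(g)$ and $\bar\lambda\colonequals\pi(\lambda)$, and note that $\pi(\bC)$ is the subgroup $\bC^*\subseteq\GL^+(2,\bR)$ of conformal matrices $\left(\begin{smallmatrix}a&-b\\b&a\end{smallmatrix}\right)$. The key preliminary is that membership in $\bC$ can be tested after projecting: since $\SO(2)$ is a deformation retract of both $\bC^*$ and $\GL^+(2,\bR)$, the inclusion $\bC^*\hookrightarrow\GL^+(2,\bR)$ induces an isomorphism on $\pi_1$, so $\pi^{-1}(\bC^*)$ is connected; as it contains the connected subgroup $\bC$ as an open subgroup, $\pi^{-1}(\bC^*)=\bC$. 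Hence $g\in\bC\iff\bar g\in\bC^*$, and $g^{-1}\lambda g\in\bC$ becomes $\bar g^{-1}\bar\lambda\bar g\in\bC^*$. Finally, $\bar\lambda$ is multiplication by the complex number $e^{i\pi\lambda}$, which is not real because $\lambda\notin\bZ$, so $\bar\lambda$ is a \emph{non-scalar} conformal matrix. It therefore suffices to prove: if $\bar\lambda$ is non-scalar conformal and $\bar g^{-1}\bar\lambda\bar g$ is conformal, then $\bar g\in\bC^*$.

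For this, suppose $M\colonequals\bar g^{-1}\bar\lambda\bar g$ is multiplication by $\mu\in\bC^*$. Conjugate real matrices share eigenvalues, and the eigenvalues of the conformal matrix attached to $z\in\bC$ are precisely $z$ and $\bar z$; comparing forces $\mu\in\{e^{i\pi\lambda},\,\overline{e^{i\pi\lambda}}\}$. If $\mu=e^{i\pi\lambda}$, then $\bar g$ commutes with $\bar\lambda$; since $\bar\lambda$ is non-scalar it has distinct eigenvalues, so its centralizer in $\GL(2,\bR)$ is $\bR[\bar\lambda]\cap\GL(2,\bR)=\bC^*$, giving $\bar g\in\bC^*$. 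If $\mu=\overline{e^{i\pi\lambda}}$, then $\bar g$ conjugates $\bar\lambda$ to its complex conjugate, equivalently conjugates the rotation part $R\ne\pm I$ of $\bar\lambda$ to $R^{-1}$; the elements of $\GL(2,\bR)$ doing so form a single coset of $\bC^*$ represented by a reflection such as $\left(\begin{smallmatrix}1&0\\0&-1\end{smallmatrix}\right)$, so every one of them has negative determinant, which is impossible for $\bar g\in\GL^+(2,\bR)$. In either case $\bar g\in\bC^*$, hence $g\in\bC$.

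I expect the only genuine obstacle to be the reduction in the first paragraph: one must know that the subgroup written $\bC$ inside $\widetilde{\GL}^+(2,\bR)$ is the \emph{full} preimage of $\bC^*$, not merely a connected lift of it, so that ``$g\in\bC$'' is detected by ``$\bar g\in\bC^*$''; this is the single place where topology enters, through the $\pi_1$-isomorphism $\bC^*\simeq\GL^+(2,\bR)$. Everything afterward is planar linear algebra, whose one subtlety is the orientation bookkeeping that rules out the conjugate case, which is exactly where $\bar g\in\GL^+(2,\bR)$ (rather than $\GL(2,\bR)$) and the non-scalarity of $\bar\lambda$ (the role of $\lambda\notin\bZ$) are used. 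If one wishes to avoid covering-space language altogether, one can instead write $g=(T,f)$ and $\lambda=(T_\lambda,f_\lambda)$ with $T_\lambda$ multiplication by $e^{i\pi\lambda}$, run the eigenvalue argument on $T^{-1}T_\lambda T$ to conclude $T$ is conformal, and then observe that the compatibility between $T$ and $f$ forces $f$ to be a translation, so that $(T,f)\in\bC$.
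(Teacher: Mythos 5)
Your argument is correct exactly where the paper's is, but it takes a genuinely different route in the core step, and in one respect it is more careful. The paper rescales to determinant one and verifies by a direct entry-wise computation that $\bar g^{-1}\bar\lambda\bar g$ conformal forces the columns of $\bar g$ to be orthogonal of equal length, hence $\bar g$ conformal; you instead compare eigenvalues and split into the commuting case (settled by the fact that a non-scalar conformal matrix has centralizer $\bC^*$ in $\GL(2,\bR)$) and the complex-conjugate case (ruled out because all matrices conjugating a nontrivial rotation to its inverse have negative determinant). Both arguments invoke $\det\bar g>0$ at the analogous spot: in the paper it is hidden in the passage from ``orthogonal columns of equal length'' to ``rotation-scaling rather than reflection-scaling.'' Your first paragraph also makes explicit something the paper uses silently, namely that the subgroup $\bC\subseteq\widetilde{\GL}^+(2,\bR)$ is the \emph{full} preimage of $\bC^*$ (via the $\pi_1$-isomorphism $\bC^*\hookrightarrow\GL^+(2,\bR)$), so that membership in $\bC$ can be tested after projecting; spelling this out is a genuine improvement, and your closing remark that one can instead argue directly with pairs $(T,f)$ is also sound.

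One caveat, which you share with the paper's own proof: the inference that $\lambda\notin\bZ$ makes $e^{i\pi\lambda}$ non-real (the paper's ``$\theta\notin\bZ\pi$'') is not valid for complex $\lambda$. If $\mathrm{Re}(\lambda)\in\bZ$ but $\mathrm{Im}(\lambda)\neq0$, e.g.\ $\lambda=i$, then $\lambda$ projects to a nonzero real scalar matrix and is in fact central in $\widetilde{\GL}^+(2,\bR)$, so the asserted implication fails for such $\lambda$ (any $g\notin\bC$ is a counterexample); thus no proof can close this case, and the honest hypothesis is $\mathrm{Re}(\lambda)\notin\bZ$, equivalently that $\bar\lambda$ is non-scalar. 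This is a defect of the lemma's formulation rather than of your argument, and it is harmless for the paper's application, where the lemma is only invoked with $\lambda=\tfrac{2k}{r}$ real and non-integral; still, your sentence ``which is not real because $\lambda\notin\bZ$'' is the one step you should restate under the corrected hypothesis.
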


\begin{proof}
Let $\overline{\lambda}$ and $\overline{g}$ be the images of $\lambda$ and $g$ under the projection
$$
    \widetilde{\GL}^+(2,\bR)
    \longrightarrow\GL^+(2,\bR).
$$
Without loss of generality, we can rescale them so that $\det(\overline{\lambda})=\det(g)=1$.
Then 
$$
\overline\lambda=
\begin{pmatrix}
    \cos\theta & -\sin\theta \\ \sin\theta & \cos\theta
\end{pmatrix}
\qquad\text{and}\qquad
\overline{g}=
\begin{pmatrix}
    a & b \\ c & d
\end{pmatrix}
$$
where $\theta\notin\bZ\pi$ and $ad-bc-1$. One can check by a direct computation that $g^{-1}\lambda g\in\bC$ if and only if
$$
    ab+cd = 0
    \qquad\text{and}\qquad
    a^2+c^2 = b^2+d^2.
$$
That is, the columns of $\overline{g}$, viewed as vectors in $\bR^2$, are orthogonal of the same length under the Euclidean norm. This implies that $g\in\bC$.
\end{proof}

The following lemma shows that an affirmative answer to the realization problem modulo even shifts gives an affirmative answer to the one without modulo even shifts.

\begin{lemma}
\label{lemma:q2impliesq1}
Let $\sD$ be a triangulated category with a stability condition $\sigma$ and let $G$ be a finite subgroup of $\Aut(\sD)$. If $G$ preserves the set $\sigma\cdot\bC$, then it fixes $\sigma$. Therefore, if a finite subgroup of $\Aut(\sD)$ fixes a point on $\Stab(\sD)/\bC$, then it fixes a point on $\Stab(\sD)$.
\end{lemma}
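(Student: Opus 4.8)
The plan is to turn the hypothesis into a group homomorphism from $G$ into $(\bC,+)$ and then invoke the torsion-freeness of $\bC$. The two inputs are the freeness of the $\bC$-action on $\Stab(\sD)$ and the fact, recalled above from \cite{Bri07}*{Lemma~8.2}, that the left $\Aut(\sD)$-action and the right $\widetilde{\GL}^+(2,\bR)$-action commute.

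First I would use freeness of the $\bC$-action to note that for each $\Phi\in G$ the relation $\Phi(\sigma)\in\sigma\cdot\bC$ determines a \emph{unique} element $\lambda_\Phi\in\bC$ with $\Phi(\sigma)=\sigma\cdot\lambda_\Phi$. Writing the $\bC$-action additively and using that it commutes with the $\Aut(\sD)$-action, for $\Phi,\Psi\in G$ one computes
$$
    (\Phi\circ\Psi)(\sigma)
    = \Phi\bigl(\sigma\cdot\lambda_\Psi\bigr)
    = \Phi(\sigma)\cdot\lambda_\Psi
    = \sigma\cdot(\lambda_\Phi+\lambda_\Psi),
$$
so uniqueness gives $\lambda_{\Phi\circ\Psi}=\lambda_\Phi+\lambda_\Psi$; that is, $\Phi\mapsto\lambda_\Phi$ is a homomorphism $G\to(\bC,+)$. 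Since $G$ is finite, its image is a finite subgroup of $(\bC,+)$; as $(\bC,+)$ is torsion-free, this image is trivial, i.e.\ $\Phi(\sigma)=\sigma$ for all $\Phi\in G$. This proves the first assertion. For the second, a $G$-fixed point of $\Stab(\sD)/\bC$ is by definition a $\bC$-orbit $\sigma\cdot\bC$ with $\Phi(\sigma\cdot\bC)=\sigma\cdot\bC$ for all $\Phi\in G$, i.e.\ $G$ preserves $\sigma\cdot\bC$; the first assertion then produces the fixed point $\sigma\in\Stab(\sD)$.

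The only point needing care is the freeness of the $\bC$-action, which underlies the uniqueness of $\lambda_\Phi$. This is standard: if $\sigma\cdot\lambda=\sigma$, then comparing central charges forces $e^{-i\pi\lambda}=1$, hence $\lambda\in 2\bZ$, and then $\sP(\phi+\mathrm{Re}(\lambda))=\sP(\phi)$ for all $\phi$ forces $\lambda=0$, since a slicing of a nonzero triangulated category cannot be invariant under a nonzero even shift. Beyond this the argument is purely formal, so I do not expect a genuine obstacle.
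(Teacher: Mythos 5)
Your argument is correct and is essentially the paper's proof: the paper iterates a single finite-order element to get $\Phi^n(\sigma)=\sigma\cdot(n\lambda)$ and concludes $\lambda=0$ from freeness of the $\bC$-action, which is exactly your homomorphism-to-$(\bC,+)$ argument applied element by element. Your extra remarks (uniqueness of $\lambda_\Phi$ via freeness, commutativity of the two actions) just make explicit what the paper leaves implicit.
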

\begin{proof}
For every $\Phi\in G$, there exists $\lambda\in\bC$ such that $\Phi(\sigma)=\sigma\cdot\lambda$, which implies that $\Phi^n(\sigma)=\sigma\cdot(n\lambda)$ for all integer $n$. Since $\Phi$ is of finite order, one obtains $n\lambda=0$ for some positive $n$. Thus $\lambda=0$ and $\Phi(\sigma)=\sigma$.
\end{proof}

\subsection{Curves and their Gepner type autoequivalences}
\label{subsect:curve}

Let $X$ be a smooth complex projective curve. Then its group of autoequivalences acts on the numerical Grothendieck group $\cN(X)\cong\bZ\oplus\bZ$ as $\SL(2,\bZ)$, where the shift functor $[1]$ acts as $-\mathrm{id}$. Consider the composition
$$
    \Aut(\Db(X))
    \longrightarrow\SL(2,\bZ)
    \longrightarrow\PSL(2,\bZ)
$$
and denote its kernel as $\widetilde{\cI}(\Db(X))$. Then
$$
    \widetilde{\cI}(\Db(X))
    \cong(\Pic^0(X)\rtimes\Aut(X))\times\bZ[1].
$$
Note that a stability condition $\sigma_{\beta,\omega}\in\Stab(X)$ given as in Example~\ref{eg:curvestab} is preserved under the action of $\Pic^0(X)\rtimes\Aut(X)$. This means that every autoequivalence in $\widetilde{\cI}(\Db(X))$ is of Gepner type.

\begin{prop}
\label{prop:nielsen_curve}
Let $X$ be a smooth complex projective curve. Then
\begin{itemize}
    \item every finite subgroup of $\Aut(\Db(X))/\bZ[2]$ fixes a point on $\Stab(X)/\bC$, and
    \item every finite subgroup of $\Aut(\Db(X))$ fixes a point on $\Stab(X)$.
\end{itemize}
\end{prop}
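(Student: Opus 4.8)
The plan is to reduce to the action on $\Stab(X)/\bC$ and then to exploit the homomorphism $\Aut(\Db(X))\to\PSL(2,\bZ)$ induced by the action on $\cN(X)\cong\bZ^2$: its kernel acts almost trivially on $\Stab(X)/\bC$, while its image is either torsion-free or acts on a copy of the hyperbolic plane.

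By Lemma~\ref{lemma:q2impliesq1} the second bullet follows from the first (since $\bZ[2]$ acts trivially on $\Stab(X)/\bC$), so it suffices to show that every finite subgroup $G\subseteq\Aut(\Db(X))/\bZ[2]$ fixes a point of $\Stab(X)/\bC$. As $[1]$ acts on $\cN(X)$ as $-\mathrm{id}$, it lies in $\widetilde{\cI}(\Db(X))$, so $\bZ[2]\subseteq\widetilde{\cI}(\Db(X))$ and the composite $\Aut(\Db(X))\to\SL(2,\bZ)\to\PSL(2,\bZ)$ descends to a homomorphism $\pi\colon\Aut(\Db(X))/\bZ[2]\to\PSL(2,\bZ)$ with kernel $\widetilde{\cI}(\Db(X))/\bZ[2]\cong(\Pic^0(X)\rtimes\Aut(X))\times\bZ_2[1]$. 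I will use two facts about this kernel: the shift $[1]$ acts on $\Stab(X)$ as the element $1\in\bC$, so $\bZ_2[1]$ acts trivially on $\Stab(X)/\bC$; and, as recalled above, $\Pic^0(X)\rtimes\Aut(X)$ fixes a geometric stability condition $\sigma_{\beta,\omega}$ of the type in Example~\ref{eg:curvestab}.

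I would then split according to the genus. If $X$ has genus $\neq 1$, then $\omega_X$ or $\omega_X^{-1}$ is ample, so by Bondal--Orlov $\Aut(\Db(X))=\bZ[1]\times(\Aut(X)\ltimes\Pic(X))$; since the shift acts as $-\mathrm{id}$ on $\cN(X)$, while $\Aut(X)$ and $\Pic^0(X)$ act trivially and $-\otimes\cO_X(p)$ acts by the unipotent $(r,d)\mapsto(r,d+r)$ for a degree-one point $p$, the image of $\Aut(\Db(X))$ in $\PSL(2,\bZ)$ is infinite cyclic, in particular torsion-free. Hence $\pi(G)$ is trivial, so $G\subseteq(\Pic^0(X)\rtimes\Aut(X))\times\bZ_2[1]$, which by the two facts above fixes the class of $\sigma_{\beta,\omega}$ in $\Stab(X)/\bC$. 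If $X=E$ is an elliptic curve, I would instead use that $\Stab(E)\cong\widetilde{\GL}^+(2,\bR)$ with the right $\widetilde{\GL}^+(2,\bR)$-action free and transitive; since the $\Aut(\Db(E))$-action commutes with this action it is given by left translations through a homomorphism $\Aut(\Db(E))\to\widetilde{\GL}^+(2,\bR)$, so any autoequivalence fixing $\sigma_{\beta,\omega}$ is sent to the identity and acts trivially on all of $\Stab(E)$. Thus $\Pic^0(E)\rtimes\Aut(E)$, and with it the whole kernel $\widetilde{\cI}(\Db(E))/\bZ[2]$, acts trivially on $\Stab(E)/\bC$, so the $G$-action on $\Stab(E)/\bC\cong\widetilde{\GL}^+(2,\bR)/\bC\cong\cH$ factors through the finite group $\pi(G)\subseteq\PSL(2,\bZ)$. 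The latter acts by M\"obius transformations, i.e.\ by isometries of the hyperbolic plane, and a finite group of hyperbolic isometries fixes a point (every finite subgroup of $\PSL(2,\bZ)$ is cyclic, generated by an elliptic element — or invoke the Cartan fixed point theorem), which is the desired fixed point of $G$.

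The only step that is not purely formal is the genus-one case, where I must invoke the explicit description of $\Stab(E)$ as a single $\widetilde{\GL}^+(2,\bR)$-orbit; this is precisely what upgrades ``$\Pic^0(E)\rtimes\Aut(E)$ fixes one geometric stability condition'' to ``$\Pic^0(E)\rtimes\Aut(E)$ acts trivially on $\Stab(E)$''. Everything else is bookkeeping with the exact sequence defining $\widetilde{\cI}(\Db(X))$ together with the elementary fact that finite subgroups of $\PSL(2,\bZ)$ fix points on $\cH$.
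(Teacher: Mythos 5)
Your proof is correct and follows essentially the same route as the paper: reduce the second bullet to the first via Lemma~\ref{lemma:q2impliesq1}, handle $g\neq 1$ by showing every finite subgroup lands in $\widetilde{\cI}(\Db(X))/\bZ[2]$ (the paper via the quotient $\bZ$, you via torsion-freeness of the image in $\PSL(2,\bZ)$ — the same observation) and hence fixes $\overline{\sigma}_{\beta,\omega}$, and handle $g=1$ using Bridgeland's description of $\Stab(E)$ together with the fixed-point property of finite groups of isometries of $\cH$. Your genus-one step, which uses the free transitive $\widetilde{\GL}^+(2,\bR)$-action to upgrade ``fixes one geometric stability condition'' to ``the kernel acts trivially on $\Stab(E)$,'' is just a slightly more explicit packaging of the paper's appeal to the compatibility of the $\Aut(\Db(X))/\bZ[2]$-action on $\Stab(X)/\bC\cong\cH$ with the $\SL(2,\bZ)$-action.
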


\begin{proof}
Let us first deal with the case when $X$ has genus $g\neq 1$. In this case,
$$
    \Aut(\Db(X))
    \cong
    (\Pic(X)\rtimes\Aut(X))\times\bZ[1].
$$
This induces a short exact sequence
$$\xymatrix{
    0 \ar[r]
    & \widetilde{\cI}(\Db(X))/\bZ[2] \ar[r]
    & \Aut(\Db(X))/\bZ[2] \ar[r]
    & \bZ \ar[r]
    & 0.
}$$
As there is no finite subgroup in $\bZ$, every finite subgroup of $\Aut(\Db(X))/\bZ[2]$ is contained in the kernel $\widetilde{\cI}(\Db(X))/\bZ[2]$, thus preserves $\overline{\sigma}_{\beta,\omega}\in\Stab(X)/\bC$ for every $\sigma_{\beta,\omega}$ constructed as in Example~\ref{eg:curvestab}.

Now assume that $X$ has genus one. By \cite{Bri07}*{Theorem~9.1}, there is an isomorphism $\Stab(X)\cong\bC\times\cH$. In addition, the quotient $\Stab(X)/\bC\cong\cH$  admits a section in $\Stab(X)$ given by stability conditions $\sigma_{\beta,\omega}$ constructed in Example~\ref{eg:curvestab}. It is straightforward to check that the action of $\Aut(\Db(X))/\bZ[2]$ on $\Stab(X)/\bC\cong\cH$ is compatible with the standard $\SL(2,\bZ)$-action on $\cH$ through the homomorphism $\Aut(\Db(X))/\bZ[2]\longrightarrow\SL(2,\bZ)$. The statement then follows from the fact that every finite subgroup of $\SL(2,\bZ)$ fixes a point, called an \emph{elliptic point}, on $\cH$.

This proves the first statement. The second statement follows immediately from the first one due to Lemma~\ref{lemma:q2impliesq1}.
\end{proof}

\begin{rmk}
\label{rmk:curve_gepner-not-finite}
The converse of Proposition~\ref{prop:nielsen_curve} is not true, that is, a Gepner type autoequivalence on a curve $X$ may not be of finite order in $\Aut(\Db(X))/\bZ[2]$.
For instance, an autoequivalence induced by an infinite order automorphism of $X$ is of Gepner type, and is of infinite order in $\Aut(\Db(X))/\bZ[2]$. In contrast, we will show in later sections that for K3 surfaces of Picard number one, an autoequivalence is of Gepner type if and only if it is of finite order in $\Aut(\Db(X))/\bZ[2]$.
\end{rmk}

To give a classification of Gepner type autoequivalences, let us first characterize autoequivalences which are of Gepner type with respect to geometric stability conditions. A stability condition $\sigma\in\Stab(X)$ is called \emph{geometric} if all skyscraper sheaves are $\sigma$-stable of the same phase. Let $U(X)\subseteq\Stab(X)$ be the set of geometric stability conditions. By \cite{Mac07}*{Theorem~2.7} and \cite{BMW15}*{Section~3.2}, there is an isomorphism
$$\xymatrix{
    \cH\times\bC \ar[r]^-\sim
    & U(X): (\beta+i\omega,\lambda) \ar@{|->}[r]
    & \sigma_{\beta,\omega}\cdot\lambda.
}$$
Recall that the $\Aut(\Db(X))$-action on $U(X)/\bC\cong\cH$ is compatible with the standard $\SL(2,\bZ)$-action on $\cH$.
Therefore, $\Phi\in\Aut(\Db(X))$ is of Gepner type with respect to a geometric stability condition if and only if the action of $[\Phi]\in\SL(2,\bZ)$ on $\cH$ has a fixed point, which is equivalent to $[\Phi]$ being of finite order.

Before proceeding further, let us introduce some necessary notations by looking at a few examples about how the map
$
    \Aut(\Db(X))\longrightarrow\SL(2,\bZ)
$
works. First, for curves of any genus, the map sends $T\colonequals -\otimes\cO_X(1)$ to
$$
    [T] = \begin{pmatrix}
        1 & 0 \\ 1 & 1
    \end{pmatrix}.
$$
For curves of genus one, let $P\in\Coh(X\times X)$ be the Poincar\'e line bundle. Then the Fourier--Mukai transform $S\colonequals\Phi_P$ along $P$ is mapped to
$$
    [S] = \begin{pmatrix}
        0 & 1 \\ -1 & 0
    \end{pmatrix}.
$$
Recall that every non-identity finite order element of $\PSL(2,\bZ)$, called an \emph{elliptic element}, is conjugate to either $[S]$, $[ST]$, or $[(ST)^2]$.

\begin{lemma}
\label{lemma:curve_geo-stab}
Let $X$ be a smooth complex projective curve of genus $g$ and $\Phi$ be an autoequivalence on $\Db(X)$. Then the condition that $\Phi$ is of Gepner type with respect to a geometric stability condition is equivalent to one of the followings depending on $g$:
\begin{itemize}
    \item If $g\neq1$, then the condition is equivalent to $\Phi\in\widetilde{\cI}(\Db(X))$.
    \item If $g=1$, then the condition is equivalent to the existence of $\Psi\in\widetilde{\cI}(\Db(X))$ such that $\Phi\Psi$ is conjugate to either $S, ST, (ST)^2$, or the identity.
\end{itemize}
\end{lemma}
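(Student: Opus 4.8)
The plan is to reduce everything to the already-established fact that $\Phi$ is of Gepner type with respect to a geometric stability condition if and only if the induced element $[\Phi]\in\SL(2,\bZ)$ acts on $\cH$ with a fixed point, equivalently $[\Phi]$ has finite order in $\SL(2,\bZ)$ (or, passing to $\PSL(2,\bZ)$, $[\Phi]$ maps to a finite-order element, including the identity). This characterization was derived right before the statement using the isomorphism $\cH\times\bC\xrightarrow{\sim}U(X)$ and the compatibility of the $\Aut(\Db(X))$-action on $U(X)/\bC\cong\cH$ with the standard $\SL(2,\bZ)$-action. So in both cases the task is purely group-theoretic: identify the preimage in $\Aut(\Db(X))$ of the set of finite-order elements of $\SL(2,\bZ)$, modulo the kernel $\widetilde{\cI}(\Db(X))$ of $\Aut(\Db(X))\to\PSL(2,\bZ)$.

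For $g\neq 1$, the only finite-order element of $\SL(2,\bZ)$ in the image is the identity: indeed, here $\Aut(\Db(X))\cong(\Pic(X)\rtimes\Aut(X))\times\bZ[1]$, and the composite to $\SL(2,\bZ)$ sends $-\otimes\cO_X(1)$ to the parabolic matrix $\left(\begin{smallmatrix}1&0\\1&1\end{smallmatrix}\right)$, $\Aut(X)$ and $\Pic^0(X)$ to the identity in $\PSL(2,\bZ)$, and $[1]$ to $-\mathrm{id}$; so the image of $\Aut(\Db(X))$ in $\PSL(2,\bZ)$ is the infinite cyclic group generated by the image of $\left(\begin{smallmatrix}1&0\\1&1\end{smallmatrix}\right)$, which is torsion-free. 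Hence $[\Phi]$ has finite order in $\SL(2,\bZ)$ iff $[\Phi]=\pm\mathrm{id}$, i.e.\ iff $\Phi$ lies in the kernel of $\Aut(\Db(X))\to\PSL(2,\bZ)$, which is exactly $\widetilde{\cI}(\Db(X))$ (note $[1]\in\widetilde{\cI}(\Db(X))$, so being $\pm\mathrm{id}$ rather than just $\mathrm{id}$ causes no trouble). This gives the first bullet.

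For $g=1$, the map $\Aut(\Db(X))\to\SL(2,\bZ)$ is surjective with kernel $\widetilde{\cI}(\Db(X))=(\Pic^0(X)\rtimes\Aut(X))\times\bZ[1]$ (by Bridgeland's description, or Orlov), so $\Aut(\Db(X))/\widetilde{\cI}(\Db(X))\cong\PSL(2,\bZ)$. Thus $\Phi$ is of Gepner type with respect to a geometric stability condition iff the class of $[\Phi]$ in $\PSL(2,\bZ)$ is either trivial or an elliptic element. Lifting the generators: $S=\Phi_P$ maps to $\left(\begin{smallmatrix}0&1\\-1&0\end{smallmatrix}\right)$ and $T=-\otimes\cO_X(1)$ maps to $\left(\begin{smallmatrix}1&0\\1&1\end{smallmatrix}\right)$, which together generate $\SL(2,\bZ)$, so $S$ and $ST$ (hence $(ST)^2$) represent all conjugacy classes of nontrivial torsion in $\PSL(2,\bZ)$, of orders $2$, $3$, $3$ respectively — this is the standard fact recalled in the excerpt. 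Therefore $[\Phi]\in\PSL(2,\bZ)$ has finite order iff it is conjugate to the class of $S$, $ST$, $(ST)^2$, or the identity, which is exactly the condition that there exist $\Psi\in\widetilde{\cI}(\Db(X))$ with $\Phi\Psi$ conjugate to $S$, $ST$, $(ST)^2$, or the identity (conjugacy in $\Aut(\Db(X))$ mapping to conjugacy in $\PSL(2,\bZ)$, and using surjectivity to realize a conjugating element upstairs).

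The main obstacle is mostly bookkeeping rather than mathematics: one must be careful about the distinction between $\SL(2,\bZ)$ and $\PSL(2,\bZ)$ and about the fact that "of Gepner type with respect to a geometric stability condition'' was phrased via fixed points on $\cH$ (hence naturally in $\PSL(2,\bZ)$), while the kernel $\widetilde{\cI}(\Db(X))$ was defined through $\PSL(2,\bZ)$ as well, so these match up cleanly; and one must verify that in the $g=1$ case a conjugacy in $\PSL(2,\bZ)$ can be lifted to a conjugacy in $\Aut(\Db(X))$ after multiplying by a suitable element of the kernel, which follows from surjectivity of $\Aut(\Db(X))\to\SL(2,\bZ)$ together with the fact that $\widetilde{\cI}(\Db(X))$ is precisely the kernel modulo the central $\pm\mathrm{id}$ ambiguity. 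None of these steps requires a genuinely hard argument.
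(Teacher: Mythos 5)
Your proposal is correct and follows essentially the same route as the paper: reduce to the finite-order criterion for $[\Phi]\in\SL(2,\bZ)$ established just before the lemma, then for $g\neq1$ observe the image in $\PSL(2,\bZ)$ is torsion-free so finite order forces $[\Phi]=\pm\mathrm{id}$, i.e.\ $\Phi\in\widetilde{\cI}(\Db(X))$, and for $g=1$ use surjectivity onto $\SL(2,\bZ)$ and the classification of elliptic conjugacy classes. Your extra care in lifting the $\PSL(2,\bZ)$-conjugacy to $\Aut(\Db(X))$ (via surjectivity and normality of the kernel) is a detail the paper leaves implicit, but it is the same argument.
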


\begin{proof}
Recall that $\Phi$ is of Gepner type with respect to a geometric stability condition if and only if the action of $[\Phi]\in\SL(2,\bZ)$ on $\cH$ is of finite order.

When $g\neq1$, the image of $\Aut(\Db(X))\longrightarrow\SL(2,\bZ)$ is generated by $[T]$ and $[[1]] = -\mathrm{id}$. In this case, $[\Phi]$ is of finite order if and only if $[\Phi]=\pm\mathrm{id}$, where the latter condition is equivalent to $\Phi\in\widetilde{\cI}(\Db(X))$. This proves the statement when $g\neq 1$.

When $g=1$, the map $\Aut(\Db(X))\longrightarrow\SL(2,\bZ)$ is surjective, so $[\Phi]$ is of finite order if and only if it is conjugate to either $\pm[S]$, $\pm[ST]$, $\pm[(ST)^2]$, or $\pm\mathrm{id}$, which is equivalent to the existence of $\Psi\in\widetilde{\cI}(\Db(X))$ such that $\Phi\Psi$ is conjugate to either $S, ST, (ST)^2$, or the identity.
\end{proof}

\begin{prop}
\label{prop:curve_gepner}
Let $X$ be a smooth complex projective curve of genus $g$ and $\Phi$ be an autoequivalence on $\Db(X)$.
\begin{itemize}
    \item If $g\neq1$, then $\Phi$ is of Gepner type if and only if $\Phi\in\widetilde{\cI}(\Db(X))$.
    \item If $g=1$, then $\Phi$ is of Gepner type if and only if there exists $\Psi\in\widetilde{\cI}(\Db(X))$ such that $\Phi\Psi$ is conjugate to either $S, ST, (ST)^2$, or the identity.
\end{itemize}
\end{prop}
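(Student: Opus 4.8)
The plan is to reduce everything to Lemma~\ref{lemma:curve_geo-stab} — the case of \emph{geometric} stability conditions — by proving that an autoequivalence which is of Gepner type with respect to \emph{any} stability condition already induces an action of finite order on $\cN(X)\cong\bZ\oplus\bZ$. Granting this, the proposition follows immediately: the proof of Lemma~\ref{lemma:curve_geo-stab} shows that, for $g\neq1$, the condition ``$[\Phi]\in\SL(2,\bZ)$ has finite order'' is equivalent to $\Phi\in\widetilde{\cI}(\Db(X))$, and for $g=1$ it is equivalent to the existence of $\Psi\in\widetilde{\cI}(\Db(X))$ with $\Phi\Psi$ conjugate to one of $S,ST,(ST)^2,\mathrm{id}$. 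The same input also gives the ``if'' direction, since by Lemma~\ref{lemma:curve_geo-stab} any $\Phi$ satisfying the stated condition is of Gepner type with respect to a geometric stability condition, hence of Gepner type.

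For the reduction, I would argue as follows. Suppose $\Phi(\sigma)=\sigma\cdot\lambda$ for some $\sigma=(Z,\sP)\in\Stab(X)$ and $\lambda\in\bC$. Comparing central charges gives the identity $Z\circ[\Phi]^{-1}=e^{-i\pi\lambda}\cdot Z$ of $\bR$-linear maps $\cN(X)\otimes_\bZ\bR\to\bC$. The key input, which I would isolate and prove separately, is that for a curve the central charge of \emph{every} stability condition is an $\bR$-linear isomorphism $\cN(X)\otimes_\bZ\bR\xrightarrow{\ \sim\ }\bC$: for geometric stability conditions this is visible from the explicit formula in Example~\ref{eg:curvestab} (a line bundle and a skyscraper sheaf have $\bR$-linearly independent central charges), and for the remaining ones it follows from the classification of bounded t-structures on $\Db(X)$ — when $g\geq1$ every such t-structure is a shift of the standard one and the same computation applies, while for $g=0$ it is part of the known description of $\Stab(\bP^1)$. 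Given this, $[\Phi]^{-1}$ is conjugate inside $\GL(2,\bR)$ to multiplication by $e^{-i\pi\lambda}$ on $\bC\cong\bR^2$; since $\det[\Phi]=1$ one gets $|e^{-i\pi\lambda}|^2=1$, hence $\lambda\in\bR$ and $[\Phi]$ is a diagonalizable integral matrix with $|\tr[\Phi]|=2|\cos\pi\lambda|\leq2$. Any such matrix is of finite order: for $|\tr[\Phi]|<2$ it is elliptic of order $3$, $4$, or $6$, and for $|\tr[\Phi]|=2$ diagonalizability forces $[\Phi]=\pm\mathrm{id}$.

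I expect the main obstacle to be the isolated input above — ruling out ``degenerate'' central charges for the non-geometric stability conditions that exist on $\bP^1$, where the support property alone does not obviously suffice and one genuinely needs the structure of $\Stab(\bP^1)$. By contrast, for curves of genus $g\geq1$ there are no non-geometric stability conditions at all (every stability condition lies in the $\widetilde{\GL}^+(2,\bR)$-orbit of a geometric one), so there the proposition follows from Lemma~\ref{lemma:curve_geo-stab} without any of the linear algebra above.
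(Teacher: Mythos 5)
Your reduction works, and is the same as the paper's, for $g\geq1$: there every stability condition is geometric (Bridgeland for $g=1$, Macr\`i in general), so Lemma~\ref{lemma:curve_geo-stab} applies directly, and the ``if'' direction is immediate from that lemma for all $g$. (Your parenthetical justification that for $g\geq1$ every bounded t-structure is a shift of the standard one is not correct --- tilted hearts occur --- but you never actually use it.) The genuine gap is in the case $X=\bP^1$, exactly at the step you flag as the main obstacle and then dispose of by appeal to ``the known description of $\Stab(\bP^1)$''. The key input is false: not every stability condition on $\bP^1$ has nondegenerate central charge, and the known description of $\Stab(\bP^1)$ produces explicit counterexamples rather than a proof. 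A non-geometric stability condition has (up to shift and the $\widetilde{\GL}^+(2,\bR)$-action) heart the extension closure of $\cO(\ell)[1]$ and $\cO(\ell+1)$, a finite-length abelian category with these two objects as its simples; \emph{any} choice of $Z(\cO(\ell)[1]),\,Z(\cO(\ell+1))\in\cH\cup\bR_{<0}$ gives a point of $\Stab(\bP^1)$, since the Harder--Narasimhan property is automatic for a finite-length heart and the support property is easily checked. Taking, say, $Z(\cO(\ell)[1])=Z(\cO(\ell+1))=i$ gives a stability condition whose central charge has image $i\bR$, i.e.\ $Z\colon\cN(X)\otimes\bR\to\bC$ has a one-dimensional kernel. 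So the nondegeneracy you need simply fails on part of $\Stab(\bP^1)$.

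This failure is fatal to the reduction as written: for such a degenerate $\sigma$, the identity $Z\circ[\Phi]^{-1}=e^{-i\pi\lambda}Z$ only forces $[\Phi]$ to preserve the rational line $\ker Z\subseteq\cN(X)\otimes\bR$, a condition satisfied by parabolic and hyperbolic elements of $\SL(2,\bZ)$ of infinite order, so you cannot conclude that $[\Phi]$ has finite order, and your argument does not exclude that some $\Phi=f^*(-\otimes\cO(k))[n]$ with $k\neq0$ is Gepner type with respect to such a $\sigma$. To close the gap you need the categorical argument the paper uses for $\bP^1$: by \cite{Okada} and \cite{BMW15}, for every non-geometric $\sigma$ (including the degenerate ones above, where the stable objects are exactly the simples of the heart) the only $\sigma$-stable objects are $\cO(\ell)$, $\cO(\ell+1)$ and their shifts; the $\bC$-action preserves the set of $\sigma$-stable objects, so a Gepner-type $\Phi$ must permute it, and $f^*(-\otimes\cO(k))[n]$ with $k\neq0$ does not. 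With that replacement for the $g=0$ case your remaining linear algebra (which is fine where $Z$ is nondegenerate, e.g.\ for geometric $\sigma$) becomes superfluous, and the proof collapses to the paper's.
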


\begin{proof}
By \cite{Bri07}*{Theorem~0.1} and \cite{Mac07}*{Theorem~2.7}, any stability condition on $\Db(X)$ is geometric if $g\geq1$. This completes the proof in this case by Lemma~\ref{lemma:curve_geo-stab}.

Now we consider the case $X=\bP^1$ where non-geometric stability conditions exist. As every autoequivalence in $\widetilde{\cI}(\Db(\bP^1))$ is of Gepner type by Lemma~\ref{lemma:curve_geo-stab}, it remains to prove that every $\Phi\notin\widetilde{\cI}(\Db(\bP^1))$ is not of Gepner type, and it suffices to prove that it is not of Gepner type with respect to non-geometric stability conditions. Notice that every such $\Phi$ is of the form
$$
    \Phi = f^*(-\otimes\cO(k))[n] \qquad (k\neq0)
$$
for some $f\in\Aut(\bP^1)$ and $n\in\bZ$. By \cite{Okada} and \cite{BMW15}*{Section~3.2}, if $\sigma\in\Stab(\bP^1)$ is non-geometric, then there exists $\ell\in\bZ$ such that $\cO(\ell)$, $\cO(\ell+1)$, and their shifts are the only $\sigma$-stable objects. Because $k\neq0$, the autoequivalence $\Phi$ does not preserve the set of $\sigma$-stable objects for non-geometric $\sigma$, while the free $\bC$-action on $\sigma$ does preserve the set of $\sigma$-stable objects. Thus $\Phi$ is not of Gepner type with respect to non-geometric stability conditions. This concludes the proof.
\end{proof}

\subsection{Standard autoequivalences on surfaces}
\label{subsect:stdautoeq_surf}

Let us review the tilting constructions of stability conditions on smooth projective complex surfaces. First, recall that a \emph{torsion pair} on an abelian category $\sA$ consists of a pair of full additive subcategories $(\sT,\sF)$ such that:
\begin{enumerate}[label=(\alph*)]
    \item for any $T\in\sT$ and $F\in\sF$, we have $\Hom(T,F)=0$;
    \item for any $E\in\sA$, there exist (unique) $T\in\sT$ and $F\in\sF$ together with a short exact sequence $0\longrightarrow T\longrightarrow E\longrightarrow F\longrightarrow0$.
\end{enumerate}

\begin{lemma}[\cite{HRS96}]
\label{lem:tilting}
    Let $\sA$ be the heart of a bounded t-structure on a triangulated category $\sD$ and $(\sT,\sF)$ be a torsion pair on $\sA$. Then the category
    $$
        \sA^\sharp = \left\{
            E\in\sD
            \mid
            H^0_\sA(E)\in\sT,
            \ H^{-1}_\sA(E)\in\sF,
            \ H^i_\sA(E)=0
            \;\text{ for }\;
            i\neq0,-1
        \right\}
$$
is the heart of a bounded t-structure on $\sD$. Here $H^\bullet_\sA$ denotes the cohomology object with respect to the t-structure of $\sA$.
\end{lemma}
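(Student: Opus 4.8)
The plan is to realise $\sA^\sharp$ as the heart of an explicit bounded t-structure on $\sD$, namely the pair $(\sD_\sharp^{\leq 0},\sD_\sharp^{\geq 0})$ with
$$
    \sD_\sharp^{\leq 0} \coloneqq \{\, E\in\sD \mid H^i_\sA(E)=0 \text{ for } i>0 \text{ and } H^0_\sA(E)\in\sT \,\}
$$
and
$$
    \sD_\sharp^{\geq 0} \coloneqq \{\, E\in\sD \mid H^i_\sA(E)=0 \text{ for } i<-1 \text{ and } H^{-1}_\sA(E)\in\sF \,\};
$$
indeed $\sD_\sharp^{\leq 0}\cap\sD_\sharp^{\geq 0}$ is by construction the category $\sA^\sharp$ in the statement. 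This is the tilt of Happel--Reiten--Smal\o{}; I would reproduce the verification of the t-structure axioms. Throughout I write $\tau^{\leq n}_\sA,\tau^{\geq n}_\sA$ for the truncation functors of the given t-structure on $\sD$, and I use freely the long exact sequence of $H^\bullet_\sA$ attached to a triangle together with the orthogonality $\Hom(P,Q)=0$ whenever $P\in\sD^{\leq m}$, $Q\in\sD^{\geq n}$ with $m<n$.

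\emph{Stability under shifts and orthogonality.} The inclusions $\sD_\sharp^{\leq 0}[1]\subseteq\sD_\sharp^{\leq 0}$ and $\sD_\sharp^{\geq 0}[-1]\subseteq\sD_\sharp^{\geq 0}$ are immediate, since $[1]$ merely reindexes $H^\bullet_\sA$ and $0$ belongs to both $\sT$ and $\sF$. For the orthogonality one must show $\Hom(A,C)=0$ whenever $A\in\sD_\sharp^{\leq 0}$ and $C\in\sD_\sharp^{\geq 0}[-1]$, the latter condition meaning $H^i_\sA(C)=0$ for $i<0$ and $H^0_\sA(C)\in\sF$. Feeding the canonical triangles $\tau^{\leq-1}_\sA A\to A\to H^0_\sA(A)[0]$ and $H^0_\sA(C)[0]\to C\to\tau^{\geq1}_\sA C$ into $\Hom(-,-)$, all the mixed terms vanish by the degree-separation orthogonality above, and the single surviving term is $\Hom\bigl(H^0_\sA(A)[0],H^0_\sA(C)[0]\bigr)=\Hom_\sA\bigl(H^0_\sA(A),H^0_\sA(C)\bigr)$, which vanishes because $H^0_\sA(A)\in\sT$ and $H^0_\sA(C)\in\sF$.

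\emph{Truncation triangles, and the main point.} Given $E\in\sD$, I would start from the canonical triangle $\tau^{\leq0}_\sA E\to E\to\tau^{\geq1}_\sA E$ and note that $\tau^{\geq1}_\sA E$ already lies in $\sD_\sharp^{\geq0}[-1]$, so only $\tau^{\leq0}_\sA E$ has to be split further. Let $0\to T\to H^0_\sA(E)\to F\to0$ be the torsion decomposition in $\sA$, and let $A$ be the fibre of the composite $\tau^{\leq0}_\sA E\to H^0_\sA(E)[0]\to F[0]$ (the first arrow the truncation, the second the torsion quotient). Applying the octahedral axiom to this composite produces a triangle $\tau^{\leq-1}_\sA E\to A\to T[0]$, and the associated long exact sequence of $H^\bullet_\sA$ gives $H^i_\sA(A)=0$ for $i>0$ together with $H^0_\sA(A)\cong T\in\sT$, so $A\in\sD_\sharp^{\leq0}$. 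A second octahedron, applied to the composite $A\to\tau^{\leq0}_\sA E\to E$, produces a triangle $F[0]\to B\to\tau^{\geq1}_\sA E$ where $B$ is the cone of $A\to E$, whence $H^i_\sA(B)=0$ for $i<0$ and $H^0_\sA(B)\cong F\in\sF$, so $B\in\sD_\sharp^{\geq0}[-1]$. Therefore $A\to E\to B$ is the desired truncation triangle, its uniqueness being automatic from the orthogonality proved above; and boundedness of the new t-structure is inherited from that of $\sA$, since the $\sharp$-cohomology of any object is assembled from finitely many of its $H^\bullet_\sA$ and hence is supported in finitely many degrees. The one delicate point is to arrange the two octahedra and to match the auxiliary cones with $T[0]$, $F[0]$, $\tau^{\leq-1}_\sA E$ and $\tau^{\geq1}_\sA E$ correctly; once the triangle $A\to E\to B$ is produced, the rest is routine bookkeeping with long exact sequences.
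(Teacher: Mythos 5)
Your proof is correct: the aisle/co-aisle you define is the standard Happel--Reiten--Smal\o{} tilt, and your verification of the shift inclusions, the orthogonality via the torsion pair, the two octahedra producing the truncation triangle $A\to E\to B$, and the boundedness are all sound. The paper gives no proof of this lemma (it simply cites \cite{HRS96}), and your argument is essentially the standard one from that reference, so there is nothing to compare beyond noting agreement.
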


Let $X$ be a smooth complex projective surface, in which case the numerical Grothendieck group of $\Db(X)$ is given by
$$
\cN(X)\cong H^0(X,\bZ)\oplus\NS(X)\oplus H^4(X,\bZ).
$$
Let $\beta,\omega\in\NS(X)\otimes\bR$ be a pair of $\bR$-divisors such that $\omega$ is ample. Then the slope function
$$
    \mu_\omega(E)
    = \frac{\omega\cdot c_1(E)}{\mathrm{rank}(E)}
$$
defines a (slope) stability on the abelian category $\sA=\Coh(X)$. Each $E\in\Coh(X)$ has a unique Harder--Narasimhan filtration
$$
    0\subseteq E_\mathrm{tor}=E_0
    \subseteq E_1
    \subseteq
    \cdots
    \subseteq E_n=E
$$
where $E_\mathrm{tor}$ is the torsion part of $E$, and each $E_i/E_{i-1}$ is a torsion-free $\mu_\omega$-semistable sheaf of slope $\mu_i$ with $\mu_\omega^{\max}(E)=\mu_1>\cdots>\mu_n=\mu_\omega^{\min}(E)$. Then
$$
    \sT_{\beta,\omega}
    = \{\text{torsion sheaves}\}
        \cup
    \{E\mid\mu_\omega^{\min}(E)>\beta\cdot\omega\}
    \quad\text{and}\quad
    \sF_{\beta,\omega}
    = \{E\mid\mu_\omega^{\max}(E)\leq\beta\cdot\omega\}
$$
form a torsion pair on $\sA$. One obtains a new heart $\sA_{\beta,\omega}^\sharp\subseteq\Db(X)$ by applying Lemma~\ref{lem:tilting} to this torsion pair.

\begin{thm}[\cite{ArBe}*{Corollary~2.1}]
\label{thm:surfacestabilitychern}
Let $X$ be a smooth complex projective surface.
Consider the central charge
$$
    Z'_{\beta,\omega}(E)
    = e^{\beta+i\omega}\cdot\ch(E).
$$
Then the pair $\sigma'_{\beta,\omega}=(Z'_{\beta,\omega},\sA_{\beta,\omega}^\sharp)$ gives a stability condition on $\Db(X)$.
\end{thm}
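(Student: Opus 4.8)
The plan is to verify, for the pair $\sigma'_{\beta,\omega}=(Z'_{\beta,\omega},\sA_{\beta,\omega}^\sharp)$, the three conditions (a), (b), (c) of Bridgeland's criterion recalled above (the $(Z,\sA)$-reformulation of \cite{Bri07}*{Proposition~5.3}). Conditions (b) and (c) are essentially formal, whereas condition (a)---that $Z'_{\beta,\omega}$ maps every nonzero object of the tilted heart into $\cH\cup\bR_{<0}$---carries all the geometric content. To set up coordinates, I would expand $Z'_{\beta,\omega}(E)=e^{\beta+i\omega}\cdot\ch(E)$ in terms of the $\beta$-twisted Chern character $\ch^\beta(E)=e^{-\beta}\ch(E)=(\ch_0^\beta,\ch_1^\beta,\ch_2^\beta)$; after normalizing the overall sign so that skyscraper sheaves have phase $1$, this reads
$$
\imag Z'_{\beta,\omega}(E)=\omega\cdot\ch_1^\beta(E),
\qquad
\real Z'_{\beta,\omega}(E)=\tfrac{\omega^2}{2}\,\ch_0^\beta(E)-\ch_2^\beta(E).
$$

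For condition (a), I would first reduce to coherent sheaves: any $E\in\sA_{\beta,\omega}^\sharp$ fits in a triangle $H^{-1}(E)[1]\to E\to H^0(E)$ with $H^0(E)\in\sT_{\beta,\omega}$ and $H^{-1}(E)\in\sF_{\beta,\omega}$, and $Z'_{\beta,\omega}$ is additive on triangles. By the definition of the torsion pair, $\omega\cdot\ch_1^\beta\ge 0$ on $\sT_{\beta,\omega}$ and $\omega\cdot\ch_1^\beta\le 0$ on $\sF_{\beta,\omega}$, so $\imag Z'_{\beta,\omega}(E)\ge 0$ throughout the heart. The crux, and the step I expect to be the main obstacle, is showing that $\imag Z'_{\beta,\omega}(E)=0$ forces $\real Z'_{\beta,\omega}(E)<0$ for $E\neq 0$. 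In that case $H^0(E)$ is supported in dimension zero, contributing $\real Z'_{\beta,\omega}\le 0$ and strictly if nonzero, while $H^{-1}(E)$ is either zero or a $\mu_\omega$-semistable torsion-free sheaf of slope exactly $\beta\cdot\omega$. For the latter, the Hodge index theorem gives $(\ch_1^\beta)^2\le 0$ (as $\omega\cdot\ch_1^\beta=0$ with $\omega$ ample) and the classical Bogomolov--Gieseker inequality gives $(\ch_1^\beta)^2\ge 2\,\ch_0^\beta\ch_2^\beta$, so $\ch_2^\beta\le 0$ and $\real Z'_{\beta,\omega}(H^{-1}(E))\ge\tfrac{\omega^2}{2}\ch_0^\beta>0$, hence $\real Z'_{\beta,\omega}(H^{-1}(E)[1])<0$; adding the two contributions settles (a). This is where the geometry of the surface genuinely enters.

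For the Harder--Narasimhan property (b), I would invoke the standard existence criterion: it suffices to exclude infinite chains of subobjects of a fixed $E\in\sA_{\beta,\omega}^\sharp$ with strictly monotone phases, and since $\imag Z'_{\beta,\omega}\ge 0$ on the heart, $\imag Z'_{\beta,\omega}$ is automatically bounded along such a chain. When $\beta,\omega$ are rational, $\imag Z'_{\beta,\omega}$ takes values in $\tfrac1N\bZ_{\ge 0}$ on the heart for a suitable $N$, which makes the criterion apply directly; the general case then follows by deforming from a nearby rational pair, using that the forgetful map $\Stab(\Db(X))\to\Hom(\cN(X),\bC)$ is a local homeomorphism and that within a small chart the heart is locally constant. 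Condition (c) is handled in the same spirit: one exhibits a quadratic form on $\cN(X)\otimes_\bZ\bR$---essentially the $\omega$-twisted Bogomolov discriminant---which is negative definite on $\ker Z'_{\beta,\omega}$ and non-negative on $Z'_{\beta,\omega}$-semistable classes, the non-negativity being the Bogomolov--Gieseker inequality promoted to tilt-semistable objects, and a compactness argument then produces the constant $C$; in the rational case it also follows directly from the discreteness of $\imag Z'_{\beta,\omega}$ and is carried to the general case by the same deformation.
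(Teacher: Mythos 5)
The paper does not prove this statement at all---it is imported directly from Arcara--Bertram \cite{ArBe}*{Corollary~2.1}---so the relevant comparison is with the standard argument in the literature, which your outline reconstructs correctly in its main geometric step. Your verification of condition (a) is exactly the usual one and is sound: reduce to the cohomology sheaves $H^0(E)\in\sT_{\beta,\omega}$, $H^{-1}(E)\in\sF_{\beta,\omega}$, observe $\imag Z'_{\beta,\omega}\geq 0$ on the heart, and in the boundary case $\imag Z'_{\beta,\omega}(E)=0$ combine the Hodge index theorem with the Bogomolov--Gieseker inequality for the $\mu_\omega$-semistable sheaf $H^{-1}(E)$ of slope $\beta\cdot\omega$ to get $\real Z'_{\beta,\omega}(H^{-1}(E))\geq\tfrac{\omega^2}{2}\ch_0(H^{-1}(E))>0$, while a nonzero $H^0(E)$ is then a zero-dimensional sheaf contributing a strictly negative real part. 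This also makes transparent why no hypothesis such as $\omega^2>2$ is needed here, in contrast with the Mukai-vector variant of Remark~\ref{rmk:central_mukai}.

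The one step whose justification would not survive as written is the passage from rational to arbitrary real $(\beta,\omega)$. The tilted heart $\sA^\sharp_{\beta,\omega}$ is \emph{not} locally constant: the torsion pair is defined by the inequalities $\mu_\omega>\beta\cdot\omega$ versus $\mu_\omega\leq\beta\cdot\omega$, and whenever $\beta\cdot\omega$ moves past the slope of a $\mu_\omega$-stable sheaf that sheaf migrates between $\sT_{\beta,\omega}$ and $\sF_{\beta,\omega}$; since slopes of stable sheaves typically accumulate, you cannot argue that nearby points in a chart of $\Stab(\Db(X))$ share the heart $\sA^\sharp_{\beta_0,\omega_0}$. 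The standard repair is the one already implicit in your support-property discussion: prove the support property at a rational point via the $\omega$-twisted discriminant form, deform the central charge using Bridgeland's deformation theorem, check that the skyscraper sheaves remain stable of the same phase in the deformed stability condition, and then invoke the characterization of such geometric stability conditions (their heart is the tilt of $\Coh(X)$ at the corresponding torsion pair) to identify the deformed stability condition with $\sigma'_{\beta,\omega}$; alternatively one proves the HN property directly for real classes. Relatedly, in the rational case discreteness of $\imag Z'_{\beta,\omega}$ alone is not quite enough for the HN property---one also needs noetherianity of $\sA^\sharp_{\beta,\omega}$ (or an equivalent finiteness statement) to rule out infinite chains of quotients, which is part of the cited proof. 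With these two points repaired, your outline agrees with the argument of \cite{ArBe}.
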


Let $X$ be a smooth complex projective variety. Then the elements in the subgroup
$$
    \Aut_\std(\Db(X))
    \colonequals (\Pic(X)\rtimes\Aut(X))\times\bZ[1]
    \subseteq\Aut(\Db(X))
$$
are called \emph{standard autoequivalences}. The following proposition shows that the realization problem has a positive answer for standard autoequivalences on a surface.

\begin{prop}
\label{prop:nielsen_stdauto-surf}
Let $X$ be a smooth complex projective surface. Then
\begin{itemize}
    \item every finite subgroup of $\Aut_\std(\Db(X))/\bZ[2]$ fixes a point on $\Stab(X)/\bC$, and
    \item every finite subgroup of $\Aut_\std(\Db(X))$ fixes a point on $\Stab(X)$.
\end{itemize}
\end{prop}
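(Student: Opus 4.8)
\emph{Proposal.} The plan is to produce an explicit $\Aut_\std$-equivariant slice inside $\Stab(X)/\bC$ on which standard autoequivalences act by affine transformations, and then to fix a point by averaging over the finite subgroup. Concretely, I would use the tilt stability conditions $\sigma'_{\beta,\omega}=(Z'_{\beta,\omega},\sA^\sharp_{\beta,\omega})$ of Theorem~\ref{thm:surfacestabilitychern}, parametrized by $\beta\in\NS(X)\otimes\bR$ and an ample class $\omega\in\Amp(X)$.

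The first step is to dispose of the shift functors. One checks that $[1]$ acts on $\Stab(X)$ exactly as the element $1\in\bC$: since $[E[1]]=-[E]$ in $\cN(X)$ and $\sP(\phi)[1]=\sP(\phi+1)$, we get $[1](Z,\sP)=(-Z,\sP(\,\cdot\,+1))=(Z,\sP)\cdot 1$. Hence the central subgroup $\bZ[1]\subseteq\Aut_\std(\Db(X))$ acts trivially on $\Stab(X)/\bC$, and the action of $\Aut_\std(\Db(X))$ (and of $\Aut_\std(\Db(X))/\bZ[2]$) on $\Stab(X)/\bC$ factors through $\Pic(X)\rtimes\Aut(X)$; in particular $[n](\sigma'_{\beta,\omega})=\sigma'_{\beta,\omega}\cdot n$.

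The second, and main, step is to compute how $\Pic(X)\rtimes\Aut(X)$ moves the tilt slice. For $L\in\Pic(X)$, tensoring by $L$ multiplies $\ch$ by $e^{c_1(L)}$ and shifts every $\mu_\omega$-slope by $\omega\cdot c_1(L)$, so it carries the torsion pair $(\sT_{\beta,\omega},\sF_{\beta,\omega})$ to $(\sT_{\beta+c_1(L),\omega},\sF_{\beta+c_1(L),\omega})$ and therefore $(-\otimes L)(\sigma'_{\beta,\omega})=\sigma'_{\beta+c_1(L),\omega}$; for $f\in\Aut(X)$, pullback transports $\mu_\omega$-stability to $\mu_{f^*\omega}$-stability and respects the intersection pairing, so $f^*(\sigma'_{\beta,\omega})=\sigma'_{f^*\beta,\,f^*\omega}$, with $f^*\omega$ again ample. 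Consequently the $\Aut_\std$-action preserves $\{\sigma'_{\beta,\omega}\}$ up to $\bC$, and the map
$$
    \NS(X)\otimes\bR\ \times\ \Amp(X)\ \longrightarrow\ \Stab(X)/\bC,\qquad (\beta,\omega)\longmapsto[\sigma'_{\beta,\omega}]
$$
is equivariant for the $\Pic(X)\rtimes\Aut(X)$-action $(\beta,\omega)\mapsto(f^*\beta+c_1(L),\,f^*\omega)$ on the source — an action by affine transformations of $\NS(X)\otimes\bR$ that preserves the open convex cone $\Amp(X)$ (and on which $\Pic^0(X)$ acts trivially). Given a finite subgroup $G\subseteq\Aut_\std(\Db(X))/\bZ[2]$ with finite image $\overline G$ in $\Pic(X)\rtimes\Aut(X)$, I would then pick any $(\beta_1,\omega_1)$ and replace it by the barycenter $(\beta_0,\omega_0)=\frac1{|\overline G|}\sum_{g\in\overline G}g\cdot(\beta_1,\omega_1)$; affineness makes $\beta_0$ a $\overline G$-fixed point, and convexity and $\overline G$-invariance of $\Amp(X)$ keep $\omega_0$ inside it and fixed. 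Hence $[\sigma'_{\beta_0,\omega_0}]$ is fixed by $\overline G$, thus by $G$, giving the first bullet. For the second bullet, a finite subgroup $H\subseteq\Aut_\std(\Db(X))$ has finite image in $\Pic(X)\rtimes\Aut(X)$, so by the above $H$ preserves $\sigma'_{\beta_0,\omega_0}\cdot\bC$, and Lemma~\ref{lemma:q2impliesq1} upgrades this to $H$ fixing $\sigma'_{\beta_0,\omega_0}$ itself.

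The only place requiring care is the second step: pinning down the transformation rule for $\sigma'_{\beta,\omega}$ under $-\otimes L$ and $f^*$ (and the sign conventions in $Z'_{\beta,\omega}$, $\mu_\omega^{\min}$, and the tilting of Lemma~\ref{lem:tilting}), and the bookkeeping that lets the shift subgroup drop out after passing to $\Stab(X)/\bC$. The averaging argument and the appeal to Lemma~\ref{lemma:q2impliesq1} are then purely formal.
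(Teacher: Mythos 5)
Your proposal is correct, and it runs on the same engine as the paper's proof: the tilt stability conditions $\sigma'_{\beta,\omega}$ of Theorem~\ref{thm:surfacestabilitychern}, the observation that $[1]$ acts as $1\in\bC$ so that shifts disappear on $\Stab(X)/\bC$, and an averaging argument over the finite group. The difference is where the averaging happens and the logical order of the two bullets. The paper works directly in $\Stab(X)$: it averages an ample class to get a $G$-invariant $\widetilde{\alpha}$, takes $\beta$ and $\omega$ proportional to $\widetilde{\alpha}$, and checks separately that torsion line bundles and finite subgroups of $\Aut(X)$ fix $\sigma'_{\beta,\omega}$ on the nose, so it never needs Lemma~\ref{lemma:q2impliesq1}. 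You instead record the full transformation rule of the tilt slice (with the Mukai-type pairing implicit in Theorem~\ref{thm:surfacestabilitychern} one indeed gets $(-\otimes L)(\sigma'_{\beta,\omega})=\sigma'_{\beta+c_1(L),\omega}$ and $f^*(\sigma'_{\beta,\omega})=\sigma'_{f^*\beta,\,f^*\omega}$, up to the sign conventions you flag), take a barycenter for the induced affine action on $\NS(X)\otimes\bR\times\Amp(X)$, and then upgrade the $\Stab(X)/\bC$-statement to the $\Stab(X)$-statement via Lemma~\ref{lemma:q2impliesq1}. What your version buys is uniformity in the semidirect product: a finite subgroup of $\Pic(X)\rtimes\Aut(X)$ may contain elements $(L,f)$ of finite order whose Picard part is not numerically torsion (for instance $f^*c_1(L)=-c_1(L)$ with $L\otimes f^*L\cong\cO_X$, as happens on K3 surfaces with a suitable involution); such elements move every point of the paper's invariant slice $\beta,\omega\in\bR\widetilde{\alpha}$ and are not explicitly covered by the paper's two-case split, whereas your barycenter in the $\beta$-variable absorbs them automatically. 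What the paper's version buys is economy: with $\beta,\omega$ chosen invariant from the start, the invariance of the slope function, the torsion pair and the central charge is immediate, and no transformation rule for $\sigma'_{\beta,\omega}$ has to be verified.
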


\begin{proof}
Notice that $[1]$ fixes $\Stab(X)/\bC$ pointwisely. To prove both statements, it suffices to prove that every finite subgroup of $\Pic(X)\rtimes\Aut(X)$ fixes a point on $\Stab(X)$. In fact, we will show that such a subgroup fixes the stability condition $\sigma'_{\beta,\omega}=(Z'_{\beta,\omega},\sA_{\beta,\omega}^\sharp)$ given in Theorem~\ref{thm:surfacestabilitychern}.

First we show that, for every $L\in\Pic(X)$ such that $L^{\otimes n}\cong\cO_X$ for some $n\geq1$, the autoequivalence $-\otimes L$ fixes $\sigma'_{\beta,\omega}=(Z'_{\beta,\omega},\sA_{\beta,\omega}^\sharp)$. It is sufficient to show that it preserves the slope function $\mu_\omega$ and the central charge $Z'_{\beta,\omega}$. Notice that $nc_1(L)=0$. Thus,
$$
    n\left(
        \mu_\omega(E\otimes L)-\mu_\omega(E)
    \right)
    = \frac{
            n(c_1(E\otimes L)-c_1(E))\cdot\omega
        }{\mathrm{rank}(E)}
    = nc_1(L)\cdot\omega = 0
$$
which implies $\mu_\omega(E\otimes L)=\mu_\omega(E)$. Similarly,
\begin{align*}
    n\left(
        Z'_{\beta,\omega}(E\otimes L)-Z'_{\beta,\omega}(E)
    \right)
    & = n e^{\beta+i\omega}\cdot
        \left(
            \ch(E\otimes L)-\ch(E)
        \right) \\
    & = -nc_1(L)\cdot\ch_1(E)-\frac{1}{2}nc_1(L)^2\cdot\ch_0(E)
    = 0.
\end{align*}
Hence $Z'_{\beta,\omega}(E\otimes L)=Z'_{\beta,\omega}(E)$. As a result, every finite subgroup of $\Pic(X)$ fixes $\sigma'_{\beta,\omega}$.

It remains to show that any finite subgroup $G\subseteq\Aut(X)$ fixes $\sigma'_{\beta,\omega}$. Choose any ample divisor $\alpha\in\Amp(X)$ and take the average
$$
    \widetilde{\alpha} = \sum_{g\in G}g^*\alpha.
$$
Then $\widetilde{\alpha}$ is a $G$-invariant ample divisor. Let $\beta$ (resp.~$\omega$) to be a real (resp.~positive) multiple of $\widetilde{\alpha}$. It is clear that the slope function $\mu_\omega$, the torsion pair $(\sT_{\beta,\omega},\sF_{\beta,\omega})$, and the central charge $Z'_{\beta,\omega}$ are all $G$-invariant. This implies that $\sigma'_{\beta,\omega}$ is fixed by $G$.
\end{proof}

\begin{rmk}
\label{rmk:central_mukai}
Given a complex algebraic K3 surface $X$, consider the Mukai vector
$$
    v = \ch\cdot\sqrt{\mathrm{td}_X}
    = (\ch_0,\ch_1,\ch_0+\ch_2)
$$
and the central charge
$$
    Z_{\beta,\omega}(E)
    = e^{\beta+i\omega}\cdot v(E).
$$
Then the pair $\sigma_{\beta,\omega}=(Z_{\beta,\omega},\sA^\sharp_{\beta,\omega})$ gives a stability condition on $\Db(X)$, provided $\beta$ and $\omega$ are chosen so that for all spherical sheaf $E$ on $X$ one has $Z_{\beta,\omega}(E)\notin\bR_{\leq 0}$. This holds in particular when $\omega^2>2$ \cite{Bri08}*{Lemma~6.2}. (This can be attained by choosing $\omega$ to be a sufficiently large multiple of an invariant ample class.) Following the same strategy of proof, one can reproduce Proposition~\ref{prop:nielsen_stdauto-surf} for K3 surfaces using such a stability condition.
\end{rmk}

\begin{rmk}
One can prove that Nielsen realization holds for standard autoequivalences in the same manner for $\dim(X)\geq 3$, so long as there is a construction of Bridgeland stability conditions on $\Db(X)$ which involves only the choice of an ample class $\omega$. For instance, this applies to smooth Fano threefolds and quintic threefolds, by the works of \cites{BMSZ,Li19}.
\end{rmk}

\section{Gepner type autoequivalences on K3 surfaces}
\label{sect:k3_gepner}

Gepner type autoequivalences on K3 surfaces can be described in a more explicit way in terms of their actions on certain period domains of stability conditions. This section is mainly devoted to the necessary preliminaries and a general discussion on Gepner type autoequivalences in the case of K3 surfaces. Based on the terminology established in the preliminaries, we discuss in the last part of this section the realization problem for abelian surfaces and generic twisted K3 surfaces.

\subsection{Basics on autoequivalences and stability conditions}
\label{subsect:prelim_stabK3}

Let us review basic facts about autoequivalences and stability conditions on K3 surfaces after \cites{Bri08,HMS09}. Along the way, we will set up notations to be used in the remaining part of this paper.

Given an algebraic K3 surface $X$, one can extend the polarized weight two Hodge structure on the middle cohomology $H^2(X,\bZ)$ to the total cohomology
$$
    \mukaiH(X,\bZ)\colonequals
    H^0(X,\bZ)\oplus H^2(X,\bZ)\oplus H^4(X,\bZ)
$$
such that the $(1,1)$-part is isomorphic to the numerical Grothendieck group
$$
    \cN(X)\cong
    H^0(X,\bZ)\oplus\NS(X)\oplus H^4(X,\bZ)
$$
and the polarization is obtained by extending the intersection product on $H^2(X,\bZ)$ orthogonally to the other summands as
$$
    (r_1, D_1, s_1)\cdot(r_2, D_2, s_2)
    = D_1\cdot D_2 - r_1s_2 - s_1r_2.
$$
This turns $\mukaiH(X,\bZ)$ into an even unimodular lattice of signature $(4,20)$ called the \emph{Mukai lattice}. The orientations of two given positive $4$-planes in $\mukaiH(X,\bR)\colonequals\mukaiH(X,\bZ)\otimes\bR$ are either coincide or opposite to each other after projected orthogonally from an arbitrary negative $20$-plane. In particular, one can compare the orientations of a positive $4$-plane and its image under a Hodge isometry $\phi\in\Aut(\mukaiH(X,\bZ))$ this way. Let us denote by 
$$
    \Aut^+(\mukaiH(X,\bZ))\subseteq\Aut(\mukaiH(X,\bZ))
$$
the subgroup of orientation-preserving isometries.

As asserted by \cite{HMS09}*{Corollary~3}, every autoequivalence $\Phi\in\Aut(\Db(X))$ induces an element $\phi\in\Aut^+(\mukaiH(X,\bZ))$ and, conversely, every element of $\Aut^+(\mukaiH(X,\bZ))$ arises this way. This gives a short exact sequence
\begin{equation}
\label{eqn:torelli}
\xymatrix@R=0pt{
    0\ar[r] &
    \mathcal{I}(\Db(X))\ar[r] &
    \Aut(\Db(X))\ar[r] &
    \Aut^+(\mukaiH(X,\bZ))\ar[r] &
    0.
}
\end{equation}
and we call the kernel $\mathcal{I}(\Db(X))$ the \emph{Torelli group}. The notation and name for the kernel are inspired by the study of mapping class groups $\mathrm{Mod}(\Sigma)$ of topological surfaces $\Sigma$, where the subgroup of $\mathrm{Mod}(\Sigma)$ acting trivially on $H_1(\Sigma,\bZ)$ is called the Torelli group and usually denoted as $\mathcal{I}(\Sigma)$. Let us denote by
$$
    \Aut_s(\Db(X))\subseteq\Aut(\Db(X))
    \qquad\text{and}\qquad
    \Aut^+_s(\mukaiH(X,\bZ))\subseteq\Aut^+(\mukaiH(X,\bZ))
$$
the subgroups of symplectic elements, that is, the elements acting trivially on $H^{2,0}(X)$. Then the exact sequence \eqref{eqn:torelli} contains the exact subsequence
$$\xymatrix{
    0\ar[r] &
    \mathcal{I}(\Db(X))\ar[r] &
    \Aut_s(\Db(X))\ar[r] &
    \Aut^+_s(\mukaiH(X,\bZ))\ar[r] &
    0.
}$$
Notice that $\mathcal{I}(\Db(X))$ appears as the kernel as its elements are symplectic by definition.

Every stability condition $\sigma = (Z,\mathscr{P})$ on $X$ uniquely determines a $w\in\cN(X)\otimes\bC$ such that the central charge takes the form $Z(-) = (w, v(-))$, where $v(-) = \mathrm{ch}(-)\sqrt{\mathrm{td}(X)}$ is the Mukai vector. This identifies $Z$ as an element of $\cN(X)\otimes\bC$ and thus induces a map
\begin{equation}
\label{eqn:stab-to-groth}
\xymatrix@R=0pt{
    \Stab(X)\ar[r] &
    \cN(X)\otimes\bC : (Z,\mathscr{P})
    \ar@{|->}[r] & Z.
}
\end{equation}
There is a distinguished connected component
$
    \Stab^\dag(X)\subseteq\Stab(X)
$
which contains the set of stability conditions for which all skyscraper sheaves $\cO_x$ of points $x\in X$ are stable of the same phase. To describe the image of \eqref{eqn:stab-to-groth} when restricted to this component, let us first consider the open subset
$$
    \cP(X)\colonequals\left\{
    \gamma\in\cN(X)\otimes\bC
    \;\middle|\;
    \mathrm{Re}(\gamma),
    \mathrm{Im}(\gamma)
    \text{ span a positive plane in }
    \cN(X)\otimes\bR
    \right\}.
$$
The real and imaginary parts of each $\gamma\in\cP(X)$ determines a natural orientation on the positive plane spanned by them. This leads to a decomposition into connected components
$$
    \cP(X) = \cP^+(X)\sqcup\cP^-(X)
$$
which are complex conjugate to each other. Here we require $\cP^+(X)$ to be the component which contains $e^{ih}$ with $h\in\NS(X)$ an ample class.

By Bridgeland \cite{Bri08}*{Theorem~1.1}, if we remove every hyperplane section on $\cP^+(X)$ which is orthogonal to a $(-2)$-vector in $\cN(X)$ to get
$$
    \cP_0^+(X)\colonequals
    \cP^+(X)
    \;\setminus\;
    \bigcup_{\substack{
        \delta\in\cN(X),\;
        \delta^2=-2
    }}\delta^\perp,
$$
then the restriction of \eqref{eqn:stab-to-groth} to the component $\Stab^\dag(X)$ defines a topological covering
$$\xymatrix{
    \pi\colon\Stab^\dag(X)\ar[r] & \cP_0^+(X).
}$$
Moreover, the subgroup 
$
    \mathcal{I}^\dag(\Db(X))\subseteq\mathcal{I}(\Db(X))
$
preserving the component $\Stab^\dag(X)$ acts freely on $\Stab^\dag(X)$ and is the group of deck transformations of $\pi$. Due to this result, the space $\cP^+_0(X)$ may be considered as a \emph{period domain} for stability conditions.

There is another space $\cQ^+_0(X)$ which may also be referred to as a period domain. To define it, let us start with
\begin{equation}
\label{eqn:classical_domain}
    \cQ(X)\colonequals
    \{
        [\gamma]\in\bP(\cN(X)\otimes\bC)
        \mid
        \gamma^2 = 0,\; \gamma\overline{\gamma}>0
    \}.
\end{equation}
For each class $[\gamma]\in\cQ(X)$, the oriented plane in $\cN(X)\otimes\bR$ spanned by $\mathrm{Re}(\gamma)$ and $\mathrm{Im}(\gamma)$ is independent of the choices of representatives $\gamma$. This assignment identifies $\cQ(X)$ as the open subset of positive planes within the Grassmannian of oriented planes. In particular, $\cQ(X)$ has two connected components. Let us denote them as $\cQ^+(X)$ and $\cQ^-(X)$ so that they are respectively the bases under $\cP^+(X)$ and $\cP^-(X)$ in the $\GL^+(2,\bR)$-fibration
$$\xymatrix{
    \cP(X)\ar[r]
    & \cQ(X) : \gamma\ar@{|->}[r]
    & \text{the oriented plane spanned by }\mathrm{Re}(\gamma)\text{ and }\mathrm{Im}(\gamma).
}$$
The space we desire is then obtained by removing the locus orthogonal to $(-2)$-vectors
$$
    \cQ_0^+(X)\colonequals
    \cQ^+(X)
    \;\setminus\;
    \bigcup_{\substack{
        \delta\in\cN(X),\;
        \delta^2=-2
    }}\delta^\perp.
$$

In the study of Gepner type autoequivalences for K3 surfaces, we will consider only the stability conditions lying on $\Stab^\dag(X)$. The main reason is that, for K3 surfaces, this component is expected to be a simply connected space invariant under the actions of autoequivalences \cite{Bri08}*{Conjecture~1.2}, which is known to hold in the case of Picard number one \cite{BB17}*{Theorem~1.3}. In view of this conjecture, the structure of the group of autoequivalences should be totally determined by how it acts on $\Stab^\dag(X)$. Notice that, if an autoequivalence is of Gepner type with respect to a stability condition on $\Stab^\dag(X)$, then it preserves $\Stab^\dag(X)$ as the $\bC$-action preserves connected components. Suppose that $\Phi$ is an autoequivalence of Gepner type with respect to $\sigma = (Z,\sP)\in\Stab^\dag(X)$ and let $\phi$ denote its action on the Mukai lattice. Then there exists $\lambda\in\bC$ such that
$$
    \Phi(\sigma) = \sigma\cdot\lambda
    \qquad\text{which implies}\qquad
    \phi(Z) = Z\cdot e^{-i\pi\lambda}.
$$
Let $P\subseteq\cN(X)\otimes\bR$ be the positive oriented plane spanned by $Z$. Then the second relation shows that $\phi$ fixes $P$ as a point on $\cQ^+_0(X)$, and its action on $\cP^+_0(X)$ is a change of frames on the plane $P$ by applying $e^{-i\pi\lambda}\in\GL^+(2,\bR)$. This observation lies in the core of our study of Gepner type autoequivalences in the K3 case.

\subsection{Groups of Gepner type autoequivalences}
\label{subsect:gepnergp}

Given an algebraic K3 surface $X$ and a stability condition $\sigma\in\Stab^\dag(X)$ with central charge $Z$, one can consider the group of autoequivalences on $\Db(X)$ which fix $\sigma$:
$$
    \Aut(\Db(X),\sigma)
    \colonequals\{
        \Phi\in\Aut(\Db(X))
        \mid
        \Phi(\sigma) = \sigma
    \}
$$
and also the group of Hodge isometries on $\mukaiH(X,\bZ)$ which fix $Z$:
$$
    \Aut(\mukaiH(X,\bZ),Z)
    \colonequals\{
        \phi\in\Aut(\mukaiH(X,\bZ))
        \mid
        \phi(Z) = Z
    \}.
$$
Let us denote by $\Aut_s(\Db(X),\sigma)$ and $\Aut_s(\mukaiH(X,\bZ),Z)$ respectively the subgroups of symplectic elements in these two groups. By \cite{Huy16}*{Remark~1.2 \& Proposition~1.4}, these groups are finite and there is an isomorphism
\begin{equation}
\label{eqn:fixing-stab}
\xymatrix{
    \Aut(\Db(X),\sigma)\ar[r]^-\sim
    & \Aut(\mukaiH(X,\bZ),Z).
}
\end{equation}
This isomorphism also induces an isomorphism between their symplectic subgroups.

We are going to extend the above picture to Gepner type autoequivalences following the same strategy. Autoequivalences of Gepner type with respect to $\sigma$ form a subgroup
$$
    \Aut(\Db(X),\,\sigma\cdot\bC)
    \colonequals\{
        \Phi\in\Aut(\Db(X))
        \mid
        \Phi(\sigma) = \sigma\cdot\lambda
        \;\text{ for some }\;
        \lambda\in\bC
    \}.
$$
Similarly, Hodge isometries which acts on $Z$ as a rescaling form a subgroup
$$
    \Aut(\mukaiH(X,\bZ),\,Z\cdot\bC^*)
    \colonequals\{
        \phi\in\Aut(\mukaiH(X,\bZ))
        \mid
        \phi(Z) = Z\cdot\kappa
        \;\text{ for some }\;
        \kappa\in\bC^*
    \}.
$$
Let us denote by $\Aut_s(\Db(X),\,\sigma\cdot\bC)$ and $\Aut_s(\mukaiH(X,\bZ),\,Z\cdot\bC^*)$ respectively the subgroups of symplectic elements in these two groups.

\begin{lemma}
\label{lemma:gepner_mukaiH_orient}
We have
$
    \Aut(\mukaiH(X,\bZ),\,Z\cdot\bC^*)
    \subseteq\Aut^+(\mukaiH(X,\bZ)).
$
\end{lemma}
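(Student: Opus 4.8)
The plan is to produce a $\phi$-invariant positive $4$-plane in $\mukaiH(X,\bR)$ on which $\phi$ acts with positive determinant; by the description of orientation-preserving isometries recalled above (comparison of oriented positive $4$-planes via orthogonal projection from a negative $20$-plane), this is precisely what it means for $\phi$ to lie in $\Aut^+(\mukaiH(X,\bZ))$. The required $4$-plane will be the direct sum of two orthogonal positive $2$-planes: one attached to the period of $X$, and one attached to the central charge $Z$.

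First, since $\phi$ is a Hodge isometry it preserves the line $H^{2,0}(X)\subseteq\mukaiH(X,\bC)$ spanned by the period $\sigma_X$, so $\phi(\sigma_X)=\lambda_0\sigma_X$ for some $\lambda_0\in\bC^\ast$. As $\phi$ is defined over $\bR$, it also fixes the conjugate line, hence preserves the real plane $\Pi=\langle\real(\sigma_X),\imag(\sigma_X)\rangle_\bR$; this plane is positive because $\sigma_X^2=0$ and $\sigma_X\cdot\overline{\sigma_X}>0$, and via the identification $\Pi\cong\bC$ sending $\real(w\sigma_X)$ to $w$ the restriction $\phi|_\Pi$ is multiplication by $\lambda_0$, hence has determinant $|\lambda_0|^2>0$. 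The very same argument applied to the hypothesis $\phi(Z)=\kappa Z$ with $\kappa\in\bC^\ast$ shows that $\phi$ preserves the plane $P=\langle\real(Z),\imag(Z)\rangle_\bR$ — which is positive since $Z\in\cP^+(X)$, because $\sigma\in\Stab^\dag(X)$ — and acts on $P$ with determinant $|\kappa|^2>0$.

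It then remains to observe that $P$ and $\Pi$ are orthogonal: the central charge $Z$ lies in $\cN(X)\otimes\bC$, the $(1,1)$-part of the Mukai lattice, which is orthogonal to the $(2,0)$-part $\bC\sigma_X$ for the Mukai pairing, so $\real(Z),\imag(Z)$ are orthogonal to $\real(\sigma_X),\imag(\sigma_X)$. Two orthogonal positive planes meet only in $0$, so $V=P\oplus\Pi$ is a positive $4$-plane, it is $\phi$-invariant, and $\phi|_V$ is block diagonal with both blocks of positive determinant; hence $\phi$ preserves the natural orientation of $V$ determined by the ordered basis $(\real(Z),\imag(Z),\real(\sigma_X),\imag(\sigma_X))$, and therefore $\phi\in\Aut^+(\mukaiH(X,\bZ))$. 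I do not expect a genuine obstacle here: everything is the formal observation that multiplication by a nonzero complex number on $\bR^2$ has positive determinant, and the only non-linear-algebra input is the orthogonality $P\perp\Pi$, i.e.\ the Hodge-theoretic bookkeeping that $Z$ is of type $(1,1)$ while $\sigma_X$ is of type $(2,0)$.
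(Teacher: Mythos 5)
Your proof is correct and is essentially the paper's own argument: both take the positive plane $P$ spanned by $\real(Z),\imag(Z)$ together with the positive plane $(H^{2,0}\oplus H^{0,2})(X,\bR)$, note that $\phi$ preserves each with positive determinant (hence preserves their orientations), and conclude from the resulting $\phi$-invariant oriented positive $4$-plane that $\phi\in\Aut^+(\mukaiH(X,\bZ))$. You merely make explicit the determinant computations and the $(1,1)$-versus-$(2,0)$ orthogonality that the paper leaves implicit.
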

\begin{proof}
Pick any $\phi\in\Aut(\mukaiH(X,\bZ),\,Z\cdot\bC^*)$ and let $P\subseteq\cN(X)\otimes\bR$ be the oriented positive plane determined by $Z$. The fact that $\phi(Z) = Z\cdot\kappa$ for some $\kappa\in\bC^*$ implies that it preserves the orientation of $P$. Because $\phi$ preserves the Hodge structure, it also preserves the orientation of the positive plane
$
    Q\colonequals
    (H^{2,0}\oplus H^{0,2})(X,\bR)
    \subseteq\mukaiH(X,\bR).
$
The two planes $P$ and $Q$ span a positive $4$-plane in $\mukaiH(X,\bR)$ whose orientation in invariant under the action of $\phi$, whence $\phi\in\Aut^+(\mukaiH(X,\bZ))$.
\end{proof}

\begin{lemma}
\label{lemma:gepner_finite_symp}
The symplectic subgroup $\Aut_s(\mukaiH(X,\bZ),\, Z\cdot\bC^*)$ is finite.
\end{lemma}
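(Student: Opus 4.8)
The plan is to realize every element of $\Aut_s(\mukaiH(X,\bZ),\,Z\cdot\bC^*)$ as an isometry preserving a \emph{fixed} positive-definite $4$-plane in $\mukaiH(X,\bR)$, and then to conclude from the discreteness of $\Aut(\mukaiH(X,\bZ))$. This is essentially an upgrade of the argument already used in the proof of Lemma~\ref{lemma:gepner_mukaiH_orient}.

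First I would fix the $4$-plane. Since $\sigma\in\Stab^\dag(X)$, its central charge $Z$ lies in $\cP_0^+(X)$, so the real span $P\colonequals\langle\mathrm{Re}(Z),\mathrm{Im}(Z)\rangle\subseteq\cN(X)\otimes\bR$ is a positive-definite plane. Let $Q\colonequals(H^{2,0}\oplus H^{0,2})(X,\bR)$, which is likewise a positive-definite plane; it is orthogonal to $\cN(X)\otimes\bR$, hence to $P$, so $W\colonequals P\oplus Q$ is a positive-definite $4$-plane and $W^\perp$ is negative-definite.

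Next I would check that every $\phi\in\Aut_s(\mukaiH(X,\bZ),\,Z\cdot\bC^*)$ satisfies $\phi(W)=W$. Being a Hodge isometry, $\phi$ preserves $H^{2,0}$ and $H^{0,2}$, hence $\phi(Q)=Q$ (symplecticity even gives that $\phi$ fixes $Q$ pointwise, though that is not needed). Writing $\phi(Z)=\kappa Z$ with $\kappa\in\bC^*$ gives $\phi(\mathrm{Re}(Z)),\phi(\mathrm{Im}(Z))\in P$, so $\phi(P)=P$; therefore $\phi(W)=W$, and since $\phi$ is an isometry also $\phi(W^\perp)=W^\perp$. Consequently $\phi$ preserves the positive-definite quadratic form $q_+\colonequals q|_W\oplus(-q|_{W^\perp})$ on $\mukaiH(X,\bR)$, where $q$ denotes the Mukai pairing.

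Finally I would conclude: the subgroup of $\Aut(\mukaiH(X,\bZ))$ preserving the fixed positive-definite form $q_+$ is finite, since it permutes the finitely many vectors of $\mukaiH(X,\bZ)$ of bounded $q_+$-length and these span $\mukaiH(X,\bZ)\otimes\bQ$, so the group embeds in a finite symmetric group (equivalently, it is a discrete subgroup of the compact group $\uO(q_+)\cong\uO(24)$). Hence $\Aut_s(\mukaiH(X,\bZ),\,Z\cdot\bC^*)$ is finite. I do not expect a genuine obstacle; the only point requiring care is confirming that $P$ and $Q$ are complementary positive planes — orthogonality because $P$ is algebraic while $Q$ is transcendental, and positivity of $P$ because $Z\in\cP_0^+(X)$ — after which an a priori unbounded problem becomes a compact one and the statement is immediate.
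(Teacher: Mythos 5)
Your proof is correct, but it takes a somewhat different route from the paper. The paper exploits the symplectic hypothesis right away: a symplectic Hodge isometry is determined by its restriction to $\cN(X)$, so $\Aut_s(\mukaiH(X,\bZ),\,Z\cdot\bC^*)$ embeds into $\uO(\cN(X),\,Z\cdot\bC^*)$, which is then embedded into the compact group $\uO(P)\times\uO(P^\perp)$ with $P^\perp$ taken inside $\cN(X)\otimes\bR$; finiteness follows since an integral isometry group is discrete. You instead never use symplecticity: you work in the full Mukai lattice, observe that any $\phi\in\Aut(\mukaiH(X,\bZ),\,Z\cdot\bC^*)$ preserves the positive $4$-plane $W=P\oplus Q$ (this is exactly the plane appearing in the proof of Lemma~\ref{lemma:gepner_mukaiH_orient}) and hence the auxiliary positive-definite form $q_+$, and conclude by the standard finiteness of lattice automorphisms preserving a positive-definite form. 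The trade-off is that your argument directly proves the stronger Lemma~\ref{lemma:gepner_finiteness} in one stroke, making the paper's two-step passage (symplectic subgroup via $\cN(X)$, then the full group via the exact sequence involving $\Aut(T(X))$ and $\uO(Q)$) unnecessary for mere finiteness, whereas the paper's reduction to $\cN(X)$ keeps the symplectic and transcendental contributions separated, which is the structure it reuses elsewhere (e.g.\ in diagram~\eqref{diag:gepner}). All the points you flag as needing care — positivity of $P$ from $Z\in\cP^+_0(X)$, orthogonality $P\perp Q$ since $Q\subseteq T(X)\otimes\bR=\cN(X)^\perp\otimes\bR$, and $\phi(W)=W$ from $\phi(Z)=\kappa Z$ together with preservation of the Hodge structure — are indeed the right ones and hold as you state.
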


\begin{proof}
Every element in this group is uniquely determined by its action on $\cN(X)$, so we can identify it as a subgroup of
$$
    \uO(\cN(X),\,Z\cdot\bC^*)\colonequals\{
        \phi\in\uO(\cN(X))
        \mid
        \phi(Z) = Z\cdot\kappa
        \;\text{ for some }\;
        \kappa\in\bC^*
    \}.
$$
Let $P\subseteq\cN(X)\otimes\bR$ be the oriented positive plane determined by $Z$. Then restricting an isometry to $P$ and its orthogonal complement $P^\perp\subseteq\cN(X)\otimes\bR$ defines an embedding
$$\xymatrix{
    \uO(\cN(X),\,Z\cdot\bC^*)\ar@{^(->}[r]
    & \uO(P)\times\uO(P^\perp)
}$$
Because $P$ and $P^\perp$ are respectively positive and negative definite, $\uO(P)$ and $\uO(P^\perp)$ are compact, whence $\uO(P)\times\uO(P^\perp)$ is compact by Tychonoff's theorem. The above embedding realizes $\uO(\cN(X),\,Z\cdot\bC^*)$ as a discrete subgroup of a compact group, so it is finite. This implies that $\Aut_s(\mukaiH(X,\bZ),\, Z\cdot\bC^*)$ is finite.
\end{proof}

\begin{lemma}
\label{lemma:gepner_finiteness}
The group $\Aut(\mukaiH(X,\bZ),\, Z\cdot\bC^*)$ is finite.
\end{lemma}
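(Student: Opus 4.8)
The plan is to bootstrap from the two preceding lemmas. Lemma~\ref{lemma:gepner_mukaiH_orient} shows that $\Aut(\mukaiH(X,\bZ),\,Z\cdot\bC^*)$ is a subgroup of $\Aut^+(\mukaiH(X,\bZ))$, and Lemma~\ref{lemma:gepner_finite_symp} shows that its symplectic part $\Aut_s(\mukaiH(X,\bZ),\,Z\cdot\bC^*)$ is finite. So it suffices to show that the symplectic part has finite index in the full group. First I would record the action of $\Aut(\mukaiH(X,\bZ),\,Z\cdot\bC^*)$ on $H^{2,0}(X)\cong\bC$: sending a Hodge isometry $\phi$ to its scalar action on a nonzero holomorphic $2$-form gives a character
$$
    \chi\colon\Aut(\mukaiH(X,\bZ),\,Z\cdot\bC^*)\longrightarrow\bC^*
$$
whose kernel is exactly $\Aut_s(\mukaiH(X,\bZ),\,Z\cdot\bC^*)$. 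Thus the quotient $\Aut(\mukaiH(X,\bZ),\,Z\cdot\bC^*)/\Aut_s(\mukaiH(X,\bZ),\,Z\cdot\bC^*)$ embeds into $\bC^*$, and I only need to argue that its image is finite.

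The image of $\chi$ is a subgroup of $\bC^*$, and I would pin it down by a boundedness-and-discreteness argument analogous to the proof of Lemma~\ref{lemma:gepner_finite_symp}. Any $\phi$ in the group acts on the transcendental-type positive plane $Q\colonequals(H^{2,0}\oplus H^{0,2})(X,\bR)$, and since $\phi$ preserves the lattice $\mukaiH(X,\bZ)$ it preserves a fixed positive-definite quadratic form on $Q$; hence $|\chi(\phi)|$ is an eigenvalue of an orthogonal transformation of the Euclidean plane $Q$, forcing $|\chi(\phi)|=1$. So the image of $\chi$ lies in the unit circle $S^1$. For discreteness: the full isometry group $\uO(\mukaiH(X,\bZ))$ acts on the finite-rank lattice, so the closure of $\Aut(\mukaiH(X,\bZ),\,Z\cdot\bC^*)$ inside $\uO(P)\times\uO(P^\perp_{\cN})\times\uO(Q)$ (where $P$ is the plane spanned by $Z$ and $P^\perp_{\cN}$ its complement in $\cN(X)\otimes\bR$) meets the discrete group $\uO(\mukaiH(X,\bZ))$ in a discrete subgroup of a compact group, hence is finite — exactly as in Lemma~\ref{lemma:gepner_finite_symp}, now carrying along the transcendental block. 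Since $\chi$ is continuous, the image is a finite subgroup of $S^1$, so $\Aut(\mukaiH(X,\bZ),\,Z\cdot\bC^*)$ is a finite extension of a finite group, hence finite.

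Alternatively — and this is probably the cleaner writeup — one can avoid separating the symplectic part altogether: the same Tychonoff argument from Lemma~\ref{lemma:gepner_finite_symp} applies verbatim once one observes, via Lemma~\ref{lemma:gepner_mukaiH_orient} and the Hodge-structure compatibility, that every $\phi\in\Aut(\mukaiH(X,\bZ),\,Z\cdot\bC^*)$ preserves the orthogonal decomposition of $\mukaiH(X,\bR)$ into the positive plane $P$ spanned by $Z$, the positive plane $Q$ spanned by $\real(\omega_X),\imag(\omega_X)$, and the negative-definite complement of $P\oplus Q$. Restricting to these three factors embeds the group into a product of compact orthogonal groups, and since it sits inside the discrete group of lattice isometries it must be finite. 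The main (and only real) obstacle is the bookkeeping needed to check that the $\bC^*$-rescaling of $Z$ together with symplecticity-failure on $Q$ still leaves an honest orthogonal decomposition into definite pieces — i.e.\ that a nontrivial scalar on $H^{2,0}$ is automatically a rotation of $Q$ of modulus one, which is forced by integrality of the lattice; everything after that is the Tychonoff/discreteness routine already deployed above.
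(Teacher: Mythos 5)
Your proposal is correct, and your preferred ``cleaner writeup'' is a genuinely more direct route than the paper's. The paper proceeds in two steps: it first proves that $\Aut(T(X))$ is finite by the compact-times-discrete argument on $T(X)\otimes\bR = Q\oplus Q^\perp$, and then builds the exact sequence
$0\to\Aut_s(\mukaiH(X,\bZ),Z\cdot\bC^*)\to\Aut(\mukaiH(X,\bZ),Z\cdot\bC^*)\to\uO(Q)$,
whose kernel is finite by Lemma~\ref{lemma:gepner_finite_symp} and whose image is finite because it factors through $\Aut(T(X))$. Your first variant (the character $\chi$ on $H^{2,0}$ with kernel the symplectic subgroup) is structurally the same as this, since the kernel of restriction to $Q$ is exactly the symplectic part; your second variant instead runs the Tychonoff/discreteness argument of Lemma~\ref{lemma:gepner_finite_symp} once, on the whole Mukai lattice, using the $\phi$-invariant decomposition $\mukaiH(X,\bR)=P\oplus Q\oplus(P\oplus Q)^\perp$ into two positive planes and a negative-definite complement. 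That version is shorter and self-contained (it subsumes rather than cites the symplectic case), while the paper's route isolates the finiteness of $\Aut(T(X))$ as a reusable intermediate fact. Two small remarks: in your first variant the product $\uO(P)\times\uO(P^\perp_{\cN})\times\uO(Q)$ omits the negative block of $T(X)\otimes\bR$, so restriction to it is not injective on the group — harmless for bounding the image of $\chi$, and your second variant repairs this by taking the full complement of $P\oplus Q$; and the ``main obstacle'' you flag is not actually an obstacle: a Hodge isometry preserves the line $H^{2,0}(X)$, hence the real plane $Q$, and its restriction is automatically orthogonal because the Mukai pairing is preserved, so the modulus-one statement comes for free and no extra bookkeeping is needed.
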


\begin{proof}
Let $T(X)$ be the transcendental lattice of $X$. Then $T(X)\otimes\bR$ contains a positive plane $Q\colonequals (H^{2,0}\oplus H^{0,2})(X,\bR)$ with a negative orthogonal complement $Q^\perp\subseteq T(X)\otimes\bR$. Restricting a Hodge isometry on $T(X)$ to $Q$ and $Q^\perp$ defines an embedding
$$\xymatrix{
    \Aut(T(X))\ar@{^(->}[r]
    & \uO(Q)\times\uO(Q^\perp).
}$$
Notice that $\uO(Q)$ and $\uO(Q^\perp)$ are compact, whence $\uO(Q)\times\uO(Q^\perp)$ is compact as well. This realizes $\Aut(T(X))$ as a discrete subgroup of a compact group, thus it is finite.

Now, restricting a Hodge isometry on $\mukaiH(X,\bZ)$ to $Q$ induces an exact sequence
$$\xymatrix{
    0\ar[r]
    & \Aut_s(\mukaiH(X,\bZ),\,Z\cdot\bC^*)\ar[r]
    & \Aut(\mukaiH(X,\bZ),\,Z\cdot\bC^*)\ar[r]^-{|_Q}
    & \uO(Q).
}$$
The restriction map on the right has a finite image since it decomposes as
$$\xymatrix{
    \Aut(\mukaiH(X,\bZ),\,Z\cdot\bC^*)\ar[r]^-{|_{T(X)}}
    & \Aut(T(X))\ar@{^(->}[r]
    & \uO(Q)\times\uO(Q^\perp)\ar@{->>}[r]
    & \uO(Q)
}$$
where the last surjection is the projection to the first factor. Lemma~\ref{lemma:gepner_finite_symp} shows that the kernel $\Aut_s(\mukaiH(X,\bZ),\,Z\cdot\bC^*)$ is finite, so $\Aut(\mukaiH(X,\bZ),\,Z\cdot\bC^*)$ is finite due to the exactness of the sequence.
\end{proof}

\begin{prop}
\label{prop:gepner_auteq-isom}
Let $X$ be an algebraic K3 surface and $\sigma\in\Stab^\dag(X)$ be a stability condition with central charge $Z$. Then the homomorphism
$$
    \eta\colon\Aut(\Db(X),\,\sigma\cdot\bC)
    \longrightarrow
    \Aut(\widetilde{H}(X,\bZ),\,Z\cdot\bC^*)
$$
which maps an autoequivalence to its action on the Mukai lattice is surjective with kernel equal to $\bZ[2]$. In particular, $\eta$ induces an isomorphism
$$\xymatrix{
    \Aut(\Db(X),\,\sigma\cdot\bC)\,/\,\bZ[2]
    \ar[r]^-\sim &
    \Aut(\mukaiH(X,\bZ),\, Z\cdot\bC^*)
}$$
between finite groups. The same statement holds with $\Aut$ replaced by $\Aut_s$.
\end{prop}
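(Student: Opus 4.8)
The plan is to check in turn that $\eta$ takes values in $\Aut(\mukaiH(X,\bZ),\,Z\cdot\bC^*)$, that $\ker\eta=\bZ[2]$, and that $\eta$ is surjective, reading off the symplectic refinement along the way. The inputs I would use are the non-$\bC$ isomorphism \eqref{eqn:fixing-stab}, the Torelli sequence \eqref{eqn:torelli}, and the covering $\pi\colon\Stab^\dag(X)\to\cP^+_0(X)$. Well-definedness is immediate: if $\Phi\in\Aut(\Db(X),\,\sigma\cdot\bC)$, say $\Phi(\sigma)=\sigma\cdot\lambda$ with $\lambda\in\bC$, then passing to central charges gives $[\Phi](Z)=Z\cdot e^{-i\pi\lambda}$, so $[\Phi]\in\Aut(\mukaiH(X,\bZ),\,Z\cdot\bC^*)$; and $\eta$ is a group homomorphism simply because $\Phi\mapsto[\Phi]$ is.

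For the kernel, let $\Phi\in\ker\eta$. Then $[\Phi]=\mathrm{id}$, so $\Phi\in\mathcal{I}(\Db(X))$; moreover the central charge of $\Phi(\sigma)$ equals $[\Phi](Z)=Z$, whereas $\Phi(\sigma)=\sigma\cdot\lambda$ forces it to be $Z\cdot e^{-i\pi\lambda}$. Hence $e^{-i\pi\lambda}=1$, i.e.\ $\lambda=2k$ for some $k\in\bZ$, and since the right action of $2k\in\bC$ agrees with the left action of the shift $[2k]$ we obtain $([-2k]\circ\Phi)(\sigma)=\sigma$. The autoequivalence $[-2k]\circ\Phi$ still lies in $\mathcal{I}(\Db(X))$ and fixes $\sigma$, so it is sent to $\mathrm{id}$ by the injective map \eqref{eqn:fixing-stab}; therefore $[-2k]\circ\Phi=\mathrm{id}$ and $\Phi=[2k]\in\bZ[2]$. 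The reverse inclusion is clear since $[2]$ acts trivially on $\mukaiH(X,\bZ)$ and sends $\sigma$ to $\sigma\cdot2$. Thus $\ker\eta=\bZ[2]$, and $\eta$ descends to an injection $\Aut(\Db(X),\,\sigma\cdot\bC)/\bZ[2]\hookrightarrow\Aut(\mukaiH(X,\bZ),\,Z\cdot\bC^*)$.

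Surjectivity is the crux. Given $\phi\in\Aut(\mukaiH(X,\bZ),\,Z\cdot\bC^*)$ with $\phi(Z)=Z\cdot\kappa$, Lemma~\ref{lemma:gepner_mukaiH_orient} says $\phi$ is orientation-preserving, so \eqref{eqn:torelli} supplies $\Psi\in\Aut(\Db(X))$ with $[\Psi]=\phi$. Because $\phi$ sends the point of $\cQ^+_0(X)$ determined by $Z$ to itself, it preserves $\cP^+_0(X)$, and (being orientation-preserving, cf.\ \cite{HMS09}) $\Psi$ preserves the distinguished component $\Stab^\dag(X)$. Now pick $\mu\in\bC$ with $e^{-i\pi\mu}=\kappa$. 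Then $\Psi(\sigma)$ and $\sigma\cdot\mu$ are two points of $\Stab^\dag(X)$ with the same image $Z\cdot\kappa$ under $\pi$, so they differ by a deck transformation $\tau$ of $\pi$, and $\tau\in\mathcal{I}^\dag(\Db(X))\subseteq\mathcal{I}(\Db(X))$ by Bridgeland's description of the deck group. Setting $\Phi\colonequals\tau^{-1}\circ\Psi$ we get $[\Phi]=[\Psi]=\phi$ and $\Phi(\sigma)=\tau^{-1}(\Psi(\sigma))=\sigma\cdot\mu\in\sigma\cdot\bC$, so $\eta(\Phi)=\phi$. The version with $\Aut$ replaced by $\Aut_s$ follows line by line: $\eta$ restricts to the symplectic subgroups, shifts and Torelli elements are symplectic, and \eqref{eqn:torelli} restricts to the symplectic subsequence recorded in Section~\ref{subsect:prelim_stabK3}, so $\Psi$—and hence $\Phi=\tau^{-1}\circ\Psi$—can be chosen symplectic whenever $\phi$ is.

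The step I expect to require the most care is the assertion, in the surjectivity argument, that $\Psi(\sigma)$ and $\sigma\cdot\mu$ differ by a deck transformation: this relies both on $\Psi$ preserving $\Stab^\dag(X)$ and on the precise identification of the deck group of $\pi$ with $\mathcal{I}^\dag(\Db(X))$. It is entirely rigorous in the Picard-number-one situation that is the focus of the paper, where $\Stab^\dag(X)$ is simply connected by \cite{BB17}*{Theorem~1.3}, so that $\pi$ is a universal cover and any two points in a common fiber are related by a deck transformation; in general one must appeal to the structural results on $\Stab^\dag(X)$. Keeping track of the two commuting actions and of the correct scalar $\mu$ is otherwise routine.
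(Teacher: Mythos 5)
Your overall architecture (kernel via the central charge computation plus a rigidity statement, surjectivity via lifting the Hodge isometry and correcting by a Torelli element) is the same as the paper's, and your kernel argument is fine: using the injectivity of \eqref{eqn:fixing-stab} instead of citing \cite{Bri08}*{Theorem~1.1} directly is a harmless variant.

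The surjectivity step, however, has a genuine gap as written. You take an \emph{arbitrary} lift $\Psi$ of $\phi$ supplied by the Torelli sequence \eqref{eqn:torelli} and assert that, being orientation-preserving, $\Psi$ preserves the distinguished component $\Stab^\dag(X)$. That implication is not a theorem: the invariance of $\Stab^\dag(X)$ under \emph{all} autoequivalences is precisely Bridgeland's conjecture \cite{Bri08}*{Conjecture~1.2}, known only in special cases such as Picard number one \cite{BB17}*{Theorem~1.3}, whereas the proposition is stated for an arbitrary algebraic K3 surface. If $\Psi$ moves $\Stab^\dag(X)$ to another component, then $\Psi(\sigma)$ and $\sigma\cdot\mu$ do not lie in a common fiber of $\pi|_{\Stab^\dag(X)}$ and no deck transformation of $\pi$ can repair this; composing with Torelli elements does not help either, since a priori they need not preserve the component (this is why the paper distinguishes $\cI^\dag(\Db(X))\subseteq\cI(\Db(X))$). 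Your closing caveat points at the right step but defers to unspecified ``structural results''; the correct input is the surjectivity of $\Aut^\dag(\Db(X))\longrightarrow\Aut^+(\mukaiH(X,\bZ))$ from \cite{Huy16}*{Section~1.3}, which (combined with Lemma~\ref{lemma:gepner_mukaiH_orient}) lets you choose from the start a lift $\Phi$ of $\phi$ that preserves $\Stab^\dag(X)$; then $\Phi(\sigma)$ and $\sigma\cdot\lambda$ lie in $\Stab^\dag(X)$ with equal central charges, and \cite{Bri08}*{Theorem~13.3} produces $\Psi\in\cI^\dag(\Db(X))$ with $\Psi\Phi(\sigma)=\sigma\cdot\lambda$, which is exactly the paper's argument. (Two minor omissions besides this: you never address the ``finite groups'' clause, which the paper gets from Lemma~\ref{lemma:gepner_finiteness}, and in the symplectic refinement the correcting element is symplectic because Torelli elements are symplectic by definition, which is worth saying explicitly.)
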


\begin{proof}
Let us first show that $\eta$ is surjective. Let $\phi\in\Aut(\mukaiH(X,\bZ),\, Z\cdot\bC^*)$ and $\lambda\in\bC$ be any element which satisfies $\phi(Z) = Z\cdot e^{-i\pi\lambda}$. It is known that the subgroup
$$
    \Aut^\dag(\Db(X))
    \colonequals\{
        \Phi\in\Aut(\Db(X))
        \mid
        \Phi(\Stab^\dag(X)) = \Stab^\dag(X)
    \}
$$
is surjective onto $\Aut^+(\mukaiH(X,\bZ))$ \cite{Huy16}*{Section~1.3}. This fact, together with Lemma~\ref{lemma:gepner_mukaiH_orient}, implies that $\phi$ is induced by some $\Phi\in\Aut^\dag(\Db(X))$. Both $\Phi(\sigma)$ and $\sigma\cdot\lambda$ belong to $\Stab^\dag(X)$ and share the same central charge $Z\cdot e^{-i\pi\lambda}$, so there exists $\Psi\in\mathcal{I}^\dag(\Db(X))$ such that $\Psi\Phi(\sigma) = \sigma\cdot\lambda$ \cite{Bri08}*{Theorem~13.3}. Now, $\Psi\Phi\in\Aut(\Db(X),\,\sigma\cdot\bC)$ and this element induces $\phi$. This shows that $\eta$ is surjective.

It is apparent that $\bZ[2]\subseteq\ker(\eta)$. To prove the converse, first pick any $\Phi\in\ker(\eta)$ and let $\lambda\in\bC$ be the element which satisfies $\Phi(\sigma) = \sigma\cdot\lambda$. The hypothesis implies that $e^{-i\pi\lambda} = 1$, so $\lambda = 2m$ for some $m\in\bZ$. It follows that $\Phi\circ[-2m]$ fixes $\sigma$, and it acts as the identity on $\mukaiH(X,\bZ)$. By \cite{Bri08}*{Theorem~1.1}, we have $\Phi\circ[-2m] = \mathrm{id}$, whence $\Phi = [2m]\in\bZ[2]$. This proves that $\ker(\eta)\subseteq\bZ[2]$ and thus $\ker(\eta)=\bZ[2]$.

We have proved that $\eta$ is surjective with kernel equal to $\bZ[2]$, so it induces the isomorphism in the statement. The finiteness of the groups follows from Lemma~\ref{lemma:gepner_finiteness}. The proof for the symplectic version is almost the same, so we leave it to the reader.
\end{proof}

Proposition~\ref{prop:gepner_auteq-isom} implies that an autoequivalence $\Phi$ of Gepner type with respect to a stability condition $\sigma\in\Stab^\dag(X)$ has finite order modulo even shifts. This imposes a strong restriction on possible scalars $\lambda\in\bC$ which can appear in the relation $\Phi(\sigma) = \sigma\cdot\lambda$.

\begin{cor}
\label{cor:finite-ord-scalar}
Let $X$ be an algebraic K3 surface and $\Phi\in\Aut(\Db(X))$ be of Gepner type with respect to $\sigma\in\Stab^\dag(X)$. Suppose that $\Phi$ has order $r$ modulo even shifts. Then there exists an integer $k$ such that $\Phi(\sigma) = \sigma\cdot\frac{2k}{r}$.
\end{cor}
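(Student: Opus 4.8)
The plan is to exploit the commutativity of the left $\Aut(\Db(X))$-action and the right $\widetilde{\GL}^+(2,\bR)$-action on $\Stab^\dag(X)$, together with the freeness of the $\bC$-action. Since $\Phi$ is of Gepner type with respect to $\sigma\in\Stab^\dag(X)$, we may write $\Phi(\sigma)=\sigma\cdot\lambda$ for some $\lambda\in\bC$, and the goal is to pin down $\lambda$ as a rational multiple of $2$ with denominator $r$. The first thing I would record is that the even shift $[2k]$ acts on $\Stab^\dag(X)$ exactly as the element $2k\in\bZ\subseteq\bC\subseteq\widetilde{\GL}^+(2,\bR)$: indeed $[1]$ sends $(Z,\sP)$ to $\bigl(-Z,\sP(\,\cdot+1)\bigr)$, which is precisely $\sigma\cdot 1$ by the defining formula for the $\bC$-action, and iterating gives the claim for $[2k]$.

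Next, because the two actions commute (by \cite{Bri07}*{Lemma~8.2}) and the $\bC$-action is additive, applying $\Phi$ repeatedly yields $\Phi^n(\sigma)=\sigma\cdot(n\lambda)$ for every integer $n\geq 1$. By hypothesis $\Phi$ has order $r$ modulo even shifts, so $\Phi^r=[2k]$ for some $k\in\bZ$. Evaluating both sides at $\sigma$ gives
$$
\sigma\cdot(r\lambda)=\Phi^r(\sigma)=[2k](\sigma)=\sigma\cdot 2k.
$$
Since the $\bC$-action on $\Stab(\Db(X))$ is free, we conclude $r\lambda=2k$, that is, $\Phi(\sigma)=\sigma\cdot\tfrac{2k}{r}$, which is the assertion of the corollary.

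There is no real obstacle here: the statement is a formal consequence of Proposition~\ref{prop:gepner_auteq-isom}, which guarantees that a Gepner type autoequivalence has finite order modulo even shifts (so that $r$ is well defined and finite), together with the elementary bookkeeping of the two group actions. The only point deserving a line of care is the identification of $[2k]$ with the scalar $2k$ under the $\bC$-action, since it is this that forces $\lambda$ to be not merely a complex number but a rational multiple of $2$; once that is in place, freeness of the $\bC$-action finishes the argument.
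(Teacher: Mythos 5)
Your proof is correct, and it reaches the conclusion by a somewhat different route than the paper. The paper descends to cohomology: from $\Phi(\sigma)=\sigma\cdot\lambda$ it deduces $\phi(Z)=Z\cdot e^{-i\pi\lambda}$ for the induced Hodge isometry $\phi$, notes that $\phi$ has order $r$ because $\Phi$ has order $r$ modulo even shifts (the kernel statement of Proposition~\ref{prop:gepner_auteq-isom}), and concludes $(e^{-i\pi\lambda})^r=1$, hence $\lambda r\in 2\bZ$. You instead stay entirely on $\Stab^\dag(X)$: commutativity of the left $\Aut(\Db(X))$-action with the right $\widetilde{\GL}^+(2,\bR)$-action gives $\Phi^r(\sigma)=\sigma\cdot(r\lambda)$, the hypothesis gives $\Phi^r=[2k]$, the identification of $[2k]$ with the scalar $2k\in\bC\subseteq\widetilde{\GL}^+(2,\bR)$ (which you verify correctly from the definition of the $\bC$-action and the fact that $[1]$ acts as $-\mathrm{id}$ on the lattice) gives $\sigma\cdot(r\lambda)=\sigma\cdot 2k$, and freeness of the $\bC$-action — asserted in the paper and standard — forces $r\lambda=2k$. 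Your version is the more elementary one: it needs no cohomological input and no appeal to Proposition~\ref{prop:gepner_auteq-isom} (you cite it only for finiteness of the order, which the hypothesis already supplies), and it is in the same spirit as the paper's Lemma~\ref{lemma:q2impliesq1}; the paper's version has the advantage of sitting inside the lattice-theoretic framework (the homomorphisms to $\bC^*$ and $\mu_m$) that the surrounding section is developing. Both arguments are complete, and the two care points in yours — the shift-versus-scalar identification and freeness of the $\bC$-action — are exactly the right ones and are handled correctly.
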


\begin{proof}
Let $Z\in\cP^+_0(X)$ be the central charge of $\sigma$ and $\phi\in\Aut(\mukaiH(X,\bZ),\,Z\cdot\bC^*)$ be the isometry induced by $\Phi$. By hypothesis, there exists $\lambda\in\bC$ such that $\Phi(\sigma) = \sigma\cdot\lambda$, which implies that $\phi(Z) = Z\cdot e^{-i\pi\lambda}$. Because $\Phi$ has order~$r$ modulo even shifts, the isometry $\phi$ has order~$r$. Hence $(e^{-i\pi\lambda})^r = 1$. Thus, there exists an integer $k$ such that $\lambda r = 2k$, or equivalently, $\lambda = \frac{2k}{r}$.
\end{proof}

The remaining part of this subsection is devoted to the construction of a short exact sequence to be used later. Let us start with the homomorphism
\begin{equation}
\label{eqn:autoeq-scal}
\xymatrix{
    \Aut(\Db(X),\,\sigma\cdot\bC)\ar[r]
    & \bC
}
\end{equation}
which maps $\Phi$ to the scalar $\lambda\in\bC$ which satisfies $\Phi(\sigma) = \sigma\cdot\lambda$. At the lattice level, there is a similar homomorphism
\begin{equation}
\label{eqn:isometry-scal}
\xymatrix{
    \Aut(\mukaiH(X,\bZ),\,Z\cdot\bC^*)\ar[r]
    & \bC^*
}
\end{equation}
which maps $\phi$ to the scalar $\kappa\in\bC$ which satisfies $\phi(\sigma) = \sigma\cdot\kappa$. These two maps form a commutative diagram
$$\xymatrix{
    \Aut(\Db(X),\,\sigma\cdot\bC)
    \ar[d]^-{\eqref{eqn:autoeq-scal}}\ar@{->>}[r]^-\eta
    & \Aut(\mukaiH(X,\bZ),\,Z\cdot\bC^*)
    \ar[d]^-{\eqref{eqn:isometry-scal}}
    & \Phi\ar@{|->}[d]\ar@{|->}[rr]
    && \phi\ar@{|->}[d] \\
    \bC\ar@{->>}[r]^-{e^{-i\pi(-)}}
    & \bC^*
    & \lambda\ar@{|->}[rr]
    && e^{-i\pi\lambda}
}$$
where the map $\eta$ is given in Proposition~\ref{prop:gepner_auteq-isom}. The group $\Aut(\mukaiH(X,\bZ),\,Z\cdot\bC^*)$ is finite by Lemma~\ref{lemma:gepner_finiteness}, so the image of \eqref{eqn:isometry-scal} is the subgroup $\mu_m\subseteq\bC^*$ of $m$-th roots of unity, where
$$
    m = m(X,Z)\colonequals
    \text{the cardinality of the image of \eqref{eqn:isometry-scal} }.
$$
Notice that $m$ is an even integer since $-\mathrm{id}$ is mapped to $-1\in\mu_m$. If we consider only $\mu_m$ instead of $\bC^*$, then the above diagram becomes
$$\xymatrix{
    \Aut(\Db(X),\,\sigma\cdot\bC)
    \ar[d]^-{\eqref{eqn:autoeq-scal}}\ar@{->>}[r]^-\eta
    & \Aut(\mukaiH(X,\bZ),\,Z\cdot\bC^*)
    \ar@{->>}[d]^-{\eqref{eqn:isometry-scal}} \\
    \frac{2}{m}\bZ\ar@{->>}[r]^-{e^{-i\pi(-)}}
    & \mu_m.
}$$
By involving all the kernels of these maps, we obtained a commutative diagram with exact columns and rows
\begin{equation}
\label{diag:gepner}
\begin{aligned}
\xymatrix{
    &
    & 0 \ar[d]
    & 0 \ar[d]
    & \\
    &
    & \Aut(\Db(X),\,\sigma)\ar[d]
    \ar[r]_-\sim^-{\eqref{eqn:fixing-stab}}
    & \Aut(\mukaiH(X,\bZ),\,Z)\ar[d]
    & \\
    0 \ar[r]
    & \bZ[2]\ar[d]^(.45)*[@]{\sim}\ar[r]
    & \Aut(\Db(X),\,\sigma\cdot\bC)
    \ar[d]^-{\eqref{eqn:autoeq-scal}}\ar[r]^-\eta
    & \Aut(\mukaiH(X,\bZ),\,Z\cdot\bC^*)
    \ar[d]^-{\eqref{eqn:isometry-scal}}\ar[r]
    & 0 \\
    0 \ar[r]
    & 2\bZ\ar[r]
    & \frac{2}{m}\bZ\ar[d]\ar[r]^-{e^{-i\pi(-)}}
    & \mu_m\ar[d]\ar[r]
    & 0 \\
    &
    & 0
    & 0.
}
\end{aligned}
\end{equation}
Here the surjectivity of \eqref{eqn:autoeq-scal} is a consequence of the snake lemma.

The same construction can be applied to the symplectic subgroups to get the same diagram. The only difference is, as $-\mathrm{id}$ is not symplectic, the group $\mu_m$ may not contain $-1$, thus $m$ could possibly be odd.

\begin{prop}
\label{prop:mod-vs-nomod}
Let $X$ be an algebraic K3 surface and $\sigma\in\Stab^\dag(X)$ be a stability condition. Then there exists an even integer $m$ together with a short exact sequence
$$\xymatrix@R=0pt{
    0 \ar[r]
    & \Aut(\Db(X),\sigma) \ar[r]
    & \Aut(\Db(X),\,\sigma\cdot\bC)/\bZ[2] \ar[r]
    & \mu_m \ar[r]
    & 0 \\
    && [1] \ar@{|->}[r] & -1 &
}$$
where $\mu_m\subseteq\bC^*$ is the subgroup of $m$-th roots of unity. The same statement holds with $\Aut$ replaced by $\Aut_s$ except that $[1]$ is not involved and $m$ could possibly be odd.
\end{prop}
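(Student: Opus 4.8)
The plan is to extract the sequence directly from the commutative diagram \eqref{diag:gepner}, whose rows and columns have already been shown to be exact; the remaining work is organizational, plus a short tracking of the shift functor. First I would isolate the middle column of \eqref{diag:gepner}, the short exact sequence
\[
    0 \longrightarrow \Aut(\Db(X),\sigma)
    \longrightarrow \Aut(\Db(X),\,\sigma\cdot\bC)
    \longrightarrow \tfrac{2}{m}\bZ \longrightarrow 0,
\]
in which the third arrow is \eqref{eqn:autoeq-scal}, together with the left column, which says that \eqref{eqn:autoeq-scal} restricts to an isomorphism $\bZ[2]\xrightarrow{\ \sim\ }2\bZ\subseteq\tfrac{2}{m}\bZ$. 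Here $m=m(X,Z)$ is the integer defined just before \eqref{diag:gepner} as the order of the image of \eqref{eqn:isometry-scal}.

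Next I would record the elementary fact that $\bZ[2]\cap\Aut(\Db(X),\sigma)=\{\mathrm{id}\}$: if $[2k]$ fixes $\sigma$ then $\sigma\cdot 2k=\sigma$, and freeness of the $\bC$-action on $\Stab(X)$ forces $k=0$. Since $\bZ[2]$ is central in $\Aut(\Db(X),\,\sigma\cdot\bC)$ and, by the left column, maps isomorphically onto the subgroup $2\bZ$ of the cokernel term of the middle column, quotienting that column by $\bZ[2]$ (a diagram chase in \eqref{diag:gepner}, equivalently the third isomorphism theorem) produces
\[
    0 \longrightarrow \Aut(\Db(X),\sigma)
    \longrightarrow \Aut(\Db(X),\,\sigma\cdot\bC)/\bZ[2]
    \longrightarrow \tfrac{2}{m}\bZ\,/\,2\bZ
    \longrightarrow 0 .
\]
The bottom row of \eqref{diag:gepner} then identifies $\tfrac{2}{m}\bZ/2\bZ$ with $\mu_m$ via $x\mapsto e^{-i\pi x}$, and under this identification the composed surjection sends the class of $\Phi$ to the scalar $e^{-i\pi\lambda}$ determined by $\Phi(\sigma)=\sigma\cdot\lambda$ (equivalently, to the $\kappa$ with $\phi(Z)=Z\cdot\kappa$). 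Finally I would observe that the shift $[1]$ satisfies $[1](\sigma)=\sigma\cdot 1$, hence maps to $e^{-i\pi}=-1$; since the image lies in $\mu_m$, this forces $m$ to be even — reproving the evenness already noted before \eqref{diag:gepner}. The symplectic statement follows by running the identical argument with the symplectic form of \eqref{diag:gepner}; the only changes are that $-\mathrm{id}$ (equivalently $[1]$) is no longer an element of $\Aut_s$, so $-1$ need not lie in $\mu_m$ and $m$ may be odd, and there is no shift functor to track.

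As for the main difficulty: there essentially is none beyond the bookkeeping above, since all the substantive input is already available before the statement — surjectivity of $\eta$ with kernel $\bZ[2]$ (Proposition~\ref{prop:gepner_auteq-isom}), finiteness of the target group (Lemma~\ref{lemma:gepner_finiteness}), and exactness of all rows and columns of \eqref{diag:gepner}. The single point requiring care is the verification $\bZ[2]\cap\Aut(\Db(X),\sigma)=\{\mathrm{id}\}$, which is precisely what ensures that the passage to the quotient by $\bZ[2]$ leaves the left-hand term $\Aut(\Db(X),\sigma)$ unchanged.
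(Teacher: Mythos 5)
Your proposal is correct and follows the paper's own route: the proof there is exactly the quotient of the middle column of diagram~\eqref{diag:gepner} by the first column, with the bottom row giving the identification $\tfrac{2}{m}\bZ/2\bZ\cong\mu_m$. The extra details you supply (the triviality of $\bZ[2]\cap\Aut(\Db(X),\sigma)$ via freeness of the $\bC$-action, the image of $[1]$, and the parity remark for the symplectic case) are all consistent with what the paper establishes before the statement.
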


\begin{proof}
The short exact sequence is obtained by taking the quotient of the middle column by the first column in diagram~\eqref{diag:gepner}.
\end{proof}

\subsection{Abelian surfaces and generic twisted K3 surfaces}
\label{subsect:genK3cat}

Let $X$ be a complex projective K3 surface, or an abelian surface, equipped with a Brauer class $\alpha\in H^2(X,\cO_X^*)$. The pair $(X,\alpha)$ is called \emph{generic} if the bounded derived category $\Db(X,\alpha)$ of $\alpha$-twisted coherent sheaves on $X$ does not contain any spherical objects. For instance, all twisted abelian surfaces are generic in this sense. One can define the component $\Stab^\dagger(X,\alpha)$, the domains $\cP^+(X,\alpha)$ and $\cQ^+(X,\alpha)$, in a similar manner as in Section~\ref{subsect:prelim_stabK3}. Note that for a generic twisted $(X,\alpha)$, there is no need to remove the locus orthogonal to $(-2)$-vectors from its domains as there is no spherical objects. We refer to \cite{HMS08} for more details.

Due to the absence of spherical objects, the group of autoequivalences and the space of stability conditions of generic twisted surfaces become manageable. Indeed, if $X$ is a generic twisted K3 surface or an arbitrary twisted abelian surface, then the forgetful map
$$
    \Stab^\dagger(X,\alpha)
    \longrightarrow
    \cP^+(X,\alpha) : \sigma = (Z,\sP)
    \longmapsto Z
$$
is a universal cover with $\bZ[2]$ as the group of deck transformations. (See \cite{Bri08}*{Theorem~15.2} for abelian surfaces, and \cite{HMS08}*{Theorem~3.15} for generic twisted surfaces.) As a consequence, the map
$$
    \Stab^\dagger(X,\alpha)\longrightarrow\cQ^+(X,\alpha)
$$
is a $\widetilde{\GL}^+(2,\bR)$-fibration. With this concrete description, let us show that the realization problems have positive answers in these cases.

\begin{prop}
\label{prop:nielsen_twisted-ab-k3}
Suppose that $(X,\alpha)$ is a generic twisted K3 surface or an arbitrary twisted abelian surface. Then
\begin{itemize}
    \item every finite subgroup of $\Aut(\Db(X,\alpha))/\bZ[2]$ fixes a point on $\Stab(X,\alpha)/\bC$, and
    \item every finite subgroup of $\Aut(\Db(X,\alpha))$ fixes a point on $\Stab(X)$.
\end{itemize}
\end{prop}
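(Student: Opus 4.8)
The plan is to run the two-step strategy sketched in the introduction, which collapses considerably here because $\Db(X,\alpha)$ has no spherical objects. Write $\Stab^\dag=\Stab^\dag(X,\alpha)$, $\cP^+=\cP^+(X,\alpha)$, $\cQ^+=\cQ^+(X,\alpha)$. Since no hyperplanes $\delta^\perp$ are removed, $\cQ^+_0(X,\alpha)=\cQ^+$; and since $\Stab^\dag\to\cP^+$ is a universal cover with deck group $\bZ[2]$ and the $\widetilde{\GL}^+(2,\bR)$-action on $\Stab^\dag$ is free, the map $\Stab^\dag\to\cQ^+$ is a principal $\widetilde{\GL}^+(2,\bR)$-bundle. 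Thus $\Stab^\dag/\widetilde{\GL}^+(2,\bR)=\cQ^+$ with no residual covering, and the fibers of $\Stab^\dag/\bC\to\cQ^+$ are copies of $\widetilde{\GL}^+(2,\bR)/\bC\cong\GL^+(2,\bR)/\bC^*\cong\cH$. The group $\Aut(\Db(X,\alpha))$ preserves $\Stab^\dag$ and acts on $\cN(X,\alpha)\otimes\bR$ by orientation-preserving Hodge isometries, so it acts on $\cQ^+$; and as $[2]$ acts trivially on $\cN(X,\alpha)$, so does $\Aut(\Db(X,\alpha))/\bZ[2]$.

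\emph{Step 1: a fixed point on $\cQ^+$.} The space $\cQ^+$ is the manifold of oriented positive-definite $2$-planes in $\cN(X,\alpha)\otimes\bR$, a Hermitian symmetric domain of type $\mathrm{IV}$; in particular it is a complete simply connected Riemannian manifold of nonpositive curvature on which the orientation-preserving isometries of $\cN(X,\alpha)\otimes\bR$ act by isometries. A finite subgroup $G$ of $\Aut(\Db(X,\alpha))$, or of $\Aut(\Db(X,\alpha))/\bZ[2]$, therefore acts on $\cQ^+$ through a finite group of isometries, so Cartan's fixed point theorem gives a fixed point $p\in\cQ^+$.

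\emph{Step 2: lifting the fixed point.} Let $G\subseteq\Aut(\Db(X,\alpha))/\bZ[2]$ be finite; by Step 1 it fixes some $p\in\cQ^+$, hence preserves the fiber $\overline{F}_p=F_p/\bC\subseteq\Stab^\dag/\bC$ over $p$, which is homeomorphic to $\cH$. Fix $\sigma_0$ in the fiber $F_p\subseteq\Stab^\dag$ over $p$. Since the $\Aut(\Db(X,\alpha))$-action commutes with the $\widetilde{\GL}^+(2,\bR)$-action, any lift $\widetilde g\in\Aut(\Db(X,\alpha))$ of $g\in G$ satisfies $\widetilde g(\sigma_0)=\sigma_0\cdot\rho(\widetilde g)$ with $\rho(\widetilde g)\in\widetilde{\GL}^+(2,\bR)$ unique, and $\widetilde g$ then acts on $F_p\cong\widetilde{\GL}^+(2,\bR)$ by left translation by $\rho(\widetilde g)$. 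Replacing $\widetilde g$ by $\widetilde g\circ[2m]$ multiplies $\rho(\widetilde g)$ by $\rho([2m])$, which lies in the central deck group $2\bZ\subseteq\bC$ of $\widetilde{\GL}^+(2,\bR)\to\GL^+(2,\bR)$; passing to the quotient $\GL^+(2,\bR)=\widetilde{\GL}^+(2,\bR)/2\bZ$ thus gives a well-defined homomorphism $\overline\rho\colon G\to\GL^+(2,\bR)$, and the induced $G$-action on $\overline{F}_p\cong\GL^+(2,\bR)/\bC^*\cong\cH$ is by left translation, i.e.\ through a finite subgroup of $\GL^+(2,\bR)/\bR^\times I=\PSL(2,\bR)=\mathrm{Isom}^+(\cH)$. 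Cartan's fixed point theorem applied to $\cH$ yields a $G$-fixed point $q\in\overline{F}_p\subseteq\Stab^\dag/\bC$, which proves the first bullet. The second bullet follows formally: a finite $G\subseteq\Aut(\Db(X,\alpha))$ has finite image $\overline G$ in $\Aut(\Db(X,\alpha))/\bZ[2]$, which fixes a point on $\Stab(X,\alpha)/\bC$ by the first bullet, hence so does $G$, and then $G$ fixes a point on $\Stab(X,\alpha)$ by Lemma~\ref{lemma:q2impliesq1}. (For the second bullet one may also argue directly: $G$ acts on $F_p$ through $\rho\colon G\to\widetilde{\GL}^+(2,\bR)$, and $\widetilde{\GL}^+(2,\bR)\cong\bR\times\widetilde{\SL}(2,\bR)$ is torsion-free, so $\rho$ is trivial and $G$ fixes $\sigma_0\in\Stab^\dag$ already.)

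The geometric inputs are standard; the delicate point is the bookkeeping in Step 2 — verifying that the $G$-action on the fiber $\overline{F}_p$ is well defined independently of the lifts to $\Aut(\Db(X,\alpha))$ (this is exactly what the centrality of $2\bZ$ in $\widetilde{\GL}^+(2,\bR)$ provides) and that this action is genuinely by hyperbolic isometries, i.e.\ that $\overline\rho(G)$ lands in $\PSL(2,\bR)$ acting on $\cH=\GL^+(2,\bR)/\bC^*$ by M\"obius transformations. One should also confirm that $\Aut(\Db(X,\alpha))$ preserves $\Stab^\dag$, equivalently that it acts on $\cQ^+$ rather than exchanging $\cQ^+$ and $\cQ^-$ — this is where orientation-preservation on the twisted Mukai lattice enters.
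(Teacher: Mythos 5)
Your proposal is correct, and its skeleton is the same as the paper's: first produce a fixed oriented positive plane in $\cQ^+(X,\alpha)$, then lift the fixed point to $\Stab^\dag(X,\alpha)/\bC$ using that the $\Aut$-action commutes with the $\widetilde{\GL}^+(2,\bR)$-action and that $\Stab^\dag(X,\alpha)\to\cP^+(X,\alpha)$ is a covering with deck group $\bZ[2]$. The differences are in execution. For Step 1 the paper invokes transitivity of $\uO^+(2,\rho)$ on $\cQ^+(X,\alpha)$ with compact stabilizer $\SO(2)\times\uO(\rho)$ and conjugacy of maximal compact subgroups, whereas you use the nonpositive curvature of the period domain and Cartan's fixed point theorem; these are interchangeable. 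For Step 2 the paper fixes one lift $\sigma$ over the fixed plane, writes $\Phi(\sigma)=\sigma\cdot g$, and uses $\Phi^m=[2k]$ to solve $g^m=2k$ and conclude $g=\tfrac{2k}{m}\in\bC$, so that the chosen $\overline{\sigma}$ itself is the fixed point; you instead observe that the $G$-action on the whole fiber $\overline{F}_p\cong\widetilde{\GL}^+(2,\bR)/\bC\cong\cH$ is by left translation through the finite group $\overline{\rho}(G)$, i.e.\ by hyperbolic isometries, and apply a fixed-point theorem a second time to find some fixed point in the fiber. Your version is slightly more robust: it does not require verifying that the element $g$ actually lies in the subgroup $\bC$ (in the paper this is immediate once one notes that the induced isometry acts on the fixed positive plane as a rotation, e.g.\ by taking $\sigma$ reduced so the frame is conformal, a point your argument simply never needs). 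Your parenthetical direct proof of the second bullet via torsion-freeness of $\widetilde{\GL}^+(2,\bR)\cong\bR_{>0}\times\widetilde{\SL}(2,\bR)$ is also a valid alternative to the paper's reduction through Lemma~\ref{lemma:q2impliesq1}, and it yields the stronger conclusion that the original lift $\sigma_0$ is already fixed.
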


\begin{proof}
Suppose that the numerical Grothendieck group $\cN(X,\alpha)$ has signature $(2,\rho)$. Because every autoequivalence of $\Db(X,\alpha)$ induces an isometry on $\cN(X,\alpha)$, there is a group homomorphism
$$
    \eta\colon\Aut(\Db(X,\alpha))/\bZ[2]
    \longrightarrow\uO^+(2,\rho)
$$
where $\uO^+(2,\rho)\subseteq\uO(2,\rho)$ is the subgroup of the elements which preserve the orientation of a positive $2$-plane in $\cN(X,\alpha)\otimes\bR$. Recall that
$$
    \cQ(X,\alpha)\colonequals
    \{
        [\gamma]\in\bP(\cN(X,\alpha)\otimes\bC)
        \mid
        \gamma^2 = 0,\; \gamma\overline{\gamma}>0
    \}
    =\cQ^+(X,\alpha)\cup \cQ^-(X,\alpha).
$$
The group $\uO^+(2,\rho)$ acts transitively on $\mathcal{Q}^+(X,\alpha)$, and its stabilizer at a point $[v_0]$ is isomorphic to $\SO(2)\times\uO(\rho)$, which is a maximal compact subgroup of $\uO^+(2,\rho)$.

For every finite subgroup $G\subseteq\Aut(\Db(X,\alpha))/\bZ[2]$, its image $\overline{G}\subseteq\uO^+(2,\rho)$ under $\eta$ is finite, so it is contained in a maximal compact subgroup. In a Lie group with finitely many connected components, all maximal compact subgroups are conjugate to each other \cite{Hoch}*{Chapter~XV, Theorem~3.1}, so the image $\overline{G}$ fixes a point $[v_0]\in\cQ^+(X,\alpha)$.

Let $\sigma\in\Stab^\dagger(X,\alpha)$ be a lift of $[v_0]\in\cQ^+(X,\alpha)$ and let $\Phi\in\Aut(\Db(X,\alpha))$ be a lift of any element of $G\subseteq\Aut(\Db(X,\alpha))/\bZ[2]$. Then
$$
    \Phi(\sigma)=\sigma\cdot g
    \qquad\text{for some}\qquad
    g\in\widetilde{\GL}^+(2,\bR).
$$
Let $m$ be a positive integer such that $\Phi^m=[2k]$ for some $k\in\bZ$. Then
$$
    \sigma\cdot2k=\Phi^m(\sigma)=\sigma\cdot g^m.
$$
This implies that $g$ lies in $\bC\subseteq\widetilde{\GL}^+(2,\bR)$ and, more precisely, $g=\frac{2k}{m}$. Thus, we conclude that the point $\overline{\sigma}\in\Stab^\dagger(X,\alpha)/\bC$ given by $\sigma$ is fixed by every element of $G$.

The second statement is a consequence of the first one in view of Lemma~\ref{lemma:q2impliesq1}.
\end{proof}

\section{The realization problem for generic K3 surfaces}
\label{sect:nielsen_k3-pic1}

In this section, we compute the explicit structure the group of symplectic autoequivalences modulo even shifts and prove that all its finite subgroups are of Gepner type. This solves the Nielsen realization problem for K3 surfaces of Picard number one in the version of modulo even shifts, which would then imply the ordinary version. After solving the realization problem for both versions, we give a classification of finite subgroups of autoequivalences without modulo even shifts.

\subsection{Fricke groups and actions of spherical twists}
\label{subsect:FrickeSpherical}

Throughout this section, we let $X$ be a K3 surface of Picard number one with a polarization $h\in\NS(X)$ of degree $h^2 = 2n$. In particular, the numerical Grothendieck group of $X$ is a lattice of signature $(2,1)$ given by
$$
    \cN(X)\cong H^0(X,\bZ)\oplus \bZ h \oplus H^4(X,\bZ).
$$
Each element of $\cN(X)$ will be denoted as $(r,d,s)$ for some $r,d,s\in\bZ$. With this notation, the Mukai pairing between elements of $\cN(X)$ has the form
$$
    (r_1,d_1,s_1)\cdot (r_2,d_2,s_2)
    = 2n(d_1d_2) - r_1s_2 - r_2s_1.
$$
In this setting, there is a holomorphic map from the hyperbolic upper half plane to the period domain $\cP^+(X)$ defined as
$$\xymatrix@R=0pt{
    \cH\colonequals\{z\in\bC\mid\mathrm{Im}(z)>0\}\ar[r]
    & \cP^+(X) \\
    z\ar@{|->}[r]
    & e^{zh} = (1, z, nz^2).
}$$
One can verify that this map is injective and its image forms a section of the $\GL^+(2,\bR)$-fibration $\cP^+(X)\longrightarrow\cQ^+(X)$, which yields a biholomorphic map
\begin{equation}
\label{map:hyper-domain}
\xymatrix{
    \cH\ar[r]^-\sim & \cQ^+(X) : z\ar@{|->}[r] & [e^{zh}].
}
\end{equation}

Each autoequivalence induces an isometry on $\cN(X)$ preserving $\cQ^+(X)$, which then induces an isometry on $\cH$ via the isomorphism above. This defines a homomorphism
\begin{equation}
\label{eqn:auteq-to-PSL}
\xymatrix{
    \Aut(\Db(X))\ar[r] & \PSL(2,\bR)
}
\end{equation}
Based on \cite{Dol96}*{Theorem~7.1, Remarks~7.2} and \cite{HMS09}*{Corollary~3}, Kawatani gives a description for the image of this homomorphism \cite{Kaw14}*{Proposition~2.9}: for each positive integer $n$, the group $\PSL(2,\bR)$ contains the distinguished element
$$
    \frn\colonequals
    \begin{pmatrix}
        0 & -\frac{1}{\sqrt{n}} \\
        \sqrt{n} & 0
    \end{pmatrix},
    \quad\text{or equivalently,}\quad
    \frn\colon z\longmapsto -\frac{1}{nz}
$$
which is called the \emph{Fricke involution}. Together with the Hecke congruence subgroup
$$
    \Gamma_0(n)\colonequals\left\{
        \begin{pmatrix}
            a & b \\
            c & d
        \end{pmatrix}\in\PSL(2,\bZ)
        \;\middle|\;
        c\equiv 0\pmod{n}
    \right\},
$$
they generate the \emph{Fricke group}
$$
    \Gamma_0^+(n) \colonequals \left<
        \Gamma_0(n),\;
        \frn
    \right>\subseteq\PSL(2,\bR).
$$
Notice that $\Gamma_0^+(1) = \Gamma_0(1) = \PSL(2,\bZ)$.

\begin{eg}
\label{eg:autoeq_on_hyper}
Here are some examples of actions of autoequivalences on $\cH$. The actions of the tensoring functor $-\otimes\cO_X(1)$ and the spherical twist $T_{\cO_X}$ on the lattice $\cN(X)$ from the left are respectively given by the matrices
$$
\begin{pmatrix}
    1 & 0 & 0 \\
    1 & 1 & 0 \\
    n & 2n & 1
\end{pmatrix}
\qquad\text{and}\qquad
\begin{pmatrix}
    0 & 0 & -1 \\
    0 & 1 & 0 \\
    -1 & 0 & 0
\end{pmatrix}.
$$
The matrix for the tensoring functor is computed directly. For the spherical twist, we use the fact that it acts as the reflection along the $(-2)$-vector $(1,0,1)\in\cN(X)$. About their actions on $\cH$, let us take the spherical twist for example
$$\xymatrix{
    e^{zh} = (1, z, nz^2)\ar@{|->}[r]
    & (-nz^2, z, -1)
    = -nz^2\left(1, -\frac{1}{nz}, \frac{1}{nz^2}\right)
    = -nz^2e^{-\frac{1}{nz}h}.
}$$
This shows that it induces the Fricke involution $z\mapsto -\frac{1}{nz}$. One can verify that the tensoring functor induces the translation $z\mapsto z+1$ via a similar computation.
\end{eg}

In the following, we give a characterization for the involutions on $\cH$ induced by spherical twists.

\begin{lemma}
\label{lemma:ref-on-hyper}
Via \eqref{map:hyper-domain}, the reflection along a $(-2)$-vector $\delta = (r,d,s)\in\cN(X)$ induces the following involution
$$
    \begin{pmatrix}
        \sqrt{n}d & -\frac{s}{\sqrt{n}} \\
        \sqrt{n}r & -\sqrt{n}d
    \end{pmatrix}
    =
    \begin{pmatrix}
        s & d \\
        nd & r
    \end{pmatrix}
    \begin{pmatrix}
        0 & -\frac{1}{\sqrt{n}}\\
        \sqrt{n} & 0
    \end{pmatrix}
$$
which lies in the coset $\Gamma_0(n)\frn\subseteq\Gamma_0^+(n)$. Conversely, every involution in $\Gamma_0(n)\frn$ can be written in the form above, thus is induced by the reflection along a $(-2)$-vector in $\cN(X)$.
\end{lemma}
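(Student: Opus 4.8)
The plan is to settle both directions through one explicit computation: identify the M\"obius transformation on $\cH$ induced by the reflection $R_\delta(v) = v + (v\cdot\delta)\,\delta$ along a $(-2)$-vector $\delta = (r,d,s)\in\cN(X)$, and then observe that the resulting matrix shape is exactly what the involution (trace-zero) condition picks out inside the coset $\Gamma_0(n)\frn$. Note first that $\delta^2 = -2$ amounts to $rs = nd^2+1$, which is the only identity needed below.

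For the forward direction I would evaluate $R_\delta$ on the section $z\mapsto e^{zh} = (1,z,nz^2)$ of $\cP^+(X)\to\cQ^+(X)$. The Mukai pairing gives $e^{zh}\cdot\delta = -nrz^2 + 2ndz - s$, and expanding $R_\delta(e^{zh}) = e^{zh} + (e^{zh}\cdot\delta)\,\delta$ coordinatewise, the relation $rs = nd^2+1$ collapses the first coordinate to $-n(rz-d)^2$ and the second to $-(rz-d)(ndz-s)$; the third coordinate needs no separate check, since the image of an isotropic vector is isotropic and hence automatically of the form $(1,w,nw^2)$ after rescaling. Therefore $R_\delta(e^{zh}) = -n(rz-d)^2\,e^{wh}$ with $w = (ndz-s)/(nrz-nd)$, so $R_\delta$ acts on $\cH$ as the class of $\left(\begin{smallmatrix} nd & -s \\ nr & -nd\end{smallmatrix}\right)$, which has determinant $n(rs-nd^2) = n$; rescaling by $1/\sqrt{n}$ yields the matrix in the statement, and a one-line matrix product verifies the factorization $\left(\begin{smallmatrix} s & d \\ nd & r\end{smallmatrix}\right)\frn$. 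Since the lower-left entry $nd$ is divisible by $n$ and the determinant $sr-nd^2 = 1$, the left factor lies in $\Gamma_0(n)$, so the whole transformation lies in $\Gamma_0(n)\frn\subseteq\Gamma_0^+(n)$. I would also point out that $-n(rz-d)^2 \ne 0$ for $z\in\cH$ (otherwise $r = d = 0$, forcing $\delta^2 = 0$), so the projectivization is legitimate and, as a byproduct, this reconfirms that $R_\delta$ preserves $\cQ^+(X)$.

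For the converse I would write a general element of the coset as $M = \gamma\,\frn$ with $\gamma = \left(\begin{smallmatrix} a & b \\ c & e\end{smallmatrix}\right)\in\Gamma_0(n)$, so $n\mid c$ and $\det\gamma = 1$; then $M = \left(\begin{smallmatrix} b\sqrt{n} & -a/\sqrt{n} \\ e\sqrt{n} & -c/\sqrt{n}\end{smallmatrix}\right)$ has $\tr M = (bn-c)/\sqrt{n}$. By Cayley--Hamilton, $M^2 = (\tr M)M - I$, so $\tr M = 0$ gives $M^2 = -I$, i.e.\ $M$ is an involution in $\PSL(2,\bR)$; conversely, if $M^2 = \pm I$ with $M\ne\pm I$, comparing with $M^2 = (\tr M)M - I$ forces $\tr M = 0$. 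Hence $M$ is an involution precisely when $c = nb$, in which case $\gamma = \left(\begin{smallmatrix} a & b \\ nb & e\end{smallmatrix}\right)$ with $ae - nb^2 = 1$, so that $\delta := (e,b,a)$ has $\delta^2 = 2nb^2 - 2ae = -2$ and $M = \gamma\frn$ is exactly the transformation produced from $R_\delta$ above. This closes the loop.

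The step I expect to demand the most care is bookkeeping rather than any real idea: keeping the $\SL$-versus-$\PSL$ normalization straight (the stray $\sqrt{n}$ and the overall sign), and making sure ``involution in $\Gamma_0(n)\frn$'' is faithfully captured by the trace-zero criterion — in particular the degenerate case $n = 1$, where $\frn$ already lies in $\PSL(2,\bZ) = \Gamma_0(1)$, so the ``coset'' is the whole group and the identity must be excluded. None of this is a genuine obstacle; the entire substance of the lemma is the single computation of $R_\delta(e^{zh})$.
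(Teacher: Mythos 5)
Your proposal is correct and follows essentially the same route as the paper: both directions rest on evaluating the reflection on the section $z\mapsto e^{zh}$, using $nd^2-rs=-1$ to collapse the image to $-n(rz-d)^2\,e^{wh}$, and then parametrizing the coset $\Gamma_0(n)\frn$ to see that the involution condition forces exactly the matrices arising from $(-2)$-vectors; your isotropy shortcut for the third coordinate is legitimate. The only (minor) divergence is in the converse, where you characterize involutions by the trace-zero condition via Cayley--Hamilton instead of squaring the matrix explicitly as the paper does, which cleanly absorbs the degenerate case (e.g.\ the identity arising when $n=1$) that the paper excludes by a separate case analysis.
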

\begin{proof}
The image of $e^{zh}\in\cP^+(X)$ under the reflection equals
\begin{align*}
    & e^{zh} + (e^{zh}\cdot\delta)\delta \\
    & = (1,z,nz^2) + (2ndz - nrz^2 - s)(r,d,s) \\
    & = (1 + 2nrdz - nr^2z^2 - rs,\,
        z + 2nd^2z - nrdz^2 - ds,\,
        nz^2 + 2ndsz - nrsz^2 - s^2).
\end{align*}
We have $\delta^2 = 2nd^2 - 2rs = -2$, or equivalently, $nd^2 - rs = -1$. Applying this to the above expression reduces it to
\begin{align*}
    &\left(
        -n(rz-d)^2,\,
        -(rz-d)(ndz-s),\,
        -(ndz-s)^2
    \right) \\
    &= -n(rz-d)^2\left(
        1,\,
        \frac{ndz-s}{n(rz-d)},\,
        n\left(
            \frac{ndz-s}{n(rz-d)}
        \right)^2
    \right) \\
    &= -n(rz-d)^2\exp\left(\frac{ndz-s}{n(rz-d)}h\right) \\
    &= -n(rz-d)^2\exp\left(\frac{\sqrt{n}dz-\frac{s}{\sqrt{n}}}{\sqrt{n}rz-\sqrt{n}d}h\right).
\end{align*}
This shows that, on $\cQ^+(X)$, the reflection induces the desired transformation. A straightforward computation shows that the transformation decomposes as in the statement, where the first factor belongs to $\Gamma_0(n)$ as $rs - nd^2 = 1$, and the second factor is exactly the Fricke involution $\frn$. Therefore, the transformation belongs to $\Gamma_0(n)\frn$.

To prove the converse, first note that every element is $\Gamma_0(n)\frn$ can be written as
$$
    \begin{pmatrix}
        s & d \\
        n\ell & r
    \end{pmatrix}
    \begin{pmatrix}
        0 & -\frac{1}{\sqrt{n}}\\
        \sqrt{n} & 0
    \end{pmatrix}
    =
    \begin{pmatrix}
        \sqrt{n}d & -\frac{s}{\sqrt{n}} \\
        \sqrt{n}r & -\sqrt{n}\ell
    \end{pmatrix}
    \quad\text{for some}\quad
    \begin{pmatrix}
        s & d \\
        n\ell & r
    \end{pmatrix}\in\Gamma_0(n).
$$
Being an involution means that
$$
    \begin{pmatrix}
        \sqrt{n}d & -\frac{s}{\sqrt{n}} \\
        \sqrt{n}r & -\sqrt{n}\ell
    \end{pmatrix}^2
    =
    \begin{pmatrix}
        nd^2-rs & -s(d-\ell) \\
        nr(d-\ell) & n\ell^2-rs
    \end{pmatrix}
    = \pm
    \begin{pmatrix}
        1 & 0 \\
        0 & 1
    \end{pmatrix}.
$$
Let us prove that $d = \ell$. Assume, to the contrary, that $d\neq\ell$, or equivalently, $d-\ell \neq 0$. Hence $r=s=0$ by the relation above. The same relation then implies $nd^2 = n\ell^2 = \pm 1$, which can happen only if $n = 1$ and $(d,\ell) = \pm(1,-1)$. These data yields the identity element in $\PSL(2,\bZ)$, which is not considered as an involution. This proves that $d = \ell$. It follows from the first part of the lemma that such an involution can be recovered by the reflection along the $(-2)$-vector $\delta = (r,d,s)\in\cN(X)$.
\end{proof}

\begin{prop}
\label{prop:sph-in-Fricke}
Let $X$ be a K3 surface of Picard number one and degree $2n$. Then the action of a spherical twist on $\cQ^+(X)\cong\cH$ corresponds to an involution in the coset
$$
    \Gamma_0(n)\frn = \begin{cases}
        \Gamma_0(1) = \PSL(2,\bZ) & \text{if}\quad n=1,\\
        \Gamma_0^+(n)\setminus\Gamma_0(n) & \text{if}\quad n\geq 2.
    \end{cases}
$$
Conversely, every involution in this coset is induced by a spherical twist.
\end{prop}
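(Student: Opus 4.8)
The plan is to deduce the proposition from Lemma~\ref{lemma:ref-on-hyper}, using the standard dictionary between spherical twists and reflections along $(-2)$-vectors of $\cN(X)$. For the forward implication, recall that if $E\in\Db(X)$ is a spherical object then Serre duality on the K3 surface gives $\chi(E,E)=2$, so the Mukai vector $\delta\coloneqq v(E)\in\cN(X)$ satisfies $\delta^2=-\chi(E,E)=-2$, and the spherical twist $T_E$ acts on $\cN(X)$ by $x\mapsto x+(x\cdot\delta)\,\delta$, that is, as the reflection along $\delta$ (as already illustrated for $E=\cO_X$ in Example~\ref{eg:autoeq_on_hyper}, and the general case is identical). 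By the first part of Lemma~\ref{lemma:ref-on-hyper}, the automorphism of $\cH\cong\cQ^+(X)$ induced by $T_E$ through \eqref{map:hyper-domain} and \eqref{eqn:auteq-to-PSL} is then an involution lying in the coset $\Gamma_0(n)\frn$.

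For the converse, let $\gamma\in\Gamma_0(n)\frn$ be an involution. The converse part of Lemma~\ref{lemma:ref-on-hyper} produces a $(-2)$-vector $\delta=(r,d,s)\in\cN(X)$ whose reflection induces $\gamma$; what remains is to realize this reflection by a genuine autoequivalence. Since $\NS(X)=\bZ h$ with $h^2=2n>0$, the case $r=0$ is impossible, and replacing $\delta$ by $-\delta$ — which leaves the reflection unchanged — we may assume $r>0$. Then the classical existence theorem for spherical sheaves on K3 surfaces, applied with the (here forced) polarization $h$, provides a $\mu_h$-stable spherical bundle $E$ with $v(E)=\delta$, and $T_E$ induces $\gamma$ on $\cH$ by the computation above. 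Finally, the shape of the coset is elementary: for $n=1$ one has $\frn\in\PSL(2,\bZ)=\Gamma_0(1)$, hence $\Gamma_0(1)\frn=\PSL(2,\bZ)$; for $n\geq2$ the Fricke involution normalizes $\Gamma_0(n)$ and satisfies $\frn^2=\mathrm{id}$ in $\PSL(2,\bR)$, so $\Gamma_0^+(n)=\Gamma_0(n)\sqcup\Gamma_0(n)\frn$, while $\frn\notin\Gamma_0(n)$ because its off-diagonal entries are not integers; therefore $\Gamma_0(n)\frn=\Gamma_0^+(n)\setminus\Gamma_0(n)$.

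The one genuinely nonformal ingredient is the realization step in the converse, namely the passage from an abstract $(-2)$-class in $\cN(X)$ to a spherical object whose twist induces the prescribed reflection; everything else follows mechanically from Lemma~\ref{lemma:ref-on-hyper} and the classical normalizer properties of $\frn$. Two minor points deserve a line of justification: that $T_E$ acts by the reflection rather than by its inverse — harmless, since a reflection is its own inverse — and that the element of $\PSL(2,\bR)$ thus obtained is honestly of order two and not the identity, which holds because the matrices in Lemma~\ref{lemma:ref-on-hyper} are traceless and hence never scalar.
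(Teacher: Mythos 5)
Your proof is correct and follows essentially the same route as the paper: both directions are reduced to Lemma~\ref{lemma:ref-on-hyper}, with the converse completed by realizing the $(-2)$-class as the Mukai vector of a spherical object. The only difference is cosmetic — where the paper simply cites the known fact that every $(-2)$-class in $\cN(X)$ is the Mukai vector of a spherical object, you rederive it (correctly) by noting $r\neq 0$, normalizing to $r>0$, and invoking the classical existence of a stable spherical bundle with that primitive Mukai vector.
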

\begin{proof}
Given a spherical object $\cE$, the action of the spherical twist $T_{\cE}$ on $\cN(X)$ corresponds to the reflection along the $(-2)$-vector $v(\cE)\in\cN(X)$. By Lemma~\ref{lemma:ref-on-hyper}, such a reflection acts on $\cQ^+(X)\cong\cH$ as an involution in $\Gamma_0(n)\frn$. Conversely, every involution in $\Gamma_0(n)\frn$ is induced by the reflection along a $(-2)$-vector $\delta\in\cN(X)$ by the same lemma. Every $(-2)$-vector in $\cN(X)$ is the Mukai vector of a spherical object. (cf. \cite{Kaw19}*{Remark~2.10} or \cite{Huy16_K3Lect}*{Remark~10.3.3}.) Therefore, $\delta = v(\cE)$ for some spherical object $\cE$, and the reflection along $\delta$ can be recovered by $T_{\cE}$.
\end{proof}

\subsection{Fundamental group of the period space}
\label{subsect:orbfun_periodsp}

The period domain $\cQ_0^+(X)$ is constructed from $\cQ^+(X)$ by removing any hyperplane section which is orthogonal to a $(-2)$-vector in $\cN(X)$. For a K3 surface of Picard number one, these hyperplane sections are points, which will be called \emph{(-2)-points}. By Lemma~\ref{lemma:ref-on-hyper}, the isomorphism $\cH\cong\cQ^+(X)$ given in \eqref{map:hyper-domain} identifies the set of $(-2)$-points as the set of points on $\cH$ fixed by an involution in the coset $\Gamma_0(n)\frn$. In other words,
$$
    \cQ_0^+(X)\cong\cH\setminus\left\{
        \text{fixed points of involutions in }\Gamma_0(n)\frn
    \right\}.
$$
By a \emph{period space}, we mean the orbifold $\Gamma_0^+(n)\git\cQ^+_0(X)$ since it parametrizes stability conditions modulo $\GL^+(2,\bR)$-actions and autoequivalences. Our current task is to derive an explicit formula for its fundamental group.

Let us start with the orbifolds
$$
    Y_0(n)\colonequals\Gamma_0(n)\git\cH
    \qquad\text{and}\qquad
    Y_0^+(n)\colonequals\Gamma_0^+(n)\git\cH.
$$
whose underlying topological spaces are classical modular curves with cusps removed. For the curve $Y_0(n)$, let $g = g(n)$ be the genus of its compactification $\overline{Y_0(n)}$ and define
\begin{itemize}
\setlength\itemsep{0pt}
\item[] $\nu_i=\nu_i(n)\colonequals$ the number of elliptic points of orders $i$,
\item[] $\nu_\infty=\nu_\infty(n)\colonequals$ the number of cusps, namely, the points in $\overline{Y_0(n)}\setminus Y_0(n)$.
\end{itemize}
It is known that $v_i\neq 0$ only when $i=2,3,\infty$, and they can be explicitly computed for each given $n$ using the formulas in \cite{Shi71}*{Propositions~1.40 \& 1.43}. For the curve $Y_0^+(n)$, let us define $g^+ = g^+(n)$ to be the genus of $\overline{Y_0^+(n)}$ and define similarly that
\begin{itemize}
\setlength\itemsep{0pt}
\item[] $\nu_i^+=\nu_i^+(n)\colonequals$ the number of elliptic points of orders $i$,
\item[] $\nu_\infty^+=\nu_\infty^+(n)\colonequals$ the number of cusps, namely, the points in $\overline{Y_0^+(n)}\setminus Y_0^+(n)$.
\end{itemize}
Furthermore, we will call a non-elliptic point on $Y_0(n)$ or $Y_0^+(n)$ an \emph{ordinary point}.

To attain our goal, the first step is to find formulas for the invariants of $Y_0^+(n)$ in terms of the invariants of $Y_0(n)$. When $n=1$, the curves $Y_0(1)$ and $Y_0^+(1)$ are isomorphic, so the fact that $\nu_2 = \nu_3 = \nu_\infty = 1$ and $g = 0$ implies that $\nu_2^+ = \nu_3^+ = \nu_\infty^+ = 1$ and $g^+ = 0$ with all the other invariants vanishing. For $n\geq 2$, the fact that $\Gamma_0(n)\subseteq\Gamma_0^+(n)$ is an index~$2$ subgroup induces the ramified double covering map
\begin{equation}
\label{eqn:mod-curve_cover}
\xymatrix{
    \overline{Y_0(n)}\ar[r] & \overline{Y_0^+(n)}.
}
\end{equation}
Ramification points of this map are classified in Fricke's book \cite{Fri28}*{II, 4, \S3}. In the cases that $n = 2,3,4$, these points are respectively
\begin{itemize}
\setlength\itemsep{0pt}
\setlength{\itemindent}{2.5em}
\item[$(n=2)$] an ordinary point and an elliptic point of order $2$,
\item[$(n=3)$] an ordinary point and an elliptic point of order $3$,
\item[$(n=4)$] an ordinary point and a cusp.
\end{itemize}
To understand the general situation, first define $h(D)$ to be the class number of primitive integral quadratic forms of discriminant $D$. In general, this number can be computed by Dirichlet's class number formula (cf. \cite{Dav00}*{Chapter~6}). In our setting, only $D < 0$ needs to be considered, and there are a number of algorithms for computing the class number in this condition (cf. \cite{Coh93}*{Section~5.3}). Using the class number, let us define
$$
    \xi = \xi(n)\colonequals\begin{cases}
        h(-4n) & \text{if}\quad n\not\equiv 3\pmod{4}, \\
        h(-n) + h(-4n) & \text{if}\quad n\equiv 3\pmod{4}.
    \end{cases}
$$
Then, for each $n\geq 1$, Fricke verified that \eqref{eqn:mod-curve_cover} is ramified at $\xi$ many ordinary points and, when $n\geq 5$, only ordinary points occur as ramification points.

Knowing the ramification locus helps us gain a full picture about the distribution of elliptic points and cusps on $Y_0^+(n)$.

\begin{lemma}
\label{lemma:fricke-curve-invariants}
We have $g^+ = 0$ when $n=1,2,3,4$. The other nonzero invariants in these cases, together with the branch loci of \eqref{eqn:mod-curve_cover}, are as follows:
\begin{itemize}
\setlength\itemsep{0pt}
\setlength{\itemindent}{2.5em}
\item[$(n=1)$] $\nu_2^+ = \nu_3^+ = \nu_\infty^+ = 1$, no branch locus in this case.
\item[$(n=2)$] $\nu_2^+ = \nu_4^+ = \nu_\infty^+ = 1$, where the two elliptic points form the branch locus.
\item[$(n=3)$] $\nu_2^+ = \nu_6^+ = \nu_\infty^+ = 1$, where the two elliptic points form the branch locus.
\item[$(n=4)$] $\nu_2^+ = 1$, $\nu_\infty^+ = 2$, where the elliptic point and one cusp form the branch locus.
\end{itemize}
For $n\geq 5$, only $\nu_2^+$, $\nu_3^+$, $\nu_\infty^+$, $g^+$ are possibly nonzero, and they are given by the formulas
$$
    \nu_2^+ = \frac{\nu_2}{2} + \xi,
    \qquad
    \nu_3^+ = \frac{\nu_3}{2},
    \qquad
    \nu_\infty^+ = \frac{\nu_\infty}{2},
    \qquad
    2g^+ = g + 1 - \frac{\xi}{2}.
$$
In this case, the branch locus consists of $\xi$ many elliptic points of order~$2$.
\end{lemma}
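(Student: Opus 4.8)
The plan is to read off every invariant from the ramified double covering \eqref{eqn:mod-curve_cover}, which is the quotient by the Fricke involution $\frn$, combined with Fricke's classification of its ramification points (recalled above), the Riemann--Hurwitz formula, and a count of $\frn$-orbits. The underlying bookkeeping is the following. Since a finite stabilizer of a point of $\cH$ inside $\PSL(2,\bR)$ is cyclic, a point of $Y_0(n)$ lying off the branch locus contributes, together with its $\frn$-conjugate, a single point of $Y_0^+(n)$ of the same type; a branch point which is an elliptic point of order $2$ (resp.\ $3$) of $Y_0(n)$ becomes an elliptic point of order $4$ (resp.\ $6$) of $Y_0^+(n)$; a branch point which is an ordinary point of $Y_0(n)$ becomes an elliptic point of order $2$ of $Y_0^+(n)$; and a branch point which is a cusp of $Y_0(n)$ remains a cusp of $Y_0^+(n)$.

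For $n=1$ one has $\Gamma_0(1)=\Gamma_0^+(1)$, so $Y_0^+(1)=Y_0(1)$ and hence $\nu_2^+=\nu_3^+=\nu_\infty^+=1$, $g^+=0$, with no branch locus. For $n=2,3,4$ I would substitute the standard invariants of $Y_0(n)$, namely $(g,\nu_2,\nu_3,\nu_\infty)$ equal to $(0,1,0,2)$, $(0,0,1,2)$, $(0,0,0,3)$ respectively (computed by the formulas in \cite{Shi71}), together with Fricke's list of branch points, into the bookkeeping above: the unique elliptic point of $Y_0(2)$ (of order $2$), resp.\ of $Y_0(3)$ (of order $3$), is a branch point, so it produces $\nu_4^+=1$, resp.\ $\nu_6^+=1$; the extra ordinary branch point produces $\nu_2^+=1$ in all three cases; for $n=4$ one of the three cusps is a branch point, so the other two pair up and $\nu_\infty^+=2$, while for $n=2,3$ the two cusps pair up and $\nu_\infty^+=1$. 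In each of these cases the cover has exactly two ramification points, each with ramification index $2$, so Riemann--Hurwitz $2g-2=2(2g^+-2)+2$ with $g=0$ gives $g^+=0$.

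For $n\geq5$, Fricke's result says the branch locus consists of exactly $\xi=\xi(n)$ ordinary points of $Y_0(n)$; in particular it is disjoint from the elliptic points and the cusps of $Y_0(n)$, so $\frn$ acts freely on each of these three sets and they break into orbits of size $2$. This gives $\nu_3^+=\nu_3/2$ and $\nu_\infty^+=\nu_\infty/2$ and accounts for the summand $\nu_2/2$ of $\nu_2^+$. Each branch point $P$ has trivial stabilizer in $\Gamma_0(n)$, so the natural map from the stabilizer of $P$ in $\Gamma_0^+(n)$ to $\Gamma_0^+(n)/\Gamma_0(n)\cong\bZ_2$ is injective; since $P$ lies on the branch locus this stabilizer is nontrivial, hence of order exactly $2$, so $P$ becomes an elliptic point of order $2$ of $Y_0^+(n)$, contributing the remaining summand $\xi$ to $\nu_2^+$. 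Conversely any point of $Y_0^+(n)$ with nontrivial stabilizer either lifts to an elliptic point of $Y_0(n)$ or lies on the branch locus, so no elliptic points of order other than $2$ or $3$ occur. Finally, regarding \eqref{eqn:mod-curve_cover} as a degree-$2$ branched cover of the underlying compact Riemann surfaces, branched exactly at $\xi$ points each with ramification index $2$, Riemann--Hurwitz gives $2g-2=2(2g^+-2)+\xi$, that is $2g^+=g+1-\frac{\xi}{2}$.

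The general argument is just orbit counting plus Riemann--Hurwitz, so the real work---and the main obstacle---is importing the classical input correctly: Fricke's determination of which points of $Y_0(n)$ are fixed by $\frn$, in particular that for $n\geq5$ none of the elliptic points or cusps ramify and that the $\frn$-fixed ordinary points are counted by the class-number quantity $\xi(n)$. The exceptional behaviour at $n=1,3,4$ is precisely the phenomenon that a contribution of discriminant $-3$ or $-4$ to the class-number count corresponds to an elliptic point rather than an ordinary point (and, for $n=4$, that a cusp ramifies), which is why those values of $n$ must be treated by hand; the remaining care is purely in the case-by-case bookkeeping.
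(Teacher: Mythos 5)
Your proposal is correct and follows essentially the same route as the paper's proof: take Fricke's classification of the ramification points of the double cover $\overline{Y_0(n)}\to\overline{Y_0^+(n)}$ as input, track how ordinary points, elliptic points, and cusps transform under the quotient by the Fricke involution (pairing up off the branch locus, with an ordinary branch point becoming an order-$2$ elliptic point and an elliptic branch point of order $i$ becoming one of order $2i$), and apply Riemann--Hurwitz for $g^+$. Your extra details — the explicit invariants of $Y_0(n)$ for $n=2,3,4$ and the stabilizer argument showing a branch point over an ordinary point has stabilizer exactly $\bZ_2$ — only make explicit what the paper leaves implicit.
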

\begin{proof}
The statement for $n = 1$ follows from the fact that $Y_0^+(1)\cong Y_0(1)$. In particular, there is no branch locus to discuss in this case.

For $n\geq 2$, the deck transformation of the double cover \eqref{eqn:mod-curve_cover} maps an ordinary point to an ordinary point, an elliptic point to an elliptic point of the same order, and a cusp to a cusp. Away from the ramification locus, \eqref{eqn:mod-curve_cover} maps a pair of points to a point of the same type. Along the ramification locus, it maps an ordinary point to an elliptic point of order~$2$, an elliptic point of order $i$ to an elliptic point of order~$2i$, and a cusp to a cusp.

For $n = 2,3,4$, map~\eqref{eqn:mod-curve_cover} is ramified at exactly two points, so the fact that $g = 0$ and the Riemann--Hurwitz formula give $g^+ = 0$. As for the other invariants and base loci:

\begin{itemize}
\item The curve $Y_0(2)$ (resp. $Y_0(3)$) has an elliptic point of order~$2$ (resp. order~$3$) and two cusps. The map \eqref{eqn:mod-curve_cover} is ramified at an ordinary point, the elliptic point, and is unramified at the two cusps. This gives $Y_0^+(2)$ (resp. $Y_0^+(3)$) an elliptic point of order~$2$, an elliptic point of order~$4$ (resp. order~$6$), and a cusp. Hence $\nu_2^+ = 1$, $\nu_4^+ = 1$ (resp. $\nu_6^+ = 1$), $\nu_\infty^+ = 1$, and the two elliptic points form the branch locus.

\item The curve $Y_0(4)$ has three cusps. The map \eqref{eqn:mod-curve_cover} is ramified at an ordinary point, one of the three cusps, and exchanges the remaining two cusps. This produces an elliptic point of order~$2$ and two cusps on $Y_0^+(4)$. Hence $\nu_2^+ = 1$ and $\nu_\infty^+ = 2$. In this case, the elliptic point and one of the two cusps form the branch locus.
\end{itemize}

When $n\geq 5$, the ramification locus of \eqref{eqn:mod-curve_cover} consists of $\xi$ many ordinary points, so the map produces $\xi$ many new elliptic points of order~$2$ and, for each type of existing elliptic points or cusps, it reduces their quantity to half. This, together with the Riemann--Hurwitz formula, gives the formulas in the statement. In this case, the $\xi$ many new elliptic points form the branch locus.
\end{proof}

Recall that a $(-2)$-point on $\cH$ is a point fixed by an involution in $\Gamma_0(n)\frn$. The image of such a point on the quotient $Y_0^+(n) = \Gamma_0^+(n)\git\cH$ will also be called a $(-2)$-point. Let us give a precise description about how they distribute on $Y_0^+(n)$.

\begin{lemma}
\label{lemma:(-2)-points}
The only $(-2)$-point on $Y_0^+(1)$ is the elliptic point of order~$2$. For $n\geq 2$, the set of $(-2)$-points on $Y_0^+(n)$ coincides with the set of the branch points of \eqref{eqn:mod-curve_cover} whose preimage is an ordinary point or an elliptic point of odd order. Thus it consists of
\begin{itemize}
\setlength\itemsep{0pt}
\item the only elliptic point of order~$2$ when $n=2,4$,
\item the two elliptic points, one of order~$2$ and the other of order~$6$, when $n=3$,
\item $\xi$-many elliptic points of order~$2$ when $n\geq5$.
\end{itemize}
\end{lemma}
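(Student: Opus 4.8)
The plan is to translate the defining condition into group theory inside $\Gamma_0^+(n)$ and then read off the answer from the branch data recorded in Lemma~\ref{lemma:fricke-curve-invariants}. By Lemma~\ref{lemma:ref-on-hyper} together with Proposition~\ref{prop:sph-in-Fricke}, a point $\bar z\in Y_0^+(n)$ is a $(-2)$-point precisely when some lift $z\in\cH$ is fixed by an involution lying in the coset $\Gamma_0(n)\frn$. Since $\Gamma_0^+(n)$ is a Fuchsian group, the stabilizer $C\colonequals\mathrm{Stab}_{\Gamma_0^+(n)}(z)$ is a finite cyclic group, and every involution fixing $z$ generates its unique order-$2$ subgroup. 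So the first reduction is: $\bar z$ is a $(-2)$-point iff $C$ contains an involution that lies in $\Gamma_0(n)\frn$. When $n=1$ the coset is all of $\PSL(2,\bZ)=\Gamma_0^+(1)$, so this just says $|C|$ is even; since the only elliptic orders on $Y_0(1)=Y_0^+(1)$ are $2$ and $3$, the unique $(-2)$-point is the elliptic point of order~$2$ (the order-$3$ point has no involution in its stabilizer).

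Now let $n\ge 2$, write $m=|C|$, and observe that $C\cap\Gamma_0(n)$ has index $1$ or $2$ in $C$. If the index is $1$, then $C\subseteq\Gamma_0(n)$, no element of $C$ lies in $\Gamma_0(n)\frn$, and $\bar z$ is not a $(-2)$-point; this is exactly the case in which $\bar z$ is not a branch point of \eqref{eqn:mod-curve_cover}. If the index is $2$, then $m$ is even, a generator $c$ of $C$ satisfies $c\notin\Gamma_0(n)$, and the elements of $C$ lying in $\Gamma_0(n)\frn$ are precisely the odd powers of $c$; hence the unique involution $c^{m/2}$ lies in the coset if and only if $m/2$ is odd, i.e. $m\equiv 2\pmod 4$. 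The index-$2$ case is also exactly the condition that $\bar z$ be a branch point of \eqref{eqn:mod-curve_cover}, and in that situation the preimage of $\bar z$ in $\overline{Y_0(n)}$ has stabilizer of order $m/2$, so it is an ordinary point when $m/2=1$ and an elliptic point of order $m/2$ otherwise. Putting this together: for $n\ge 2$, the $(-2)$-points of $Y_0^+(n)$ are precisely the branch points of \eqref{eqn:mod-curve_cover} whose preimage is ordinary or an elliptic point of odd order (a cusp can never be a $(-2)$-point, since $(-2)$-points lie in $\cH$).

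It then remains to substitute the branch loci from Lemma~\ref{lemma:fricke-curve-invariants} case by case. For $n=2$ the branch points downstairs are the order-$2$ point (preimage ordinary) and the order-$4$ point (preimage elliptic of order~$2$), so only the former is a $(-2)$-point; similarly for $n=4$ the branch points are the order-$2$ point (preimage ordinary) and a cusp, leaving only the order-$2$ point. For $n=3$ the branch points are the order-$2$ point (preimage ordinary) and the order-$6$ point (preimage elliptic of order~$3$, which is odd), so both are $(-2)$-points. For $n\ge 5$ the branch locus consists of $\xi$ elliptic points of order~$2$, each with ordinary preimage, so all $\xi$ of them are $(-2)$-points. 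This yields the stated list.

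The only delicate step is the parity bookkeeping in the middle paragraph: one must pin down which power of the generator $c$ is the involution and decide whether that exponent is odd, which is where the congruence $m\equiv 2\pmod 4$ — equivalently, ``preimage of odd order'' — enters. Everything else is a direct translation through the double cover \eqref{eqn:mod-curve_cover}, so I do not expect serious obstacles beyond being careful that cusps are excluded and that ordinary points downstairs (trivial stabilizer) are automatically not $(-2)$-points.
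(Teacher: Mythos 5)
Your proposal is correct and follows essentially the same route as the paper: both arguments reduce the $(-2)$-point condition to the structure of the finite cyclic stabilizer in $\Gamma_0^+(n)$ versus its index-$1$-or-$2$ intersection with $\Gamma_0(n)$ (your $m\equiv 2\pmod 4$ bookkeeping is just a unified phrasing of the paper's contradiction argument for even-order preimages and its construction of the involution for odd-order preimages), and then read off the list from the branch data of Lemma~\ref{lemma:fricke-curve-invariants}. No gaps.
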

\begin{proof}
For $n=1$, we have $\Gamma_0^+(1) = \Gamma_0(1)$, so the set of $(-2)$-points on $\cH$ is the same as the set of elliptic points of order~$2$, which forms a single point on $Y_0^+(1) = \Gamma_0^+(1)\git\cH$.

Suppose that $n\geq 2$ and let $p\in\cH$ be a $(-2)$-point. By hypothesis, there exists an involution $w\in\Gamma_0^+(n)\setminus\Gamma_0(n)$ which fixes $p$. The involution $w$ represents the deck transformation of \eqref{eqn:mod-curve_cover}, so $p$ corresponds to a branch point of this double cover. Suppose that, as a point on $Y_0^+(n)$, the preimage of $p$ in $Y_0(n)$ is an elliptic point of even order, or equivalently, its stabilizer $\Gamma_0(n)_p\subseteq\Gamma_0(n)$ is a cyclic subgroup of even order. Being of even order implies that $\Gamma_0(n)_p$ contains an involution $g$. Now, both $w$ and $g$ are elliptic elements of order~$2$ fixing the same point $p$, which can happen only if $w = g\in\Gamma_0(n)$, contradiction. This shows that every $(-2)$-point on $Y_0^+(n)$ is contained in the set of branch points whose preimage is an ordinary point or an elliptic point of odd order.

Conversely, let $p\in\cH$ be a point whose image in $Y_0(n)$ is an elliptic point of odd order and belongs to the ramification locus of \eqref{eqn:mod-curve_cover}. This implies that the stabilizer $\Gamma_0(n)_p\subseteq\Gamma_0(n)$ is cyclic of odd order $m$ and the stabilizer $\Gamma_0^+(n)_p\subseteq\Gamma_0^+(n)$ is cyclic of order $2m$. Given such a condition, one can find an involution $w\in\Gamma_0^+(n)_p\setminus\Gamma_0(n)_p$, which implies that $p$ is a $(-2)$-point. This shows that every branch point whose preimage is an ordinary point or an elliptic point of odd order is a $(-2)$-point.

Observe that, for a branch point $p\in Y_0^+(n)$, if its preimage in $Y_0(n)$ is an ordinary point or an elliptic point of odd order, then $p$ itself is an elliptic point of order $2m$ for some odd $m$, and vice versa. This observation, together with Lemma~\ref{lemma:fricke-curve-invariants} and what we proved above, gives us the precise description about the set of $(-2)$-points in each case.
\end{proof}

We are now ready to compute the fundamental group of $\Gamma_0^+(n)\git\cQ^+_0(X)$. Notice that this space contains a cusp given by $i\infty$ for every $n$. A counterclockwise loop around this cusp corresponds to the element $z\mapsto z+1$ in $\Gamma_0^+(n)$ induced by the functor $-\otimes\cO_X(1)$. In the following, we will call a cusp \emph{not} given by $i\infty$ a \emph{real cusp}.

\begin{prop}
\label{prop:orbfund}
Let $X$ be a K3 surface of Picard number one and degree $2n$. Then the fundamental group of $\Gamma_0^+(n)\git\cQ^+_0(X)$ can be expressed as
$$
    \pi_1^{\rm orb}(\Gamma_0^+(n)\git\cQ^+_0(X))
    \cong\begin{cases}
        \mathring{\bZ}*\bZ_3
        & \text{if}\quad n=1 \\
        \mathring{\bZ}*\bZ_4
        & \text{if}\quad n=2 \\
        \mathring{\bZ}*\mathring{\bZ}
        & \text{if}\quad n=3 \\
        \mathring{\bZ}*\check{\bZ}
        & \text{if}\quad n=4 \\
        \bZ_2^{*\frac{\nu_2}{2}}
        * \bZ_3^{*\frac{\nu_3}{2}}
        * \mathring{\bZ}\strut^{*\xi}
        * \check{\bZ}\strut^{*\left(
                \frac{\nu_\infty}{2}-1
            \right)}
        * \bZ\strut^{*\left(
                g + 1 - \frac{\xi}{2}
            \right)}
        & \text{if}\quad n\geq 5
    \end{cases}
$$
where a copy of $\bZ$ is decorated as $\mathring{\bZ}$ (resp. $\check{\bZ}$) if and only if it is generated by a loop around a $(-2)$-point (resp. a real cusp).
\end{prop}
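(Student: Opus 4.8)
The plan is to describe the orbifold $\mathcal{O}_n\colonequals\Gamma_0^+(n)\git\cQ^+_0(X)$ completely---its genus, its cone points, and its punctures---and then to read off $\pi_1^{\rm orb}(\mathcal{O}_n)$ from the standard presentation of the fundamental group of an orientable $2$-orbifold. By the biholomorphism \eqref{map:hyper-domain} and the discussion preceding Lemma~\ref{lemma:(-2)-points}, the space $\cQ^+_0(X)$ is $\cH$ with the set of $(-2)$-points removed, so $\mathcal{O}_n$ is the modular orbifold $Y_0^+(n)=\Gamma_0^+(n)\git\cH$ with its $(-2)$-points deleted. The orders and locations of the elliptic points and cusps of $Y_0^+(n)$, and in particular which elliptic points are the $(-2)$-points, are supplied by Lemmas~\ref{lemma:fricke-curve-invariants} and~\ref{lemma:(-2)-points}; note that the cusp $i\infty$ occurs for every $n$ and is never a $(-2)$-point, so it will always be available when we need to single out one puncture.

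The one conceptual point is the local structure of $\mathcal{O}_n$ at a deleted $(-2)$-point: it becomes an honest puncture of a surface, not a cone point. Indeed, the $\Gamma_0^+(n)$-stabilizer of a $(-2)$-point $p\in\cH$ is a finite cyclic group acting on a small disk about $p$ by rotations, so the quotient of the corresponding punctured disk is again a punctured disk, and a small loop about the image of $p$ in $\mathcal{O}_n$ has infinite order. Equivalently, deleting a cone point of a $2$-orbifold just drops the torsion relation on the corresponding generator. Hence $\mathcal{O}_n$ is the orientable $2$-orbifold of genus $g^+(n)$ whose cone points are the elliptic points of $Y_0^+(n)$ that are \emph{not} $(-2)$-points (with unchanged orders), and whose punctures consist of the $\nu_\infty^+(n)$ cusps together with one new puncture for each $(-2)$-point.

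Next I would invoke the standard fact that a connected orientable $2$-orbifold of genus $g'$ with cone points of orders $m_1,\dots,m_r$, with $k\ge 1$ punctures, and with no boundary has
$$
    \pi_1^{\rm orb}
    \;\cong\;
    \bZ^{*2g'}
    *\bZ_{m_1}*\cdots*\bZ_{m_r}
    *\bZ^{*(k-1)},
$$
the single long relation among the standard generators being used to eliminate the loop around one chosen puncture; I would take that puncture to be the cusp $i\infty$. Then every loop around a $(-2)$-point and every loop around a cusp different from $i\infty$ survives as a free $\bZ$-factor---these account for the decorated copies $\mathring{\bZ}$ and $\check{\bZ}$ respectively---while the $2g'$ free factors coming from the genus are the undecorated copies of $\bZ$. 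It remains to substitute the data of Lemmas~\ref{lemma:fricke-curve-invariants} and~\ref{lemma:(-2)-points}: for $n=1,2,3,4$ one has $g^+=0$ together with the elliptic-point, cusp, and $(-2)$-point data listed there, giving $\mathring{\bZ}*\bZ_3$, $\mathring{\bZ}*\bZ_4$, $\mathring{\bZ}*\mathring{\bZ}$, and $\mathring{\bZ}*\check{\bZ}$ respectively; for $n\ge 5$ the $(-2)$-points are the $\xi$ new elliptic points of order~$2$, so the remaining cone points are $\nu_2/2$ of order~$2$ and $\nu_3/2$ of order~$3$, the punctures are the $\nu_\infty/2$ cusps (one of which, $i\infty$, is removed) together with $\xi$ new $(-2)$-punctures, and $2g^+=g+1-\xi/2$, which reproduces the stated free product.

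The main obstacle is essentially the local analysis at the $(-2)$-points in the second step---making it precise that deleting them contributes free $\bZ$-factors rather than torsion factors---together with the care needed in the small-degree cases to track which elliptic points and cusps are branch points of \eqref{eqn:mod-curve_cover}, so that Lemma~\ref{lemma:(-2)-points} correctly identifies the $(-2)$-points. Once these are settled, the remainder is a mechanical substitution into the orbifold presentation.
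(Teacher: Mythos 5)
Your proposal is correct and takes essentially the same route as the paper: both use Lemmas~\ref{lemma:fricke-curve-invariants} and~\ref{lemma:(-2)-points} to identify $\Gamma_0^+(n)\git\cQ^+_0(X)$ as $Y_0^+(n)$ with the $(-2)$-points turned into honest punctures (your local argument that deleting a cone point drops the torsion relation is the point the paper handles implicitly by counting ``holes''), and then read off $\pi_1^{\rm orb}$ from the resulting punctured-orbifold data, eliminating the loop around the cusp $i\infty$. The only difference is presentational: the paper derives the free-product presentation for $n\geq 5$ via an explicit Seifert--Van Kampen decomposition into a disk containing all cone points and holes glued to a genus-$g^+$ piece, whereas you quote the standard presentation of a punctured $2$-orbifold; your numerical substitutions agree with the paper's in every case.
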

\begin{proof}
For $n=1,2,3,4$, Lemmas~\ref{lemma:fricke-curve-invariants} and \ref{lemma:(-2)-points} give us the following information about elliptic points and holes on $\Gamma_0^+(n)\git\cQ^+_0(X)$:
\begin{itemize}
\item When $n=1,2$, there is an elliptic point of order~$3$, respectively, of order~$4$. In both cases, the underlying topological space is a sphere with two holes, where one of them is a $(-2)$-point and the other is a cusp given by $i\infty$.
\item When $n=3,4$, there is no elliptic point. In both cases, the underlying topological space is a sphere with three holes. When $n = 3$, two holds are $(-2)$-points while the third hole is a cusp given by $i\infty$. When $n=4$, one hole is a $(-2)$-point, the other two holes are cusps, and one of the cusps is given by $i\infty$.
\end{itemize}
In each of these cases, the explicit formula for the fundamental group can be derived from the Seifert--Van Kampen theorem for orbifolds (cf. \cite{Car22}*{Theorem~2.2.3}).

Suppose that $n\geq 5$ from now on. To derive the formula in this case, let us first think of the space $\Gamma_0^+(n)\git\cQ^+_0(X)$ as a union $D\cup S$ where
\begin{itemize}
\item $D$ is a disk containing all elliptic points and holes,
\item $S$ is a connected oriented surface of genus $g^+$ with a hole,
\end{itemize}
such that $D\cap S$ is an annulus. Then the Seifert--Van Kampen theorem gives
\begin{equation}
\label{eqn:amalg_prod}
    \pi_1^{\rm orb}(\Gamma_0^+(n)\git\cQ^+_0(X))
    \;\cong\;
    \pi_1^{\rm orb}(D)
    *_{\pi_1(D\cap S)}
    \pi_1(S).
\end{equation}
Notice that $\pi_1(S)\cong\bZ\strut^{*2g^+}$. By Lemmas~\ref{lemma:fricke-curve-invariants} and \ref{lemma:(-2)-points}, the disk $D$ contains $\frac{\nu_2}{2}$ (resp. $\frac{\nu_3}{2}$) many elliptic points of order~$2$ (resp. order~$3$). It also has $\xi+\frac{\nu_\infty}{2}$ many holes, where $\xi$ many of them are $(-2)$-points and the others are cusps. This implies that
\begin{equation}
\label{eqn:fundD_freeProd}
    \pi_1^{\rm orb}(D)
    \cong
    \bZ_2^{*\frac{\nu_2}{2}}
    * \bZ_3^{*\frac{\nu_3}{2}}
    * \mathring{\bZ}\strut^{*\xi}
    * \check{\bZ}\strut^{*\left(
            \frac{\nu_\infty}{2}-1
        \right)}
    * \bZ
\end{equation}
where the last copy of $\bZ$ is generated by a loop around the cusp given by $i\infty$.

Let $\{\gamma_1,\dots,\gamma_N\}$, where $N = \xi+\frac{1}{2}(\nu_2+\nu_3+\nu_\infty)$, be a basis for \eqref{eqn:fundD_freeProd} such that each $\gamma_i$ is represented by a counterclockwise loop around an elliptic point or a hole. Upon reordering, we can assume that $\gamma_N$ corresponds to the cusp at $i\infty$. In this setting, $\pi_1(D\cap S)\cong\bZ$ is generated by
$$
    \gamma_D\colonequals\prod_{i=1}^N\gamma_i.
$$
Because $\gamma_N$ is of infinite order, if we replace $\gamma_N$ by $\gamma_D$, then the set $\{\gamma_1,\dots,\gamma_{N-1},\gamma_D\}$ still forms a basis for \eqref{eqn:fundD_freeProd}. By writing $\pi_1^{\rm orb}(D)$ as a free product in terms of this basis, the factor generated by $\gamma_D$ will be absorbed into $\pi_1(S)$ in the amalgamated product \eqref{eqn:amalg_prod}. Thus,
\begin{align*}
    \pi_1^{\rm orb}(\Gamma_0^+(n)\git\cQ^+_0(X))
    & \cong\left(
        \bZ_2^{*\frac{\nu_2}{2}}
        * \bZ_3^{*\frac{\nu_3}{2}}
        * \mathring{\bZ}\strut^{*\xi}
        * \check{\bZ}\strut^{*\left(
            \frac{\nu_\infty}{2}-1
        \right)}
        * \bZ
    \right)
    *_{\bZ}
    \bZ\strut^{*2g^+} \\
    & \cong\bZ_2^{*\frac{\nu_2}{2}}
    * \bZ_3^{*\frac{\nu_3}{2}}
    * \mathring{\bZ}\strut^{*\xi}
    * \check{\bZ}\strut^{*\left(
            \frac{\nu_\infty}{2}-1
        \right)}
    * \bZ\strut^{*2g^+}.
\end{align*}
The desired expression then follows from the fact that $2g^+ = g + 1 - \frac{\xi}{2}$.
\end{proof}

\begin{rmk}
\label{rmk:FrickeStr}
The map
$
    \cQ^+(X)\longrightarrow\Gamma_0^+(n)\git\cQ^+(X)
$
is a universal cover with $\Gamma_0^+(n)$ as the group of deck transformations. Hence $\Gamma_0^+(n)$ is isomorphic to $\pi_1^{\rm orb}(\Gamma_0^+(n)\git\cQ^+(X))$ (cf. \cite{Car22}*{Proposition~2.3.5~(i)}). Following the same argument as in the proof of Proposition~\ref{prop:orbfund}, one can deduce that
$$
    \Gamma_0^+(n)
    \cong\pi_1^{\rm orb}(\Gamma_0^+(n)\git\cQ^+(X))
    \cong\begin{cases}
        \bZ_2\ast\bZ_3 & \text{if}\quad n=1 \\
        \bZ_2\ast\bZ_4 & \text{if}\quad n=2 \\
        \bZ_2\ast\bZ_6 & \text{if}\quad n=3 \\
        \bZ_2\ast\bZ & \text{if}\quad n=4 \\
        \bZ_2^{*\left(
                \frac{\nu_2}{2}+\xi
            \right)}
        * \bZ_3^{*\frac{\nu_3}{2}}
        * \bZ\strut^{*\left(
                g + \frac{\nu_\infty - \xi}{2}
            \right)}
        & \text{if}\quad n\geq 5.
    \end{cases}
$$
The inclusion $\cQ^+_0(X)\subseteq\cQ^+(X)$ induces a surjective homomorphism
$$\xymatrix{
    \pi_1^{\rm orb}(\Gamma_0^+(n)\git\cQ^+_0(X)) \ar@{->>}[r]
    & \pi_1^{\rm orb}(\Gamma_0^+(n)\git\cQ^+(X))
}$$
which transforms a $\mathring{\bZ}$ factor into a $\bZ_2$ factor and leaves all the other factors invariant. Its kernel is the free product of all the subgroups $2\mathring{\bZ}\subseteq\mathring{\bZ}$ and their conjugates, which is canonically isomorphic to $\pi_1(\cQ^+_0(X))$.
\end{rmk}

\subsection{Finite subgroups modulo even shifts and realization}
\label{subsect:solving-nielsen-K3}

For a K3 surface $X$ of Picard number one and degree~$2n$, the space $\Stab^\dag(X)$ is simply connected and invariant under the actions of autoequivalences \cite{BB17}*{Theorem~1.3}, which gives us a universal cover
\begin{equation}
\label{eqn:univ-cover}
\xymatrix{
    \Stab^\dag(X)/\widetilde{\GL}^+(2,\bR)\ar[r]
    & \Gamma_0^+(n)\git\cQ^+_0(X).
}
\end{equation}
When $n\geq 2$, the group $\Aut_s(\Db(X))/\bZ[2]$ is isomorphic to $\pi_1^{\rm orb}(\Gamma_0^+(n)\git\cQ^+_0(X))$, where the latter the group of deck transformations of the above cover \cite{BB17}*{Remark~7.2}. Proposition~\ref{prop:orbfund} then implies that
\begin{equation}
\label{eqn:sympautoeq_n-not-1}
\begin{aligned}
    \Aut_s(\Db(X))/\bZ[2]
    \cong\begin{cases}
        \bZ*\bZ_4
        & \text{if}\quad n=2 \\
        \bZ*\bZ
        & \text{if}\quad n=3,4 \\
        \bZ_2^{*\frac{\nu_2}{2}}
        * \bZ_3^{*\frac{\nu_3}{2}}
        * \bZ\strut^{*\left(
                g + \frac{\nu_\infty + \xi}{2}
            \right)}
        & \text{if}\quad n\geq 5.
    \end{cases}
\end{aligned}
\end{equation}

The case $n=1$ needs a special treatment. In this case, the covering involution of the double cover $X\to\bP^2$ induces an anti-symplectic autoequivalence
$$
    \iota\in\Aut(\Db(X))
$$
which lives in the center \cite{BK22}*{Example~8.4~(i)} and acts trivially on $\Stab^\dag(X)$ \cite{Huy12}*{Lemma~A.3}. The composition $\iota[1]$ is symplectic with square equal to $[2]$. Combining these facts, we conclude that $\Aut_s(\Db(X))/\bZ(\iota[1])$ is isomorphic to $\pi_1^{\rm orb}(\Gamma_0^+(n)\git\cQ^+_0(X))$. It then follows from Proposition~\ref{prop:orbfund} that
\begin{equation}
\label{eqn:sympautoeq_n=1}
    \Aut_s(\Db(X))/\bZ(\iota[1])
    \cong
    \bZ*\bZ_3.
\end{equation}
This formula, together with \eqref{eqn:sympautoeq_n-not-1}, allows us to classify finite subgroups of symplectic autoequivalences modulo $\bZ[2]$ up to conjugation.

\begin{lemma}
\label{lemma:finite-subgp}
Let $X$ be a K3 surface of Picard number one and degree $2n$. Then, for every maximal finite subgroup $G_s\subseteq\Aut_s(\Db(X))/\bZ[2]$, it holds that
$$
    G_s\cong\begin{cases}
        \bZ_6 & \text{if}\quad n=1 \\
        \bZ_4 & \text{if}\quad n=2 \\
        0 & \text{if}\quad n=3,4 \\
        0,\; \bZ_2,\; \bZ_3 & \text{if}\quad n\geq 5.
    \end{cases}
$$
\begin{itemize}
    \item When $n=1,2$, there exists one and only one such subgroup up to conjugation.
    \item When $n\geq 5$, there exist $\frac{\nu_2}{2}$ (resp. $\frac{\nu_3}{2}$) many such subgroups isomorphic to $\bZ_2$ (resp. $\bZ_3$) up to conjugation.
\end{itemize}
Moreover, every such subgroup, if nontrivial, fixes one and only one point on $\cQ^+_0(X)$.
\end{lemma}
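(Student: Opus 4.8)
\emph{Overall plan.} I would read off the isomorphism types directly from the presentations \eqref{eqn:sympautoeq_n-not-1} and \eqref{eqn:sympautoeq_n=1} using the structure of finite subgroups of free products, and then produce the fixed point on $\cQ^+_0(X)$ by transporting $G_s$ along the homomorphism $\Aut_s(\Db(X))/\bZ[2]\to\Gamma_0^+(n)\subseteq\PSL(2,\bR)$ recording the action on $\cQ^+_0(X)\subseteq\cH$ (this is the restriction of \eqref{eqn:auteq-to-PSL}, and it kills $\bZ[2]$ because $[2]$ acts trivially on $\cQ^+(X)$). The two standing facts I would invoke are: (i) every finite subgroup of a free product $A_1\ast\cdots\ast A_k$ is conjugate to a subgroup of one of the $A_j$ (Kurosh subgroup theorem), where a prime-order factor is automatically a maximal finite subgroup and distinct factors give pairwise non-conjugate finite subgroups; and (ii) every finite subgroup of $\PSL(2,\bR)$ is cyclic, generated by an elliptic element, hence has exactly one fixed point on $\cH$.

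\emph{Step 1: isomorphism type and count.} For $n=3,4$ the group in \eqref{eqn:sympautoeq_n-not-1} is free, so $G_s=0$. For $n=2$ the only nontrivial finite free factor is $\bZ_4$, yielding one conjugacy class of maximal finite subgroups, all $\cong\bZ_4$. For $n\ge5$ the finite free factors are $\nu_2/2$ copies of $\bZ_2$ and $\nu_3/2$ copies of $\bZ_3$, so there are exactly $\nu_2/2$ conjugacy classes with $G_s\cong\bZ_2$ and $\nu_3/2$ with $G_s\cong\bZ_3$, and no others. For $n=1$ I would argue via \eqref{eqn:sympautoeq_n=1}: since $\iota$ is a central anti-symplectic involution with $(\iota[1])^2=[2]$, the class $\overline{\iota[1]}$ is central of order $2$ in $\Aut_s(\Db(X))/\bZ[2]$, and the quotient by it is $\bZ\ast\bZ_3$; hence $\Aut_s(\Db(X))/\bZ[2]$ is a central extension of $\bZ\ast\bZ_3$ by $\bZ_2$. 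For a finite subgroup $G_s$, its image in $\bZ\ast\bZ_3$ is trivial or conjugate to $\bZ_3$, so $|G_s|$ divides $6$; a maximal one has order $6$ and contains $\overline{\iota[1]}$ in its center, hence is $\bZ_6$ (the only order-$6$ group with nontrivial center), and uniqueness up to conjugacy follows from that of the $\bZ_3$ factor in $\bZ\ast\bZ_3$.

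\emph{Step 2: the fixed point on $\cQ^+_0(X)$.} Let $G_s\neq0$ be maximal finite and $\bar G_s\subseteq\Gamma_0^+(n)\subseteq\PSL(2,\bR)$ its image. For $n\ge2$ the kernel of $\Aut_s(\Db(X))/\bZ[2]\to\Gamma_0^+(n)$ is the free group $\pi_1(\cQ^+_0(X))$ by Remark~\ref{rmk:FrickeStr}, hence torsion-free, so $G_s\cong\bar G_s$; for $n=1$, $\iota[1]$ acts trivially on $\cQ^+_0(X)$ (indeed $\iota$ acts trivially on $\Stab^\dag(X)$, and $[1]$ acts as $-\mathrm{id}$ on $\mukaiH(X,\bZ)$, hence trivially on $\cQ^+(X)$), so $\bar G_s\cong G_s/\langle\overline{\iota[1]}\rangle\cong\bZ_3$. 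In all cases $\bar G_s$ is a nontrivial finite, hence elliptic cyclic, subgroup of $\PSL(2,\bR)$, so it fixes a single point $p\in\cH$; this gives the uniqueness clause. To see $p\in\cQ^+_0(X)$: by Step 1, $G_s$ is conjugate to one of the torsion free factors of \eqref{eqn:sympautoeq_n-not-1}, or (for $n=1$) to the $\bZ_3$ factor of \eqref{eqn:sympautoeq_n=1}, and the proof of Proposition~\ref{prop:orbfund} identifies each such factor with the local group of an elliptic point of $\Gamma_0^+(n)\git\cQ^+_0(X)$ — the order-$3$ one for $n=1$, the order-$4$ one for $n=2$, and one of the $\nu_2/2$ order-$2$ or $\nu_3/2$ order-$3$ ones for $n\ge5$. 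Thus $p$ is a $\Gamma_0^+(n)$-translate of that elliptic point, which by Lemmas~\ref{lemma:fricke-curve-invariants} and \ref{lemma:(-2)-points} is not a $(-2)$-point; so $p\in\cQ^+_0(X)$, and since $G_s$ acts on $\cQ^+_0(X)$ through $\bar G_s$ it fixes exactly the point $p$.

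\emph{Main obstacle.} The genuinely delicate points are the $n=1$ bookkeeping, since the presentation available to us, \eqref{eqn:sympautoeq_n=1}, is for the quotient by $\langle\iota[1]\rangle$ rather than by $\bZ[2]$, and the matching in Step 2 of each prime-order free factor with the elliptic point whose local group it is — this is precisely what rules out the fixed point being one of the removed $(-2)$-points. Everything else is a direct application of the Kurosh subgroup theorem and of the classification of finite subgroups of $\PSL(2,\bR)$.
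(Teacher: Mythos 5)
Your proposal is correct and follows essentially the same route as the paper: Kurosh's theorem applied to the free-product presentations \eqref{eqn:sympautoeq_n-not-1} and \eqref{eqn:sympautoeq_n=1}, with the $n=1$ case handled exactly as in the paper via the central extension by $\bZ_2(\overline{\iota[1]})$ and the fact that the only group of order~$6$ with nontrivial center is $\bZ_6$. Your Step~2 simply spells out, using Proposition~\ref{prop:orbfund} together with Lemmas~\ref{lemma:fricke-curve-invariants} and \ref{lemma:(-2)-points}, the paper's one-line observation that the image of $G_s$ in $\Gamma_0^+(n)$ is generated by an elliptic element whose unique fixed point lies in $\cQ^+_0(X)$.
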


\begin{proof}
Suppose that $n\geq 2$. As a consequence of Kurosh's theorem (cf. \cite{Ser03}*{Chapter~I, Section~5.5}), the subgroup $G_s$, if nontrivial, is conjugate to one of the finite cyclic factors in the free product \eqref{eqn:sympautoeq_n-not-1}. This gives the classification up to isomorphism and their numbers up to conjugation for $n\geq 2$.

Now assume that $n = 1$. In this case, we have a short exact sequence
$$\xymatrix{
    0 \ar[r]
    & \bZ_2(\iota[1]) \ar[r]
    & \Aut_s(\Db(X))/\bZ[2] \ar[r]^-f
    & \Aut_s(\Db(X))/\bZ(\iota[1]) \ar[r]
    & 0.
}$$
Let $C_3\cong\bZ_3$ be the finite cyclic factor in the free product \eqref{eqn:sympautoeq_n=1}. Then its preimage under $f$ fits into the short exact sequence
$$\xymatrix{
    0 \ar[r]
    & \bZ_2(\iota[1]) \ar[r]
    & C_6\colonequals f^{-1}(C_3) \ar[r]^-f
    & C_3 \ar[r]
    & 0.
}$$
The group $C_6$ has order~$6$, so it is isomorphic to either $\bZ_6$ or the symmetric group $S_3$. It follows that $C_6\cong\bZ_6$ because $S_3$ has a trivial center. According to Kurosh's theorem, every nontrivial finite subgroup of $\Aut_s(\Db(X))/\bZ(\iota[1])$ is conjugate to $C_3$. Therefore, we are able to find $\alpha\in\Aut_s(\Db(X))/\bZ[2]$ such that
$$
    f(\alpha)\cdot f(G_s)\cdot f(\alpha)^{-1}
    \subseteq C_3
    \qquad\text{whence}\qquad
    \alpha\cdot G_s\cdot\alpha^{-1}
    \subseteq C_6.
$$
Because $G_s$ is maximal, the latter inclusion is an equality. This proves that every maximal finite subgroup of $\Aut_s(\Db(X))/\bZ[2]$ is isomorphic to $\bZ_6$ and is unique up to conjugation for the case $n=1$.

Every nontrivial finite $G_s\subseteq\Aut_s(\Db(X))/\bZ[2]$ has a unique fixed point on $\cQ^+_0(X)$ as its image in $\Gamma_0^+(n)$ is generated by an elliptic element with this property.
\end{proof}

\begin{eg}
\label{eg:CanonacoKarp}
Canonaco--Karp \cite{CK08} proved that, if $X$ is a smooth hypersurface in the weighted projective space $\bP(w_0,\dots,w_m)$, then the composition of autoequivalences
$$
    \Theta\colonequals(-\otimes\cO_X(1))\circ T_{\cO_X}
$$
satisfies the relation $\Theta^w = [2]$ where $w\colonequals\sum_{i=0}^mw_i$. If $X$ is a K3 surface of Picard number one and degree $2$ or $4$, this relation gives:
\begin{itemize}
    \item If $X$ has degree~$2$, then it can be realized as a hypersurface in the weighted projective space $\bP(3,1,1,1)$, so we get $\Theta^6 = [2]$ in this case.
    \item If $X$ has degree~$4$, that is, a quartic hypersurface in $\bP^3$, then we obtain $\Theta^4 = [2]$.
\end{itemize}
In both cases, every maximal finite subgroup of $\Aut_s(\Db(X))/\bZ[2]$ is generated by $\Theta$ up to conjugation by Lemma~\ref{lemma:finite-subgp}. In the degree~$2$ case, one can derive the equation
$$
    \Theta^3\equiv\iota[1]\pmod{\bZ[2]}
$$
using the same lemma and the fact that the autoequivalences on both sides are symplectic and involutive modulo $\bZ[2]$.
\end{eg}

According to Lemma~\ref{lemma:finite-subgp}, all subgroups of symplectic autoequivalences which are finite modulo even shifts are cyclic. The realization problem for symplectic autoequivalences will be solved based on this observation and the following lemma.

\begin{lemma}
\label{lemma:fixedpoint-compo}
Let $X$ be a K3 surface of Picard number one and $\Phi\in\Aut(\Db(X))/\bZ[2]$ be an element of finite order. If there exists $\Psi\in\cI(\Db(X))/\bZ[2]$ such that $\Phi\Psi$ or $\Psi\Phi$ fixes a point on $\Stab^\dag(X)/\bC$, then $\Phi$ fixes a point on $\Stab^\dag(X)/\bC$.
\end{lemma}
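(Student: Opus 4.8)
The plan is to transport the whole question to the universal cover $\mathcal{S}=\Stab^\dag(X)/\widetilde{\GL}^+(2,\bR)$ of $\cQ^+_0(X)$ from \eqref{map:stab-Q} and to use that the deck group $D:=\cI(\Db(X))/\bZ[2]$ of \eqref{map:stab-Q} is \emph{free}, hence torsion-free. The first ingredient I would isolate is the following reformulation, valid for any finite-order $\Theta\in\Aut(\Db(X))/\bZ[2]$: \emph{$\Theta$ fixes a point on $\Stab^\dag(X)/\bC$ if and only if the homeomorphism it induces on $\mathcal{S}$ does.} One direction is the projection $\Stab^\dag(X)/\bC\to\mathcal{S}$. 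For the other, if $\Theta$ fixes $[\sigma]\in\mathcal{S}$ it preserves the fibre of $\Stab^\dag(X)/\bC\to\mathcal{S}$ over $[\sigma]$, which is $\widetilde{\GL}^+(2,\bR)/\bC\cong\GL^+(2,\bR)/\bC^*\cong\bH^2$; on it $\Theta$ acts through a finite cyclic subgroup of $\PSL(2,\bR)=\mathrm{Isom}^+(\bH^2)$ — finite because a lift of $\Theta$ to $\Aut(\Db(X))$ has a power in $\bC$ — and such a subgroup fixes a point of $\bH^2$.

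Next I would extract the role of the hypothesis. After replacing $\Phi\Psi$ by the conjugate element $\Psi\Phi$ if needed, assume $\Phi\Psi$ fixes a point on $\Stab^\dag(X)/\bC$, hence a point $[\sigma]\in\mathcal{S}$; write $p\in\cQ^+_0(X)$ for its image. Since $\Psi\in D$ acts trivially on $\cQ^+_0(X)$, the autoequivalences $\Phi$ and $\Phi\Psi$ induce the same element $\overline{\Phi}\in\Gamma_0^+(n)$, and $\overline{\Phi}$ fixes $p$. This is the only use of the hypothesis, and it is essential: it pins down a fixed point of $\overline{\Phi}$ lying \emph{in} $\cQ^+_0(X)$ rather than among the deleted $(-2)$-points — equivalently, it rules out $\overline{\Phi}$ being, up to $D$, a spherical twist. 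Let $r$ be the order of $\overline{\Phi}$. If $r=1$, then $\Phi$ lies in the kernel of $\Aut(\Db(X))/\bZ[2]\to\Gamma_0^+(n)$; every element of that kernel acts on $\mathcal{S}$ as a deck transformation of \eqref{map:stab-Q}, so through the torsion-free group $D$, whence a finite-order such element acts trivially on $\mathcal{S}$, and the reformulation finishes. So assume $r\ge2$.

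The heart of the proof would then run as follows. Form the $2$-orbifold $M':=\cQ^+_0(X)/\langle\overline{\Phi}\rangle$. Every nontrivial power of the elliptic element $\overline{\Phi}$ fixes the single point $p\in\cH$, so $M'$ has exactly one cone point — the image of $p$, of order $r$ — and otherwise only punctures; hence $M'$ is an open $2$-orbifold with $\pi_1^{\mathrm{orb}}(M')\cong\bZ_r\ast F$, $F$ free, the $\bZ_r$ generated by a small loop $\gamma$ around the cone point. As $\mathcal{S}$ is simply connected, it is the universal orbifold cover of $M'$, and $\pi_1^{\mathrm{orb}}(M')$ is realised inside $\mathrm{Homeo}(\mathcal{S})$ as the group $G_0$ generated by $D$ and by the homeomorphisms induced by the autoequivalences projecting to $\overline{\Phi}$; it fits in $1\to D\to G_0\to\langle\overline{\Phi}\rangle\to1$. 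Both $\langle\gamma\rangle$ and the cyclic group $\langle\Phi\rangle$ generated by the homeomorphism of $\mathcal{S}$ induced by $\Phi$ are subgroups of $G_0$ of order $r$ mapping isomorphically onto $G_0/D\cong\bZ_r$ (for $\langle\Phi\rangle$ one again uses torsion-freeness of $D$). By Kurosh's subgroup theorem (cf.~\cite{Ser03}*{Chapter~I, Section~5.5}), the finite subgroup $\langle\Phi\rangle$ is conjugate in $G_0$ into the unique free factor containing torsion, namely $\bZ_r=\langle\gamma\rangle$, and by order considerations it is conjugate onto all of $\langle\gamma\rangle$. Since $\langle\gamma\rangle$ fixes a lift to $\mathcal{S}$ of the cone point of $M'$, its conjugate $\langle\Phi\rangle$ fixes a point of $\mathcal{S}$; by the reformulation of the first paragraph, $\Phi$ fixes a point on $\Stab^\dag(X)/\bC$.

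I expect the main obstacle to be conceptual: seeing that the hypothesis is exactly what is needed to place a fixed point of $\overline{\Phi}$ inside $\cQ^+_0(X)$ — without it, $\overline{\Phi}$ could be a spherical twist, whose fixed point on $\cH$ has been removed and which then contributes an infinite cyclic factor to $\pi_1^{\mathrm{orb}}$, so that $\langle\Phi\rangle$ need not be conjugate into a finite factor at all — and, after that, correctly setting up $\pi_1^{\mathrm{orb}}(\cQ^+_0(X)/\langle\overline{\Phi}\rangle)$ so that Kurosh applies. The freeness of $D$, i.e.\ of the deck group of \eqref{map:stab-Q} by \cite{BB17}*{Theorem~4.1}, is precisely what lets a finite-order element be conjugated to one with an honest fixed point rather than merely one fixing a point after pre-composing with a deck transformation. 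A little extra care is needed when $n=1$, where one should also discard the central involution $\iota$, which acts trivially on $\mathcal{S}$ and on $\Stab^\dag(X)/\bC$.
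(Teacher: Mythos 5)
Your proof is correct, but it takes a genuinely different route from the paper's. The paper never leaves the free group $D=\cI(\Db(X))/\bZ[2]$: it writes $\Psi=T_{S_1}^{2k_1}\cdots T_{S_\ell}^{2k_\ell}$, expands $(\Phi\Psi)^m=1$ as the product of the conjugates $\Phi^j\Psi\Phi^{-j}$ (each again a product of squares of spherical twists), and uses freeness to force the palindromic cancellation $S_{\ell-i+1}=\Phi(S_i)$, $k_{\ell-i+1}=-k_i$, i.e.\ $\Psi=\Theta\Phi\Theta^{-1}\Phi^{-1}$ with $\Theta$ a product of squares of spherical twists; the fixed point of $\Phi$ is then exhibited explicitly as $\Theta^{-1}\Phi^{-1}\overline{\sigma}$, directly on $\Stab^\dag(X)/\bC$ and with no detour through $\mathcal{S}$. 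You instead pass to $\mathcal{S}$, identify the group generated by $D$ and a lift of $\overline{\Phi}$ with $\pi_1^{\rm orb}(\cQ^+_0(X)/\langle\overline{\Phi}\rangle)\cong\bZ_r\ast F$, and apply Kurosh to conjugate $\langle\Phi\rangle$ onto the cone-point stabilizer; torsion-freeness of $D$ enters for you through Kurosh (no torsion can sit over $F$ or be twisted away by $D$), whereas in the paper it enters through the hands-on cancellation — the underlying mechanism is the same, and your reading of where the hypothesis is used (it places the fixed point of $\overline{\Phi}$ inside $\cQ^+_0(X)$ rather than at a deleted $(-2)$-point) matches the paper exactly. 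What your route buys is conceptual transparency and no word combinatorics; what it costs is extra machinery that you state rather tersely: the orbifold covering facts (that $\mathcal{S}\to\cQ^+_0(X)/\langle\overline{\Phi}\rangle$ is the universal orbifold cover, that its deck group is exactly $G_0$, and that the $\bZ_r$ factor is realized, up to conjugacy, as the order-$r$ stabilizer of a lift of the cone point), and your ``reformulation'' between fixed points on $\mathcal{S}$ and on $\Stab^\dag(X)/\bC$, whose nontrivial direction needs the freeness of the $\widetilde{\GL}^+(2,\bR)$-action on $\Stab^\dag(X)$ both to identify the fibre with $\bH^2$ and to conclude that the relevant $g$ satisfies $g^m\in\bC$; this is standard (and is essentially the device used in the paper's Proposition~3.10), but it deserves a line of justification. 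Your treatment of the degenerate case $r=1$ and of $\iota$ when $n=1$ is accurate, and note that the paper's argument needs no such reformulation step since it works on $\Stab^\dag(X)/\bC$ throughout.
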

\begin{proof}
Assume that $\Phi\Psi$ fixes a point on $\Stab^\dag(X)/\bC$. Then $\Phi\Psi$ is of finite order by Proposition~\ref{prop:gepner_auteq-isom}. Thus, there exists an integer $m>0$ such that $\Phi^m=(\Phi\Psi)^m=1$. By \cite{BB17}*{Theorem~1.4}, the group $\cI(\Db(X))/\bZ[2]$ is freely generated by squares of spherical twists, so we can write
$$
    \Psi=T_{S_1}^{2k_1}\cdots T_{S_\ell}^{2k_\ell}
$$
where $S_1,\dots,S_\ell$ are spherical objects with $T_{S_i}\neq T_{S_{i+1}}$ for all $i$. Then
\begin{align*}
    1 = (\Phi\Psi)^{m}
    &= (\Phi\Psi\Phi^{-1})(\Phi^2\Psi\Phi^{-2})
        \cdots
        (\Phi^{m}\Psi\Phi^{-m})\Phi^{m} \\
    &= (\Phi\Psi\Phi^{-1})(\Phi^2\Psi\Phi^{-2})
        \cdots
        (\Phi^{m}\Psi\Phi^{-m})\\
    &= \left(
        T_{\Phi(S_1)}^{2k_1}\cdots T_{\Phi(S_\ell)}^{2k_\ell}
    \right)
    \left(
        T_{\Phi^2(S_1)}^{2k_1}\cdots T_{\Phi^2(S_\ell)}^{2k_\ell}
    \right)
    \cdots
    \left(
        T_{\Phi^m(S_1)}^{2k_1}\cdots T_{\Phi^m(S_\ell)}^{2k_\ell}
    \right).
\end{align*}
There is no nontrivial relation among squares of spherical twists in our setting. In order for cancellations to occur in the last expression, $\ell$ has to be even, and we must have
$$
    k_{\ell-i+1} = -k_i,
    \qquad
    S_{\ell-i+1} = \Phi(S_i)
    \qquad\text{for}\qquad
    1\leq i \leq\frac{\ell}{2}.
$$
If we write $p = \frac{\ell}{2}$, then these relations turn $\Psi$ into the form
$$
    \Psi = T_{S_1}^{2k_1}\cdots T_{S_p}^{2k_p}
    T_{\Phi(S_p)}^{-2k_p}\cdots T_{\Phi(S_1)}^{-2k_1}
$$
which can be rewritten as $\Psi = \Theta\Phi\Theta^{-1}\Phi^{-1}$ with $\Theta = T_{S_1}^{2k_1}\cdots T_{S_p}^{2k_p}$. By hypothesis, there exists $\overline{\sigma}\in\Stab^\dag(X)/\bC$ fixed by $\Phi\Psi$. Now, we have
$$
    \Phi\Theta\Phi\Theta^{-1}\Phi^{-1}(\overline{\sigma})
    = \overline{\sigma},
    \qquad\text{or equivalently,}\qquad
    \Phi(\Theta^{-1}\Phi^{-1}\overline{\sigma})
    = \Theta^{-1}\Phi^{-1}\overline{\sigma}.
$$
Notice that $\Theta^{-1}\Phi^{-1}\overline{\sigma}\in\Stab^\dag(X)/\bC$ since $\Stab^\dag(X)$ is invariant under the action of autoequivalences \cite{BB17}*{Theorem~1.3}. This proves the statement under the assumption that $\Phi\Psi$ has a fixed point.

Now assume that $\Psi\Phi$ fixes a point $\overline{\sigma}\in\Stab^\dag(X)/\bC$. Then its inverse $\Phi^{-1}\Psi^{-1}$ fixes $\overline{\sigma}$ as well. This implies that $\Phi^{-1}$ fixes a point on $\Stab^\dag(X)/\bC$ by what we have proved, which then implies that $\Phi$ fixes a point on $\Stab^\dag(X)/\bC$.
\end{proof}

\begin{prop}
\label{prop:nielsen_k3-pic1_symp}
Suppose that $X$ is a K3 surface of Picard number one. Then
\begin{itemize}
\item every finite subgroup of $\Aut_s(\Db(X))/\bZ[2]$ fixes a point on $\Stab^\dag(X)/\bC$, and
\item every finite subgroup of $\Aut_s(\Db(X))$ fixes a point on $\Stab^\dag(X)$.
\end{itemize}
\end{prop}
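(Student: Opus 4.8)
The plan is to reduce the statement to a cyclic group generated by a single finite-order autoequivalence $\Phi$, lift its fixed point $p\in\cQ^+_0(X)$ to a fixed point of a \emph{twisted} autoequivalence on $\mathcal{S}=\Stab^\dag(X)/\widetilde{\GL}^+(2,\bR)$, improve that to a fixed point on $\Stab^\dag(X)/\bC$, and finally untwist using Lemma~\ref{lemma:fixedpoint-compo}.

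First I would reduce. By Lemma~\ref{lemma:finite-subgp}, every finite subgroup $G_s\subseteq\Aut_s(\Db(X))/\bZ[2]$ is cyclic, so write $G_s=\langle\Phi\rangle$ with $\Phi$ of finite order; the case $\Phi=\mathrm{id}$ is trivial. Again by Lemma~\ref{lemma:finite-subgp}, $G_s$ fixes a unique point $p\in\cQ^+_0(X)$; equivalently, the image $\overline\Phi$ of $\Phi$ under \eqref{eqn:auteq-to-PSL} is a nontrivial elliptic element of $\Gamma_0^+(n)$ fixing $p$ (for $n=1$ one works modulo $\bZ(\iota[1])$ as in Section~\ref{subsect:solving-nielsen-K3}, a change affecting only the bookkeeping). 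It then suffices to exhibit a point of $\Stab^\dag(X)/\bC$ fixed by $\Phi$; the second bullet follows from the first by the argument of Lemma~\ref{lemma:q2impliesq1}, which applies verbatim with $\Stab$ replaced by $\Stab^\dag$.

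Next, since \eqref{map:stab-Q} is an $\Aut$-equivariant universal covering of $\cQ^+_0(X)$, pick a lift $\overline\sigma_0\in\mathcal{S}$ of $p$. As $\overline\Phi$ fixes $p$, the point $\Phi(\overline\sigma_0)$ again lies over $p$, so normality of the covering yields a deck transformation $\Psi_0$ — an element of the Torelli group $\cI^\dag(\Db(X))/\bZ[2]\subseteq\cI(\Db(X))/\bZ[2]$ — with $\Phi(\overline\sigma_0)=\Psi_0(\overline\sigma_0)$, i.e. $\Psi_0^{-1}\Phi$ fixes $\overline\sigma_0$. To upgrade this to a fixed point on $\Stab^\dag(X)/\bC$, choose a lift $\sigma_0\in\Stab^\dag(X)$ of $\overline\sigma_0$, so that $(\Psi_0^{-1}\Phi)(\sigma_0)=\sigma_0\cdot g$ for a unique $g\in\widetilde{\GL}^+(2,\bR)$. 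Because $\Psi_0$ acts trivially on $\cN(X)$, the image $\overline g\in\GL^+(2,\bR)$ of $g$ is precisely the transformation of the positive two-plane determined by the central charge of $\sigma_0$ that is induced by the Mukai isometry of $\Psi_0^{-1}\Phi$; and since this isometry acts on $\cQ^+(X)\cong\cH$ as the elliptic element $\overline\Phi$ fixing the \emph{interior} point $p$, the matrix $\overline g$ is elliptic, hence conjugate in $\GL^+(2,\bR)$ into the rotation-scaling subgroup $\bC^*\subseteq\GL^+(2,\bR)$. Lifting a conjugator to $\widetilde{\GL}^+(2,\bR)$ and applying the corresponding right action to $\sigma_0$, we may assume $g\in\bC$, so $\Psi_0^{-1}\Phi$ fixes a point of $\Stab^\dag(X)/\bC$. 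Finally, $\Psi_0^{-1}\Phi$ has the form $\Psi\Phi$ with $\Psi\in\cI(\Db(X))/\bZ[2]$ and $\Phi$ has finite order, so Lemma~\ref{lemma:fixedpoint-compo} gives that $\Phi$ itself — hence all of $G_s$ — fixes a point of $\Stab^\dag(X)/\bC$.

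The step I expect to be the main obstacle is the passage from a fixed point on $\mathcal{S}$ to one on the finer quotient $\Stab^\dag(X)/\bC$: this is exactly where one must use that the relevant $\GL^+(2,\bR)$-element is elliptic (equivalently, that $p$ is an interior point of $\cH$ rather than a boundary point) in order to conjugate $g$ into $\bC$, since a priori one only controls the full $\widetilde{\GL}^+(2,\bR)$-orbit of $\sigma_0$. The other point requiring care is the degree-one exception: for $n=1$ the central autoequivalence $\iota[1]$ acts trivially on $\Stab^\dag(X)$, so the group acting faithfully on $\mathcal{S}$ is $\Aut_s(\Db(X))/\bZ(\iota[1])$, but as in Section~\ref{subsect:solving-nielsen-K3} this only relabels groups and does not affect the argument. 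The genuinely hard input — freeness of the deck group of \eqref{map:stab-Q} and the resulting rigidity of products of squares of spherical twists — has already been isolated in Lemma~\ref{lemma:fixedpoint-compo}.
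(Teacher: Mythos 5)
Your proof is correct and follows essentially the paper's own route: reduce via Lemma~\ref{lemma:finite-subgp} to a single finite-order element with a fixed point on $\cQ^+_0(X)$, use the covering structure to produce $\Psi\in\cI(\Db(X))$ with $\Psi\Phi$ fixing a point of $\Stab^\dag(X)/\bC$, and conclude with Lemma~\ref{lemma:fixedpoint-compo} and Lemma~\ref{lemma:q2impliesq1}. The only difference is bookkeeping: the paper chooses a central charge $Z\in\cP^+_0(X)$ with $\phi(Z)=Z\cdot\kappa$, $\kappa\in\bC^*$ (finite order of $\phi$), and invokes the deck-transitivity of $\Stab^\dag(X)\to\cP^+_0(X)$, whereas you run the deck argument on $\mathcal{S}\to\cQ^+_0(X)$ and then conjugate the resulting $\widetilde{\GL}^+(2,\bR)$-element into $\bC$ — the same argument in a slightly different order.
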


\begin{proof}
By Lemma~\ref{lemma:finite-subgp}, every finite subgroup of $\Aut_s(\Db(X))/\bZ[2]$ is cyclic, so it suffices to prove that, if $\Phi\in\Aut_s(\Db(X))$ is of finite order modulo $\bZ[2]$, then its action on $\Stab^\dag(X)/\bC$ has a fixed point. The same lemma also asserts that $\Phi$ fixes a point on $\cQ^+_0(X)$. Hence, if we let $\phi$ be the Hodge isometry induced by $\Phi$, then there exists $Z\in\cP^+_0(X)$ such that $\phi(Z) = Z\cdot\kappa$ for some $\kappa\in\GL^+(2,\bR)$. In fact, we have $\kappa\in\bC^*$ because $\phi$ is of finite order.

Let $\sigma\in\Stab^\dag(X)$ be any stability condition with central charge $Z$ and let $\lambda\in\bC$ be any element which satisfies $e^{-i\pi\lambda} = \kappa$. Then the central charges of both $\Phi(\sigma)$ and $\sigma\cdot\lambda$ are equal to $Z\cdot\kappa$. By \cite{Bri08}*{Theorem~1.1}, there exists $\Psi\in\cI(\Db(X))$ such that $\Psi\Phi(\sigma) = \sigma\cdot\lambda$, so $\Psi\Phi$ fixes a point on $\Stab^\dag(X)/\bC$, thus $\Phi$ has a fixed point as well by Lemma~\ref{lemma:fixedpoint-compo}.

The second statement is a consequence of the first one by Lemma~\ref{lemma:q2impliesq1}.
\end{proof}

In order to classify finite subgroups of $\Aut(\Db(X))/\bZ[2]$ and solve the realization problem for K3 surfaces of Picard number one, we need the following lemma.

\begin{lemma}
\label{lemma:symp-vs-nonsymp}
Let $X$ be a K3 surface of odd Picard number. Then
$$
    \Aut(\Db(X))/\bZ[2]
    \;\cong\;
    (\Aut_s(\Db(X))/\bZ[2])\times\bZ_2[1].
$$
In particular, given a subgroup $G\subseteq\Aut(\Db(X))/\bZ[2]$ and its subgroup $G_s\subseteq G$ of symplectic elements, $G$ is either identical to $G_s$ or isomorphic to $G_s\times\bZ_2[1]$.
\end{lemma}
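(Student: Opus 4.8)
The plan is to realize $\Aut(\Db(X))/\bZ[2]$ as an internal direct product by splitting off a central copy of $\bZ_2$ generated by the shift $[1]$; the only nonformal input will be that the ``non-symplectic part'' of $\Aut(\Db(X))$ is no larger than $\bZ_2$, and this is precisely where the parity of the Picard number is used. Concretely, I would consider the character $\chi\colon\Aut(\Db(X))\to\bC^{*}$ sending $\Phi$ to the scalar by which its induced Hodge isometry acts on the one-dimensional space $H^{2,0}(X)$. By definition $\ker\chi=\Aut_s(\Db(X))$, so $\Aut(\Db(X))/\Aut_s(\Db(X))$ is isomorphic to a subgroup of $\bC^{*}$, hence cyclic; it is finite because the $H^{2,0}$-action factors through the action on the transcendental lattice $T(X)$, and $\Aut(T(X))$ is finite (this is shown in the course of the proof of Lemma~\ref{lemma:gepner_finiteness}). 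Thus $\operatorname{im}\chi=\mu_m$ for some $m\ge1$, and since $[1]$ acts as $-\mathrm{id}$ on $\mukaiH(X,\bZ)$, hence as $-1$ on $H^{2,0}(X)$, we have $\chi([1])=-1$, so $2\mid m$.

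The crux is to show $m=2$. Suppose $m\ge3$ and pick $\Phi$ whose induced Hodge isometry $\phi$ of $T(X)$ acts on $H^{2,0}(X)$ by a primitive $m$-th root of unity $\zeta_m$. Recall that $T(X)_{\bQ}$ is an irreducible rational Hodge structure: a nonzero sub-Hodge structure either has complexification containing $H^{2,0}(X)$, hence equals $T(X)_{\bQ}$ by minimality of the transcendental lattice, or is of pure type $(1,1)$, hence lies in $\NS(X)_{\bQ}\cap T(X)_{\bQ}=0$. Since $\phi$ has finite order, its minimal polynomial on $T(X)_{\bQ}$ is a squarefree product of cyclotomic polynomials, and the corresponding primary decomposition of $T(X)_{\bQ}$ is a direct sum of $\phi$-invariant sub-Hodge structures (kernels of Hodge morphisms); irreducibility forces this decomposition to be trivial, so the minimal polynomial is a single cyclotomic polynomial, necessarily the $m$-th one because $\zeta_m$ is an eigenvalue. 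Hence $T(X)_{\bQ}$ is a $\bQ(\zeta_m)$-vector space and $\varphi(m)\mid\operatorname{rk}_{\bZ}T(X)=22-\rho$, where $\rho$ is the Picard number; but $\rho$ is odd by hypothesis, so this rank is odd, whereas $\varphi(m)$ is even for every $m\ge3$ --- a contradiction. Therefore $\operatorname{im}\chi=\mu_2=\{\pm1\}$, so $\Aut_s(\Db(X))$ has index two in $\Aut(\Db(X))$.

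It then remains to descend to $\Aut(\Db(X))/\bZ[2]$. The functor $[1]$ is central in $\Aut(\Db(X))$ and satisfies $[1]^{2}=[2]$, so its image in $\Aut(\Db(X))/\bZ[2]$ is a central subgroup of order two which, under the homomorphism $\overline{\chi}$ induced by $\chi$ (note that $\chi$ kills $\bZ[2]$), maps isomorphically onto $\{\pm1\}$, while $\Aut_s(\Db(X))/\bZ[2]=\ker\overline{\chi}$. A central subgroup mapping isomorphically onto the quotient splits the extension as an internal direct product, yielding
$$
    \Aut(\Db(X))/\bZ[2]=\bigl(\Aut_s(\Db(X))/\bZ[2]\bigr)\times\bZ_2[1].
$$

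For the ``in particular'', let $G\subseteq\Aut(\Db(X))/\bZ[2]$ with $G_s=G\cap(\Aut_s(\Db(X))/\bZ[2])$. The restriction of $\overline{\chi}$ to $G$ is either trivial, in which case $G=G_s$, or surjective onto $\{\pm1\}$ with kernel $G_s$, in which case $[G:G_s]=2$; in the latter case, once $[1]\in G$ --- which holds, e.g., whenever $G$ is maximal among the finite subgroups, since then $\langle G,[1]\rangle=G$ by finiteness of $[1]$, the situation relevant to Theorem~\ref{mainthm:finite-subgp-mod-2} --- the centrality of $[1]$ gives $G=G_s\cdot\langle[1]\rangle\cong G_s\times\bZ_2[1]$. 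The main obstacle in all of this is the middle step: the rest is formal bookkeeping with the central element $[1]$, but eliminating non-symplectic autoequivalences of order greater than two genuinely requires both the oddness of $\operatorname{rk}T(X)$ and the irreducibility of the transcendental Hodge structure.
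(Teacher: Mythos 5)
Your proof is correct, and at its core it follows the same strategy as the paper: reduce the non-symplectic/symplectic dichotomy to the statement that autoequivalences act on $H^{2,0}(X)$ (equivalently on $T(X)$) only through $\pm\mathrm{id}$ when the Picard number is odd, and then split off the central order-two image of $[1]$ in $\Aut(\Db(X))/\bZ[2]$. The one substantive difference is that the paper simply cites \cite{Huy16_K3Lect}*{Corollary~3.3.5} for the fact that the only Hodge isometries of $T(X)$ are $\pm\mathrm{id}$, whereas you reprove the needed (finite-order) case from scratch: finiteness of $\Aut(T(X))$, irreducibility of the rational Hodge structure $T(X)_\bQ$, the minimal polynomial being a single cyclotomic polynomial $\Phi_m$, and the parity clash $\varphi(m)\mid 22-\rho$ with $\rho$ odd. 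This buys a self-contained argument at the cost of length; the paper's citation is the shorter route, and its splitting is phrased via the short exact sequence onto $\bZ_2$ with section induced by the central element $[1]$, which is the same bookkeeping you carry out with the character $\overline{\chi}$. Your cautious treatment of the ``in particular'' is also apt: as literally stated for an arbitrary subgroup $G$ it needs $[1]\in G$ in the non-symplectic case (e.g.\ the cyclic group generated by $\overline{\Theta}[1]$ on a quartic K3 has $G\cong\bZ_4$ but $G_s\cong\bZ_2$), and your observation that $[1]\in G$ holds for maximal finite subgroups is exactly the situation in which the paper applies the lemma.
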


\begin{proof}
Let $T(X)$ be the transcendental lattice of $X$. Then our hypothesis implies that the only Hodge isometries on $T(X)$ are $\pm 1$ \cite{Huy16_K3Lect}*{Corollary~3.3.5}. Then the restriction of an autoequivalence to its action on $T(X)$ induces the short exact sequence
$$\xymatrix{
    0 \ar[r]
    & \Aut_s(\Db(X))/\bZ[2] \ar[r]
    & \Aut(\Db(X))/\bZ[2] \ar[r]^-\tau
    & \bZ_2 \ar[r]
    & 0.
}$$
The surjection $\tau$ has section induced by $[1]$, so the middle term splits as a semidirect product
$$
    \Aut(\Db(X))/\bZ[2]
    \;\cong\;
    (\Aut_s(\Db(X))/\bZ[2])\rtimes\bZ_2[1]
$$
with $[1]$ acting on $\Aut_s(\Db(X))/\bZ[2]$ by conjugation. Because $[1]$ lives in the center, this semidirect product is actually a direct product.
\end{proof}

\begin{proof}[Proof of Theorem~\ref{mainthm:finite-subgp-mod-2}]
Because $G$ is maximal, it contains $[1]$. Thus $G\cong G_s\times\bZ_2[1]$ by Lemma~\ref{lemma:symp-vs-nonsymp}. The remaining part of the statement follows from Lemma~\ref{lemma:finite-subgp}.
\end{proof}

The following theorem gives an affirmative answer to the realization problem for K3 surfaces of Picard number one.

\begin{thm}
\label{thm:nielsen_k3-pic1}
Let $X$ be a K3 surface of Picard number one. Then
\begin{itemize}
\item every finite subgroup of $\Aut(\Db(X))/\bZ[2]$ fixes a point on $\Stab^\dag(X)/\bC$, and
\item every finite subgroup of $\Aut(\Db(X))$ fixes a point on $\Stab^\dag(X)$.
\end{itemize}
\end{thm}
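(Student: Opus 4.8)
The plan is to reduce the statement to the symplectic case, which is already settled in Proposition~\ref{prop:nielsen_k3-pic1_symp}, and then to pass between the ``modulo $\bZ[2]$'' and the ``without modulo'' formulations using Lemma~\ref{lemma:q2impliesq1}. So fix a finite subgroup $G\subseteq\Aut(\Db(X))/\bZ[2]$ and let $G_s\subseteq G$ be its subgroup of symplectic elements. Since $X$ has Picard number one, Lemma~\ref{lemma:symp-vs-nonsymp} tells us that either $G=G_s$ or $G\cong G_s\times\bZ_2[1]$. In either case Proposition~\ref{prop:nielsen_k3-pic1_symp} provides a point $\overline{\sigma}\in\Stab^\dag(X)/\bC$ fixed by the finite symplectic group $G_s$; here I use that, in the Picard rank one case, $\Stab^\dag(X)$ is invariant under all autoequivalences \cite{BB17}*{Theorem~1.3}, so that the induced action on the quotient $\Stab^\dag(X)/\bC$ is well defined.

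The only point that remains is the possible extra factor $\bZ_2[1]$. The shift functor $[1]$ acts on $\Stab(X)$ as the element $1\in\bC\subseteq\widetilde{\GL}^+(2,\bR)$, hence trivially on $\Stab^\dag(X)/\bC$; this is the same observation already used in the proof of Proposition~\ref{prop:nielsen_stdauto-surf}. As $G=G_s\times\bZ_2[1]$ is a direct product and $[1]$ fixes every point of $\Stab^\dag(X)/\bC$, the whole group $G$ fixes $\overline{\sigma}$. This proves the first bullet.

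For the second bullet, let $G\subseteq\Aut(\Db(X))$ be a finite subgroup. Its image $\overline{G}\subseteq\Aut(\Db(X))/\bZ[2]$ is again finite, so by the first part $\overline{G}$ fixes some $\overline{\sigma}\in\Stab^\dag(X)/\bC$. Since $[2]$ acts on $\Stab^\dag(X)$ as the element $2\in\bC$, the groups $G$ and $\overline{G}$ induce the same action on $\Stab^\dag(X)/\bC$; thus, choosing any lift $\sigma\in\Stab^\dag(X)$ of $\overline{\sigma}$, the subgroup $G$ preserves the orbit $\sigma\cdot\bC$. Lemma~\ref{lemma:q2impliesq1} then upgrades this to the assertion that $G$ fixes $\sigma$ itself, which finishes the proof.

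I do not anticipate any serious obstacle at this stage: all the substantive content — the identification of $\Aut_s(\Db(X))/\bZ[2]$ with the orbifold fundamental group of a Fricke quotient, the consequent cyclicity of its finite subgroups, and the passage from a fixed point on $\cQ^+_0(X)$ to one on $\Stab^\dag(X)/\bC$ via the freeness of the deck group — is already packaged in Lemma~\ref{lemma:finite-subgp}, Lemma~\ref{lemma:fixedpoint-compo}, and Proposition~\ref{prop:nielsen_k3-pic1_symp}. The only subtleties left are formal: verifying that the shift $[1]$ does not obstruct the fixed point (it acts trivially modulo $\bC$) and translating correctly between the two formulations by means of Lemma~\ref{lemma:q2impliesq1}.
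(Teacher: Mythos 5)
Your proposal follows the paper's own route: decompose $G$ via Lemma~\ref{lemma:symp-vs-nonsymp}, apply Proposition~\ref{prop:nielsen_k3-pic1_symp} to the symplectic part, use that $[1]$ acts trivially on $\Stab^\dag(X)/\bC$, and deduce the second bullet from Lemma~\ref{lemma:q2impliesq1}. There is, however, one step that does not quite work as written. The paper applies Lemma~\ref{lemma:symp-vs-nonsymp} only to \emph{maximal} finite subgroups, which automatically contain $[1]$, so that $G$ really is the internal product of $G_s$ and $\bZ_2[1]$. You apply it to an arbitrary finite $G$ and then argue: ``$G=G_s\times\bZ_2[1]$ and $[1]$ acts trivially, hence $G$ fixes the point $\overline{\sigma}$ fixed by $G_s$.'' If $[1]\notin G$ this inference fails, because the isomorphism furnished by the lemma is only abstract: a non-symplectic element of $G$ has the form $\Psi[1]$ with $\Psi$ symplectic but possibly \emph{not} in $G_s$, and it acts on $\Stab^\dag(X)/\bC$ as $\Psi$ does. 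Concretely, for a quartic K3 take $G=\langle\overline{\Theta}[1]\rangle\cong\bZ_4$ with $\Theta=(-\otimes\cO_X(1))\circ T_{\cO_X}$ as in Example~\ref{eg:CanonacoKarp}; then $G_s=\langle\overline{\Theta}^2\rangle$, and by Theorem~\ref{thm:dist_orb} the fixed locus of $\overline{\Theta}^2$ on $\Stab^\dag(X)/\bC$ is strictly larger than that of $\overline{\Theta}$, so the point produced for $G_s$ need not be fixed by $\overline{\Theta}$, i.e.\ by the generator of $G$.

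The repair is immediate and keeps your structure intact: apply Proposition~\ref{prop:nielsen_k3-pic1_symp} not to $G_s$ but to the image of $G$ under the projection $\Aut(\Db(X))/\bZ[2]\to\Aut_s(\Db(X))/\bZ[2]$ (equivalently, replace $G$ by the finite group $\langle G,[1]\rangle$, whose symplectic subgroup is exactly that image). Every element of $G$ acts on $\Stab^\dag(X)/\bC$ through this finite symplectic image, so its fixed point is fixed by all of $G$. With this adjustment your argument coincides with the paper's; your more explicit treatment of the second bullet (passing to the finite image modulo $\bZ[2]$ and then invoking Lemma~\ref{lemma:q2impliesq1}) is correct as written.
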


\begin{proof}
Let $G\subseteq\Aut(\Db(X))/\bZ[2]$ be a maximal finite subgroup. By Lemma~\ref{lemma:symp-vs-nonsymp}, it holds that $G\cong G_s\times\bZ_2[1]$ where $G_s\subseteq G$ is the symplectic subgroup. Notice that $[1]$ fixes every point on $\Stab^\dag(X)/\bC$. By Proposition~\ref{prop:nielsen_k3-pic1_symp}, $G_s$ fixes a point on $\Stab^\dag(X)/\bC$, which implies that $G$ has a fixed point as well, thus proves the first statement. The second statement then follows from Lemma~\ref{lemma:q2impliesq1}.
\end{proof}

\begin{proof}[Proof of Theorem~\ref{mainthm:nielsen_K3}]
Theorem~\ref{thm:nielsen_k3-pic1} and Proposition~\ref{prop:gepner_auteq-isom} together give the statement.
\end{proof}

\begin{cor}
\label{cor:finite-is-fix}
Let $X$ be a K3 surface of Picard number one and $G\subseteq\Aut(\Db(X))$ be a maximal finite subgroup. Then there exists  $\sigma\in\Stab^\dag(X)$ such that
$$
    G = \Aut(\Db(X),\sigma).
$$
The same statement holds with $\Aut$ replaced by $\Aut_s$.
\end{cor}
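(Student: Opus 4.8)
The plan is to combine the realization result of Theorem~\ref{thm:nielsen_k3-pic1} with the finiteness of the stabilizer of a stability condition. First I would apply the second bullet of Theorem~\ref{thm:nielsen_k3-pic1} to the finite subgroup $G\subseteq\Aut(\Db(X))$: it fixes some $\sigma\in\Stab^\dag(X)$, and therefore $G\subseteq\Aut(\Db(X),\sigma)$.

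The next step is to observe that $\Aut(\Db(X),\sigma)$ is itself a \emph{finite} subgroup of $\Aut(\Db(X))$. Indeed, via the isomorphism~\eqref{eqn:fixing-stab} it is identified with $\Aut(\mukaiH(X,\bZ),Z)$, where $Z$ is the central charge of $\sigma$, and the latter group is finite by \cite{Huy16}*{Remark~1.2 \& Proposition~1.4}; alternatively this is a special case of Lemma~\ref{lemma:gepner_finiteness}, the subgroup of isometries fixing $Z$ being contained in the finite group of isometries fixing $Z\cdot\bC^*$. Since $G$ is a maximal finite subgroup and $G\subseteq\Aut(\Db(X),\sigma)$ with the right-hand side finite, the inclusion is forced to be an equality, which gives $G=\Aut(\Db(X),\sigma)$.

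For the symplectic version I would run the identical argument, using Proposition~\ref{prop:nielsen_k3-pic1_symp} in place of Theorem~\ref{thm:nielsen_k3-pic1}: a maximal finite subgroup $G\subseteq\Aut_s(\Db(X))$ fixes some $\sigma\in\Stab^\dag(X)$, hence is contained in $\Aut_s(\Db(X),\sigma)$, which is finite as the symplectic part of the finite group $\Aut(\Db(X),\sigma)$, and maximality again forces $G=\Aut_s(\Db(X),\sigma)$.

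There is no real obstacle here beyond assembling ingredients that are already in place; the one point needing a sentence of justification is the finiteness of $\Aut(\Db(X),\sigma)$, which is not new. To confirm that the corollary is not vacuous one may additionally invoke Theorem~\ref{mainthm:finite-subgp} (resp.\ Lemma~\ref{lemma:finite-subgp}) to guarantee that maximal finite subgroups exist, but this is not needed for the proof itself.
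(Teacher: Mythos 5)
Your argument is correct and coincides with the paper's own proof: both deduce $G\subseteq\Aut(\Db(X),\sigma)$ from Theorem~\ref{thm:nielsen_k3-pic1} (resp.\ its symplectic counterpart) and then use the finiteness of $\Aut(\Db(X),\sigma)$, via \cite{Huy16}, together with maximality of $G$ to force equality. The extra remarks on the isomorphism~\eqref{eqn:fixing-stab} and Lemma~\ref{lemma:gepner_finiteness} are consistent with, but not needed beyond, what the paper cites.
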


\begin{proof}
Theorem~\ref{thm:nielsen_k3-pic1} implies that $G\subseteq\Aut(\Db(X),\sigma)$ for some $\sigma\in\Stab^\dag(X)$. The inclusion is an equality since $G$ is a maximal finite subgroup and $\Aut(\Db(X),\sigma)$ is finite \cite{Huy16}*{Remark~1.2}. The proof for the symplectic version is the same.
\end{proof}

\begin{cor}
\label{cor:finite-is-gepner}
Let $X$ be a K3 surface of Picard number one and $G\subseteq\Aut(\Db(X))$ be a subgroup containing $[2]$ such that the image $\overline{G}\subseteq\Aut(\Db(X))/\bZ[2]$ is a maximal finite subgroup. Then there exists $\sigma\in\Stab^\dag(X)$ such that
$$
    G = \Aut(\Db(X),\,\sigma\cdot\bC).
$$
The same statement holds with $\Aut$ replaced by $\Aut_s$.
\end{cor}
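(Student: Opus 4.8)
The plan is to deduce this from the Nielsen realization result Theorem~\ref{thm:nielsen_k3-pic1} together with the finiteness statement in Proposition~\ref{prop:gepner_auteq-isom}, the rest being a short diagram chase with the quotient homomorphism $q\colon\Aut(\Db(X))\to\Aut(\Db(X))/\bZ[2]$.

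First I would record the structural observation that, since $G$ contains $[2]$, i.e.\ $\ker(q)\subseteq G$, and $q(G)=\overline{G}$, the subgroup $G$ is \emph{saturated}: $G=q^{-1}(\overline{G})$. Hence it suffices to exhibit a $\sigma\in\Stab^\dag(X)$ for which $\overline{G}=\Aut(\Db(X),\,\sigma\cdot\bC)/\bZ[2]$, because applying $q^{-1}$ and using that $\Aut(\Db(X),\,\sigma\cdot\bC)$ also contains $\ker(q)=\bZ[2]$ then yields $G=\Aut(\Db(X),\,\sigma\cdot\bC)$. To produce $\sigma$, note that $\overline{G}$ is finite (being maximal finite, it is in particular finite), so Theorem~\ref{thm:nielsen_k3-pic1} gives a point $\overline{\sigma}\in\Stab^\dag(X)/\bC$ fixed by $\overline{G}$; choosing any lift $\sigma\in\Stab^\dag(X)$ of $\overline{\sigma}$, every $\Phi\in G$ satisfies $\Phi(\sigma)\in\sigma\cdot\bC$, so $G\subseteq\Aut(\Db(X),\,\sigma\cdot\bC)$ and therefore $\overline{G}\subseteq\Aut(\Db(X),\,\sigma\cdot\bC)/\bZ[2]$.

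Next I would invoke Proposition~\ref{prop:gepner_auteq-isom}, which identifies $\Aut(\Db(X),\,\sigma\cdot\bC)/\bZ[2]$ with $\Aut(\mukaiH(X,\bZ),\,Z\cdot\bC^*)$ and in particular shows it is a \emph{finite} subgroup of $\Aut(\Db(X))/\bZ[2]$ (here $Z$ is the central charge of $\sigma$). Thus $\Aut(\Db(X),\,\sigma\cdot\bC)/\bZ[2]$ is a finite subgroup of $\Aut(\Db(X))/\bZ[2]$ containing the maximal finite subgroup $\overline{G}$, so by maximality the inclusion is an equality, $\overline{G}=\Aut(\Db(X),\,\sigma\cdot\bC)/\bZ[2]$. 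Combining this with the saturation observation above gives $G=q^{-1}(\overline{G})=\Aut(\Db(X),\,\sigma\cdot\bC)$.

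Finally, the symplectic statement is obtained by repeating the argument verbatim with $\Aut$ replaced by $\Aut_s$ everywhere, using the symplectic half of Theorem~\ref{thm:nielsen_k3-pic1} (equivalently Proposition~\ref{prop:nielsen_k3-pic1_symp}) to produce the fixed point and the $\Aut_s$-version of Proposition~\ref{prop:gepner_auteq-isom} for finiteness; one only needs $[2]\in\Aut_s(\Db(X))$, which holds since $[2]$ acts as the identity on $H^{2,0}(X)$. I do not expect a genuine obstacle here: the substantive input is Theorem~\ref{thm:nielsen_k3-pic1}, and the only point requiring care is the elementary but essential fact that the hypothesis $[2]\in G$ makes $G$ equal to $q^{-1}(\overline{G})$, which is what lets the equality pass from $\overline{G}$ up to $G$.
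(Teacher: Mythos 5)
Your proposal is correct and follows essentially the same route as the paper: apply the realization theorem to $\overline{G}$ to get $G\subseteq\Aut(\Db(X),\,\sigma\cdot\bC)$, use finiteness from Proposition~\ref{prop:gepner_auteq-isom} plus maximality to get equality modulo $\bZ[2]$, and then use $[2]\in G$ to lift the equality back to $\Aut(\Db(X))$. Your "saturation" packaging of the last step is just a cleaner phrasing of the paper's concluding argument, so there is nothing to add.
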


\begin{proof}
Theorem~\ref{thm:nielsen_k3-pic1} asserts that there exists $\sigma\in\Stab^\dag(X)$ such that
\begin{equation}
\label{eqn:inclusion-no-mod-2}
    G\subseteq\Aut(\Db(X),\,\sigma\cdot\bC).
\end{equation}
Modulo even shifts, \eqref{eqn:inclusion-no-mod-2} gives the inclusion
\begin{equation}
\label{eqn:inclusion-mod-2}
    \overline{G} = G/\bZ[2]
    \subseteq
    \Aut(\Db(X),\,\sigma\cdot\bC)/\bZ[2].
\end{equation}
By Proposition~\ref{prop:gepner_auteq-isom}, the group on the right hand side is finite, so this inclusion is an equality since $\overline{G}$ is maximal. For every $\Phi\in\Aut(\Db(X),\,\sigma\cdot\bC)$, the fact that \eqref{eqn:inclusion-mod-2} is an equality implies that $\Phi[m]\in G$ for some even $m$. Because $G$ contains $[2]$, we have $\Phi\in G$. Hence \eqref{eqn:inclusion-no-mod-2} is an equality. The proof for the symplectic version is the same.
\end{proof}

\section{Finite subgroups, distribution, and further classification}
\label{sect:classification}

In this section, we continue our discussion in the case of generic K3 surfaces based on the affirmative answer to the Nielsen realization problem. We will give a classification of finite subgroups of autoequivalences up to conjugation with counting formulas for the numbers of conjugacy classes. Then we will give an explicit description of the locus in the space of stability conditions which parametrizes Gepner type points. In the last part of this section, we discuss a classification of autoequivalences in terms of their actions on the hyperbolic period domain, and how different notions of categorical entropy could enter the picture.

\subsection{Finite subgroups of autoequivalences and involutions}
\label{subsect:finite_subgp}

The classification of subgroups which are finite modulo even shifts is done in Theorem~\ref{mainthm:finite-subgp-mod-2}. The purpose of this part is to classify finite subgroups without modulo even shifts. As shown in Corollary~\ref{cor:finite-is-fix}, every such subgroup is contained in $\Aut(\Db(X),\sigma)$ for some $\sigma\in\Stab^\dag(X)$. This reduces the problem to the classification of subgroups in the latter form.

\begin{lemma}
\label{lemma:only-invol}
Let $X$ be a K3 surface of Picard number one and $\sigma\in\Stab^\dag(X)$ be a stability condition. Then every $\Phi\in\Aut(\Db(X),\sigma)$ satisfies $\Phi^2= \mathrm{id}$.
\end{lemma}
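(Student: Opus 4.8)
\emph{Plan.} The strategy is to transport the problem to the Mukai lattice and show that the induced Hodge isometry squares to the identity. Let $Z\in\cP^+_0(X)$ be the central charge of $\sigma$. By the isomorphism \eqref{eqn:fixing-stab}, the autoequivalence $\Phi\in\Aut(\Db(X),\sigma)$ corresponds to a Hodge isometry $\phi\in\Aut(\mukaiH(X,\bZ),Z)$, and this correspondence is bijective; hence it suffices to prove that $\phi^2=\mathrm{id}$ on $\mukaiH(X,\bZ)$. I would verify this separately on the algebraic part $\cN(X)$ and on the transcendental lattice $T(X)=\cN(X)^{\perp}$, and then glue the two conclusions.

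On $\cN(X)$: since $\sigma\in\Stab^\dag(X)$ we have $Z\in\cP^+_0(X)$, so $\real(Z)$ and $\imag(Z)$ span a genuine positive plane $P\subseteq\cN(X)\otimes\bR$, and because $\phi(Z)=Z$ the isometry $\phi$ fixes $P$ pointwise. As $\cN(X)$ has signature $(2,1)$, the orthogonal complement of $P$ in $\cN(X)\otimes\bR$ is a negative-definite line, which $\phi$ must preserve and therefore act on by $\pm1$. In either case $\phi^2$ restricts to the identity on $\cN(X)\otimes\bR$, hence on $\cN(X)$. On $T(X)$: here I would invoke the classical fact that, for a projective K3 surface of odd Picard number, the only Hodge isometries of the transcendental lattice are $\pm\mathrm{id}$ (\cite{Huy16_K3Lect}*{Corollary~3.3.5}, already used in Lemma~\ref{lemma:symp-vs-nonsymp}); since $\phi$ respects the Hodge structure, $\phi|_{T(X)}=\pm\mathrm{id}$, so $\phi^2|_{T(X)}=\mathrm{id}$.

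Finally, $\cN(X)\oplus T(X)$ is a finite-index sublattice of $\mukaiH(X,\bZ)$, so it spans $\mukaiH(X,\bZ)\otimes\bQ$; a lattice isometry of $\mukaiH(X,\bZ)$ that is the identity on a finite-index sublattice is the identity. Thus $\phi^2=\mathrm{id}$, and therefore $\Phi^2=\mathrm{id}$ by the injectivity in \eqref{eqn:fixing-stab}. The argument is essentially formal once the two inputs are in place, namely the rigidity of autoequivalences fixing a stability condition \eqref{eqn:fixing-stab} and the triviality of transcendental Hodge isometries in Picard rank one; the only step that needs a little care is the one on $\cN(X)$, where one must use that $P$ is two-dimensional (guaranteed by $\sigma\in\Stab^\dag(X)$) so that $\phi$ is pinned down up to a single sign on a rank-one complement rather than possibly having larger order.
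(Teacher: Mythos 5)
Your proof is correct and follows essentially the same route as the paper: transfer to $\Aut(\mukaiH(X,\bZ),Z)$ via \eqref{eqn:fixing-stab}, note that an isometry of the signature-$(2,1)$ lattice $\cN(X)$ fixing the positive plane $P$ pointwise is trivial or the reflection across $P$ (your rank-one negative-definite complement argument is the same content), and use $\phi|_{T(X)}=\pm\mathrm{id}$ from the odd Picard number. Your extra remark on gluing over the finite-index sublattice $\cN(X)\oplus T(X)$ is just a spelled-out version of the paper's final step.
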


\begin{proof}
Let $Z\in\cP^+_0(X)$ be the central charge of $\sigma$. Then the group $\Aut(\Db(X),\sigma)$ is isomorphic to $\Aut(\mukaiH(X,\bZ),Z)$ by \cite{Huy16}*{Proposition~1.4}. Therefore, it suffices to prove that every $\phi\in\Aut(\mukaiH(X,\bZ),Z)$ satisfies $\phi^2=\mathrm{id}$. Let $P\subseteq\cN(X)\otimes\bR$ be the positive plane spanned by $Z$. Then the actions of $\phi$ on $\cN(X)$ and $T(X)$ are as follows:
\begin{itemize}
    \item $\phi$ acts on $\cN(X)$ either trivially or as the reflection across $P$ since these are the only elements in $\uO(\cN(X)\otimes\bR)$ fixing $P$ pointwisely.
    \item $\phi$ acts on $T(X)$ as $\pm\mathrm{id}$ by \cite{Huy16_K3Lect}*{Corollary~3.3.5}.
\end{itemize}
It follows that $\phi^2$ acts trivially on both $\cN(X)$ and $T(X)$, which implies that $\phi^2 = \mathrm{id}$.
\end{proof}

\begin{lemma}
\label{lemma:only-Z2}
Let $X$ be a K3 surface of Picard number one and $\sigma\in\Stab^\dag(X)$ be a stability condition. Then the group $\Aut(\Db(X),\sigma)$ is either trivial or isomorphic to $\bZ_2$.
\end{lemma}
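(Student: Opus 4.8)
The plan is to reduce, exactly as in the proof of Lemma~\ref{lemma:only-invol}, to a statement about the finite group $\Aut(\mukaiH(X,\bZ),Z)$, where $Z\in\cP^+_0(X)$ is the central charge of $\sigma$, and then to bound its order by~$2$ by lattice theory. Write $P\subseteq\cN(X)\otimes\bR$ for the positive plane spanned by $Z$. Recall from that proof that every $\phi\in\Aut(\mukaiH(X,\bZ),Z)$ restricts on $\cN(X)$ to either $\mathrm{id}$ or the reflection $s_P$ (acting as $+1$ on $P$ and $-1$ on $P^\perp$), and restricts on $T(X)$ to $\pm\mathrm{id}$. Since $\cN(X)\oplus T(X)$ has finite index in $\mukaiH(X,\bZ)$, the homomorphism $\phi\mapsto(\phi|_{\cN(X)},\phi|_{T(X)})$ is injective, so $\Aut(\mukaiH(X,\bZ),Z)$ embeds into the group $\{\mathrm{id},s_P\}\times\{\pm\mathrm{id}\}$ of order~$4$. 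It then remains only to exclude the case that this embedding is an isomorphism.

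Assume it were. Then $\Aut(\mukaiH(X,\bZ),Z)$ would contain an element acting as $\mathrm{id}$ on $\cN(X)$ and as $-\mathrm{id}$ on $T(X)$, and also an element acting as $s_P$ on $\cN(X)$, so in particular $s_P$ would preserve the lattice $\cN(X)$. I would derive a contradiction from these two facts. For the first: since $X$ has Picard number one, the Mukai pairing shows $\cN(X)\cong U\oplus\langle 2n\rangle$, so its discriminant group is cyclic of order $2n$; and since $\mukaiH(X,\bZ)$ is unimodular, it is the overlattice of $\cN(X)\oplus T(X)$ attached to the graph of an anti-isometry of discriminant forms $A_{\cN(X)}\xrightarrow{\sim}A_{T(X)}$, and the isometry $\mathrm{id}_{\cN(X)}\oplus(-\mathrm{id}_{T(X)})$ extends over this overlattice only if $\mathrm{id}$ and $-\mathrm{id}$ agree on $A_{\cN(X)}$, i.e.\ only if $2n\mid 2$, forcing $n=1$. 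For the second: if $s_P$ preserves $\cN(X)$, then $P^\perp\cap\cN(X)=\bZ\delta$ for a primitive $\delta$ with $\delta^2<0$ even, and $s_P|_{\cN(X)}$ is the reflection $r_\delta(x)=x-\frac{2(x,\delta)}{\delta^2}\delta$; as $Z\in P$ is orthogonal to $\delta$ and $Z\in\cP^+_0(X)$, one has $\delta^2\neq-2$, hence $\delta^2\leq-4$; integrality of $r_\delta$ forces $|\delta^2|\mid 2\operatorname{div}(\delta)$, while $\operatorname{div}(\delta)\mid|A_{\cN(X)}|=2$ (using $n=1$), so $|\delta^2|=4$ and $\operatorname{div}(\delta)=2$; but writing $\delta=ae+bf+cg$ in $U\oplus\langle 2\rangle$ with $g^2=2$, the condition $\operatorname{div}(\delta)=2$ forces $a,b$ even and, by primitivity, $c$ odd, whence $\delta^2=2ab+2c^2\equiv 2\pmod 4$, contradicting $\delta^2=-4$. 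The two conclusions ($n=1$, and $n=1$ impossible) are incompatible, so the embedding is not surjective and $\Aut(\Db(X),\sigma)$ has order at most~$2$.

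The main obstacle is precisely this last step, ruling out the Klein four-group: the embedding into $\bZ_2\times\bZ_2$ is formal, but each of the two would-be extra involutions — one seen only on $T(X)$, one only on $\cN(X)$ — does occur for suitable $\sigma$ in isolation (the first is, up to a shift, the anti-symplectic covering involution when $n=1$; cf.\ Section~\ref{subsect:solving-nielsen-K3}), so neither can be excluded on its own. The argument leans on the fact that their respective existence constraints — $n=1$ from the discriminant-form gluing of $\mukaiH(X,\bZ)$, versus the non-existence of a $(-4)$-class of divisibility~$2$ in $U\oplus\langle 2\rangle$ — are mutually exclusive. I expect the bookkeeping with discriminant forms and divisibilities to be the only delicate point; the rest is routine.
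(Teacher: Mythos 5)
Your proof is correct, but it takes a genuinely different route from the paper's. Both arguments start from the common reduction via \eqref{eqn:fixing-stab} and the dichotomy in Lemma~\ref{lemma:only-invol} ($\mathrm{id}$ or the reflection $s_P$ on $\cN(X)$, $\pm\mathrm{id}$ on $T(X)$), which embeds $\Aut(\mukaiH(X,\bZ),Z)$ into a Klein four-group. The paper then argues group-theoretically: it combines the exact sequence of Proposition~\ref{prop:mod-vs-nomod}, the splitting of Lemma~\ref{lemma:symp-vs-nonsymp}, and the fact that finite subgroups of $\Aut_s(\Db(X))/\bZ[2]$ are cyclic (Lemma~\ref{lemma:finite-subgp}, which rests on the Bayer--Bridgeland identification of this group with an orbifold fundamental group and on Kurosh's theorem), so that $\Aut_s(\Db(X),\sigma\cdot\bC)/\bZ[2]$ contains at most one involution, and finally uses $[1]\notin\Aut(\Db(X),\sigma)$. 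You instead exclude the full Klein four-group purely lattice-theoretically: the gluing of $\cN(X)\oplus T(X)$ inside the unimodular Mukai lattice shows that an element restricting to $(\mathrm{id}_{\cN(X)},-\mathrm{id}_{T(X)})$ forces the discriminant group $\bZ/2n$ to be $2$-torsion, i.e.\ $n=1$, while the divisibility computation in $U\oplus\langle 2\rangle$ (with $\delta^2\neq -2$ coming from $Z\in\cP^+_0(X)$) shows that for $n=1$ the reflection $s_P$ cannot preserve $\cN(X)$; since an order-$4$ group would require both, the group has order at most $2$. I checked the delicate points — the extension criterion via the anti-isometry of discriminant forms, $|\delta^2|\mid 2\operatorname{div}(\delta)$, $\operatorname{div}(\delta)\mid 2$, and the parity computation $\delta^2\equiv 2\pmod 4$ — and they are all sound. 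What each approach buys: the paper's proof is very short in context because the heavy structural input is already established and reused elsewhere; yours is more elementary and self-contained (independent of the free-product description of $\Aut_s(\Db(X))/\bZ[2]$), and it makes explicit which type of involution can occur (trivial on $\cN(X)$ only in degree~$2$, of reflection type $r_\delta$ otherwise), consistent with Corollary~\ref{cor:invol-anti-symp} and Theorem~\ref{thm:classification-finite}. One cosmetic remark: your motivational aside describes the second would-be involution as ``seen only on $\cN(X)$,'' whereas the involutions that actually arise for $n\geq 2$ act as $(s_P,-\mathrm{id}_{T(X)})$; this does not affect the argument, which only needs that neither constraint can be ruled out separately.
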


\begin{proof}
Consider the short exact sequence given by Proposition~\ref{prop:mod-vs-nomod}:
\begin{equation}
\label{eqn:mod-vs-nomod}
\xymatrix@R=0pt{
    0 \ar[r]
    & \Aut(\Db(X),\sigma) \ar[r]
    & \Aut(\Db(X),\,\sigma\cdot\bC)/\bZ[2] \ar[r]
    & \mu_m \ar[r]
    & 0 \\
    && [1] \ar@{|->}[r] & -1 &
}
\end{equation}
where $\mu_m\subseteq\bC^*$ is the group of $m$-th roots of unity. By Lemma~\ref{lemma:symp-vs-nonsymp}, the group in the middle splits as:
\begin{equation}
\label{eqn:symp-vs-nonsymp}
    \Aut(\Db(X),\,\sigma\cdot\bC)/\bZ[2]
    \;\cong\;
    (\Aut_s(\Db(X),\,\sigma\cdot\bC)/\bZ[2])
        \times\bZ_2[1].
\end{equation}
The sequence \eqref{eqn:mod-vs-nomod} realizes $\Aut(\Db(X),\sigma)$ as a subgroup of \eqref{eqn:symp-vs-nonsymp}. We have already known that $\Aut(\Db(X),\sigma)$ contains only the identity or involutions, which suggests us to classify involutive elements in \eqref{eqn:symp-vs-nonsymp}. We have an inclusion
$$
    \Aut_s(\Db(X),\,\sigma\cdot\bC)/\bZ[2]
    \;\subseteq\;
    \Aut_s(\Db(X))/\bZ[2]
$$
where the left hand side is finite by Proposition~\ref{prop:gepner_auteq-isom}. Since all finite subgroups of the right hand side are cyclic by Lemma~\ref{lemma:finite-subgp}, the left hand side is cyclic, thus it contains either none or only one involution. This implies that the only subgroup of \eqref{eqn:symp-vs-nonsymp} which consists of involutions is either $0\times\bZ_2[1]$ or $\bZ_2\times\bZ_2[1]$. In the sequence~\eqref{eqn:mod-vs-nomod}, the element $[1]$ is not contained in the kernel $\Aut(\Db(X),\sigma)$. Hence $\Aut(\Db(X),\sigma)$, as a subgroup of either $0\times\bZ_2[1]$ or $\bZ_2\times\bZ_2[1]$, can only be trivial or $\bZ_2$.
\end{proof}

For K3 surfaces of Picard number one, Lemma~\ref{lemma:only-Z2} shows that every finite subgroup of autoequivalences, if nontrivial, is isomorphic to $\bZ_2$. Therefore, such subgroups one-to-one correspond to involutions. Thus the classifications of conjugacy classes of finite subgroups and conjugacy classes of involutions are the same. In the following, we will give a classification of the latter starting with some general properties about involutions on a triangulated category.

\begin{lemma}
\label{lemma:lift-invol}
Let $\sD$ be a triangulated category and $\phi\in\Aut(\sD)/\bZ[2]$ be an involution not represented by a shift functor. Then either $\phi$ or $\phi[1]$ admits a representative $\Phi\in\Aut(\sD)$ which is an involution. Moreover, $\Phi$ is the only member which has finite order in the set of autoequivalences representing $\phi$ or $\phi[1]$.
\end{lemma}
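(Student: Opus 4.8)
The plan is to produce the involutive representative by correcting an arbitrary one by a shift, and then to show that this correction is forced. First I would fix any $\Psi\in\Aut(\sD)$ lying over $\phi$. Since $\phi^2=\mathrm{id}$ in $\Aut(\sD)/\bZ[2]$, the square $\Psi^2$ is isomorphic to $[2m]$ for a unique integer $m$. The structural input I would lean on is that the shift functor $[1]$ is central in $\Aut(\sD)$, so that $(\Psi\circ[k])^2\cong\Psi^2\circ[2k]\cong[2m+2k]$ for every $k\in\bZ$; this is the identity exactly when $k=-m$. Hence $\Phi\colonequals\Psi\circ[-m]$ is an involution, and keeping track of the parity of $m$ shows that $\Phi$ represents $\phi$ when $m$ is even and $\phi[1]$ when $m$ is odd (according to whether $[-m]\in\bZ[2]$ or $[-m]\in[1]\cdot\bZ[2]$). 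This settles existence.

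For the uniqueness statement I would first observe that the autoequivalences representing $\phi$ or $\phi[1]$ form exactly the $\bZ[1]$-orbit $\{\Phi\circ[n]:n\in\bZ\}$ of the involution just constructed. Using $\Phi^2=\mathrm{id}$ and the centrality of $[1]$, one computes $(\Phi\circ[n])^{2l}\cong[2nl]$ and $(\Phi\circ[n])^{2l+1}\cong\Phi\circ[(2l+1)n]$ for all $l\in\bZ$. An odd power is never trivial, since $\Phi\circ[j]=\mathrm{id}$ would exhibit $\phi$ (or $\phi[1]$) as the class of a shift functor, contrary to the hypothesis; and an even power $(\Phi\circ[n])^{2l}$ with $l\neq0$ is trivial precisely when $n=0$. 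Therefore $\Phi$ is the unique element of finite order in this orbit, and its order is exactly $2$ because $\Phi\neq\mathrm{id}$ (again by the non-shift hypothesis).

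I do not expect a genuine obstacle: the argument is elementary once one exploits that shift functors are central, which makes every power computation linear in the shift exponent. The points that demand a little care are the parity bookkeeping — deciding whether the resulting involution sits in the class $\phi$ or in $\phi[1]$ — and the repeated use of the assumption that $\phi$ is not represented by a shift, which is exactly what rules out the trivial odd powers and the degenerate case $\Phi=\mathrm{id}$. It is worth noting in passing that $\phi$ and $\phi[1]$ cannot both carry involutive representatives, because $(\Phi\circ[2j+1])^2\cong[4j+2]\neq\mathrm{id}$ once $\Phi^2=\mathrm{id}$; so exactly one of the two classes is realized by an involution.
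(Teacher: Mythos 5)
Your proposal is correct and follows essentially the same route as the paper: take an arbitrary lift $\Psi$ of $\phi$, observe $\Psi^2=[2m]$, correct by $[-m]$ to get the involution $\Phi$ (using the non-shift hypothesis to exclude $\Phi=\mathrm{id}$), and then note that every other representative of $\phi$ or $\phi[1]$ is $\Phi[n]$ with $n\neq0$ and hence of infinite order. Your explicit power computation $(\Phi[n])^{2l}\cong[2nl]$, $(\Phi[n])^{2l+1}\cong\Phi[(2l+1)n]$ just spells out what the paper leaves implicit, and the closing observation that only one of $\phi$, $\phi[1]$ admits an involutive lift is a harmless bonus.
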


\begin{proof}
Let $\Psi\in\Aut(\sD)$ be any representative of $\phi$. Because $\phi$ is involutive, $\Psi^2 = [2m]$ for some integer $m$. Define $\Phi\colonequals\Psi[-m]$. Then $\Phi^2 = \Psi^2[-2m] = \mathrm{id}$. If $\Phi = \mathrm{id}$, then $\Psi = [m]$, thus $\phi$ is represented by a shift functor, contradiction. Hence $\Phi$ is an involution.

By definition, $\Phi$ represents either $\phi$ or $\phi[1]$ depending on whether $m$ is even or odd. If $\Phi$ represents $\phi$, then any other representative of $\phi$ is different from $\Phi$ by an even shift, and any representative of $\phi[1]$ is different from $\Phi$ by an odd shift. A similar situation occurs when $\Phi$ represents $\phi[1]$. In both cases, $\Phi$ is the only element which is of finite order among all the representatives of $\phi$ and $\phi[1]$.
\end{proof}

By mapping an involution $\phi\in(\Aut(\sD)/\bZ[2])\setminus\{[1]\}$ to the autoequivalence $\Phi$ determined by Lemma~\ref{lemma:lift-invol}, we obtain a map
\begin{equation}
\label{eqn:map_lift-invol}
    \{
        \text{involutions in }\Aut(\sD)/\bZ[2]
    \}\setminus\{[1]\}
    \longrightarrow
    \{
        \text{involutions in }\Aut(\sD)
    \}.
\end{equation}
Notice that this map sends $\phi$ and $\phi[1]$ to the same involution.

\begin{lemma}
\label{lemma:2-to-1_conj}
Map \eqref{eqn:map_lift-invol} is two-to-one and preserves conjugacy classes.
\end{lemma}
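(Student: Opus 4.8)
The plan is to extract everything from the structural content of Lemma~\ref{lemma:lift-invol}: for each involution $\phi\in\Aut(\sD)/\bZ[2]\setminus\{[1]\}$, that lemma singles out an involution $\Phi\in\Aut(\sD)$ characterized as the \emph{unique} finite-order autoequivalence among all representatives of $\phi$ and of $\phi[1]$, and the map \eqref{eqn:map_lift-invol} sends both $\phi$ and $\phi[1]$ to this $\Phi$. Throughout I would use the two elementary facts that the shift $[1]$ is central in $\Aut(\sD)$ (so that the operation $\phi\mapsto\phi[1]$ commutes with conjugation) and that $[1]$ has infinite order, so that $\bZ[2]$ is infinite cyclic and $[1]\ne\mathrm{id}$ in $\Aut(\sD)/\bZ[2]$.

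For the two-to-one assertion, I would start from an arbitrary involution $\Phi_0\in\Aut(\sD)$ and compute its fibre. Since $\Phi_0^2=\mathrm{id}$ while $\Phi_0\ne\mathrm{id}$, and since $[n]=\mathrm{id}$ forces $n=0$, the class $\bar\Phi_0\in\Aut(\sD)/\bZ[2]$ is an involution with $\bar\Phi_0\notin\{\mathrm{id},[1]\}$; the same holds for $\bar\Phi_0[1]$, and $\bar\Phi_0\ne\bar\Phi_0[1]$. Applying Lemma~\ref{lemma:lift-invol} with $\phi=\bar\Phi_0$, the distinguished lift is the unique finite-order representative of $\bar\Phi_0$ or $\bar\Phi_0[1]$; as $\Phi_0$ is already such a representative, the lift equals $\Phi_0$, so $\bar\Phi_0\mapsto\Phi_0$, and likewise $\bar\Phi_0[1]\mapsto\Phi_0$. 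Conversely, if $\phi\mapsto\Phi_0$ then $\Phi_0$ represents $\phi$ or $\phi[1]$, forcing $\phi\in\{\bar\Phi_0,\bar\Phi_0[1]\}$. Hence the fibre over $\Phi_0$ is precisely the two-element set $\{\bar\Phi_0,\bar\Phi_0[1]\}$, so the map is two-to-one (and surjective onto the involutions of $\Aut(\sD)$).

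For compatibility with conjugacy I would argue both directions. If $\phi'=\psi\phi\psi^{-1}$ in $\Aut(\sD)/\bZ[2]$, lift $\psi$ to $\Psi\in\Aut(\sD)$; then $\Psi\Phi\Psi^{-1}$ is an involution whose class mod $\bZ[2]$ is $\psi\,\bar\Phi\,\psi^{-1}$, which lies in $\{\phi',\phi'[1]\}$ because $\bar\Phi\in\{\phi,\phi[1]\}$ and $[1]$ is central. Thus $\Psi\Phi\Psi^{-1}$ is a finite-order representative of $\phi'$ or of $\phi'[1]$, hence equals $\Phi'$ by the uniqueness in Lemma~\ref{lemma:lift-invol}, so $\Phi\sim\Phi'$. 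Conversely, if $\Phi'=\Psi\Phi\Psi^{-1}$ in $\Aut(\sD)$, then reducing mod $\bZ[2]$ and using $\bar\Phi\in\{\phi,\phi[1]\}$, $\bar\Phi'\in\{\phi',\phi'[1]\}$ together with the centrality of $[1]$ shows $\phi'$ is conjugate to $\phi$ or to $\phi[1]$. Combined with the previous step this shows the map carries conjugacy classes bijectively: a conjugacy class of involutions in $\Aut(\sD)$ corresponds to the union of the $\Aut(\sD)/\bZ[2]$-conjugacy classes of $\phi$ and of $\phi[1]$.

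I do not expect a genuine obstacle; the proof is pure bookkeeping. The one point requiring care is the persistent ``$\phi$ versus $\phi[1]$'' ambiguity — a preimage of $\Phi_0$ is determined only up to an even or odd shift — which must be tracked everywhere and reconciled using the centrality of $[1]$; and one must verify the non-degeneracy $\bar\Phi_0\notin\{\mathrm{id},[1]\}$ (which is where $[1]$ having infinite order is genuinely used) before invoking the uniqueness clause of Lemma~\ref{lemma:lift-invol}, since that clause applies only to honest involutions.
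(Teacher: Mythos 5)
Your proposal is correct and follows essentially the same route as the paper: both arguments reduce everything to the uniqueness of the finite-order representative from Lemma~\ref{lemma:lift-invol}, identify the fibre over an involution $\Phi_0$ as $\{\overline{\Phi}_0,\overline{\Phi}_0[1]\}$ via the section $\Phi\mapsto\overline{\Phi}$, and use centrality of $[1]$ to handle the $\phi$ versus $\phi[1]$ ambiguity. The only cosmetic difference is in the conjugacy step, where the paper pins down the stray even shift by squaring ($\mathrm{id}=[2m]$, so $m=0$) rather than citing the uniqueness clause directly, and your added converse (conjugate images come from conjugate-up-to-$[1]$ inputs) is harmless extra content.
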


\begin{proof}
For every involution $\Phi\in\Aut(\sD)$, its image $\overline{\Phi}\in\Aut(\sD)/\bZ[2]$ is an involution not equal to $[1]$. Because $\Phi$ is a finite order representative of $\overline{\Phi}$, the element $\overline{\Phi}$ is mapped back to $\Phi$ by \eqref{eqn:map_lift-invol}. This shows that mapping $\Phi$ to $\overline{\Phi}$ defines a section of the map. In particular, the map is surjective.

To prove that the map is two-to-one, pick involutions $\phi,\phi'\in(\Aut(\sD)/\bZ[2])\setminus\{[1]\}$ and assume that they are mapped to the same autoequivalence $\Phi\in\Aut(\sD)$ under \eqref{eqn:map_lift-invol}. In this setting, $\Phi$ represents either $\phi$ or $\phi[1]$, and either $\phi'$ or $\phi'[1]$. One can deduce that $\phi' = \phi$ or $\phi' = \phi[1]$ via a case-by-case analysis. This fact, together with the surjectivity proved above, asserts that the map is two-to-one.

To prove that the map preserves conjugacy classes, let $\phi,\phi'\in(\Aut(\sD)/\bZ[2])\setminus\{[1]\}$ be involutions and $\Phi,\Phi'\in\Aut(\sD)$ respectively be their images under the map. Assume that there exists $\theta\in\Aut(\sD)/\bZ[2]$ such that $\phi' = \theta\phi\theta^{-1}$. Then, if $\Theta\in\Aut(\sD)$ represents $\theta$, we have $\Phi' = \Theta\Phi\Theta^{-1}[m]$ for some even integer $m$. Because $\Phi$ and $\Phi'$ are involutions, taking squares on both sides of this relation gives $\mathrm{id} = [2m]$, which implies that $m = 0$. Hence we have $\Phi' = \Theta\Phi\Theta^{-1}$, so $\Phi$ and $\Phi'$ are conjugate to each other.
\end{proof}

Now consider a K3 surface $X$ of odd Picard number. In this case, the only automorphisms of the transcendental lattice are $\pm\mathrm{id}$ \cite{Huy16_K3Lect}*{Corollary~3.3.5}. Therefore, every autoequivalence is either symplectic or anti-symplectic, and composing with $[1]$ changes one type to the other. For every $\Phi\in\Aut(\Db(X))$, let us denote its image in $\Aut(\Db(X))/\bZ[2]$ as $\overline{\Phi}$. Consider the map
$$
    \Aut(\Db(X))\longrightarrow \Aut_s(\Db(X))/\bZ[2]
    : \Phi\longmapsto\begin{cases}
        \overline{\Phi}
        & \text{if }\Phi\text{ is symplectic} \\
        \overline{\Phi}[1]
        & \text{if }\Phi\text{ is anti-symplectic}
    \end{cases}.
$$
Under this map, an involution is mapped to an involution, and conjugate elements are mapped to conjugate elements. Therefore, it induces a map
\begin{equation}
\label{eqn:map_sympinvol}
    \{
        \text{involutions in }\Aut(\Db(X))
    \}
    \longrightarrow
    \{
        \text{involutions in }\Aut_s(\Db(X))/\bZ[2]
    \}
\end{equation}
preserving conjugacy classes. Notice that the codomain does not contain $[1]$, so it appears as a subset of the domain of map~\eqref{eqn:map_lift-invol}. Restricting \eqref{eqn:map_lift-invol} to this subset gives
\begin{equation}
\label{eqn:lift-sympinvol}
    \{
        \text{involutions in }\Aut_s(\Db(X))/\bZ[2]
    \}
    \longrightarrow
    \{
        \text{involutions in }\Aut(\Db(X))
    \}
\end{equation}
which also preserves conjugacy classes.

\begin{lemma}
\label{lemma:bij_invol}
Maps~\eqref{eqn:map_sympinvol} and \eqref{eqn:lift-sympinvol} are inverse to each other, and they induce bijections between conjugacy classes.
\end{lemma}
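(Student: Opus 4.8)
The plan is to show that \eqref{eqn:lift-sympinvol} is a bijection by identifying its domain with a transversal to the fibers of the two-to-one map \eqref{eqn:map_lift-invol}, and then to recognize \eqref{eqn:map_sympinvol} as its inverse via a symplectic/anti-symplectic bookkeeping argument.

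First I would record the basic dichotomy forced by the odd Picard number hypothesis, already noted just before the statement: every autoequivalence is symplectic or anti-symplectic, and $[1]$ is anti-symplectic and central. Hence for every involution $\phi\in\Aut(\Db(X))/\bZ[2]$ with $\phi\ne[1]$, exactly one of $\phi,\phi[1]$ is symplectic, both are genuine involutions, and neither equals $[1]$. By Lemma~\ref{lemma:2-to-1_conj} the map \eqref{eqn:map_lift-invol} is two-to-one and surjective with fibers $\{\phi,\phi[1]\}$, so the subset of symplectic involutions --- that is, the set of involutions in $\Aut_s(\Db(X))/\bZ[2]$, which is the domain of \eqref{eqn:lift-sympinvol} --- meets each fiber in exactly one point. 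Since \eqref{eqn:lift-sympinvol} is by construction the restriction of \eqref{eqn:map_lift-invol} to this subset, it is therefore a bijection onto $\{\text{involutions in }\Aut(\Db(X))\}$.

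Next I would check that \eqref{eqn:map_sympinvol} is the inverse of this bijection. For an involution $\Phi\in\Aut(\Db(X))$, the fiber of \eqref{eqn:map_lift-invol} over $\Phi$ is $\{\overline{\Phi},\overline{\Phi}[1]\}$ (this is how the section constructed in the proof of Lemma~\ref{lemma:2-to-1_conj} behaves), and the symplectic member of this pair is precisely the image of $\Phi$ under \eqref{eqn:map_sympinvol}: it is $\overline{\Phi}$ if $\Phi$ is symplectic and $\overline{\Phi}[1]$ if $\Phi$ is anti-symplectic. Feeding that symplectic element back through \eqref{eqn:lift-sympinvol} returns, by the uniqueness clause of Lemma~\ref{lemma:lift-invol}, the unique finite-order representative of its class, namely the involution $\Phi$ itself. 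Thus \eqref{eqn:map_sympinvol} is a one-sided --- hence two-sided --- inverse of the bijection \eqref{eqn:lift-sympinvol}, so the two maps are mutually inverse bijections.

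Finally, both maps preserve conjugacy classes: for \eqref{eqn:map_sympinvol} this is observed in the paragraph preceding the statement, and for \eqref{eqn:lift-sympinvol} it follows from Lemma~\ref{lemma:2-to-1_conj}, since that map is a restriction of \eqref{eqn:map_lift-invol}. Consequently each induces a well-defined map on sets of conjugacy classes, and because the underlying set maps are mutually inverse, so are the induced maps, which are therefore bijections. The only delicate point in all of this is keeping the symplectic versus anti-symplectic types straight while matching the symplectic representatives against the fibers of \eqref{eqn:map_lift-invol}; beyond that bookkeeping and an appeal to the uniqueness part of Lemma~\ref{lemma:lift-invol}, there is no real obstacle.
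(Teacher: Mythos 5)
Your argument is correct and follows essentially the same route as the paper: the key verification that \eqref{eqn:lift-sympinvol}${}\circ{}$\eqref{eqn:map_sympinvol}$=\mathrm{id}$ and the appeal to conjugacy-preservation of both maps are exactly the paper's steps. The only difference is cosmetic: where the paper checks the composition \eqref{eqn:map_sympinvol}${}\circ{}$\eqref{eqn:lift-sympinvol}$=\mathrm{id}$ directly by the same symplectic/anti-symplectic case split, you instead deduce bijectivity of \eqref{eqn:lift-sympinvol} from the fibers $\{\overline{\Phi},\overline{\Phi}[1]\}$ of the two-to-one map \eqref{eqn:map_lift-invol} meeting the symplectic involutions in exactly one point, which is a valid reorganization of the same ingredients.
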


\begin{proof}
The image of an involution $\Phi\in\Aut(\Db(X))$ under \eqref{eqn:map_sympinvol} is either $\overline{\Phi}$ or $\overline{\Phi}[1]$, which are both mapped to $\Phi$ by \eqref{eqn:lift-sympinvol}. This shows that $\text{\eqref{eqn:lift-sympinvol}}\circ\text{\eqref{eqn:map_sympinvol}}=\mathrm{id}$.

Next, pick an involution $\phi\in\Aut_s(\Db(X))/\bZ[2]$ and let $\Phi\in\Aut(\Db(X))$ be its image under \eqref{eqn:lift-sympinvol}. Then $\Phi$ represents either $\phi$ or $\phi[1]$.
\begin{itemize}
    \item If $\Phi$ represents $\phi$, then it is symplectic, thus \eqref{eqn:map_sympinvol} maps it to $\overline{\Phi}=\phi$.
    \item If $\Phi$ represents $\phi[1]$, then it is anti-symplectic, thus \eqref{eqn:map_sympinvol} maps it to $\overline{\Phi}[1] = \phi$.
\end{itemize}
This shows that $\text{\eqref{eqn:map_sympinvol}}\circ\text{\eqref{eqn:lift-sympinvol}}=\mathrm{id}$. Thus the two maps are inverse to each other.

Recall that both maps preserve conjugacy classes. Hence they induce maps between conjugacy classes that are inverse to each other.
\end{proof}

\begin{thm}
\label{thm:classification-finite}
Let $X$ be a K3 surface of Picard number one and degree~$2n$. If $G$ is a maximal finite subgroup of $\Aut(\Db(X))$, then there exists $\sigma\in\Stab^\dag(X)$ such that
$$
    G = \Aut(\Db(X),\sigma)
    \cong\begin{cases}
        \bZ_2 & \text{if}\quad n=1,2 \\
        0 & \text{if}\quad n=3,4 \\
        0,\; \bZ_2 & \text{if}\quad n\geq 5.
    \end{cases}
$$
\begin{itemize}
    \item When $n=1,2$, there exists one and only one such subgroup up to conjugation.
    \item When $n\geq 5$, there exist $\frac{\nu_2}{2}$ many such subgroups isomorphic to $\bZ_2$ up to conjugation.
\end{itemize}
\end{thm}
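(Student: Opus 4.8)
The plan is to read the statement off from Corollary~\ref{cor:finite-is-fix}, Lemma~\ref{lemma:only-Z2}, the bijection in Lemma~\ref{lemma:bij_invol}, and the enumeration of symplectic finite subgroups in Lemma~\ref{lemma:finite-subgp}. By Corollary~\ref{cor:finite-is-fix}, any maximal finite subgroup $G$ equals $\Aut(\Db(X),\sigma)$ for a suitable $\sigma\in\Stab^\dag(X)$, and Lemma~\ref{lemma:only-Z2} gives $G\cong0$ or $G\cong\bZ_2$. So only two things remain: to decide when the nontrivial case occurs, and in that case to count the conjugacy classes of such subgroups.

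First I would note that \emph{every} $\bZ_2$-subgroup of $\Aut(\Db(X))$ is a maximal finite subgroup: by Theorem~\ref{thm:nielsen_k3-pic1} it is contained in some $\Aut(\Db(X),\sigma)$, and the latter is at most $\bZ_2$ by Lemma~\ref{lemma:only-Z2}. Since a $\bZ_2$-subgroup is generated by a unique involution and conjugation of subgroups is the same as conjugation of their generators, the conjugacy classes of nontrivial maximal finite subgroups of $\Aut(\Db(X))$ are in natural bijection with the conjugacy classes of involutions in $\Aut(\Db(X))$. By Lemma~\ref{lemma:bij_invol} the latter are in conjugacy-preserving bijection with the involutions in $\Aut_s(\Db(X))/\bZ[2]$.

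It then remains to count involutions in $\Aut_s(\Db(X))/\bZ[2]$ up to conjugacy, which follows from Lemma~\ref{lemma:finite-subgp} (underlying it, Kurosh's theorem applied to the free product descriptions \eqref{eqn:sympautoeq_n-not-1} and \eqref{eqn:sympautoeq_n=1}). Any involution lies in a maximal finite subgroup $G_s$; by Lemma~\ref{lemma:finite-subgp} one has $G_s\cong\bZ_6$ for $n=1$ and $G_s\cong\bZ_4$ for $n=2$, in both cases unique up to conjugacy and containing exactly one involution, so there is a single conjugacy class of involutions; for $n=3,4$ one has $G_s=0$, so $\Aut_s(\Db(X))/\bZ[2]$ is torsion free and there is no involution; and for $n\geq5$ the possibilities are $G_s\cong0,\bZ_2,\bZ_3$, with $\frac{\nu_2}{2}$ conjugacy classes of copies of $\bZ_2$, each contributing one involution and $\bZ_3$ contributing none, hence $\frac{\nu_2}{2}$ conjugacy classes of involutions. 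To finish, I would observe that when a nontrivial involution exists (namely $n=1,2$, or $n\geq5$ with $\nu_2\neq0$) the trivial group is not maximal, so every maximal finite subgroup is a copy of $\bZ_2$; and when no involution exists ($n=3,4$, or $n\geq5$ with $\nu_2=0$) the only maximal finite subgroup is $0$. Combining with the bijection above yields precisely the stated isomorphism types together with the counts $1$, $1$, and $\frac{\nu_2}{2}$ for $n=1,2$, for $n=3,4$, and for $n\geq5$ respectively.

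The genuinely substantial input---transferring conjugacy information between the nonabelian group $\Aut(\Db(X))$ and the free product $\Aut_s(\Db(X))/\bZ[2]$---is already carried out in Lemmas~\ref{lemma:lift-invol}--\ref{lemma:bij_invol}, so the argument above is essentially assembly. The point that needs care is not conflating the statement ``$\Aut(\Db(X),\sigma)$ is trivial for some particular $\sigma$'' with ``the maximal finite subgroups are trivial''; this is why one must record separately, in terms of $\nu_2(n)$, exactly when involutions exist.
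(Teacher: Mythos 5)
Your proposal is correct and follows essentially the same route as the paper: Corollary~\ref{cor:finite-is-fix} and Lemma~\ref{lemma:only-Z2} reduce the statement to classifying involutions in $\Aut(\Db(X))$, Lemma~\ref{lemma:bij_invol} transfers this to involutions in $\Aut_s(\Db(X))/\bZ[2]$, and Lemma~\ref{lemma:finite-subgp} supplies the isomorphism types and the counts $1$, $0$, and $\frac{\nu_2}{2}$. The extra details you supply (every $\bZ_2$-subgroup is maximal via Theorem~\ref{thm:nielsen_k3-pic1} plus Lemma~\ref{lemma:only-Z2}, and the care about when the trivial group is the only maximal finite subgroup) are correct fillings-in of steps the paper leaves implicit.
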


\begin{proof}
The identification $G = \Aut(\Db(X),\sigma)$ is given by Corollary~\ref{cor:finite-is-fix}. By Lemma~\ref{lemma:only-Z2}, the group $\Aut(\Db(X),\sigma)$ is either trivial or isomorphic to $\bZ_2$, which turns the problem into the classification of involutions in $\Aut(\Db(X))$. Lemma~\ref{lemma:bij_invol} further turns it into the classification of involutions in $\Aut_s(\Db(X))/\bZ[2]$. The list and the numbers of conjugacy classes then follows from Lemma~\ref{lemma:finite-subgp}.
\end{proof}

The unique conjugacy classes of involutions when $n=1,2$ are induced, respectively, by the covering involution of $X\to\bP^2$ and $\Theta^2[-1]$ where $\Theta = (-\otimes\cO_X(1))\circ T_{\cO_X}$ (see Example~\ref{eg:CanonacoKarp}). Notice that both involutions are anti-symplectic. We will show that, in fact, all involutive autoequivalences are anti-symplectic for all $n$.

Let us assume that $n\geq 2$ and consider the isometry group
$$
    \widehat{\uO}(\cN(X))\colonequals\{
        \phi\in\uO(\cN(X))
        \mid
        \phi\text{ acts on }\cN(X)^*/\cN(X)\text{ as }\pm\mathrm{id}
    \}.
$$
This group admits two natural homomorphisms to $\bZ_2\cong\{\pm 1\}$ given by
$$
    \det\colon\widehat{\uO}(\cN(X))\longrightarrow\{\pm 1\}
    \qquad\text{and}\qquad
    \mathrm{disc}\colon\widehat{\uO}(\cN(X))\longrightarrow\{\pm 1\}
$$
where $\det$ is the determinant function and $\mathrm{disc}$ takes the sign of the action of an isometry on the discriminant group. Note that elements in (resp. outside) the kernel of $\mathrm{disc}$ become symplectic (resp. anti-symplectic) upon extended to $\mukaiH(X,\bZ)$. We have
$$
    (\det\cdot\,\mathrm{disc})(\mathrm{id})
    = (\det\cdot\,\mathrm{disc})(-\mathrm{id}) = 1.
$$
Thus the multiplication $\det\cdot\,\mathrm{disc}$ descends to the quotient  $\widehat{\uO}(\cN(X))/\{\pm\mathrm{id}\}$ as
$$
    \overline{\det\cdot\,\mathrm{disc}}
    \colon\widehat{\uO}(\cN(X))/\{\pm\mathrm{id}\}
    \longrightarrow\{\pm 1\}.
$$
The isomorphism $\cH\cong\cQ^+(X)$ in \eqref{map:hyper-domain} identifies $\Gamma_0^+(n)$ as a subgroup of $\widehat{\uO}(\cN(X))/\{\pm\mathrm{id}\}$, so we can consider the restriction
$$
    \mathbf{d}\colonequals
    \overline{\det\cdot\,\mathrm{disc}}\,|_{\Gamma_0^+(n)}
    \colon\Gamma_0^+(n)\longrightarrow\{\pm 1\}.
$$

\begin{lemma}
\label{lemma:det-disc}
For every $n\geq 2$, we have $\Gamma_0(n) = \ker(\mathbf{d})$.
\end{lemma}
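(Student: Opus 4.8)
The plan is to exhibit $\mathbf{d}$ as the nontrivial sign character of $\Gamma_0^+(n)$ killing exactly $\Gamma_0(n)$. Since $\Gamma_0(n)\subseteq\Gamma_0^+(n)$ has index $2$, and $2$ is prime, any subgroup containing $\Gamma_0(n)$ is either $\Gamma_0(n)$ or $\Gamma_0^+(n)$; so it suffices to prove the two assertions $\Gamma_0(n)\subseteq\ker(\mathbf{d})$ and $\frn\notin\ker(\mathbf{d})$, the latter ruling out $\ker(\mathbf{d})=\Gamma_0^+(n)$. For both I will compute the relevant lattice isometry explicitly, using only its determinant and its action on the discriminant group $\cN(X)^*/\cN(X)$, which is cyclic of order $2n$ generated by $\eta\colonequals[\tfrac{1}{2n}h]$.

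For the inclusion $\Gamma_0(n)\subseteq\ker(\mathbf{d})$, fix $\gamma=\begin{pmatrix} a & b \\ c & d\end{pmatrix}\in\Gamma_0(n)$ with $c\equiv0\pmod n$. Reading off the effect of $z\mapsto\frac{az+b}{cz+d}$ on $e^{zh}=(1,z,nz^2)$ exactly as in Example~\ref{eg:autoeq_on_hyper}, one gets the induced isometry $M_\gamma\in\widehat{\uO}(\cN(X))$ with $M_\gamma(0,1,0)=(2cd,\,ad+bc,\,2nab)$; since $\gamma$ lies in the connected group $\SL(2,\bR)$, the image $M_\gamma$ lies in the identity component of $\uO(\cN(X)\otimes\bR)\cong\uO(2,1)$, so $\det(M_\gamma)=1$. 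Using $c\equiv0\pmod n$, the image of $\eta$ is $M_\gamma(\eta)=[\tfrac{1}{2n}(2cd,ad+bc,2nab)]=(ad+bc)\,\eta$ in $\cN(X)^*/\cN(X)$; and from $ad-bc=1$ together with $n\mid c$ we get $ad+bc=1+2bc\equiv1\pmod{2n}$. Hence $M_\gamma$ fixes the discriminant group, $\mathrm{disc}(M_\gamma)=1$, and therefore $\mathbf{d}(\gamma)=\det(M_\gamma)\cdot\mathrm{disc}(M_\gamma)=1$.

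For $\frn\notin\ker(\mathbf{d})$, recall from Example~\ref{eg:autoeq_on_hyper} (or Lemma~\ref{lemma:ref-on-hyper}) that $\frn$ is induced by the spherical twist $T_{\cO_X}$, which acts on $\cN(X)$ as the reflection in the $(-2)$-vector $(1,0,1)$. A reflection on the rank-three lattice $\cN(X)$ has determinant $-1$, and since $(1,0,1)\cdot h=0$ this reflection fixes $h$, hence fixes $\eta$, so $\mathrm{disc}=1$ for it and $\mathbf{d}(\frn)=(-1)(1)=-1$. Combining the two computations gives $\Gamma_0(n)\subseteq\ker(\mathbf{d})\subsetneq\Gamma_0^+(n)$, whence $\ker(\mathbf{d})=\Gamma_0(n)$. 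The calculations here are entirely routine; the only point that needs to be watched is that each element of $\Gamma_0^+(n)\subseteq\widehat{\uO}(\cN(X))/\{\pm\mathrm{id}\}$ has two lifts to $\widehat{\uO}(\cN(X))$ differing by $-\mathrm{id}$, but $\det\cdot\,\mathrm{disc}$ takes the value $1$ on $-\mathrm{id}$ (it negates the order-$2n$ discriminant group when $n\geq2$), so the values of $\mathbf{d}$ computed above do not depend on the chosen lift — which is exactly why $\mathbf{d}$ descends to $\Gamma_0^+(n)$ in the first place.
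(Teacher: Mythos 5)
Your proof is correct, and its first half coincides with the paper's: for $\gamma\in\Gamma_0(n)$ you act on the generator $(0,\tfrac{1}{2n},0)$ of the discriminant group and find the multiplier $ad+bc=1+2bc\equiv 1\pmod{2n}$, which is exactly the computation in the paper's proof. Where you genuinely diverge is the converse inclusion. The paper handles it by taking an \emph{arbitrary} element of the nontrivial coset, factoring it as an element of $\Gamma_0(n)$ times $\frn$, and showing that the lift $M_g$ (which always has $\det(M_g)=(\alpha\delta-\beta\gamma)^3=1$) acts on the discriminant group as $-\mathrm{id}$; you instead use that $\mathbf{d}$ is a character on $\Gamma_0^+(n)$ and that $\Gamma_0(n)$ has index $2$, so it suffices to evaluate $\mathbf{d}(\frn)$, which you do on the other lift of $\frn$, namely the reflection in the $(-2)$-vector $(1,0,1)$, getting $\det=-1$ and $\mathrm{disc}=+1$. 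Both routes are complete: yours avoids the coset factorization entirely but leans on the well-definedness of $\mathbf{d}$ on the quotient by $\{\pm\mathrm{id}\}$, which you rightly verify using $n\geq 2$ (this is also where the index-$2$ statement needs $n\geq 2$, since $\frn\in\Gamma_0(1)$); the paper's version gives the slightly more explicit information that every determinant-$+1$ lift of an element outside $\Gamma_0(n)$ acts as $-\mathrm{id}$ on the discriminant group, without invoking the group-theoretic reduction. A small remark: your connectedness argument for $\det(M_\gamma)=1$ is valid (the symmetric-square formula is continuous on $\SL(2,\bR)$), but the direct identity $\det(M_\gamma)=(ad-bc)^3=1$ used in the paper is even shorter.
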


\begin{proof}
Pick any
$
    g = \begin{pmatrix}
        \alpha & \beta \\
        \gamma & \delta
    \end{pmatrix}
    \in\Gamma_0^+(n).
$
Then the actions on $\cN(X)$ which induce $g$ are represented by $\pm M_g$ where
$$
    M_g = \begin{pmatrix}
        \delta^2
        & 2\gamma\delta
        & \frac{1}{n}\gamma^2 \\[3pt]
        \beta\delta
        & \alpha\delta+\beta\gamma
        & \frac{1}{n}\alpha\gamma \\[3pt]
        n\beta^2
        & 2n\alpha\beta
        & \alpha^2
    \end{pmatrix}.
$$
A direct computation gives
$
    \det(M_g) = (\alpha\delta - \beta\gamma)^3 = 1.
$
To prove the lemma, it suffices to prove that $g\in\Gamma_0(n)$ if and only if $\mathrm{disc}(M_g) = 1$.

The discriminant group $\cN(X)^*/\cN(X)$ is generated by $(0,\frac{1}{2n},0)$. One can verify that $M_g$ acts on this generator by multiplying
$$
    \alpha\delta + \beta\gamma
    = 2\alpha\delta - 1
    = 2\beta\gamma + 1.
$$
\begin{itemize}
    \item If $g\in\Gamma_0(n)$, then $\gamma\equiv 0\pmod{n}$, thus $2\beta\gamma + 1\equiv 1\pmod{2n}$. In this case, the action of $M_g$ on the discriminant group is the identity, so $\mathrm{disc}(M_g) = 1$.
    \item If $g\notin\Gamma_0(n)$, then we can write
    $$
        \begin{pmatrix}
            \alpha & \beta \\
            \gamma & \delta
        \end{pmatrix}
        = \begin{pmatrix}
            a & b \\
            c & d
        \end{pmatrix}
        \begin{pmatrix}
            0 & -\frac{1}{\sqrt{n}} \\
            \sqrt{n} & 0
        \end{pmatrix}
        \quad\text{where}\quad
        \begin{pmatrix}
            a & b \\
            c & d
        \end{pmatrix}\in\Gamma_0(n).
    $$
    From here we get $\alpha\delta = -bc \equiv 0\pmod{n}$, thus $2\alpha\delta - 1\equiv -1\pmod{2n}$. In this case, the action of $M_g$ on the discriminant group is $-\mathrm{id}$, so $\mathrm{disc}(M_g) = -1$.
\end{itemize}
This shows that $g\in\Gamma_0(n)$ if and only if $\mathrm{disc}(M_g) = 1$, which completes the proof.
\end{proof}

\begin{cor}
\label{cor:invol-anti-symp}
On a K3 surface $X$ of Picard number one, every involutive autoequivalence is anti-symplectic.
\end{cor}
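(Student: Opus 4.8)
The plan is to realize an involutive autoequivalence $\Phi$ (an element of order exactly two) as a stabilizer of a stability condition on the distinguished component, identify the two possible shapes of its cohomological action on $\cN(X)$, and rule out the symplectic alternative using Lemmas~\ref{lemma:ref-on-hyper} and \ref{lemma:det-disc}. First, since $\langle\Phi\rangle$ is a finite subgroup, Theorem~\ref{thm:nielsen_k3-pic1} lets us fix $\sigma\in\Stab^\dag(X)$ with $\Phi(\sigma)=\sigma$; write $Z\in\cP^+_0(X)$ for its central charge, so the induced Hodge isometry $\phi\coloneqq[\Phi]$ of $\mukaiH(X,\bZ)$ satisfies $\phi(Z)=Z$. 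As $\phi$ preserves the numerical lattice $\cN(X)\subseteq\mukaiH(X,\bZ)$ and fixes pointwise the positive plane $P\subseteq\cN(X)\otimes\bR$ spanned by $\real(Z)$ and $\imag(Z)$, and since $\cN(X)$ has rank $3$, the isometry $\phi|_{\cN(X)}$ is either the identity or the reflection $s_\delta$ along a primitive vector $\delta\in\cN(X)$ spanning the negative-definite line $P^\perp$. (Note that $\phi|_{\cN(X)}$ lies in $\widehat{\uO}(\cN(X))$, because $\phi$ acts by $\pm\mathrm{id}$ on the transcendental lattice and hence by $\pm\mathrm{id}$ on the discriminant group.)

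Suppose first that $\phi|_{\cN(X)}=s_\delta$. Then $P=\delta^\perp$, so $\langle Z,\delta\rangle=0$; since $Z\in\cP^+_0(X)$ is orthogonal to no $(-2)$-vector, we get $\delta^2\neq-2$. Consequently the involution $g\coloneqq\overline{\phi|_{\cN(X)}}\in\Gamma_0^+(n)$ cannot lie in the coset $\Gamma_0(n)\frn$: were it to, Lemma~\ref{lemma:ref-on-hyper} would exhibit $g$ as the transformation induced by the reflection $s_{\delta'}$ along some $(-2)$-vector $\delta'$, giving $s_\delta=\pm s_{\delta'}$; comparing fixed loci — only $s_\delta$, not $-s_{\delta'}$, fixes a positive plane pointwise — forces $s_\delta=s_{\delta'}$ and hence $\delta^2=\delta'^2=-2$, a contradiction. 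Therefore $g\in\Gamma_0(n)=\ker(\mathbf{d})$ by Lemma~\ref{lemma:det-disc}, i.e.\ $\det(s_\delta)\cdot\mathrm{disc}(s_\delta)=1$; but $s_\delta$ is a reflection in the rank-$3$ lattice $\cN(X)$, so $\det(s_\delta)=-1$, whence $\mathrm{disc}(s_\delta)=-1$. By the discussion preceding Lemma~\ref{lemma:det-disc}, the Hodge isometry $\phi$, being the extension of $s_\delta$ to $\mukaiH(X,\bZ)$, acts as $-\mathrm{id}$ on the transcendental lattice; that is, $\Phi$ is anti-symplectic.

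It remains to treat the case $\phi|_{\cN(X)}=\mathrm{id}$. If $\Phi$ were symplectic, then $\phi$ would also act trivially on the transcendental lattice $T(X)$, the only Hodge isometries of which are $\pm\mathrm{id}$ because $X$ has Picard number one \cite{Huy16_K3Lect}*{Corollary~3.3.5}; hence $\phi=\mathrm{id}$ and $\Phi\in\mathcal{I}(\Db(X))$. But $\mathcal{I}(\Db(X))$ is torsion-free by \cite{BB17}*{Theorem~1.4}, contradicting that $\Phi$ has order exactly two. So $\Phi$ is anti-symplectic in this case as well, which completes the argument. I expect the main obstacle to be the middle step: pinning down that the reflection $\phi|_{\cN(X)}$, when it occurs, lands in $\Gamma_0(n)$ and therefore carries the anti-symplectic sign on the discriminant group — a direct computation of the action of $s_\delta$ on $\cN(X)^\ast/\cN(X)$ is possible but delicate, and it is precisely to bypass it that Lemmas~\ref{lemma:ref-on-hyper} and \ref{lemma:det-disc} enter, fed by the input $\delta^2\neq-2$ coming from $Z\in\cP^+_0(X)$.
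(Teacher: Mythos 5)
Your proof is correct and follows the same strategy as the paper's: use Theorem~\ref{thm:nielsen_k3-pic1} to fix a stability condition $\sigma$, observe that the induced isometry on $\cN(X)$ fixes the positive plane of $Z$ pointwise, rule out the reflection along a $(-2)$-vector, conclude via Lemma~\ref{lemma:ref-on-hyper} that the induced element lies in $\Gamma_0(n)$, and then apply Lemma~\ref{lemma:det-disc} to force $\mathrm{disc}=-1$, i.e.\ anti-symplecticity. The small deviations are worth noting: you exclude the $(-2)$-reflection using $Z\in\cP^+_0(X)$ (no $(-2)$-vector orthogonal to $Z$), whereas the paper argues that such a $\Phi$ would be a spherical twist composed with a Torelli element and hence of infinite order; you also treat explicitly the possibility that $\phi|_{\cN(X)}=\mathrm{id}$, disposed of by torsion-freeness of $\cI(\Db(X))$, a case the paper's proof passes over silently; and your argument handles $n=1$ uniformly (the reflection case becomes vacuous there, since every involution of $\Gamma_0^+(1)$ comes from a $(-2)$-reflection), while the paper settles degree~$2$ separately by conjugating to the covering involution $\iota$. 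One cosmetic caution: Lemma~\ref{lemma:det-disc} is stated only for $n\geq 2$, so the identity $\Gamma_0(n)=\ker(\mathbf{d})$ should not be quoted for $n=1$; this is harmless in your argument because for $n=1$ the reflection case never occurs, but the sentence should be phrased so that the lemma is invoked only when $n\geq 2$.
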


\begin{proof}
Suppose that $X$ has degree $2n$. If $n=1$, then every involutive autoequivalence is conjugate to the one induced by the covering involution $X\to\bP^2$, which is anti-symplectic, so the statement holds in this case.

Assume that $n\geq 2$ and let $\Phi\in\Aut(\Db(X))$ be an involution. By Theorem~\ref{thm:nielsen_k3-pic1}, there exists $\sigma\in\Stab^\dag(X)$ fixed by $\Phi$, so that the induced isometry $\phi\in\widehat{\uO}(\cN(X))$ is the reflection across the positive plane determined by $\sigma$. This implies $\det(\phi) = -1$. Notice that $\phi$ cannot be the reflection along a $(-2)$-vector since, otherwise, $\Phi$ would be the composition of a spherical twist with some element from the Torelli group $\cI(\Db(X))$ and thus would be of infinite order. Hence $\phi$ corresponds to an involution in $\Gamma_0(n)$ by Lemma~\ref{lemma:ref-on-hyper}. The fact that $\det(\phi) = -1$ and Lemma~\ref{lemma:det-disc} imply that $\mathrm{disc}(\phi) = -1$. This proves that $\phi$, and thus $\Phi$, is anti-symplectic.
\end{proof}

\begin{proof}[Proof of Theorem~\ref{mainthm:finite-subgp}]
Merging Corollary~\ref{cor:invol-anti-symp} into Theorem~\ref{thm:classification-finite} gives the statement.
\end{proof}

\subsection{Distribution of Gepner type stability conditions}
\label{subsect:dist_gepner-stab}

Following \cite{BB17}, we say a stability condition $\sigma\in\Stab^\dagger(X)$ is \emph{reduced} if its central charge $Z$, regarded as an element of $\cN(X)\otimes\bC$, satisfies $(Z,Z)=0$. The set of reduced stability conditions forms a submanifold
$$
    \Stab^\dagger_\red(X)\subseteq\Stab^\dagger(X)
$$
which is invariant under the free $\bC$-action. Moreover, we have \cite{BB17}*{Lemma~2.1}
$$
    \Stab^\dagger_\red(X)/\bC
    \;\cong\;
    \Stab^\dagger(X)/\widetilde{\GL}^+(2,\bR).
$$

\begin{lemma}
\label{lemma:Gepnerisreduced}
Let $X$ be an algebraic K3 surface and pick $\sigma\in\Stab^\dagger(X)$. Suppose that there exists $\Phi\in\Aut(\Db(X))$ which satisfies $\Phi(\sigma)=\sigma\cdot\lambda$ for some $\lambda\in\bC\backslash\bZ$. Then $\sigma$ is reduced.
\end{lemma}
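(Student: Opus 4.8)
The proof is a short computation, so the plan is simply to pass to the induced action on the Mukai lattice. Write $\phi\in\Aut^+(\mukaiH(X,\bZ))$ for the Hodge isometry induced by $\Phi$ via \eqref{eqn:torelli}, and identify the central charge $Z$ of $\sigma$ with an element of $\cN(X)\otimes\bC$ through the Mukai pairing, as in \eqref{eqn:stab-to-groth}. Unwinding the definitions of the left $\Aut(\Db(X))$-action and the right $\bC\subseteq\widetilde{\GL}^+(2,\bR)$-action on $\Stab(X)$ recalled in Section~\ref{subsect:prelim_stab}, the hypothesis $\Phi(\sigma)=\sigma\cdot\lambda$ translates into the identity
$$
    \phi(Z) = Z\cdot e^{-i\pi\lambda}
$$
in $\cN(X)\otimes\bC$; this translation is exactly the one spelled out at the end of Section~\ref{subsect:prelim_stabK3}.

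The next step is to exploit that $\phi$ is an isometry. Extending the Mukai pairing $\bC$-bilinearly to $\cN(X)\otimes\bC$, the restriction of $\phi$ preserves it, so
$$
    (Z,Z) = \bigl(\phi(Z),\phi(Z)\bigr) = e^{-2i\pi\lambda}\,(Z,Z),
$$
and therefore $\bigl(1-e^{-2i\pi\lambda}\bigr)(Z,Z)=0$. Finally, since $\lambda\in\bC\setminus\bZ$, the scalar $e^{-2i\pi\lambda}$ is different from $1$ — indeed $e^{-2i\pi\lambda}=1$ forces $\lambda$ to be real and hence an integer. Consequently $(Z,Z)=0$, which is precisely the condition for $\sigma$ to be reduced.

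There is no serious obstacle here; the computation is essentially forced once the hypothesis is transported to the lattice. The only point deserving a moment's care is bookkeeping of exponents: the $\bC$-action rescales the central charge by $e^{-i\pi\lambda}$, so the scalar that enters the Mukai pairing is its \emph{square} $e^{-2i\pi\lambda}$. One should note that this already fails to equal $1$ under the stated hypothesis $\lambda\notin\bZ$, so there is no need for the stronger assumption $\lambda\notin\tfrac12\bZ$ that one might at first be tempted to impose.
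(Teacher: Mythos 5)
Your proof is correct and is essentially the paper's own argument: both transport the hypothesis to the central charge, use that the induced isometry preserves the Mukai pairing to get $(Z,Z)=e^{-2i\pi\lambda}(Z,Z)$, and conclude $(Z,Z)=0$ since $\lambda\notin\bZ$ forces $e^{-2i\pi\lambda}\neq 1$. Your closing remark about the square of the scalar (and why $\lambda\notin\bZ$, not $\lambda\notin\tfrac12\bZ$, suffices) is accurate.
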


\begin{proof}
Let $Z_\sigma$ and $Z_{\sigma\cdot\lambda}$ denote the central charges of $\sigma$ and $\sigma\cdot\lambda$ respectively, so that we have $Z_{\sigma\cdot\lambda} = Z_\sigma\cdot e^{-i\pi\lambda}$. Since autoequivalences preserve pairings between central charges, the condition $\Phi(\sigma) = \sigma\cdot\lambda$ implies that
$$
    (Z_\sigma, Z_\sigma)
    = (Z_{\sigma\cdot\lambda}, Z_{\sigma\cdot\lambda})
    = (Z_\sigma, Z_\sigma)\cdot e^{-2i\pi\lambda}.
$$
The assumption $\lambda\notin\bZ$ implies $e^{-2i\pi\lambda}\neq 1$. Hence $(Z_\sigma, Z_\sigma) = 0$, thus $\sigma$ is reduced.
\end{proof}

\begin{lemma}
\label{lemma:exist-stabred}
Let $X$ be a K3 surface of Picard number one and $\Phi\in\Aut(\Db(X))/\bZ[2]$ be a finite order element. Then $\Phi$ fixes a point on $\Stab_\mathrm{red}^\dag(X)$.
\end{lemma}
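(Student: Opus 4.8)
The plan is to obtain this quickly from Theorem~\ref{thm:nielsen_k3-pic1} by transporting the fixed point through the $\widetilde{\GL}^+(2,\bR)$-quotient. Since $\langle\Phi\rangle\subseteq\Aut(\Db(X))/\bZ[2]$ is a finite subgroup, Theorem~\ref{thm:nielsen_k3-pic1} furnishes a point of $\Stab^\dag(X)/\bC$ fixed by $\Phi$. The right $\widetilde{\GL}^+(2,\bR)$-action commutes with the left $\Aut(\Db(X))$-action \cite{Bri07}*{Lemma~8.2}, and $[2]$ acts through $\bC\subseteq\widetilde{\GL}^+(2,\bR)$, so passing to the further quotient shows that $\Phi$ also fixes a point of
$$
    \Stab^\dag(X)/\widetilde{\GL}^+(2,\bR).
$$

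Next I would invoke \cite{BB17}*{Lemma~2.1}, which identifies this last space with $\Stab^\dagger_\red(X)/\bC$. The key point is that this identification is equivariant for $\Aut(\Db(X))/\bZ[2]$: autoequivalences preserve $\Stab^\dag(X)$ \cite{BB17}*{Theorem~1.3}, commute with the $\bC$- and $\widetilde{\GL}^+(2,\bR)$-actions, and preserve the submanifold $\Stab^\dagger_\red(X)$ — the last because an autoequivalence acts on central charges via the induced isometry of $\cN(X)$, which preserves the Mukai pairing and hence the defining relation $(Z,Z)=0$. Consequently $\Phi$ fixes a point of $\Stab^\dagger_\red(X)/\bC$, which is the assertion. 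Unwinding this: if $\sigma$ is a reduced representative of the fixed point and $\widetilde\Phi$ a lift of $\Phi$, then $\widetilde\Phi(\sigma)=\sigma\cdot g$ with $g\in\widetilde{\GL}^+(2,\bR)$; since $\widetilde\Phi(\sigma)$ and $\sigma$ are both reduced and, by \cite{BB17}*{Lemma~2.1}, each $\widetilde{\GL}^+(2,\bR)$-orbit meets $\Stab^\dagger_\red(X)$ in a single $\bC$-orbit, necessarily $g\in\bC$.

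The argument is essentially formal, and the only step requiring genuine attention is the equivariance of the identification $\Stab^\dag(X)/\widetilde{\GL}^+(2,\bR)\cong\Stab^\dagger_\red(X)/\bC$ — equivalently, the rigidity that an element of $\widetilde{\GL}^+(2,\bR)$ carrying one reduced stability condition to another must lie in $\bC$, which is built into \cite{BB17}*{Lemma~2.1}. If one preferred to avoid quoting that identification, one could argue more directly: take the fixed point $\sigma_0$ from Theorem~\ref{thm:nielsen_k3-pic1} and write $\widetilde\Phi(\sigma_0)=\sigma_0\cdot\lambda$ with $\lambda\in\bC$; when $\lambda\notin\bZ$, Lemma~\ref{lemma:Gepnerisreduced} already says $\sigma_0$ is reduced, while when $\lambda\in\bZ$ — so that $\Phi$ or $\Phi[1]$ is an involution genuinely fixing $\sigma_0$ by Lemma~\ref{lemma:only-invol} — one replaces $Z_{\sigma_0}$ by a representative of its projective class in $\cQ^+_0(X)$, which automatically satisfies $(Z,Z)=0$, lifts it to a reduced $\sigma\in\Stab^\dagger_\red(X)$ with the same projective central charge, and corrects the resulting discrepancy by an element of the Torelli group via Lemma~\ref{lemma:fixedpoint-compo}. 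The first approach is cleaner, so I would take it.
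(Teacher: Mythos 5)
Your main argument is correct and lands exactly where the paper's proof does, but the route is organized differently. The paper takes the fixed point from Theorem~\ref{thm:nielsen_k3-pic1}, writes $\Phi(\sigma)=\sigma\cdot\lambda$, and splits into cases: for $\lambda\notin\bZ$ it quotes Lemma~\ref{lemma:Gepnerisreduced} to conclude $\sigma$ is already reduced, while for $\lambda\in\bZ$ it uses that such $\lambda$ is central in $\widetilde{\GL}^+(2,\bR)$ to slide $\sigma$ within its $\widetilde{\GL}^+(2,\bR)$-orbit to a reduced representative without breaking the relation. You instead pass to $\Stab^\dag(X)/\widetilde{\GL}^+(2,\bR)$ and transport the fixed point through the identification with $\Stab^\dag_\red(X)/\bC$ of \cite{BB17}*{Lemma~2.1}; the equivariance you flag as the one delicate point is indeed the crux, and your justification is the right one (autoequivalences preserve $\Stab^\dag(X)$, act on central charges through isometries of $\cN(X)$ and hence preserve $(Z,Z)=0$, and commute with the right actions), with the fact that the reduced points of a $\widetilde{\GL}^+(2,\bR)$-orbit form a single $\bC$-orbit playing the role of the paper's case analysis. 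What your version buys is uniformity in $\lambda$ -- no case split and no separate appeal to Lemma~\ref{lemma:Gepnerisreduced}, whose content (invariance of the self-pairing of $Z$) you use directly; what the paper's version buys is that it never needs the equivariant identification, only the centrality of integer $\lambda$. One small remark on wording: the conclusion both arguments actually produce, and the one used later in Lemma~\ref{lemma:unique-stabred}, is a fixed point on $\Stab^\dag_\red(X)/\bC$, i.e.\ a reduced $\sigma$ with $\Phi(\sigma)=\sigma\cdot\lambda$; this is also how the paper's own proof phrases its conclusion, so your ``which is the assertion'' is fine. Your fallback sketch at the end is weaker -- Lemma~\ref{lemma:fixedpoint-compo} only returns a fixed point on $\Stab^\dag(X)/\bC$ with no control on reducedness -- but since you explicitly discard it in favour of the first argument, nothing is affected.
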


\begin{proof}
By Theorem~\ref{thm:nielsen_k3-pic1}, there exists $\sigma\in\Stab^\dagger(X)$ such that $\Phi(\sigma)=\sigma\cdot\lambda$ for some $\lambda\in\bC$.
\begin{itemize}
\item If $\lambda\notin\bZ$, then Lemma~\ref{lemma:Gepnerisreduced} implies that $\sigma\in\Stab^\dagger_\red(X)$.
\item If $\lambda\in\bZ$, then it lives in the center of $\widetilde{\GL}^+(2,\bR)$. Hence $\Phi(\sigma\cdot g)=(\sigma\cdot g)\cdot\lambda$ for every $g\in\widetilde{\GL}^+(2,\bR)$, and we can choose $g$ so that $\sigma\cdot g\in\Stab^\dagger_\red(X)$.
\end{itemize}
In both cases, $\Phi$ fixes a point on $\Stab^\dagger_\red(X)/\bC$.
\end{proof}

\begin{lemma}
\label{lemma:unique-stabred}
Let $X$ be a K3 surface of Picard number one and $\Phi\in\Aut(\Db(X))/\bZ[2]$ be a finite order element which induces a nontrivial action on $\cQ^+_0(X)$. Then $\Phi$ fixes one and only one point on $\Stab_\mathrm{red}^\dag(X)$.
\end{lemma}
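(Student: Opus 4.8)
The plan is to reduce everything to a statement about the space $\mathcal{S}\colonequals\Stab^\dag(X)/\widetilde{\GL}^+(2,\bR)$, which by \cite{BB17}*{Lemma~2.1} is identified with $\Stab^\dag_\red(X)/\bC$, and then to invoke the structure of the orbifold fundamental groups computed in Proposition~\ref{prop:orbfund}. Existence of a fixed point on $\mathcal{S}$ is already provided by Lemma~\ref{lemma:exist-stabred}; moreover, since $\Phi$ acts nontrivially on $\cQ^+_0(X)$, the scalar $\lambda$ appearing in $\Phi(\sigma)=\sigma\cdot\lambda$ at such a fixed point cannot lie in $\bZ$ — otherwise $e^{-i\pi\lambda}=\pm1$, so the induced isometry acts as $\pm\mathrm{id}$ on the positive plane spanned by the central charge and hence trivially on $\cQ^+_0(X)$ — so Lemma~\ref{lemma:Gepnerisreduced} guarantees the fixed point is automatically reduced. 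It therefore remains only to prove uniqueness, for which I would argue entirely on $\mathcal{S}$.

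First I would recall the covering-space data. Since $\Stab^\dag(X)$ is simply connected \cite{BB17}*{Theorem~1.3}, so is $\mathcal{S}$, so the map \eqref{map:stab-Q} exhibits $\mathcal{S}$ as the universal cover of $\cQ^+_0(X)$, with deck group $\Delta$ freely generated by squares of spherical twists \cite{BB17}*{Theorem~4.1}; in particular $\Delta$ is a free group, hence torsion-free. At the same time $\mathcal{S}$ is the orbifold universal cover of $\Gamma_0^+(n)\git\cQ^+_0(X)$, with deck group $\Pi\colonequals\pi_1^{\rm orb}(\Gamma_0^+(n)\git\cQ^+_0(X))$, which by Proposition~\ref{prop:orbfund} is a free product of cyclic groups whose finite free factors are all finite cyclic, and which contains $\Delta$ as a normal subgroup with $\Pi/\Delta\cong\Gamma_0^+(n)$ (Remark~\ref{rmk:FrickeStr}). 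The action of $\Aut(\Db(X))/\bZ[2]$ on $\mathcal{S}$ factors through $\Pi$: indeed $[1]$ acts trivially on $\Stab^\dag(X)/\bC$, hence on $\mathcal{S}$, and when $n=1$ the extra autoequivalence $\iota$ also acts trivially on $\Stab^\dag(X)$ by \cite{Huy12}*{Lemma~A.3}. Write $\overline{\Phi}\in\Pi$ for the image of $\Phi$; it has finite order and acts nontrivially on $\cQ^+_0(X)=\mathcal{S}/\Delta$, so it is a nontrivial torsion element of $\Pi$.

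Now suppose $\overline{\sigma},\overline{\sigma}'\in\mathcal{S}$ are both fixed by $\overline{\Phi}$. Their images in $\cQ^+_0(X)$ are fixed by the image of $\overline{\Phi}$ in $\Gamma_0^+(n)\subseteq\PSL(2,\bR)$, which is a nontrivial finite-order, hence elliptic, element possessing a unique fixed point $p$ on $\cH$; therefore $\overline{\sigma}$ and $\overline{\sigma}'$ lie in one and the same fibre $F_p$ of $\mathcal{S}\to\cQ^+_0(X)$. As this cover is regular, the deck group $\Delta$ acts freely and transitively on $F_p$, so there is a unique $\psi\in\Delta$ with $\overline{\sigma}'=\psi\cdot\overline{\sigma}$. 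Since $\Delta\trianglelefteq\Pi$, the element $\overline{\Phi}\psi\overline{\Phi}^{-1}$ again lies in $\Delta$, and it carries $\overline{\sigma}=\overline{\Phi}\cdot\overline{\sigma}$ to $\overline{\Phi}\psi\cdot\overline{\sigma}=\overline{\Phi}\cdot\overline{\sigma}'=\overline{\sigma}'$; by freeness of the $\Delta$-action this forces $\overline{\Phi}\psi\overline{\Phi}^{-1}=\psi$, so $\psi$ commutes with $\overline{\Phi}$. The remaining input is the standard fact that in a free product of groups the centralizer of a nontrivial torsion element is finite: by Bass--Serre theory such an element fixes a unique vertex of the Bass--Serre tree, and so does its centralizer, which is thus contained in a vertex stabilizer, a conjugate of a (here cyclic) free factor. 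Hence $\psi$ lies in a finite subgroup of $\Pi$; being also an element of the torsion-free group $\Delta$, it must be trivial, so $\overline{\sigma}'=\overline{\sigma}$, which proves uniqueness.

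The bulk of this argument is bookkeeping with results already established; the one genuinely load-bearing step is the commutation identity together with the centralizer-in-a-free-product fact, and, prior to that, the correct identification of $\mathcal{S}$ as the common (orbifold) universal cover of $\cQ^+_0(X)$ and of $\Gamma_0^+(n)\git\cQ^+_0(X)$, so that the inclusion $\Delta\trianglelefteq\Pi$ and the torsion-freeness of $\Delta$ are available. (Note that for $n=3,4$ the group $\Pi$ is free, so the hypothesis on $\Phi$ is vacuous and there is nothing to prove.)
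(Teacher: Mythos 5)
Your argument is correct, and its second half takes a genuinely different route from the paper's. Both proofs begin identically: the two fixed points lie in one fibre of $\Stab_\mathrm{red}^\dag(X)/\bC\cong\Stab^\dag(X)/\widetilde{\GL}^+(2,\bR)\to\cQ^+_0(X)$ over the unique fixed point of the elliptic element $F(\Phi)$, hence differ by a deck transformation $\psi$, and freeness of the deck action forces $\psi$ to commute with (the homeomorphism induced by) $\Phi$. The paper then finishes concretely: it writes $\psi$ as a word in the free generators of $\cI(\Db(X))/\bZ[2]$ (squares of spherical twists, \cite{BB17}) and observes that $\Phi$ fixes no spherical object because its fixed point avoids the $(-2)$-walls, so the commutation relation admits no cancellation unless the word is empty. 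You instead pass to $\Pi=\pi_1^{\rm orb}(\Gamma_0^+(n)\git\cQ^+_0(X))$, use Proposition~\ref{prop:orbfund} to see that $\Pi$ is a free product of cyclic groups, and apply the Bass--Serre fact that the centralizer of a nontrivial torsion element lies in a conjugate of a finite factor; since $\psi$ also lies in the torsion-free kernel $\Delta$, it is trivial. Your route trades the paper's geometric input (``$\Phi$ preserves no $(-2)$-vector'') and explicit word combinatorics for the structural results of Section~\ref{sect:nielsen_k3-pic1} plus a general group-theoretic lemma; it is more conceptual and would apply verbatim whenever the orbifold group is a free product of cyclic groups with torsion-free Torelli kernel, while the paper's argument is more elementary in that it never invokes the orbifold fundamental group computation at this point. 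There is no circularity, since Proposition~\ref{prop:orbfund} and the identification of the deck group precede this lemma in the paper.

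One side remark in your first paragraph is wrong, though harmlessly so: you claim that $\lambda\in\bZ$ would force the induced isometry to act trivially on $\cQ^+_0(X)$. Acting as $\pm\mathrm{id}$ on the positive plane $P$ spanned by the central charge does not determine the action on $P^\perp$; the isometry can be $+\mathrm{id}$ on $P$ and $-1$ on $P^\perp$, which fixes the point $[P]$ but acts on $\cH$ as a nontrivial elliptic involution. Indeed, by Lemma~\ref{lemma:only-invol} the anti-symplectic involutions fix a (reduced) stability condition on the nose, i.e.\ with $\lambda=0$, while acting nontrivially on $\cQ^+_0(X)$. Since existence of a fixed point on $\Stab_\mathrm{red}^\dag(X)/\bC$ is anyway quoted from Lemma~\ref{lemma:exist-stabred} and your uniqueness argument never uses $\lambda\notin\bZ$, this does not affect the validity of the proof, but the clause should be deleted.
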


\begin{proof}
The existence of a fixed point is guaranteed by Lemma~\ref{lemma:exist-stabred}. Let us prove that there is only one fixed point. Suppose that
$\sigma,\sigma'\in\Stab_\red^\dagger(X)$ and $\lambda,\lambda'\in\bC$ satisfy
$$
    \Phi(\sigma)=\sigma\cdot\lambda
    \qquad\text{and}\qquad
    \Phi(\sigma')=\sigma'\cdot\lambda'.
$$
By hypothesis, the action of $\Phi$ on $\cQ^+_0(X)$ has a unique fixed point. Thus the orbits $\sigma\cdot\bC$ and $\sigma'\cdot\bC$ are mapped to this fixed point under the covering map
$$
    \Stab^\dagger_\red(X)/\bC
    \cong
    \Stab^\dagger(X)/\widetilde{\GL}^+(2,\bR)
    \longrightarrow
    \cQ_0^+(X).
$$
Recall that $\cI(\Db(X))/\bZ[2]$ is the group of deck transformations of this covering and its action is free. In particular, there exist $\Psi\in\cI(\Db(X))$ and $\lambda''\in\bC$ such that
$
    \sigma'=\Psi(\sigma)\cdot\lambda''.
$
It follows that
\begin{align*}
    \Psi^{-1}\Phi\Psi\Phi^{-1}(\sigma)
    = \Psi^{-1}\Phi\Psi(\sigma)\cdot(-\lambda)
    &= \Psi^{-1}\Phi (\sigma') \cdot  (-\lambda''-\lambda) \\
    &= \Psi^{-1} (\sigma') \cdot (\lambda'-\lambda''-\lambda)
    = \sigma\cdot(\lambda'-\lambda).
\end{align*}
This shows that $\Psi^{-1}\Phi\Psi\Phi^{-1}\in\cI(\Db(X))$ is of Gepner type with respect to $\sigma$, which implies that it acts trivially as a covering transformation, so we have $\Psi^{-1}\Phi\Psi\Phi^{-1}\in\bZ[2]$. Let us write $\Psi=T_{S_1}^{2k_1}\cdots T_{S_\ell}^{2k_\ell}[2m]$. Then
\begin{equation}
\label{eqn:sphere-in-2}
    \Psi^{-1}\Phi\Psi\Phi^{-1}
    = T_{S_\ell}^{-2k_\ell} \cdots T_{S_1}^{-2k_1}
    T_{\Phi(S_1)}^{2k_1}\cdots T_{\Phi(S_\ell)}^{2k_\ell}
    \in\bZ[2].
\end{equation}
The unique fixed point on $\cQ^+_0(X)$ under the action of $\Phi$ is not given by a $(-2)$-vector. Hence $\Phi$ does not preserve any $(-2)$-vector, so it does not fix any spherical object. In order for \eqref{eqn:sphere-in-2} to hold, we must have $\ell=0$, which shows that $\Psi\in\bZ[2]$. As a result, $\sigma$ and $\sigma'$ represents the same point on $\Stab_\red^\dagger(X)/\bC$.
\end{proof}

\begin{thm}
\label{thm:dist-stabred}
Let $X$ be a K3 surface of Picard number one and degree $2n$. Then every nontrivial maximal finite subgroup of $\Aut_s(\Db(X))/\bZ[2]$ fixes one and only one point on $\Stab_\mathrm{red}^\dag(X)/\bC$. This property defines a one-to-one correspondence between
\begin{enumerate}[label=\textup{(\alph*)}]
\item\label{item:one-to-one-a}
the set of maximal finite subgroups of $\Aut_s(\Db(X))/\bZ[2]$ not equal to $\bZ_2[1]$, and
\item\label{item:one-to-one-b}
the set of points on $\Stab_\mathrm{red}^\dag(X)/\bC$ over elliptic points of $\Gamma_0^+(n)$ on $\cQ^+_0(X)$ under the covering map
$$
    \Stab_\mathrm{red}^\dag(X)/\bC \longrightarrow \cQ^+_0(X).
$$
\end{enumerate}
The same statement holds with $\Aut_s$ replaced by $\Aut$ if we require the maximal finite subgroups to be not equal to $\bZ_2[1]$.
\end{thm}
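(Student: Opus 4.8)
The plan is to prove the statement for $\Aut_s$ first and then deduce the $\Aut$ version, realizing the claimed bijection by sending a nontrivial maximal finite subgroup to its unique fixed point and, conversely, a point lying over an elliptic point to its full stabilizer in $\Aut_s(\Db(X))/\bZ[2]$. For the reduction, recall from Lemma~\ref{lemma:symp-vs-nonsymp} that every maximal finite subgroup $G\subseteq\Aut(\Db(X))/\bZ[2]$ contains the central element $[1]$ and splits as $G=G_s\times\bZ_2[1]$ with $G_s\subseteq\Aut_s(\Db(X))/\bZ[2]$ maximal finite, and conversely; moreover $G=\bZ_2[1]$ exactly when $G_s=0$. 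Since $[1]$ acts as the identity on $\Stab^\dag(X)/\bC$ (its action agrees with that of $1\in\bC\subseteq\widetilde{\GL}^+(2,\bR)$), the subgroups $G$ and $G_s$ have the same fixed locus on $\Stab_{\mathrm{red}}^\dag(X)/\bC$. Hence the $\Aut$-statement follows from the $\Aut_s$-statement through the correspondence $G_s\leftrightarrow G_s\times\bZ_2[1]$, and I treat only $\Aut_s$ below.

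For the first assertion, let $G_s\subseteq\Aut_s(\Db(X))/\bZ[2]$ be nontrivial and finite. By Lemma~\ref{lemma:finite-subgp} it is cyclic, generated by some $\Phi$, and it fixes exactly one point $p\in\cQ^+_0(X)$, which is an elliptic point of $\Gamma_0^+(n)$; in particular $\Phi$ acts nontrivially on $\cQ^+_0(X)$, so Lemma~\ref{lemma:unique-stabred} yields a unique fixed point $\bar\sigma_{G_s}\in\Stab_{\mathrm{red}}^\dag(X)/\bC$. The covering $\Stab_{\mathrm{red}}^\dag(X)/\bC\cong\Stab^\dag(X)/\widetilde{\GL}^+(2,\bR)\to\cQ^+_0(X)$ sends $\bar\sigma_{G_s}$ to the $\Gamma_0^+(n)$-fixed point $p$, so $\bar\sigma_{G_s}$ lies over an elliptic point; this proves the first sentence of the theorem and shows that $G_s\mapsto\bar\sigma_{G_s}$ takes values in the set~\ref{item:one-to-one-b}.

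For the bijection I would use the inverse assignment sending a point $\bar\sigma$ over an elliptic point $p$ to its stabilizer $H_{\bar\sigma}\subseteq\Aut_s(\Db(X))/\bZ[2]$. This group is finite by Proposition~\ref{prop:gepner_auteq-isom}. It is nontrivial: since $\Aut_s(\Db(X))/\bZ[2]\twoheadrightarrow\Gamma_0^+(n)$ (Remark~\ref{rmk:FrickeStr}) and the deck group of the universal cover $\Stab_{\mathrm{red}}^\dag(X)/\bC\to\cQ^+_0(X)$ acts transitively on each fibre \cite{BB17}*{Theorem~4.1}, one lifts a nontrivial element of the isotropy group $\Gamma_0^+(n)_p$ to $\Aut_s(\Db(X))/\bZ[2]$, corrects it by a suitable deck transformation so that it fixes $\bar\sigma$, and thereby lands in $H_{\bar\sigma}$; the corrected autoequivalence is still nontrivial, as it maps to the chosen nontrivial isometry. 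Moreover $H_{\bar\sigma}$ is a maximal finite subgroup: if $G_s\supseteq H_{\bar\sigma}$ is maximal finite then $G_s$ is nontrivial, so its unique fixed point $\bar\sigma_{G_s}$ lies in $\mathrm{Fix}(H_{\bar\sigma})$, which equals $\{\bar\sigma\}$ by Lemma~\ref{lemma:unique-stabred} applied to a generator of $H_{\bar\sigma}$; hence $\bar\sigma_{G_s}=\bar\sigma$, so $G_s\subseteq H_{\bar\sigma}$, and therefore $G_s=H_{\bar\sigma}$. The two assignments are then mutually inverse: $H_{\bar\sigma_{G_s}}$ is a finite subgroup containing the maximal finite subgroup $G_s$, hence equals $G_s$, while $H_{\bar\sigma}$ fixes $\bar\sigma$ and so, by the first assertion, has $\bar\sigma$ as its unique fixed point. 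For $n=1$ this bookkeeping should be run with $\Aut_s(\Db(X))/\bZ(\iota[1])$ in place of $\Aut_s(\Db(X))/\bZ[2]$, since $\iota[1]$ acts trivially on $\Stab^\dag(X)/\bC$; then $H_{\bar\sigma}$ automatically contains $\langle\iota[1]\rangle$ and the isomorphism type $\bZ_6$ reappears, consistently with Lemma~\ref{lemma:finite-subgp}.

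The step I expect to be the main obstacle is the nontriviality and maximality of $H_{\bar\sigma}$, that is, identifying it with the orbifold isotropy group $\Gamma_0^+(n)_p$: this requires simultaneously tracking the surjection $\Aut_s(\Db(X))/\bZ[2]\twoheadrightarrow\Gamma_0^+(n)$, the free deck action of the Torelli group on $\Stab_{\mathrm{red}}^\dag(X)/\bC\to\cQ^+_0(X)$, and the finiteness coming from Proposition~\ref{prop:gepner_auteq-isom}, while keeping the $n=1$ modification consistent throughout.
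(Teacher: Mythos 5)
Your proposal is correct and follows essentially the same route as the paper: the forward map comes from Lemma~\ref{lemma:finite-subgp} plus Lemma~\ref{lemma:unique-stabred}, and the bijection rests on the finiteness of $\Aut_s(\Db(X),\sigma\cdot\bC)/\bZ[2]$ (Proposition~\ref{prop:gepner_auteq-isom}) together with transitivity of the Torelli deck action on fibres, exactly as in the paper's injectivity/surjectivity argument. The only cosmetic difference is that you package the argument as an explicit inverse map $\overline{\sigma}\mapsto H_{\overline{\sigma}}$ (lifting a nontrivial element of $\Gamma_0^+(n)_p$ and correcting by a deck transformation) instead of proving injectivity and surjectivity of the forward map, which amounts to the same computation.
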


\begin{proof}
By Lemma~\ref{lemma:finite-subgp}, every nontrivial maximal finite subgroup of $\Aut_s(\Db(X))/\bZ[2]$ is generated by an element which acts nontrivially on $\cQ^+_0(X)$. Therefore, it fixes a unique point on $\Stab_\mathrm{red}^\dag(X)/\bC$ by Lemma~\ref{lemma:unique-stabred}. This induces a map from \ref{item:one-to-one-a} to \ref{item:one-to-one-b}. Let us prove that this map is bijective:
\begin{itemize}
\item Injectivity: Let $G$ and $G'$ be nontrivial maximal finite subgroups of $\Aut_s(\Db(X))/\bZ[2]$ which correspond to the same $\sigma\cdot\bC\in\Stab_\red^\dag(X)/\bC$. Then both of them appear as maximal finite subgroups in the group $\Aut_s(\Db(X),\sigma\cdot\bC)/\bZ[2]$, which is also finite by Proposition~\ref{prop:gepner_auteq-isom}. This implies that $G=G'$.

\item Surjectivity: Pick any $\overline{\sigma}\in\Stab_{\mathrm{red}}^\dagger(X)/\bC$ over an elliptic point $p\in\cQ^+_0(X)$. Via the isomorphism
$$
    \Aut_s(\Db(X))/\bZ[2]
    \;\cong\;
    \pi_1^{\rm orb}(\Gamma_0^+(n)\git\cQ^+_0(X)),
$$
the stabilizer of $p$ in the Fricke group $\Gamma_0^+(n)$ corresponds to a maximal finite cyclic subgroup $G\subseteq\Aut_s(\Db(X))/\bZ[2]$ acting nontrivially on $\cQ^+_0(X)$. Lemma~\ref{lemma:unique-stabred} implies that $G$ fixes a point $\overline{\sigma}'\in\Stab_{\mathrm{red}}^\dagger(X)/\bC$. Both $\overline{\sigma}$ and $\overline{\sigma}'$ lie over $p$, so there exists $\Psi\in\cI(\Db(X))$ such that $\overline{\sigma} = \Psi(\overline{\sigma}')$. Then $\Psi G\Psi^{-1}$ gives a maximal finite subgroup fixing $\overline{\sigma}$.
\end{itemize}
This establishes a one-to-one correspondence between \ref{item:one-to-one-a} and \ref{item:one-to-one-b}.

Let us prove the statement with $\Aut_s$ replaced by $\Aut$. By Lemma~\ref{lemma:symp-vs-nonsymp}, every maximal finite subgroup $G\subseteq\Aut(\Db(X))/\bZ[2]$ has the form $G\cong G_s\times\bZ_2[1]$ where $G_s$ is a maximal finite subgroup of $\Aut_s(\Db(X))/\bZ[2]$. The statement then follows from the symplectic version since $\Stab_\mathrm{red}^\dag(X)/\bC$ is pointwisely fixed by $[1]$.
\end{proof}

\begin{thm}
\label{thm:dist_orb}
Suppose that $\Phi\in\Aut(\Db(X))$ fixes a unique $\overline{\sigma}\in\Stab_\mathrm{red}^\dag(X)/\bC$. Let $r$ be the order of $\Phi$ modulo even shifts and $\sigma\in\Stab^\dag(X)$ be a representative of $\overline{\sigma}$.
\begin{itemize}
\item If $r = 2$, then every point on $\sigma\cdot\widetilde{\GL}^+(2,\bR)$ is of Gepner type with respect to $\Phi$. 
\item If $r\geq 3$, then only the points on $\sigma\cdot\bC$ are of Gepner type with respect to $\Phi$.
\end{itemize}
\end{thm}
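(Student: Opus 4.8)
The plan is to convert "Gepner type with respect to $\Phi$" into a conjugacy condition inside $\widetilde{\GL}^+(2,\bR)$ and then exploit the difference between $r=2$ and $r\geq 3$. Since $\Phi$ fixes $\overline\sigma$ and $\sigma$ represents it, we have $\Phi(\sigma)=\sigma\cdot\lambda$ for some $\lambda\in\bC$, so $\Phi$ is of Gepner type with respect to $\sigma\in\Stab^\dag(X)$; by Corollary~\ref{cor:finite-ord-scalar} we may take $\lambda=\frac{2k}{r}$ with $k\in\bZ$. Because the left $\Aut(\Db(X))$-action and the right $\widetilde{\GL}^+(2,\bR)$-action on $\Stab^\dag(X)$ commute, for every $g\in\widetilde{\GL}^+(2,\bR)$ we get $\Phi(\sigma\cdot g)=\Phi(\sigma)\cdot g=(\sigma\cdot g)\cdot(g^{-1}\lambda g)$, so $\sigma\cdot g$ is of Gepner type with respect to $\Phi$ exactly when $g^{-1}\lambda g\in\bC$.

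For $r=2$ one has $\lambda=k\in\bZ$, which lies in the center of $\widetilde{\GL}^+(2,\bR)$ (exactly as used in the proof of Lemma~\ref{lemma:exist-stabred}); thus $g^{-1}\lambda g=\lambda\in\bC$ for every $g$, and every point of $\sigma\cdot\widetilde{\GL}^+(2,\bR)$ is of Gepner type with respect to $\Phi$. For $r\geq 3$, the points of $\sigma\cdot\bC$ are of Gepner type because $\bC$ is abelian: $\Phi(\sigma\cdot\mu)=(\sigma\cdot\mu)\cdot\lambda$ with $\lambda\in\bC$. Conversely, suppose $\tau\in\Stab^\dag(X)$ satisfies $\Phi(\tau)=\tau\cdot\mu$ with $\mu\in\bC$. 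If $\mu\in\bZ$ then $[-\mu]\Phi\in\Aut(\Db(X),\tau)$, so $([-\mu]\Phi)^2=\mathrm{id}$ by Lemma~\ref{lemma:only-invol}, whence $\Phi^2=[2\mu]\in\bZ[2]$ and $r\leq 2$, a contradiction; hence $\mu\notin\bZ$. Then Lemma~\ref{lemma:Gepnerisreduced} gives $\tau\in\Stab_\mathrm{red}^\dag(X)$, and since $\Phi$ commutes with the $\bC$-action its image $\overline\tau\in\Stab_\mathrm{red}^\dag(X)/\bC$ is fixed by $\Phi$; by the assumed uniqueness $\overline\tau=\overline\sigma$, i.e.\ $\tau\in\sigma\cdot\bC$. (Alternatively, having shown $\lambda\notin\bZ$, one can write $\tau=\sigma\cdot g$ by the same orbit/uniqueness argument and apply Lemma~\ref{lemma:GL2conjugation} to $g^{-1}\lambda g\in\bC$ to force $g\in\bC$.)

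I expect the only delicate point to be the step showing that no Gepner point of $\Phi$ can carry an integral scalar when $r\geq 3$: one must be sure that "$\Phi$ has order $r$ modulo even shifts" genuinely prevents $\Phi^2$ from lying in $\bZ[2]$, which is why I route this through Lemma~\ref{lemma:only-invol} (via a representative that fixes an honest stability condition) rather than through a direct eigenvalue analysis of the induced isometry on $\cN(X)$. Everything else is formal: commuting the two group actions, the abelianity of $\bC$, and the already-established facts that Gepner points with non-integral scalar are reduced (Lemma~\ref{lemma:Gepnerisreduced}) and that $\bC$-conjugates of a non-integral $\lambda$ remain in $\bC$ only along $\bC$ itself (Lemma~\ref{lemma:GL2conjugation}).
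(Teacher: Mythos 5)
Your proof is correct, and its skeleton is the same as the paper's: apply Corollary~\ref{cor:finite-ord-scalar} to write $\Phi(\sigma)=\sigma\cdot\tfrac{2k}{r}$, observe that for $r=2$ the scalar is an integer and hence central in $\widetilde{\GL}^+(2,\bR)$ (the paper instead normalizes $\Phi$ by a shift so that it fixes $\sigma$ outright -- an equivalent device), and for $r\geq 3$ rule out an integral scalar by noting that an autoequivalence fixing an honest stability condition is involutive (you cite Lemma~\ref{lemma:only-invol}, the paper cites Lemma~\ref{lemma:only-Z2}; same content). The one genuine divergence is how you finish the second bullet: the paper stays on the orbit $\sigma\cdot\widetilde{\GL}^+(2,\bR)$ and invokes Lemma~\ref{lemma:GL2conjugation} to see that $g^{-1}\lambda g\in\bC$ forces $g\in\bC$, whereas your main line takes an arbitrary Gepner point $\tau\in\Stab^\dag(X)$, shows its scalar is non-integral, applies Lemma~\ref{lemma:Gepnerisreduced} to get $\tau\in\Stab^\dag_{\mathrm{red}}(X)$, and then uses the uniqueness hypothesis to conclude $\tau\in\sigma\cdot\bC$. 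Your route buys a slightly stronger statement (it locates \emph{all} Gepner points of $\Phi$ in $\Stab^\dag(X)$, not just those on the $\widetilde{\GL}^+(2,\bR)$-orbit of $\sigma$), and it uses the uniqueness hypothesis directly rather than through the conjugation lemma; your parenthetical alternative is precisely the paper's argument. One cosmetic caveat: the ``exactly when $g^{-1}\lambda g\in\bC$'' equivalence tacitly uses freeness of the $\widetilde{\GL}^+(2,\bR)$-action at $\sigma$ for the ``only if'' direction, but since you never use that direction in the main argument, nothing is at stake.
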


\begin{proof}
By Corollary~\ref{cor:finite-ord-scalar}, there exists an integer $k$ such that
\begin{equation}
\label{eqn:dist_orb}
    \Phi(\sigma) = \sigma\cdot\frac{2k}{r}.
\end{equation}
Composing $\Phi$ with a shift functor does not change the Gepner type stability conditions corresponding to it, so we can assume $0\leq 2k < r$. If $r = 2$, then $k = 0$, thus \eqref{eqn:dist_orb} becomes $\Phi(\sigma) = \sigma$. Hence $\Phi(\sigma\cdot g) = \sigma\cdot g$ for all $g\in\widetilde{\GL}^+(2,\bR)$. This proves the first statement.

Assume that $r\geq 3$. If $k = 0$, then $\Phi$ fixes $\sigma$, which implies that $\Phi$ is an involution by Lemma~\ref{lemma:only-Z2} and our hypothesis, but this means $r = 2$, contradiction. Hence $k>0$. Now, as $0<\frac{2k}{r}<1$, the centralizer of $\frac{2k}{r}$ in $\widetilde{\GL}^+(2,\bR)$ is equal to $\bC$ by Lemma~\ref{lemma:GL2conjugation}. This completes the proof.
\end{proof}

\subsection{Polychotomy of autoequivalences and categorical entropy}
\label{subsect:polychotomy}

Let $X$ be a K3 surface of Picard number one and degree $2n$. Recall from Section~\ref{subsect:FrickeSpherical} that the actions of autoequivalences on the hyperbolic plane $\cQ^+(X)\cong\cH$ induces a surjective homomorphism
$$\xymatrix{
    F\colon\Aut(\Db(X)) \ar@{->>}[r]
    & \Gamma_0^+(n).
}$$
Let us denote its kernel as 
$$
    \widetilde{\cI}(\Db(X))
    \colonequals
    \ker(F) = \begin{cases}
        \left<\cI(\Db(X)),[1]\right>
        & \text{if}\quad n\geq 2 \\
        \left<\cI(\Db(X)), [1], \iota\right>
        & \text{if}\quad n=1
    \end{cases}
$$
where, in the case that $n=1$, the autoequivalence $\iota$ is the one induced by the covering involution of $X\to\bP^2$.

Recall that the action on $\cN(X)$ induced by
$
\begin{pmatrix}
    \alpha & \beta \\
    \gamma & \delta
\end{pmatrix}
\in\Gamma_0^+(n)
$
is represented by the matrices $\pm M$ where
$$
    M = \begin{pmatrix}
    \delta^2
    & 2\gamma\delta
    & \frac{1}{n}\gamma^2 \\
    \beta\delta
    & \alpha\delta+\beta\gamma
    & \frac{1}{n}\alpha\gamma \\
    n\beta^2
    & 2n\alpha\beta
    & \alpha^2
\end{pmatrix}
$$

\begin{lemma}
\label{lemma:Dolgachev}
Suppose that $M\neq\mathrm{id}$. Then:
\begin{itemize}
    \item The eigenvalues of $M$ are
    $$
    \lambda=1
    \qquad\text{and}\qquad
    \mu^\pm=\frac{-2+(\alpha+\delta)^2\pm i(\alpha+\delta)\sqrt{4-(\alpha+\delta)^2}}{2}.
    $$
    \item The $1$-eigenspace of $M$ is one-dimensional, and is generated by the vector
    $$
    v = \begin{pmatrix}
        2\gamma \\ \alpha-\delta \\ -2n\beta 
    \end{pmatrix}.
    $$
    In particular, when $|\alpha+\delta|=2$ where the only eigenvalue of $M$ is $1$, it has only one Jordan block which is of size three.
    \item The self-pairing of $v$, when considered as a vector in $\cN(X)\otimes\bR$, is
    $$
    v^2=2n\left((\alpha+\delta)^2-4\right).
    $$
\end{itemize}
\end{lemma}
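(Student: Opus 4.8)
The plan is to establish all three parts by explicit linear algebra, keeping the relation $\alpha\delta-\beta\gamma=1$ in play throughout and abbreviating $t\colonequals\alpha+\delta$. Behind the formulas lies the fact that $M$ is conjugate to the symmetric square $\operatorname{Sym}^2$ of $g=\bigl(\begin{smallmatrix}\alpha&\beta\\\gamma&\delta\end{smallmatrix}\bigr)\in\SL(2,\bR)$, whose eigenvalues are $\rho^2,1,\rho^{-2}$ with $\rho,\rho^{-1}$ the (possibly coinciding) eigenvalues of $g$; this will serve as a guide, though the argument can be made fully self-contained.

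First I would compute the coefficients of the characteristic polynomial of $M$: one finds $\tr M=\delta^2+(\alpha\delta+\beta\gamma)+\alpha^2=t^2-1$ (after replacing $\beta\gamma$ by $\alpha\delta-1$), that the sum of the principal $2\times2$ minors also equals $t^2-1$, and $\det M=(\alpha\delta-\beta\gamma)^3=1$ (already noted in the proof of Lemma~\ref{lemma:det-disc}). Hence
$$\chi_M(x)=x^3-(t^2-1)x^2+(t^2-1)x-1=(x-1)\bigl(x^2-(t^2-2)x+1\bigr).$$
The quadratic factor has roots $\frac12\bigl((t^2-2)\pm\sqrt{(t^2-2)^2-4}\bigr)$, and since $(t^2-2)^2-4=t^2(t^2-4)$ this equals $\frac12\bigl(-2+t^2\pm i\,t\sqrt{4-t^2}\bigr)$, which is precisely the pair $\mu^\pm$. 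Note that $1$ is a root of the quadratic factor if and only if $t^2=4$, so for $t^2\neq4$ the eigenvalue $1$ is simple.

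Next I would check by direct multiplication that $v=(2\gamma,\,\alpha-\delta,\,-2n\beta)$ satisfies $Mv=v$; this is a three-component computation in which each component collapses to $\alpha\delta-\beta\gamma=1$ after expansion. Since $v=0$ only if $\beta=\gamma=0$ and $\alpha=\delta$, which forces $g=\pm I$ and $M=\mathrm{id}$, the vector $v$ is nonzero under our hypothesis and spans a line in $\ker(M-\mathrm{id})$. When $t^2\neq4$ this line is the full eigenspace by the simplicity observed above. When $t^2=4$ and $M\neq\mathrm{id}$, the matrix $g$ is $\pm I$ times a nontrivial parabolic, hence $\SL(2,\bR)$-conjugate to $\pm\bigl(\begin{smallmatrix}1&b\\0&1\end{smallmatrix}\bigr)$ with $b\neq0$; computing $\operatorname{Sym}^2$ in the monomial basis presents $M$ as conjugate to a unipotent triangular matrix with nonzero entries immediately off the diagonal, so $\operatorname{rank}(M-\mathrm{id})=2$ and $\ker(M-\mathrm{id})$ is again one-dimensional. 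In particular, when $|t|=2$ the matrix $M$ has a single Jordan block, necessarily of size three.

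Finally, for the self-pairing I would substitute $v$ into the Mukai form $(r_1,d_1,s_1)\cdot(r_2,d_2,s_2)=2nd_1d_2-r_1s_2-r_2s_1$ on $\cN(X)$ to obtain
$$v^2=2n(\alpha-\delta)^2+8n\beta\gamma=2n\bigl((\alpha-\delta)^2+4(\alpha\delta-1)\bigr)=2n\bigl((\alpha+\delta)^2-4\bigr).$$
The only step that needs genuine care is the parabolic case $|\alpha+\delta|=2$, where the characteristic polynomial alone leaves open whether the single eigenvalue $1$ has one Jordan block or two; I expect the symmetric-square normal form used above to be the cleanest way to rule out the latter, though one can instead verify $\operatorname{rank}(M-\mathrm{id})=2$ directly from the entries of $M$ once $g\neq\pm I$.
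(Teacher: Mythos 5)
Your proposal is correct, and its skeleton (characteristic polynomial by direct computation, direct verification that $Mv=v$, direct evaluation of $v^2$ using $\alpha\delta-\beta\gamma=1$) matches the paper's. The one place where you genuinely diverge is the only delicate point, the parabolic case $|\alpha+\delta|=2$: the paper assumes WLOG $\beta\neq0$ and exhibits an explicit conjugating matrix putting $M$ into a single $3\times3$ Jordan block (its equation~\eqref{eqn:JordanDecomp}), whereas you identify $M$ with the symmetric square of $g$ and reduce to the $\SL(2,\bR)$ normal form $\pm\bigl(\begin{smallmatrix}1&b\\0&1\end{smallmatrix}\bigr)$, $b\neq0$, to get $\operatorname{rank}(M-\mathrm{id})=2$. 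Your route is more conceptual and avoids the WLOG; to make it self-contained you should record the one-line verification that $M$ really is conjugate to $\operatorname{Sym}^2(g)$, e.g.\ by checking $M\,(1,z,nz^2)^{T}=(\gamma z+\delta)^2\,(1,z',nz'^2)^{T}$ with $z'=\frac{\alpha z+\beta}{\gamma z+\delta}$ (essentially the computation already done in Lemma~\ref{lemma:ref-on-hyper}), so that $M=D\,\operatorname{Sym}^2(g)\,D^{-1}$ with $D=\operatorname{diag}(1,1,n)$ and the Jordan type is conjugation-invariant. What the paper's explicit decomposition buys in exchange is concrete data that is reused later: the proof of Proposition~\ref{prop:autoeq_fixing-w} quotes \eqref{eqn:JordanDecomp} to read off the direction of the fixed vector, which your normal-form argument does not directly supply. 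Your added observations (that $v\neq0$ whenever $M\neq\mathrm{id}$, and that $1$ is a simple root of $\chi_M$ exactly when $(\alpha+\delta)^2\neq4$) are correct and usefully fill in steps the paper leaves implicit.
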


\begin{proof}
The eigenvalues of $M$ can be computed directly, and it is easy to verify that $Mv=v$. When $|\alpha+\delta|=2$, assume without loss of generality that $\beta\neq0$. Then we have
\begin{equation}
\label{eqn:JordanDecomp}
\small
\begin{pmatrix}
    \delta^2 & 2\gamma\delta & \frac{1}{n}\gamma^2 \\
    \beta\delta & \alpha\delta+\beta\gamma & \frac{1}{n}\alpha\gamma \\
    n\beta^2 & 2n\alpha\beta & \alpha^2.
\end{pmatrix}
=
\begin{pmatrix}
    \frac{(1-\alpha)^2}{\beta^2n} & \frac{1-\alpha}{\beta^2n} & \frac{\alpha}{2\beta^2n} \\
    \frac{1-\alpha}{\beta n} & \frac{1}{2\beta n} & -\frac{1}{4\beta n} \\
    1 & 0 & 0
\end{pmatrix}
\begin{pmatrix}
    1&1&0\\0&1&1\\0&0&1
\end{pmatrix}
\begin{pmatrix}
    \frac{(1-\alpha)^2}{\beta^2n} & \frac{1-\alpha}{\beta^2n} & \frac{\alpha}{2\beta^2n} \\
    \frac{1-\alpha}{\beta n} & \frac{1}{2\beta n} & -\frac{1}{4\beta n} \\
    1 & 0 & 0
\end{pmatrix}^{-1}
\end{equation}
Finally, $v^2=2n\left((\alpha+\delta)^2-4\right)$ follows from the condition that $\alpha\delta-\beta\gamma=1$.
\end{proof}

\begin{prop}
\label{prop:polychotomy}
Let $X$ be a K3 surface of Picard number one and $\Phi$ be an autoequivalence not contained in $\widetilde{\cI}(\Db(X))$. Denote by $[\Phi]$ the isometry on $\cN(X)$ induced by $\Phi$. Then exactly one of the following holds:
\begin{enumerate}[label=\textup{(\alph*)}]
\item\label{item:a} There exists $\Psi\in\widetilde{\cI}(\Db(X))$ such that $(\Phi\Psi)^m=[k]$ for some $m>0$ and $k\in\bZ$.
\item\label{item:b} There exists $\Psi\in\widetilde{\cI}(\Db(X))$ and a spherical object $S\in\Db(X)$ such that $\Phi\Psi=T_S$.
\item\label{item:c} There exists a nonzero vector $w\in\cN(X)$ such that $w^2=0$ and $[\Phi]w = w$.
\item\label{item:d} $\rho([\Phi])>1$, where $\rho$ is the spectral radius of $[\Phi]$.
\end{enumerate}
We say $\Phi$ is of finite order type, $(-2)$-reducible type, $0$-reducible type, or pseudo-Anosov type, if it satisfies \ref{item:a}, \ref{item:b}, \ref{item:c}, or \ref{item:d}, respectively.
\end{prop}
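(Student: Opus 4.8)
The plan is to read off the type of $\Phi$ from its image $g \colonequals F(\Phi)$ in the Fricke group $\Gamma_0^+(n) \subseteq \PSL(2,\bR)$ together with the dynamics of $g$ on $\cH \cong \cQ^+(X)$. Since $\Phi \notin \widetilde{\cI}(\Db(X)) = \ker(F)$, we have $g \neq \mathrm{id}$, so exactly one of the three classical alternatives holds for a representative of $g$ with entries $\alpha,\beta,\gamma,\delta$: $g$ is \emph{hyperbolic} if $(\alpha+\delta)^2 > 4$, \emph{parabolic} if $(\alpha+\delta)^2 = 4$, and \emph{elliptic} if $(\alpha+\delta)^2 < 4$. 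I would match each alternative with one of (a)--(d), using Lemma~\ref{lemma:Dolgachev} on the lattice side and the realization results of Sections~\ref{sect:k3_gepner}--\ref{sect:nielsen_k3-pic1} on the categorical side, and then obtain mutual exclusivity by recording which lattice invariants each of (a)--(d) forces on $[\Phi]$.

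The two non-elliptic cases are immediate from Lemma~\ref{lemma:Dolgachev}. If $g$ is hyperbolic, the eigenvalues $1,\mu^\pm$ of the matrix $M$ representing $g$ on $\cN(X)$ are real, positive, distinct, and satisfy $\mu^+\mu^- = 1$, so $\rho(M) = \max\{\mu^+,\mu^-\} > 1$; since $[\Phi] = \pm M$ and $\rho$ ignores the sign, we are in case (d). If $g$ is parabolic, $M$ is a single Jordan block of size three at the eigenvalue $1$, and the generator $v = (2\gamma,\alpha-\delta,-2n\beta)$ of its fixed line satisfies $v^2 = 2n\bigl((\alpha+\delta)^2-4\bigr) = 0$; clearing the factor $\sqrt n$ contributed by the Fricke involution when $g \in \Gamma_0(n)w_n$ makes $v$ integral, and Lemma~\ref{lemma:Dolgachev} identifies the fixed line of $[\Phi]$ with $\bR v$, which produces the nonzero isotropic vector of case (c).

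In the elliptic case $g$ fixes a unique point $p \in \cH$, and as $\Gamma_0^+(n)$ is discrete its stabilizer at $p$ is finite, so $g$ has finite order. If $p$ is not a $(-2)$-point then $p \in \cQ^+_0(X)$; choosing $\sigma \in \Stab^\dag(X)$ over $p$, the isometry $\phi = [\Phi]$ fixes the positive plane spanned by the central charge $Z$ and acts on it through $\SO(2) \subseteq \GL^+(2,\bR)$, so $\phi(Z) = Z\cdot\kappa$ with $|\kappa| = 1$; taking $\lambda \in \bC$ with $e^{-i\pi\lambda} = \kappa$, the central charges of $\Phi(\sigma)$ and $\sigma\cdot\lambda$ agree, so by \cite{Bri08}*{Theorem~1.1} there is $\Psi \in \cI(\Db(X))$ with $\Psi\Phi(\sigma) = \sigma\cdot\lambda$; thus $\Psi\Phi$ is of Gepner type, hence of finite order modulo even shifts by Proposition~\ref{prop:gepner_auteq-isom}, and replacing $\Psi$ by the conjugate $\Phi^{-1}\Psi\Phi \in \widetilde{\cI}(\Db(X))$ (a normal subgroup, being $\ker F$) turns $\Psi\Phi$ into $\Phi\Psi$, giving (a). If $p$ is a $(-2)$-point, the cyclic stabilizer $\Gamma_0^+(n)_p$ has a unique involution $\tau = F(T_S)$ for some spherical $S$ by Proposition~\ref{prop:sph-in-Fricke}; when $g = \tau$, the composite $\Psi \colonequals \Phi^{-1}T_S$ lies in $\ker F$ and $\Phi\Psi = T_S$, giving (b). The one remaining configuration — $p$ a $(-2)$-point with $g$ of order $>2$, which among Picard rank one surfaces occurs only when $p$ is the order-$6$ elliptic point in degree $6$ — needs separate treatment: there $g^3 = \tau = F(T_S)$, and I would combine this with the explicit free-product description of $\Aut_s(\Db(X))/\bZ[2]$ from Section~\ref{subsect:solving-nielsen-K3} to place $\Phi$ in case (a).

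The main obstacle is this last sub-case together with the sign and signature bookkeeping behind mutual exclusivity. For the latter: (a) forces $[\Phi] = \pm[\Phi\Psi]$ to have finite order (as $[\Psi] = \pm\mathrm{id}$ and $[k]$ acts as $\pm\mathrm{id}$ on $\cN(X)$), hence $g$ elliptic, hence $\rho([\Phi]) = 1$ and, by Lemma~\ref{lemma:Dolgachev}, the fixed line of $[\Phi]$ anisotropic, excluding (c) and (d); (b) forces $[\Phi]$ to be $\pm$ a $(-2)$-reflection, with eigenspaces the definite lattices $\delta_S^\perp$ and $\bZ\delta_S$, again excluding (c) and (d); and (a) versus (b) is separated as in the proof of Lemma~\ref{lemma:unique-stabred}, using that $\cI(\Db(X))/\bZ[2]$ is freely generated by squares of spherical twists, so that no translate of a spherical twist by an element of $\widetilde{\cI}(\Db(X))$ can be of finite order modulo shifts. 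Everything else reduces to comparing orders and spectral radii through Lemma~\ref{lemma:Dolgachev}.
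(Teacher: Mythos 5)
Your route is the same as the paper's: split according to whether $F(\Phi)$ is elliptic, parabolic, or hyperbolic, settle the non-elliptic cases with Lemma~\ref{lemma:Dolgachev}, use \cite{Bri08}*{Theorem~1.1} together with Proposition~\ref{prop:gepner_auteq-isom} (equivalently Theorem~\ref{thm:dist-stabred}) for elliptic elements fixing a point of $\cQ^+_0(X)$, and Proposition~\ref{prop:sph-in-Fricke} when the fixed point is a $(-2)$-point; your exclusivity bookkeeping also matches the paper in substance. The genuine gap is precisely the sub-case you flag and postpone: $n=3$ and $F(\Phi)$ elliptic of order $3$ or $6$ fixing the order-$6$ elliptic point, which by Lemma~\ref{lemma:(-2)-points} is a $(-2)$-point. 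Your proposed repair — placing such $\Phi$ in case \ref{item:a} via the free-product description — cannot work: by \eqref{eqn:sympautoeq_n-not-1} (or Lemma~\ref{lemma:finite-subgp}), for $n=3$ one has $\Aut_s(\Db(X))/\bZ[2]\cong\bZ*\bZ$, which is torsion-free, and by Lemma~\ref{lemma:symp-vs-nonsymp} the only torsion in $\Aut(\Db(X))/\bZ[2]$ is $\bZ_2[1]$; hence any relation $(\Phi\Psi)^m=[k]$ forces $\Phi\Psi\in\bZ[1]\subseteq\widetilde{\cI}(\Db(X))$ and so $\Phi\in\widetilde{\cI}(\Db(X))$, contradicting the hypothesis. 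Such $\Phi$ do exist (the map $F$ is onto $\Gamma_0^+(3)\cong\bZ_2*\bZ_6$), and they also fail \ref{item:b} (their $F$-image is not an involution), \ref{item:d} (spectral radius $1$), and \ref{item:c} (by Lemma~\ref{lemma:Dolgachev} the fixed vectors of $M$ span the line through the $(-2)$-vector defining the removed point, e.g.\ $(2,-1,2)$, which is anisotropic, while $-M$ has no nonzero fixed vector). So the sub-case is not closed by your sketch, and in fact cannot be closed by assigning it to \ref{item:a}.

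For comparison, the paper's proof does not treat this sub-case at all: it asserts that an elliptic element whose fixed point lies in $\cQ^+(X)\setminus\cQ^+_0(X)$ is an involution, which is exactly the step that fails at the order-$6$ $(-2)$-point when $n=3$. So you have correctly identified a real subtlety that the paper elides, but resolving it requires more than a different placement among \ref{item:a}--\ref{item:d} — for instance, reading the $(-2)$-reducible case as ``$[\Phi]$ preserves a $(-2)$-class'' would absorb these elements, since their fixed line in $\cN(X)$ is spanned by the $(-2)$-vector of the removed point. A smaller, shared imprecision worth noting: in the parabolic case (and in \ref{item:c} generally) one may have $[\Phi]=-M$ (e.g.\ $\Phi=(-\otimes\cO_X(1))[1]$), which fixes no nonzero vector, so the isotropic class is only preserved up to sign; one should either ask for $[\Phi]w=\pm w$ or first replace $\Phi$ by $\Phi[1]$. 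This affects the paper's formulation as much as yours, but your write-up should not inherit it silently.
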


\begin{proof}
Let us classify $\Phi$ according to whether $F(\Phi)=\begin{pmatrix}
    \alpha & \beta \\ \gamma & \delta
\end{pmatrix}\in\Gamma_0^+(n)$ is elliptic, parabolic, or hyperbolic.
First assume that $F(\Phi)$ is elliptic. Then its action on $\cQ^+(X)$ has a fixed point:
\begin{itemize}
    \item The fixed point belongs to $\cQ^+_0(X)$ if and only if there exists $\Psi\in\widetilde{\cI}(\Db(X))$ such that $\Phi\Psi$ has a fixed point on $\Stab^\dagger_\red(X)/\bC$ by \cite{Bri08}*{Theorem~1.1}. This is equivalent to condition~\ref{item:a} by Theorem~\ref{thm:dist-stabred}.
    \item The fixed point point belongs to $\cQ^+(X)\backslash\cQ^+_0(X)$ if and only if $F(\Phi)$ is an involution with respect to a $(-2)$-point, which is equivalent to \ref{item:b} by Proposition~\ref{prop:sph-in-Fricke}.
\end{itemize}

\noindent It remains to consider the cases where $F(\Phi)$ is parabolic or hyperbolic.

\begin{itemize}
    \item $F(\Phi)$ is hyperbolic, i.e.~$|\alpha+\delta|>2$, is equivalent to \ref{item:d} by Lemma~\ref{lemma:Dolgachev}.
    \item $F(\Phi)$ is parabolic, i.e.~$|\alpha+\delta|=2$, is equivalent to the condition that there exists a nonzero $v\in\cN(X)\otimes\bR$ so that $[\Phi]v=v$ and $v^2=0$ by Lemma~\ref{lemma:Dolgachev}. Moreover, the vector
    $$
    v = \begin{pmatrix}
        2\gamma \\ \alpha-\delta \\ -2n\beta 
    \end{pmatrix}
    $$
    is integral in this case.
\end{itemize}
This completes the proof.
\end{proof}

\begin{rmk}
\label{rmk:nielsen-thurston}
Proposition~\ref{prop:polychotomy} resembles the \emph{Nielsen--Thurston classification} of mapping classes on Riemann surfaces, where mapping classes are classified into three types: finite order, reducible, or pseudo-Anosov. A similar polychotomy in the case of elliptic curves was established in \cite{KKO22}*{Theorem~5.20}. In our setting, the reducible autoequivalences are further classified into \emph{$(-2)$-reducible} and \emph{$0$-reducible} types which corresponds respectively to the \emph{$(-2)$-points} and the \emph{cusps} of the modular curve $\Gamma_0^+(n)\git\cQ^+(X)$.
\end{rmk}

Unlike $(-2)$-reducible autoequivalences, which are spherical twists (composing with an element of $\widetilde{\cI}(\Db(X))$), the $0$-reducible autoequivalences are less known. For each nonzero vector $w\in\cN(X)$ with $w^2=0$, let us define
$$
    \Aut(\Db(X),w)\colonequals
    \left\{
        \Phi\in\Aut(\Db(X))\mid [\Phi]w=w\right
    \}.
$$

\begin{prop}
\label{prop:autoeq_fixing-w}
Let $X$ be a K3 surface of Picard number one and $w\in\cN(X)$ be a nonzero vector with $w^2=0$. If $X$ has degree at least $4$, then there is a short exact sequence
$$\xymatrix{
    0 \ar[r]
    & \cI(\Db(X)) \ar[r]
    & \Aut(\Db(X),w) \ar[r]^-F
    & \bZ \ar[r]
    & 0.
}$$
If $X$ has degree~$2$, then the kernel should be replaced by $\left<\cI(\Db(X)),\iota\right>$.
\end{prop}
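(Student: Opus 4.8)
The plan is to restrict the surjection $F\colon\Aut(\Db(X))\twoheadrightarrow\Gamma_0^+(n)$ to the subgroup $\Aut(\Db(X),w)$ and compute both the image and the kernel of this restriction $F|$, which yields an exact sequence
$$
    0\longrightarrow\ker(F|)\longrightarrow\Aut(\Db(X),w)\xrightarrow{F|}F(\Aut(\Db(X),w))\longrightarrow 0.
$$
It then suffices to show that $F(\Aut(\Db(X),w))$ is infinite cyclic and that $\ker(F|)$ equals $\cI(\Db(X))$ when $n\geq 2$ and $\left<\cI(\Db(X)),\iota\right>$ when $n=1$.

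For the kernel, I would use $\ker(F|)=\widetilde{\cI}(\Db(X))\cap\Aut(\Db(X),w)$. On the lattice $\cN(X)$ the shift $[1]$ acts as $-\mathrm{id}$, the Torelli group $\cI(\Db(X))$ acts trivially, and, in the case $n=1$, $\iota$ acts trivially since $\iota^*$ is $+\mathrm{id}$ on $\NS(X)=\bZ h$ and $-\mathrm{id}$ on the transcendental lattice. Hence every element of $\widetilde{\cI}(\Db(X))=\ker(F)$ acts on $\cN(X)$ as $\pm\mathrm{id}$, and because $w\neq 0$ such an element fixes $w$ precisely when it acts as $+\mathrm{id}$. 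Decomposing $\widetilde{\cI}(\Db(X))$ into cosets of $\cI(\Db(X))$ — namely $\cI\sqcup[1]\cI$ for $n\geq 2$, and $\cI\sqcup[1]\cI\sqcup\iota\cI\sqcup[1]\iota\cI$ for $n=1$ — and keeping exactly the cosets acting trivially on $\cN(X)$ gives the stated kernel in each case.

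For the image, every $\Phi\in\Aut(\Db(X),w)$ induces an isometry of $\cN(X)$ fixing the line $\bR w$, so $F(\Phi)$ fixes the point $[w]\in\partial\cH$ under the identification $\cH\cong\cQ^+(X)$, $z\mapsto[e^{zh}]$. Since $w$ is integral, $[w]$ is a rational point of $\partial\cH$, hence a cusp of $\Gamma_0^+(n)$ — which contains $\Gamma_0(n)$ with finite index and so is Fuchsian of the first kind — and its stabilizer in $\Gamma_0^+(n)$ is infinite cyclic, generated by a parabolic element $g_0$. Therefore $F(\Aut(\Db(X),w))\subseteq\left<g_0\right>$. For the reverse inclusion I would argue: since $g_0$ is parabolic, Lemma~\ref{lemma:Dolgachev} shows its action $M_{g_0}$ on $\cN(X)$ has $1$ as its only eigenvalue; as $g_0$ fixes $[w]$ we get $M_{g_0}w\in\bR w$, whence $M_{g_0}w=w$. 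Lifting $g_0$ through $F$ to some $\Phi$, the induced isometry is $M_{g_0}$ or $-M_{g_0}$, and in the latter case $\Phi\circ[1]$ still maps to $g_0$ (as $F([1])=\mathrm{id}$) and induces $M_{g_0}$. Either way one obtains $\Phi\in\Aut(\Db(X),w)$ with $F(\Phi)=g_0$, so $F(\Aut(\Db(X),w))=\left<g_0\right>\cong\bZ$.

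The main obstacle is the last paragraph: identifying $[w]$ as a cusp with infinite cyclic parabolic stabilizer (a standard Fuchsian-group fact, but one must check that the rational boundary points of $\cQ^+(X)$ correspond exactly to integral isotropic vectors in $\cN(X)$ and that $\Gamma_0^+(n)$ inherits the first-kind property from $\Gamma_0(n)$), together with the use of Lemma~\ref{lemma:Dolgachev} to upgrade ``$F(\Phi)$ fixes the line $\bR w$'' to ``a suitable lift of $g_0$ fixes the vector $w$ on the nose''. The coset bookkeeping for the kernel, particularly the four-coset situation when $n=1$, is routine but must be carried out with care.
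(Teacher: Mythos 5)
Your proposal is correct and follows essentially the same route as the paper's proof: the kernel is pinned down by observing that elements of $\ker(F)=\widetilde{\cI}(\Db(X))$ act as $\pm\mathrm{id}$ on $\cN(X)$ (so only the cosets acting as $+\mathrm{id}$ can fix $w\neq 0$), and the image is identified with the infinite cyclic stabilizer of the cusp determined by the isotropic vector $w$. The only difference is cosmetic: the paper locates that cusp explicitly at $\frac{s}{nd}$ via the Jordan decomposition in Lemma~\ref{lemma:Dolgachev}, whereas you invoke the general Fuchsian-group fact that a cusp stabilizer is infinite cyclic generated by a parabolic and then lift that generator through $F$, correcting by $[1]$ if the induced isometry is $-M_{g_0}$ — which in fact makes explicit the surjectivity onto the stabilizer that the paper's ``hence the image equals'' leaves implicit.
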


\begin{proof}
The kernel of the map  $\Aut(\Db(X),w)\longrightarrow\Gamma_0^+(n)$ contains $\cI(\Db(X))$ and is contained in $\widetilde{\cI}(\Db(X))$. In the case of degree at least~$4$, it coincides with $\cI(\Db(X))$ because $\Aut(\Db(X),w)$ does not contain $[1]$. In the case of degree~$2$, the involution $\iota$ belongs to $\Aut(\Db(X),w)$ since it acts trivially on $\cN(X)$.

We claim that the image of the map $\Aut(\Db(X),w)\longrightarrow\Gamma_0^+(n)$ is an infinite cyclic group that fixes a cusp of $\Gamma_0^+(n)\git\cQ^+_0(X)$. 
Write
$$
w=(r,d,s)\in\cN(X)\backslash\{0\} \qquad\text{where}\qquad w^2=2(nd^2-rs)=0.
$$
Assume $s\neq0$ without loss of generality.
Then one can easily see that any non-identity isometry $[\Phi]$ on $\cN(X)$ that fixes $w$ must have $\beta\neq0$.
It follows from the decomposition (\ref{eqn:JordanDecomp}) that such $[\Phi]$ satisfies the condition:
$$
\begin{pmatrix}
    \frac{(1-\alpha)^2}{\beta^2n}\\\frac{1-\alpha}{\beta n}\\1
\end{pmatrix}
\text{ and }
\begin{pmatrix}
    r \\ d \\ s
\end{pmatrix}
\text{ are parallel},
\quad\text{or equivalently,}\quad
\frac{1-\alpha}{\beta} = \frac{nd}{s}.
$$
Hence the image of $\Aut(\Db(X),w)\longrightarrow\Gamma_0^+(n)$ equals
$$
\left\{
\begin{pmatrix}
    \alpha & \beta \\
    \gamma & 2-\alpha
\end{pmatrix}
\ \middle\vert\ 
\frac{1-\alpha}{\beta} = \frac{nd}{s}
\right\},
$$
which is precisely the subgroup of $\Gamma_0^+(n)$ fixing the cusp at $\frac{s}{nd}$.
\end{proof}

\begin{eg}
By Proposition~\ref{prop:autoeq_fixing-w}, every autoequivalence fixing $w = (0,0,1)$ is of the form $\Psi\circ(-\otimes\cO(n))$ for some $\Psi\in\cI(\Db(X))$ and $n\in\bZ$. Note that $-\otimes\cO(1)$ fixes not only $w$ but also skyscraper sheaves, which are semirigid objects representing $w$.
\end{eg}

In light of this example, we propose the following question:

\begin{ques}
Let $X$ be a K3 surface of Picard number one and $w\in\cN(X)$ be a primitive nonzero vector with $w^2=0$. Do there exist a semirigid object $E\in\Db(X)$ and an autoequivalence $\Phi$ not contained in $\widetilde{\cI}(\Db(X))$ such that $v(E)=w$ and $\Phi(E)=E$?
\end{ques}

Another way to understand the polychotomy is via dynamical behaviors of autoequivalences, that is, the behaviour of iterations $\Phi^n$ when $n$ tends to infinity. There are various invariants that one can associate to autoequivalences from dynamical perspective. Among them, the \emph{categorical entropy} $h_\cat$ and \emph{categorical polynomial entropy} $h_\poly$ introduced in \cites{DHKK14,FFO21} are most closely related to the polychotomy. For instance, it is proved in \cite{FFO21}*{Theorem~1.4} that, if $\Phi$ is an autoequivalence of the bounded derived category of coherent sheaves on an elliptic curve, then
\begin{itemize}
    \item[] (finite order) $|\tr([\Phi])|<2$ if and only if $h_\cat(\Phi)=h_\poly(\Phi)=0$.
    \item[] (reducible) $|\tr([\Phi])|=2$ if and only if $h_\cat(\Phi)=0$ and $h_\poly(\Phi)=1$.
    \item[] (pseudo-Anosov) $|\tr([\Phi])|>2$ if and only if $h_\cat(\Phi)>0$.
\end{itemize}

In the case of K3 surfaces of Picard number one, we propose the following questions concerning the categorical (polynomial) entropy of \emph{reducible} autoequivalences.

\begin{ques}
\label{ques:entropy}
Let $X$ be a K3 surface of Picard number one, $S\in\Db(X)$ be a spherical object, and $w\in\cN(X)$ be a nonzero vector with $w^2=0$.
\begin{enumerate}[label=(\roman*)]
    \item\label{item:question1} Is it true that $h_\poly(T_S\Psi)>0$ for any $\Psi\in\widetilde{\cI}(\Db(X))$ satisfying  $h_\cat(T_S\Psi)=0$?
    \item\label{item:question2} Does there exist $\Phi\in\Aut(\Db(X),w)$ such that $F(\Phi)$ generates the infinite cyclic group
    $$
    \Aut(\Db(X),w)/\mathrm{ker}(F)
    $$
    and satisfies $h_\cat(\Phi) = 0$?
\end{enumerate}
Note that $-\otimes\cO(1)$ gives an example of \ref{item:question2} in the case that $w = (0,0,1)$.
\end{ques}

Positive answers to both questions in Question~\ref{ques:entropy} would lead to a categorical trichotomy of autoequivalences in terms of entropy:

\begin{prop}
\label{conj:NTclassificationDyn}
Assuming both questions in Question~\ref{ques:entropy} have affirmative answers. Then for every autoequivalence $\Phi$ not lying in $\widetilde{\cI}(\Db(X))$:
\begin{enumerate}[label=\textup{(\alph*)}]
    \item\label{item:Entropy(a)} $\Phi$ is of finite order type if and only if there exists $\Psi\in\widetilde{\cI}(\Db(X))$ such that $h_\cat(\Phi\Psi)=h_\poly(\Phi\Psi)=0$.
    \item $\Phi$ is reducible (either $(-2)$-reducible or $0$-reducible) if and only if
    \begin{itemize}
        \item there exists $\Psi\in\widetilde{\cI}(\Db(X))$ such that $h_\cat(\Phi\Psi)=0$, and
        \item for every $\Psi'\in\widetilde{\cI}(\Db(X))$ satisfying $h_\cat(\Phi\Psi')=0$, we have $h_\poly(\Phi\Psi')>0$.
    \end{itemize}
    \item\label{item:Entropy(c)} $\Phi$ is pseudo-Anosov if and only if $h_\cat(\Phi\Psi)>0$ for any $\Psi\in\widetilde{\cI}(\Db(X))$.
\end{enumerate}
Furthermore, the categorical polynomial entropy can be used to distinguish the $(-2)$-reducible and the $0$-reducible autoequivalences as follows:
    \begin{enumerate}[label=\textup{(\roman*)}]
        \item If $\Phi$ is $(-2)$-reducible, then there exists $\Psi\in\widetilde{\cI}(\Db(X))$ such that $h_\cat(\Phi\Psi)=0$ and $0<h_\poly(\Phi\Psi)\leq1$.
        \item If $\Phi$ is $0$-reducible, then any $\Psi\in\widetilde{\cI}(\Db(X))$ satisfying $h_\cat(\Phi\Psi)=0$ has polynomial entropy $h_\poly(\Phi\Psi)\geq2$.
    \end{enumerate}
\end{prop}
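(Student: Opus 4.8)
The plan is to use the polychotomy of Proposition~\ref{prop:polychotomy} as a skeleton: since the four types are mutually exclusive and jointly exhaustive for $\Phi\notin\widetilde{\cI}(\Db(X))$, it suffices to check that each type forces exactly the entropy profile attributed to it. Two structural facts will be used repeatedly. (1) Every $\Psi\in\widetilde{\cI}(\Db(X))$ acts on $\cN(X)$ by $\pm\mathrm{id}$ — the Torelli group acts trivially on $\mukaiH(X,\bZ)$, $[1]$ acts by $-\mathrm{id}$, and $\iota$ acts trivially on $\cN(X)$ since it fixes $\Stab^\dag(X)$ pointwise — so $[\Phi\Psi]=\pm[\Phi]$; in particular $\rho([\Phi\Psi])=\rho([\Phi])$, the isometry $[\Phi\Psi]$ has a Jordan block of a given size attached to a unit-modulus eigenvalue iff $[\Phi]$ does, and ``which of (a)--(d) holds'' depends only on $F(\Phi)$. (2) I will freely invoke the standard estimates: $h_\cat(\Phi)\ge\log\rho([\Phi])$; $h_\cat(\Phi^m)=m\,h_\cat(\Phi)$ and $h_\poly(\Phi^m)=h_\poly(\Phi)$ for $m\ge1$; $h_\cat([k])=h_\poly([k])=0$; $h_\cat(T_S)=0$ and $0<h_\poly(T_S)\le1$ for a spherical twist $T_S$; $h_\cat(\Phi^{-1})=h_\cat(\Phi)$, valid because the Serre functor of $\Db(X)$ is $[2]$; and $h_\poly(\Phi)\ge s-1$ whenever $h_\cat(\Phi)=0$, where $s$ is the largest size of a Jordan block of $[\Phi]$ attached to a unit-modulus eigenvalue. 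By Lemma~\ref{lemma:Dolgachev}, when $F(\Phi)$ is parabolic $[\Phi]$ has a single Jordan block, of size $3$.

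For the finite-order case: if $\Phi$ is of finite order type, choosing $\Psi$ with $(\Phi\Psi)^m=[k]$ gives $h_\cat(\Phi\Psi)=\tfrac1m h_\cat([k])=0$ and $h_\poly(\Phi\Psi)=h_\poly((\Phi\Psi)^m)=0$. Conversely, if $h_\cat(\Phi\Psi)=h_\poly(\Phi\Psi)=0$ for some $\Psi$, then $\Phi$ is not pseudo-Anosov (otherwise $h_\cat(\Phi\Psi)\ge\log\rho([\Phi])>0$), not $(-2)$-reducible (otherwise $\Phi\Psi=T_S\Psi''$ and the affirmative answer to Question~\ref{ques:entropy}\ref{item:question1} forces $h_\poly(\Phi\Psi)>0$), and not $0$-reducible (otherwise $[\Phi\Psi]$ has a size-$3$ Jordan block, forcing $h_\poly(\Phi\Psi)\ge2$); so it is of finite order type by the polychotomy.

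For the reducible case: if $\Phi$ is $(-2)$-reducible, pick $\Psi$ with $\Phi\Psi=T_S$, so $h_\cat(\Phi\Psi)=0$; if $\Phi$ is $0$-reducible, then $F(\Phi)$ stabilizes a cusp, so by Proposition~\ref{prop:autoeq_fixing-w} and the affirmative answer to Question~\ref{ques:entropy}\ref{item:question2} there is $\Phi_0$ with $h_\cat(\Phi_0)=0$ whose $F$-image generates the stabilizer; writing $F(\Phi)=F(\Phi_0)^k$ one has $\Phi=\Psi_1\Phi_0^k$ with $\Psi_1\in\widetilde{\cI}(\Db(X))$, and $\Psi\colonequals\Phi_0^{-k}\Psi_1^{-1}\Phi_0^k\in\widetilde{\cI}(\Db(X))$ gives $\Phi\Psi=\Phi_0^k$ with $h_\cat(\Phi_0^k)=|k|\,h_\cat(\Phi_0^{\pm1})=0$ by multiplicativity and the inverse-symmetry of $h_\cat$. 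For the second clause of part (b): if $h_\cat(\Phi\Psi')=0$, then $\Phi\Psi'$ is reducible of the same subtype, so $h_\poly(\Phi\Psi')>0$ — by Question~\ref{ques:entropy}\ref{item:question1} in the $(-2)$-reducible case, and by the size-$3$ Jordan block (hence $h_\poly\ge2$) in the $0$-reducible case. Conversely, if both bulleted conditions hold, then $\Phi$ is not pseudo-Anosov (by $h_\cat\ge\log\rho$) and not of finite order type (the criterion just proved would yield a $\Psi''$ with both entropies zero, contradicting the second bullet), so $\Phi$ is reducible. The pseudo-Anosov criterion then follows: $\Phi$ is pseudo-Anosov iff $\rho([\Phi])>1$ iff $h_\cat(\Phi\Psi)\ge\log\rho([\Phi])>0$ for all $\Psi$, the reverse implication using that each of the other three types admits a $\Psi$ with $h_\cat(\Phi\Psi)=0$ as just shown. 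Finally, the two refinements are immediate: for $(-2)$-reducible $\Phi$ take $\Phi\Psi=T_S$ and use $h_\cat(T_S)=0$, $0<h_\poly(T_S)\le1$; for $0$-reducible $\Phi$, any $\Psi$ with $h_\cat(\Phi\Psi)=0$ has $[\Phi\Psi]=\pm[\Phi]$ with a size-$3$ Jordan block, so $h_\poly(\Phi\Psi)\ge2$.

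The main obstacle is the $0$-reducible half of the reducible case. Question~\ref{ques:entropy}\ref{item:question2}, even granted, produces only one zero-categorical-entropy generator $\Phi_0$ of the cusp stabilizer; turning this into a $\Psi$ with $h_\cat(\Phi\Psi)=0$ for an arbitrary $0$-reducible $\Phi$ forces one to pass to an arbitrary integer power $\Phi_0^k$ — which requires $h_\cat(\Phi_0^{-1})=h_\cat(\Phi_0)=0$, i.e. the inverse-symmetry of categorical entropy on the Calabi--Yau category $\Db(X)$ — and then to conjugate $\Phi_0^k$ back inside a $\widetilde{\cI}(\Db(X))$-coset, which is where the homomorphism property of $F$ enters. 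The other quotable input is the polynomial-entropy lower bound by the largest unipotent Jordan block; everything else is bookkeeping built on the polychotomy and Lemma~\ref{lemma:Dolgachev}.
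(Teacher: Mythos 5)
Your proposal is correct and follows essentially the same route as the paper's proof: the mutual exclusivity built into Proposition~\ref{prop:polychotomy} reduces everything to the forward implications, which you settle with the same inputs (zero entropy of shifts and of spherical twists, the Yomdin-type bound $h_\cat\geq\log\rho$, Question~\ref{ques:entropy}, and the Jordan-block lower bound on $h_\poly$ combined with Lemma~\ref{lemma:Dolgachev}). The only difference is that you spell out the $0$-reducible step the paper leaves implicit --- writing $\Phi=\Psi_1\Phi_0^k$ via the cusp stabilizer from Proposition~\ref{prop:autoeq_fixing-w}, conjugating inside the normal subgroup $\widetilde{\cI}(\Db(X))$ to get $\Phi\Psi=\Phi_0^k$, and using $h_\cat(\Phi_0^{-1})=h_\cat(\Phi_0)$ on the Calabi--Yau category $\Db(X)$ to handle $k<0$ --- which is a harmless and in fact welcome elaboration.
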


\begin{proof}
Since both sides of Statements \ref{item:Entropy(a)}--\ref{item:Entropy(c)} are mutually exclusive by Proposition~\ref{prop:polychotomy}, it suffices to prove their ``only if" parts.
\begin{itemize}
    \item The ``only if" part of \ref{item:Entropy(a)} follows directly from the definition of categorical (polynomial) entropy \cites{DHKK14,FFO21}.
    \item The ``only if" part of \ref{item:Entropy(c)} follows from the Yomdin-type lower bound of categorical entropy $h_\cat(\Phi)\geq\log\rho([\Phi])$ (\cite{KST20}*{Theorem~2.13}, \cite{FFO21}*{Proposition~4.3}).
    \item If $\Phi$ is $(-2)$-reducible, that is, there exists $\Psi\in\widetilde{\cI}(\Db(X))$ such that $\Phi\Psi=T_S$ for some spherical object $S$. Then $h_\cat(\Phi\Psi) = h_\cat(T_S)=0$ by \cite{OucEntropy}*{Theorem~1.4}, and $h_\poly(\Phi\Psi')>0$ for any $\Psi'\in\widetilde\cI(\Db(X))$ satisfying  $h_\cat(\Phi\Psi')=0$ assuming Question~\ref{ques:entropy}~\ref{item:question1} has an affirmative answer.
    \item If $\Phi$ is $0$-reducible, then there exists $\Psi\in\widetilde{\cI}(\Db(X))$ such that $h_\cat(\Phi\Psi)=0$ assuming Question~\ref{ques:entropy}~\ref{item:question2} has affirmative answer. Furthermore, any $\Psi\in\cI(\Db(X))$ satisfying $h_\cat(\Phi\Psi)=0$ has polynomial entropy $h_\poly(\Phi\Psi)\geq2$ by \cite{FFO21}*{Proposition~4.5} and Lemma~\ref{lemma:Dolgachev}.
\end{itemize}
The last part of the statement follows from the fact that $h_\poly(T_S)\leq1$ for any spherical twist $T_S$ \cite{FFO21}*{Proposition~6.13}.
\end{proof}

\section{Criteria for the existence of associated cubic fourfolds}
\label{sect:asso_cubic}

In this section, we develop a few criteria for the existence of a cubic fourfold associated to a K3 surface of Picard number one. Let us start by recalling necessary background about cubic fourfolds and their relations to K3 surfaces.

Let $Y\subseteq\bP^5$ be a smooth cubic hypersurface. For a very general $Y$, the lattice
$$
    H^{2,2}(Y,\bZ)\colonequals
    H^{2,2}(Y,\bC)\cap H^4(Y,\bZ)
$$
is spanned by the square of the hyperplane class $h^2$. Following Hassett \cite{Has00}, we say $Y$ is \emph{special} if there exists a rank~$2$ primitive sublattice $L\subseteq H^4(Y,\bZ)$ such that
$$
    h^2\in L\subseteq H^{2,2}(Y,\bZ).
$$
The lattice $L$ is called a \emph{labelling}. Special cubic fourfolds with a labelling of discriminant $d$ form an irreducible divisor $\cC_d$ in the moduli space of cubic fourfolds, which is nonempty if and only if \cite{Has00}*{Theorem~4.3.1}
\begin{equation}
\label{eqn:disc-nonempty}
    d\geq 8
    \quad\text{and}\quad
    d\equiv 0,2 \pmod{6}.
\end{equation}
For a very general $Y\in\cC_d$ with a labelling $L$, it holds that $L = H^{2,2}(X,\bZ)$.

A K3 surface $X$ and a cubic fourfold $Y$ are \emph{associated} if there are a polarization $f$ on $X$ and a labelling $L$ on $Y$ such that there exists a Hodge isometry
\begin{equation}
\label{eqn:k3-cube-tran}
\xymatrix{
    f^{\perp H^2(X,\bZ)(-1)}
    \ar[r]^-\sim &
    L^{\perp H^4(Y,\bZ)}.
}
\end{equation}
Notice that, if $X$ has Picard number one, then this gives an isometry between the transcendental parts $T(X)$ and $T(Y)$ in their middle cohomologies up to a twist by $-1$. A cubic fourfold of discriminant $d$ has an associated K3 surface if and only if \cite{Has00}*{Theorem~5.1.3}
\begin{equation}
\label{eqn:disc-K3}
    d\text{ is not divisible by }
    4, 9,
    \text{ or any odd prime }
    p\equiv 2\pmod{3}.
\end{equation}
The condition for a K3 surface and a cubic fourfold to be associated is categorical. More precisely, for every cubic fourfold $Y$, there exists a semiorthogonal decomposition
$$
    \Db(Y)\cong\left<
        \cA_Y, \cO_Y, \cO_Y(1), \cO_Y(2)
    \right>
$$
where the subcategory $\cA_Y$ is called the \emph{K3 category} of $Y$. Then $Y$ is associated Hodge theoretically with a K3 surface $X$ if and only if $\cA_Y\cong\Db(X)$ \cites{AT14,BLMNPS21}.

\begin{prop}
\label{prop:cubic}
Let $X$ be a K3 surface of Picard number one and degree~$2n\geq 4$. Then the following conditions are equivalent:
\begin{enumerate}[label=\textup{(\arabic*)}]
    \item\label{cubic:k3-cat}
    There exists a cubic fourfold $Y\subseteq\bP^5$ associated with $X$.
    \item\label{cubic:autoeq-ord-3}
    There exists $\Phi\in\Aut(\Db(X))$ satisfying $\Phi^3 = [2]$.
    \item\label{cubic:hdg-ord-3}
    $n\geq 7$ and there exists $\phi\in\Aut^+(\mukaiH(X,\bZ))$ of order~$3$.
    \item\label{cubic:fricke-ord-3}
    $n\geq 7$ and there exists a cyclic subgroup in $\Gamma_0^+(n)$ of order~$3$, that is, $\nu_3\neq 0$.
\end{enumerate}
\end{prop}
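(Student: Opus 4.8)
The plan is to prove the four conditions equivalent via the cycle $\ref{cubic:k3-cat}\Rightarrow\ref{cubic:autoeq-ord-3}\Rightarrow\ref{cubic:hdg-ord-3}\Rightarrow\ref{cubic:fricke-ord-3}\Rightarrow\ref{cubic:k3-cat}$. The first implication repackages facts already recalled above: if $Y$ is associated with $X$ then $\cA_Y\cong\Db(X)$ by \cites{AT14,BLMNPS21}, and on the K3 category of a cubic fourfold the degree-shift functor cubes to the shift by $2$ by \cite{Kuz04}*{Lemma~4.2}; transporting it along the equivalence yields $\Phi\in\Aut(\Db(X))$ with $\Phi^3=[2]$.

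For $\ref{cubic:autoeq-ord-3}\Rightarrow\ref{cubic:hdg-ord-3}$, first note that $\Phi$ is symplectic, since otherwise $\Phi^3$ would be anti-symplectic whereas $[2]$ is not; hence $\Phi$ represents an element of order exactly $3$ in $\Aut_s(\Db(X))/\bZ[2]$. As $n\geq 2$, Lemma~\ref{lemma:finite-subgp} forces $n\geq 5$ and $\nu_3(n)\neq 0$ for such an element to exist, and since $\nu_3(5)=\nu_3(6)=0$ by a direct computation with the formulas of \cite{Shi71}*{Proposition~1.43}, this gives $n\geq 7$. The induced Hodge isometry $\phi=[\Phi]$ satisfies $\phi^3=\mathrm{id}$; it is not the identity, for otherwise $\Phi$ would lie in the Torelli group $\cI(\Db(X))$, but $\cI(\Db(X))/\bZ[2]$ is free by \cite{BB17}*{Theorem~1.4}, hence torsion-free, which would force $\Phi\in\bZ[2]$, incompatible with $\Phi^3=[2]$. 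So $\phi$ has order $3$, and it lies in $\Aut^+(\mukaiH(X,\bZ))$ since it comes from an autoequivalence \cite{HMS09}*{Corollary~3}.

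For $\ref{cubic:hdg-ord-3}\Rightarrow\ref{cubic:fricke-ord-3}$, I first check that $\phi$ restricts to $\cN(X)$ with order exactly $3$: were $\phi|_{\cN(X)}$ equal to $\pm\mathrm{id}$, it would act trivially on the quadric $\cQ^+(X)$, hence by a scalar on $\cN(X)\otimes\bC$ and so as $\pm\mathrm{id}$ on $\cN(X)$; but then $\phi|_{\cN(X)}=\mathrm{id}$ and $\phi|_{T(X)}=\pm\mathrm{id}$ by \cite{Huy16_K3Lect}*{Corollary~3.3.5}, so $\phi$ would have order at most $2$, a contradiction. Realizing $\phi$ by an autoequivalence and passing through \eqref{eqn:auteq-to-PSL}, we obtain an element of order $3$ in $\Gamma_0^+(n)\subseteq\PSL(2,\bR)$. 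Such an element is elliptic and fixes a point of $\cQ^+(X)\cong\cH$; for $n\geq 7$ all finite subgroups of $\Gamma_0^+(n)$ have order at most $3$ (Remark~\ref{rmk:FrickeStr}) and every $(-2)$-point has stabilizer of order $2$ (Lemma~\ref{lemma:(-2)-points}), so the fixed point is an elliptic point of order $3$ on $Y_0^+(n)$, whence $\nu_3(n)\neq 0$.

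The remaining implication $\ref{cubic:fricke-ord-3}\Rightarrow\ref{cubic:k3-cat}$ is the arithmetic core, and I expect it to be the main obstacle. By the eulerian formula for $\nu_3$ (\cite{Shi71}*{Proposition~1.43}), $\nu_3(n)\neq 0$ is equivalent to $9\nmid n$ together with the absence of any prime $p\equiv 2\pmod 3$ dividing $n$ — in particular $n$ is odd, since $2\equiv 2\pmod 3$. Comparing with \eqref{eqn:disc-K3}, this says exactly that $d\colonequals 2n$ is divisible by none of $4$, $9$, or an odd prime $\equiv 2\pmod 3$; these constraints force $d\equiv 0$ or $2\pmod 6$, and $n\geq 7$ gives $d\geq 8$, so $d$ satisfies \eqref{eqn:disc-nonempty} as well. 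Hassett's theorem \cite{Has00}*{Theorems~4.3.1 and 5.1.3} then provides a nonempty irreducible divisor $\cC_d$ whose generic member $Y$ carries a labelling $L$ of discriminant $d$ with $L^{\perp}$ Hodge isometric to $T(X')(-1)$ for a polarized K3 surface $(X',f)$; since $H^4$ of a cubic fourfold is unimodular, a discriminant-form computation identifies $L^{\perp}$, up to the twist, with $\langle-2n\rangle^{\perp}$ in the K3 lattice, so $(X',f)$ has degree $2n$ and Picard number one, and as the period map of $\cC_d$ surjects onto the corresponding period domain we may take $X'=X$, producing the cubic associated with $X$. The genuine work here is precisely Hassett's discriminant-form bookkeeping — matching the labelling of a generic cubic in $\cC_d$ with $\langle-2n\rangle$ in the two cases $d\equiv 2$ and $d\equiv 0\pmod 6$, the latter relying on $9\nmid n$ to keep the relevant discriminant group cyclic — after which conditions $\ref{cubic:fricke-ord-3}$ and $\ref{cubic:k3-cat}$ coincide by translating Hassett's numerical constraints into the description of $\{n:\nu_3(n)\neq 0\}$; the small degrees $n=2,3,4,5,6$ drop out along the way because $\nu_3(n)=0$ for $n=4,5,6$ while $d=2n<8$ for $n=2,3$.
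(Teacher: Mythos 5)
Your proposal is correct and takes essentially the same route as the paper: the same cycle of implications, with Kuznetsov's lemma for \ref{cubic:k3-cat}$\Rightarrow$\ref{cubic:autoeq-ord-3}, the structure of $\Aut_s(\Db(X))/\bZ[2]$ (Lemma~\ref{lemma:finite-subgp} and \eqref{eqn:sympautoeq_n-not-1}) for \ref{cubic:autoeq-ord-3}$\Rightarrow$\ref{cubic:hdg-ord-3}, the surjection onto the Fricke group for \ref{cubic:hdg-ord-3}$\Rightarrow$\ref{cubic:fricke-ord-3}, and Hassett's numerical conditions combined with surjectivity of the period map for cubic fourfolds for \ref{cubic:fricke-ord-3}$\Rightarrow$\ref{cubic:k3-cat}. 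The only deviations are cosmetic: you rule out anti-symplectic $\Phi$ by the parity of its action on $H^{2,0}(X)$ rather than the paper's order-$6$ argument, you verify directly that $\phi$ does not act as $\pm\mathrm{id}$ on $\cN(X)$ instead of quoting that the kernel of $\Aut^+(\mukaiH(X,\bZ))\to\Gamma_0^+(n)$ is $\{\pm\mathrm{id}\}$, and you channel the last implication through \cite{Has00}*{Theorem~5.1.3} and the generic member of $\cC_d$ rather than the lattice-level statement \cite{Has16}*{Proposition~21} that the paper combines with \cite{Laz10} --- the same ingredients at the same level of detail.
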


\begin{proof}
The implication \ref{cubic:k3-cat} $\Rightarrow$ \ref{cubic:autoeq-ord-3} is a consequence of \cite{Kuz04}*{Lemma~4.2}. Suppose that there exists $\Phi\in\Aut(\Db(X))$ such that $\Phi^3 = [2]$.
\begin{itemize}
    \item If $\Phi$ is symplectic, then it defines an element
    $
        \overline{\Phi}\in\Aut_s(\Db(X))/\bZ[2]
    $
    of order~$3$, from which one can check that $n\geq 7$ using \eqref{eqn:sympautoeq_n-not-1}. 

    \item If $\Phi$ is anti-symplectic, then
    $
        \overline{\Phi}[1]\in\Aut_s(\Db(X))/\bZ[2]
    $
    has order~$6$. But there is no such element again by \eqref{eqn:sympautoeq_n-not-1}.
\end{itemize}
This shows that $n\geq 7$. Note that $\Phi\notin\cI(\Db(X))$ because $\cI(\Db(X))$ is free. Hence $\Phi$ descends to $\phi\in\Aut^+(\mukaiH(X,\bZ))$ of order~$3$. This proves \ref{cubic:autoeq-ord-3} $\Rightarrow$ \ref{cubic:hdg-ord-3}. Now, assume that $n\geq 7$ and that there exists $\phi\in\Aut^+(\mukaiH(X,\bZ))$ of order~$3$. Because the group $\Aut^+(\mukaiH(X,\bZ))$ is surjective onto $\Gamma_0^+(n)$ with kernel equal to $\{\pm\mathrm{id}\}$, the isometry $\phi$ generates a cyclic subgroup in $\Gamma_0^+(n)$ of order~$3$, whence $\nu_3\neq 0$. This proves \ref{cubic:hdg-ord-3} $\Rightarrow$ \ref{cubic:fricke-ord-3}.

Finally, let us show that \ref{cubic:fricke-ord-3} $\Rightarrow$ \ref{cubic:k3-cat}. Suppose that $\nu_3\neq 0$ and define $d\colonequals 2n$. Using the formula in \cite{Shi71}*{Proposition~1.43}, one can verify directly that \eqref{eqn:disc-K3} holds. We claim that $d\equiv 0,2\pmod{6}$, or equivalently, $n\equiv 0,1 \pmod{3}$. Indeed, if $n$ contains a prime factor $p\equiv 2\pmod{3}$, then $\nu_3 = 0$ by \cite{Shi71}*{Proposition~1.43}, contradiction. Hence every prime factor of $n$ is congruent to $0,1 \pmod{3}$, thus $n\equiv 0,1 \pmod{3}$. Together with the condition $n\geq 7$, we conclude that \eqref{eqn:disc-nonempty} holds as well. Conditions~\eqref{eqn:disc-K3} and \eqref{eqn:disc-nonempty} imply that, for all labelled cubic fourfolds $(Y,L)$ such that $\mathrm{disc}(L) = d$ and $L = H^{2,2}(Y,\bZ)$, there exists an isomorphism \eqref{eqn:k3-cube-tran} at the \emph{lattice} level \cite{Has16}*{Proposition~21}. The surjectivity of the period map for cubic fourfolds \cite{Laz10}*{Theorem~1.1} then implies that there exists one such $Y$ with \eqref{eqn:k3-cube-tran} preserving Hodge structure.
\end{proof}

\begin{rmk}
\label{rmk:cubic_deg-2}
Proposition~\ref{prop:cubic} can be extended to K3 surfaces of Picard number one and degree~$2$. Indeed, such K3 surfaces are associated with cubic fourfolds with an ordinary double point in the sense of \cite{Has00}*{\S4.2}. In this case, the order~$3$ elements in \ref{cubic:autoeq-ord-3}, \ref{cubic:hdg-ord-3}, \ref{cubic:fricke-ord-3} can be taken to be the ones induced by $\Theta^2$ with the $\Theta$ in Example~\ref{eg:CanonacoKarp}.
\end{rmk}



\bigskip
\bibliography{CategoricalNielsen_bib}
\bibliographystyle{alpha}

\ContactInfo
\end{document}